\numberwithin{equation}{section}
\def\Xint#1{\mathchoice
{\XXint\displaystyle\textstyle{#1}}%
{\XXint\textstyle\scriptstyle{#1}}%
{\XXint\scriptstyle\scriptscriptstyle{#1}}%
{\XXint\scriptscriptstyle\scriptscriptstyle{#1}}%
\!\int}
\def\XXint#1#2#3{{\setbox0=\hbox{$#1{#2#3}{\int}$ }
\vcenter{\hbox{$#2#3$ }}\kern-.6\wd0}}
\def\dashint{\Xint-}
\newtheorem{theorem}{Theorem}[section]
\newtheorem{corollary}[theorem]{Corollary}
\newtheorem{proposition}[theorem]{Proposition}
\newtheorem{lemma}[theorem]{Lemma}
\newtheorem{definition}[theorem]{Definition}
\theoremstyle{definition}
\newtheorem{remark}[theorem]{Remark}
\newcommand{\norm}[1]{\left\|#1\right\|}
\newcommand*{\supp}{\ensuremath{\mathrm{supp\,}}}
\newcommand*{\dist}{\ensuremath{\mathrm{dist\,}}}
\newcommand*{\Id}{\ensuremath{\mathrm{Id}}}
\renewcommand*{\div}{\ensuremath{\mathrm{div\,}}}
\newcommand*{\N}{\ensuremath{\mathbb{N}}}
\newcommand*{\T}{\ensuremath{\mathbb{T}}}
\newcommand*{\Tthreexi}{\ensuremath{\mathbb{T}_\xi^3}}
\newcommand*{\Z}{\ensuremath{\mathbb{Z}}}
\newcommand*{\R}{\ensuremath{\mathbb{R}}}
\newcommand{\eps}{\varepsilon}
\newcommand{\RR}{\mathring R}
\newcommand{\HH}{\mathring H}
\newcommand{\nn}{{\tilde{n}}}
\newcommand{\dpot}{{\mathsf d}}
\renewcommand*{\tilde}{\widetilde}
\renewcommand*{\hat}{\widehat}
\newcommand*{\curl}{\ensuremath{\mathrm{curl\,}}}
\newcommand{\cstar}{ \mathsf{c_0} }
\newcommand{\cstarn}{ \mathsf{c}_{\textnormal{n}} }
\newcommand{\cstarnprime}{ \mathsf{c}_{\textnormal{n}'} }
\newcommand{\cstarnn}{ \mathsf{c}_{\tilde{\textnormal{n}}} }
\newcommand{\cstarzero}{ \mathsf{c}_{0} }
\newcommand{\shaq}{{ -\mathsf{C_R}} }
\newcommand{\badshaq}{{ \mathsf{C_u} }}
\newcommand{\shaqqplusone}{ \Gamma_{q+1}^{-\mathsf{C_R}} }
\newcommand{\les}{\lesssim}
\newcommand{\imax}{{i_{\rm max}}}
\newcommand{\jmax}{{j_{\rm max}}}
\newcommand{\nmax}{{n_{\rm max}}}
\newcommand{\twopi}{2\pi}
\newcommand{\Ncut}{\mathsf{N}_{\rm cut}}
\newcommand{\NcutSmall}{\mathsf{N}_{\rm cut,t}}
\newcommand{\NcutLarge}{\mathsf{N}_{\rm cut,x}}
\newcommand{\NindSmall}{\mathsf{N}_{\textnormal{ind,t}}}
\newcommand{\NindLarge}{\mathsf{N}_{\textnormal{ind,v}}}
\newcommand{\NindRt}{\mathsf{N}_{\textnormal{ind,t}}}
\newcommand{\Nindvt}{\mathsf{N}_{\textnormal{ind,t}}}
\newcommand{\Nindt}{\mathsf{N}_{\textnormal{ind,t}}}
\newcommand{\Nindv}{\mathsf{N}_{\textnormal{ind,v}}}
\newcommand{\Nfin}{\mathsf{N}_{\rm fin}}
\newcommand{\Nfn}{\mathsf{N}_{\rm fin, n}}
\newcommand{\Nfnn}{\mathsf{N}_{\rm{fin,}\nn}}
\newcommand{\NN}[1]{\mathsf{N}_{#1}}
\newcommand{\CLebesgue}{{\mathsf{C}_{b}}}
\newcommand{\Ndec}{{\mathsf{N}_{\rm dec}}}
\newcommand{\WW}{\ensuremath{\mathbb{W}}}
\newcommand{\UU}{\ensuremath{\mathbb{U}}}
\newcommand{\Proj}{\ensuremath{\mathbb{P}}}
\newcommand{\MM}[1]{\ensuremath{\mathcal{M}}\left(#1\right)}
\newcommand{\rqnperptilde}{r_{q+1,\nn}}
\newcommand{\vlq}{v_{\ell_q}}
\newcommand{\vlqprime}{v_{\ell_{q'}}}
\newcommand{\vlqminus}{v_{\ell_{q-1}}}
\newcommand{\Pqx}{\mathcal{P}_{q,x}}
\newcommand{\Pqt}{\mathcal{P}_{q,t}}
\newcommand{\Pqxt}{\mathcal{P}_{q,x,t}}
\newcommand{\divH}{\mathcal{H}}
\newcommand{\divR}{\mathcal{R}^*}
\newcommand{\Dtq}{D_{t,q}}
\newcommand{\istar}{{i^*}}
\newcommand{\jstar}{{j^*}}
\newcommand{\kstar}{{k^*}}
\newcommand{\xistar}{{\xi^*}}
\newcommand{\nstar}{{n^*}}
\newcommand{\lstar}{l^*}
\newcommand{\wstar}{w^*}
\newcommand{\hstar}{h^*}
\newcommand{\LPqn}{\mathbb{P}_{[q,n]}}
\newcommand{\Phiik}{\Phi_{(i,k)}}
\newcommand{\pp}{{\tilde{p}}}
\newcommand{\const}{\mathcal{C}}
\newcommand{\qn}{_{q,n}}
\newcommand{\qnp}{_{q,n,p}}
\newcommand{\qplusnp}{_{q+1,n,p}}
\newcommand{\qnn}{_{q,\nn}}
\newcommand{\qplusnn}{_{q+1,\nn}}
\newcommand{\lessg}{\lesssim}
\newcommand{\LPqnmax}{\mathbb{P}_{\left[q,\nmax+1\right]}}
\newcommand{\Nsharp}{N^{\sharp}}
\def\aaa{{\boldsymbol{\alpha}}}
\def\bbb{{\boldsymbol{\beta}}}
\title{An Intermittent Onsager Theorem}
\author{Matthew Novack}
\address{Department of Mathematics, Purdue University, West Lafayette, IN 47907.}
\email{\href{mdnovack@purdue.edu}{mdnovack@purdue.edu}}
\author{Vlad Vicol}
\address{Courant Institute of Mathematical Sciences, New York University, New York, NY 10012.}
\email{\href{vicol@cims.nyu.edu}{vicol@cims.nyu.edu}}
\begin{document}

\maketitle

\begin{abstract}
For any regularity exponent $\beta<\sfrac 12$, we construct non-conservative weak solutions to the 3D incompressible Euler equations in the class $C^0_t (H^{\beta} \cap L^{\sfrac{1}{(1-2\beta)}})$. By interpolation, such solutions belong to $C^0_t B^{s}_{3,\infty}$ for $s$ approaching $\sfrac 13$ as $\beta$ approaches $\sfrac 12$. Hence this result provides {\em a new proof of the flexible side of the $L^3$-based Onsager conjecture}. Of equal importance is that the intermittent nature of our solutions matches that of turbulent flows, which are observed to possess an $L^2$-based regularity index exceeding $\sfrac 13$. Thus our result does not imply, and is not implied by, the work of  Isett~\cite{Isett2018}, who gave a proof of the H\"older-based Onsager conjecture.  Our proof builds on the authors' previous joint work with Buckmaster and Masmoudi~\cite{BMNV21}, in which an intermittent convex integration scheme is developed for the 3D incompressible Euler equations. We employ a scheme with higher-order Reynolds stresses, which are corrected via a combinatorial placement of intermittent pipe flows of optimal relative intermittency. 
\end{abstract}

\setcounter{tocdepth}{1}
\tableofcontents
 
\allowdisplaybreaks

\section{Introduction}
\label{sec:introduction}
We consider  the three-dimensional homogeneous incompressible Euler equations
\begin{subequations}
\label{eq:Euler} 
\begin{align}
\partial_t v + \div(v \otimes v) +\nabla p &=0 \,,\\
\div v &= 0 \,.
\end{align}
\end{subequations}
Here $v(\cdot,t)\colon \T^3 \to \R^3$ is the velocity and $p(\cdot,t) \colon \T^3 \to \R$ is the pressure, and we consider the system \eqref{eq:Euler}  with periodic boundary conditions on $\T^3=[-\pi,\pi]^3$. Without loss of generality, the velocity is taken to have zero mean, 
and the pressure is uniquely determined as the zero mean solution of $- \Delta p = \div \div (v\otimes v)$. Smooth solutions $v$ of the 3D Euler equations  conserve their kinetic energy $\mathcal{E}(t) = \frac 12 \int_{\T^3} |v(x,t)|^2 dx$.

In this paper, we consider weak solutions $v\in C^0_t L^2$ to~\eqref{eq:Euler}. Since the Euler system is in divergence form  and  we consider velocity fields of finite kinetic energy, the definition of  weak  solutions is the usual one. The motivation for considering weak solutions is twofold. First, the Euler equations are expected to dynamically produce singularities, even from smooth initial conditions. Second, matching the mathematical theory with the physical properties of turbulent fluids necessitates the consideration of solutions with singularities. Indeed, the Kolmogorov/Onsager  theories of turbulence postulate that  solutions to the 3D incompressible Navier-Stokes equations, which represent a fully developed turbulent flow, exhibit anomalous dissipation of kinetic energy in the infinite Reynolds number limit. This is an experimental fact~\cite{Frisch95,EyinkSreeniviasan06}. Hence, if the 3D Euler equations are to represent the inertial range of turbulence at very large Reynolds numbers, one is forced to consider non-conservative solutions of \eqref{eq:Euler}, which thus must be weak solutions, not smooth ones.

The conservation of kinetic energy for weak solutions to~\eqref{eq:Euler}  was considered by Onsager~\cite{Onsager49}, who predicted that ``turbulent energy dissipation [...] could take place just as readily without the final assistance of viscosity [...] because the velocity field does not remain differentiable.'' Based on the computation of the energy flux through expanding Fourier domains, Onsager formulated a remarkable 
statement connecting the regularity of a weak solution $v$ to \eqref{eq:Euler} and the validity of the energy conservation law. Onsager's conjecture asserted that any weak solution $v \in C^0_t C^{s}$ with $s>\sfrac 13$ must conserve kinetic energy, whereas for any $s < \sfrac 13$ there exist dissipative weak solutions $v \in C^0_t C^s$ to the 3D Euler equations. The rigidity/flexibility dichotomy expressed by the Onsager conjecture is the mathematical manifestation of an experimental fact in hydrodynamic turbulence:   Kolmogorov's $\sfrac 45$-law regarding third order structure functions~\cite{Frisch95,EyinkSreeniviasan06}. 

Due to the quadratic nature of the nonlinearity in~\eqref{eq:Euler}, the Onsager exponent $\sfrac 13$ is intimately connected to an $L^3$-based regularity scale, such as $C^0_t B^s_{3,\infty}$, where we recall that the Besov norm is given by $\|v\|_{B^{s}_{p,\infty}} = \|v\|_{L^p} + \sup_{|z|>0} |z|^{-s} \| v(\cdot+z) - v(\cdot)\|_{L^p}$, so that $C^s(\T^3) \subset B^{s}_{3,\infty}(\T^3)$. Indeed, the rigidity part of the Onsager conjecture was established by Constantin-E-Titi~\cite{ConstantinETiti94}, who proved that any weak solution $v \in L^3_t B^{s}_{3,\infty} \cap C^0_t L^2_x$ of \eqref{eq:Euler} must conserve kinetic energy if $s>\sfrac 13$; see also the partial result~\cite{Eyink94} and the subsequent refinements in~\cite{DuchonRobert00,CCFS08,DrivasEyink19}. Concerning the flexible part of the Onsager conjecture, after the paradoxical constructions of Scheffer~\cite{Scheffer93} and Shnirelman~\cite{Shnirelman00}, a systematic approach towards the resolution of the conjecture was proposed in the groundbreaking works~\cite{DeLellisSzekelyhidi09,DeLellisSzekelyhidi13} of De~Lellis and Sz\'ekelyhidi~Jr., who introduced $L^\infty$-convex integration and $C^0$-Nash iteration schemes to fluid dynamics. After a series of important partial results~\cite{BDLISZ15,DaneriSzekelyhidi17}, a resolution of the flexible part of the Onsager conjecture was obtained by Isett~\cite{Isett2018} in the setting of weak solutions with {\em compact support in time}. This was further refined by Buckmaster, De Lellis, Sz\'ekelyhidi Jr., and the last author in~\cite{BDLSV17}, by constructing {\em dissipative} weak solutions $v\in C^0_t C^s$ to the 3D Euler equations, for any $s <\sfrac 13$. For a detailed account of the Onsager theory of ideal turbulence, and of the mathematical results which turned the Onsager conjecture into the {\em Onsager theorem}, we refer the reader to~\cite{EyinkSreeniviasan06,Shvydkoy10,DLSZ12,DLSZ17,BV_EMS19,BV20}.

We note that the proofs of rigidity in~\cite{ConstantinETiti94,DuchonRobert00,CCFS08,DrivasEyink19} identify the $L^3$-based spaces $B^{\sfrac 13+}_{3,\infty}$ and $B^{\sfrac 13}_{3,c_0}$, as the borderline regularity spaces for ensuring that weak solutions conserve energy/have vanishing energy flux. These spaces are known to be sharp, for instance in the case of a Burgers shock, which dissipates energy and lies in $B^{\sfrac 13}_{3,\infty}$. See also the incompressible 3D vector fields constructed in~\cite{Eyink94,CCFS08,CFLS16,cheskidovluo,BS22}, which have a nonzero flux at critical regularity. Moreover, the $L^3$-based regularity scale matches the prediction made for third order structure functions in the Kolmogorov theory of turbulence. 

In contrast, the proofs of flexibility in~\cite{Isett2018,BDLSV17,Isett17} are in a certain sense ``too strong,'' since they construct weak solutions in the $L^\infty$-based space $C^{\sfrac 13-}$ (which implies the same result in $B^{\sfrac 13-}_{3,\infty}$).  These solutions thus do not exhibit the observed inertial range intermittency of turbulent flows at large Reynolds number, neither for low order structure functions, nor for high order structure functions. To be more precise, for $p < 3$, the $p^{\rm th}$ order inertial range structure function exponents $\zeta_p$ in fully developed turbulence have consistently been observed to lie above the Kolmogorov predicted value of $\sfrac p 3$. See e.g.~\cite[Figure 8.8]{Frisch95},~\cite[Figures 4\&5]{ChenEtAl05},~\cite[Figure 3]{IshiharaEtAl09},~\cite[Figure 3]{ISY20}. These measurements correspond (see also~\cite{BV_EMS19,BMNV21} for details) to an $L^p$-based regularity exponent of $\sfrac{\zeta_p}{p} > \sfrac 13$. 
Similarly, for $p \gg 3$, experiments and simulations show that the inertial range structure function exponents  $\zeta_p$ saturate (meaning, remain bounded) as $p\to \infty$. See e.g.~\cite[Figure 8.8]{Frisch95},~\cite[Figure 6]{ISY20}, and the discussion in~\cite[Section D]{ISY20}. These measurements correspond to an $L^p$-based regularity exponent of $\sfrac{\zeta_p}{p} \to 0$ as $p\to \infty$, suggesting that the fully developed isotropic turbulent solutions observed in experiments do not retain any positive H\"older exponent, even though weak solutions of Euler may possess H\"older regularity. The culprit is {\em intermittency}.

The main goal of this paper is to give a {\em new proof of the flexible side of the $L^3$-based Onsager conjecture}. We construct weak solutions to the 3D Euler equation in the regularity class $C^0_t ( H^{\sfrac 12 -} \cap  L^{\infty-}) \subset C^0_t B^{\sfrac 13-}_{3,\infty}$, which are non-conservative and exhibit the inertial-range intermittency observed in turbulent flows.

\begin{theorem}[\bf Main result]
\label{thm:main}
Fix $\beta\in (0,\sfrac{1}{2})$. For any divergence-free $v_{\rm start}, v_{\rm end} \in L^2(\T^3)$ which have zero mean, any $T>0$ and any $\epsilon >0$, there exists a weak solution $v\in C([0,T];H^\beta(\mathbb{T}^3) \cap L^{\frac{2 - 2\beta}{1-2\beta}}(\T^3))$ to the 3D Euler equations~\eqref{eq:Euler}  such that $\norm{v(\cdot,0) - v_{\rm start}}_{L^2(\T^3)} \leq \epsilon$ and $\norm{v(\cdot,T)-v_{\rm end}}_{L^2(\T^3)} \leq \epsilon$.
\end{theorem}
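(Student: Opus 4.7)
The plan is to construct $v$ as the limit of a sequence $(v_q, R_q)$ produced by an iterative convex integration scheme applied to the Euler--Reynolds system
\[
\partial_t v_q + \div(v_q\otimes v_q) + \nabla p_q = \div R_q, \qquad \div v_q = 0.
\]
At stage $q$ I would fix a frequency ladder $\lambda_q = a^{b^q}$ and amplitude $\delta_q^{1/2} \approx \lambda_q^{-\beta}$, so that formally $\|v_q\|_{H^\beta}$ stays uniformly bounded. The construction begins at $q=0$ with a smooth divergence-free field interpolating between mollifications of $v_{\rm start}$ and $v_{\rm end}$, together with a smooth stress $R_0$ compactly supported in $(0,T)$; because every subsequent perturbation will be made to vanish on small neighborhoods of $t=0$ and $t=T$, the endpoint matching up to $\eps$ will reduce to choosing the mollification parameter small enough and propagating this temporal support through the iteration.

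Next I would design the perturbation $w_{q+1}$ as a superposition of intermittent pipe flows $\WW_\xi$ oscillating at frequency $\lambda_{q+1}$ and concentrated on thin tubes whose transverse dimensions are tuned so that the $L^2/L^{p^*}$ ratio saturates, where $p^* = \tfrac{2-2\beta}{1-2\beta}$ is the critical Lebesgue exponent dictated by the target regularity. The amplitudes are extracted from a partition-of-unity decomposition of $R_q$ into rank-one pieces $a^2_\xi \xi\otimes\xi$, and the principal part satisfies
\[
w^p_{q+1}\otimes w^p_{q+1} = \sum_\xi a_\xi^2\, \WW_\xi\otimes\WW_\xi = R_q + \text{high-frequency oscillations},
\]
with an inverse-divergence operator converting these oscillations into a new stress $R_{q+1}$. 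A crucial geometric ingredient is that the pipes indexed by different $\xi$ are placed so as to have pairwise disjoint support; this makes the bilinear cancellation above exact and keeps the $L^{p^*}$ norm of $w^p_{q+1}$ bounded independently of the number of pipes used.

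The distinctive feature relative to \cite{BMNV21} is the use of higher-order Reynolds stresses. When maximal intermittency is demanded, a single round of the inverse-divergence estimate loses too many derivatives to close the induction. The remedy is to work with a nested family of stresses indexed by sub-levels $n = \nn+1,\dots,\nmax$ and $p=1,\dots,\pmax$, each correction round producing an error that is pushed to a finer scale and attacked by its own pipe family $\WW\qnp$ adapted to the Littlewood--Paley band $\LPqnp$. The combinatorial placement alluded to in the abstract is the requirement that, not only within a single round but across all inner rounds $(n,p)$, the various pipe supports remain transverse and disjoint; only then can one simultaneously extract a clean bilinear identity at each inner stage and keep the $L^{p^*}$ norm of the total perturbation bounded.

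The main obstacle I foresee is the book-keeping of this hierarchy of parameters and the simultaneous verification of four inductive estimates: the $H^\beta$ smallness $\|w_{q+1}\|_{H^\beta}\les \delta_{q+1}^{1/2}$ needed for Cauchy convergence; a uniform $L^{p^*}$ bound, which at the Onsager threshold $\beta\uparrow 1/2$ forces the intermittency dimension to saturate and leaves essentially no slack; the decay $\|R_q\|_{L^1}\les \delta_{q+2}$ together with matching decay for all inner-round stresses, absorbing transport-derivative losses, mollification errors and the iterated inverse-divergence losses across all $(n,p)$; and the preservation of temporal cutoffs guaranteeing the endpoint conditions in the limit. Once these close, the limit $v$ lies in $C^0_t(H^\beta \cap L^{p^*})$ and is a distributional solution of \eqref{eq:Euler} because $R_q\to 0$; the endpoint matching within tolerance $\eps$ is inherited from the construction at $q=0$, and the embedding of $H^\beta\cap L^{p^*}$ into $B^s_{3,\infty}$ for $s$ approaching $\sfrac 13$ as $\beta\uparrow \sfrac 12$ recovers the Onsager threshold as announced.
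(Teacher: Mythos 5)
The proposal is aligned with the paper at the level of the general convex-integration architecture: Euler--Reynolds iteration, intermittent pipe flows, higher-order stresses, and local Lagrangian coordinates. But it misses the two conceptual mechanisms that make the $L^{p^*}$-for-$p^*\to\infty$ regularity close, and it contains a misconception about why the pipes must dodge.

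\textbf{The one-half intermittency rule is the crux, and it is absent.} You write that the transverse dimensions are ``tuned so that the $L^2/L^{p^*}$ ratio saturates,'' but this hides the actual constraint. The paper shows, already from the Nash error $\div^{-1}(w_{q+1}\cdot\nabla v_q)$, that propagating the $L^1$ bound on the stress forces $r_q \lesssim \lambda_q^{1/2}\lambda_{q+1}^{-1/2}$ while simultaneously propagating the new $L^\infty$ bounds forces $r_q \gtrsim \lambda_q^{1/2}\lambda_{q+1}^{-1/2}$. There is exactly one admissible choice, and the entire construction (transport error, oscillation error, higher-order stresses) must be verified to respect this exact exponent. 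Your proposal makes it sound like one has slack to push intermittency up to saturation; the point of the paper is that there is none, and in particular the $L^\infty$ inductive hypotheses~\eqref{eq:inductive:assumption:uniform}, \eqref{eq:inductive:asumption:derivative:q:uniform}, \eqref{eq:Rq:inductive:uniform}, which you do not mention, are indispensable.

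\textbf{The anisotropic checkerboard cutoffs are missing, and your account of pipe dodging is off.} You say the pipes ``indexed by different $\xi$ are placed so as to have pairwise disjoint support,'' and that ``this makes the bilinear cancellation above exact.'' Neither statement is correct. The cancellation $\sum_\xi a_\xi^2\,\overline{\WW_\xi\otimes\WW_\xi} = R_q$ comes from the geometric identity~\eqref{e:split} and item~\eqref{item:pipe:4} of Proposition~\ref{prop:pipeconstruction}; it holds regardless of support overlap, and in fact pipes belonging to the same amplitude decomposition are not disjoint. What must be disjoint is the support of pipes coming from \emph{overlapping cutoff regions} (the Type 2 oscillation error). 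For this, the paper shows that if the amplitudes $a$ were supported on isotropic $\lambda_q^{-1}$-balls, pipe dodging would require the \emph{three-quarters} rule (as in~\cite[Proposition 4.8]{BMNV21}), which is incompatible with the one-half rule above. The resolution is to make the cutoff anisotropic --- length $\lambda_q^{-1}$ along $\xi$ and $(\lambda_{q+1}r_{q,n})^{-1}$ transverse to $\xi$ --- cf.\ Proposition~\ref{prop:disjoint:support:simple:alternate} and the checkerboard construction of Section~\ref{sec:cutoff:checkerboard:definitions}. Without this, the oscillation error does not close and the divergence correctors (Section~\ref{ss:stress:divergence:correctors}) become ill-behaved, since the naive spatial derivative of a transversely-shrunk cutoff costs $\lambda_{q+1}r_{q,n}\Gamma_{q+1}^{-1} \gg \lambda_{q,n}$; the paper handles this via the decomposition~\eqref{eq:nets:suck:2} along the good/bad directions.

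Two smaller points: the paper eliminates the inner index $p$ from the higher-order-stress hierarchy (you still use $(n,p)$-indexed bands, following~\cite{BMNV21}), controlling accumulated losses instead via the counting function $\Upsilon(n)$; and the $\epsilon$-approximate endpoint matching in the theorem statement does not require temporal cutoffs to propagate support --- it follows from summability of the increments and the initial choice of $v_0$ --- whereas temporal cutoffs would be needed only for the exact-matching variant in Remark~\ref{rem:flexibility}(a).
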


Note that as $\beta \to \sfrac 12^-$, the Sobolev regularity index of the weak solutions in Theorem~\ref{thm:main} converges to $\sfrac 12$, while the Lebesgue integrability index converges to $\infty$, explaining the notation $C^0_t ( H^{\sfrac 12 -} \cap  L^{\infty-})$. By interpolation, it follows that for any $s<\sfrac 13$, we may choose $\beta$ sufficiently close to $\sfrac 12$ to ensure that $v\in C^0_t B^s_{3,\infty}$, which is the Onsager regularity threshold (see Remark~\ref{rem:ellthree:regularity}).  

\begin{remark}[\bf \texorpdfstring{$\beta$}{Beta}-model]
We point out that the Sobolev regularity statement in Theorem~\ref{thm:main} corresponds exactly to the predictions of the phenomenological model of turbulence known as the {\em $\beta$-model}, which was introduced by Frisch, Sulem, and Nelkin \cite{FSN06}. Specifically, if one assumes that singularities concentrate on a $2$-dimensional set, then the $\beta$-model predicts that the second order structure function exponent is $1$, which corresponds to $H^{\sfrac 12}$ regularity. Simple heuristic computations indicate that the solutions constructed in this work do indeed concentrate on a two-dimensional set, which is also the prediction of~Iyer, Sreenivasan, and Yeung~\cite{ISY20}.
For a proof of energy conservation within the assumptions of the $\beta$-model, we refer to~\cite{DH21}.
\end{remark}

\begin{remark}[\bf Other flavors of flexibility]
\label{rem:flexibility}
As in~\cite{BMNV21}, we have chosen to state Theorem~\ref{thm:main} in a way that leaves the entire emphasis of the proof on the regularity of the weak solutions. In terms of flexibility,   Theorem~\ref{thm:main} gives the existence of infinitely many non-conservative weak solutions of 3D Euler in the stated regularity class, and moreover shows that the set of wild initial data is dense in the space of $L^2$ periodic functions of given mean. Using well-established techniques, see e.g.~\cite{BDLISZ15,Isett2018,BDLSV17} and~\cite[Remarks~1.2, 3.7, 3.8]{BMNV21}, we may alternatively establish other variants of flexibility for the 3D Euler equations~\eqref{eq:Euler}  in the regularity class $C^0_t ( H^{\sfrac 12 -} \cap  L^{\infty-})$:
 
\begin{enumerate}[(a)]
\item If the functions $v_{\rm start}$ and $v_{\rm end}$ in Theorem~\ref{thm:main} are any two $C^\infty$ smooth stationary solutions of the 3D Euler equations of zero mean, then we may take $\epsilon = 0$. Since the function $0$ and any smooth shear flow are stationary solutions to \eqref{eq:Euler}, this implies the existence of nonzero weak solutions which have compact support in time. Achieving this would require that we introduce a temporal cutoff in the convex integration scheme, which essentially ensures that on temporal regions where a stress is already vanishing identically, no further velocity increments need to be added; see~\cite[Equation (3.14)]{BMNV21}.

\item One may modify the proof of Theorem~\ref{thm:main} to show that any $C^\infty$  function $e \colon [0,T] \to (0,\infty)$ is the kinetic energy of a weak solution to the 3D Euler equations in the regularity class $C^0([0,T]; H^{\sfrac 12 -} \cap  L^{\infty-})$. This implies flexibility within the class of dissipative solutions. Achieving this result would require adding a few inductive assumptions in the convex integration scheme: we need to measure the distance between the energy resolved at every step $q \mapsto q+1$ in the convex integration scheme, and the desired energy profile, see e.g.~\cite{DeLellisSzekelyhidi13,BDLISZ15,DaneriSzekelyhidi17,BDLSV17}. In particular, the energy pumped into the system due to higher order stresses in every sub-step $n\mapsto n+1$ needs to be kept track of, and one also needs to keep track of the amount of energy pumped on the support of each cutoff function, as was done in~\cite{BV19} for stress cutoffs.  

\end{enumerate}
\end{remark}

\subsection{Minimally technical outline of the proof}
We now provide a sketch of the argument used to prove Theorem~\ref{thm:main}, in order to highlight the most important components.  We simultaneously aim to elide certain technical details, while emphasizing the aspects of our argument which are distinct from recent well-known convex integration arguments (see the comparisons in Subsections~\ref{sec:Holder} and~\ref{sec:intermittency}). Finally, while our proof relies fundamentally on the technology developed in~\cite{BMNV21}, it requires several new ingredients in order to ensure that the  solution $v$ belongs to $C^0_t L^{\infty -}$; see Subsection~\ref{sec:BMNV}.

As is customary in Nash-type convex integration schemes for the Euler equations (see e.g.~\cite{DLSZ17,BV_EMS19}), the solution $v$ of Theorem~\ref{thm:main} will be constructed as a limit when $q\to \infty$ of solutions $v_q:\T^3\times\R\rightarrow\R^3$ to the Euler-Reynolds system with a  traceless symmetric stress $\RR_q:\T^3\times\R\rightarrow M^{3\times 3}_{\textnormal{symm}}$
\begin{subequations}
\label{eq:Euler:Reynolds:again}
\begin{align}
\partial_t v_q + \div(v_q \otimes v_q) +\nabla p_q &= \div \RR_q \,,\\
\div v_q &= 0 \,.
\end{align}
\end{subequations}
The pressure $p_q$ is uniquely defined by solving $\Delta p_q = \div \div (\RR_q - v_q\otimes v_q)$, with $\int_{\T^3} p_q dx = 0$. 
The functions $v_q$ and $\RR_q$ are assumed to oscillate at frequencies no larger than $\lambda_q=a^{(b^q)}$, where $a=a(\beta)$ is sufficiently large and the superexponential growth rate $b=b(\beta)$ is slightly larger than $1$. Adhering to the convention that all norms are measured uniformly in time, e.g. $L^p$ refers to $C^0( [0,T]; L^p(\T^3))$, we posit that
\begin{equation}\label{eq:inductive:RRq:practice}
    \bigl\| \RR_q \bigr\|_{L^1} \leq \delta_{q+1} := \lambda_{q+1}^{-2\beta} \, , \qquad  
    \bigl\| \RR_q \bigr\|_{L^{\infty -}} \leq 1 \, .
\end{equation}
Thus $\RR_q \rightarrow 0$ in the $L^1$ topology and is nearly summable in both $W^{1-,1}$ and $L^{\infty-}$.  The quadratic nature of the nonlinearity then leads us to posit furthermore that velocity increments  $w_q=v_q-v_{q-1}$ satisfy
\begin{equation}\label{eq:velocity:inc:basic}
\left\| w_q \right\|_{L^2}\leq \delta_q^{\sfrac 12} \, , \qquad \left\| w_q \right\|_{L^{\infty -}} \leq 1 \, ,    
\end{equation}
so that $w_q\rightarrow 0$ in $L^2$ and is nearly summable in both $H^{\sfrac 12-}$ and $L^{\infty -}$.  The main inductive step on $q$ asserts the existence of a velocity increment $w_{q+1}$ and stress $\RR_{q+1}$ such that \eqref{eq:Euler:Reynolds:again}--\eqref{eq:velocity:inc:basic} hold with $q\mapsto q+1$. 

In order to construct non-conservative solutions with regularity above $\sfrac 13$ on the $L^2$-based Sobolev scale, the results of~\cite{ConstantinETiti94} dictate that the weak solution must be {\em intermittent} -- a term which is used here to mean that the weak solution contains spatial concentrations, not just oscillations, and so it has a different regularity index in an $L^2$-based scale, versus an $L^\infty$-based scale.  A first attempt to define the velocity increment $w_{q+1}$ would then be as a sum of products of the form 
\begin{equation}
    a\bigl(\RR_q, \nabla v_q\bigr) \WW_{q+1,r_q,\xi} \, ,
    \label{eq:amplitude:times:pipe}
\end{equation}
where $a(\RR_q, \nabla  v_q)$ oscillates at spatial frequency $\lambda_q$, and $\WW_{q+1,r_q,\xi}$ is a high-frequency \emph{intermittent pipe flow}. More specifically, $\WW_{q+1,r_q,\xi}$ is a shear flow supported in a thin tube of diameter $\lambda_{q+1}^{-1}$ around a line parallel to a unit vector $\xi$, which has been periodized to scale $(\lambda_{q+1}r_q)^{-1}$, see~Proposition~\ref{prop:pipeconstruction}. The parameter $0<r_q<1$ corresponds both to the measure of the support of the intermittent pipe flow (which is $r_q^2$) and the effective frequency support (which is $[\lambda_{q+1}r_q,\lambda_{q+1}]$). As such, it is clear that $r_q$ quantifies the intermittent nature of the velocity increment $w_{q+1}$. The low-frequency function $a(\RR_q, \nabla  v_q)$ \emph{localizes} the scheme in space and time by zooming down to the scale $\lambda_q^{-1}$, at which $\RR_q$ and $v_q$ may be treated as spatially homogeneous. The ``convex integration step" via which we construct $w_{q+1}$ then consists of essentially independent local iterative steps, which are predicated on the local size of $\RR_q$ and $\nabla v_q$.  The timescale of $a(\RR_q, \nabla  v_q)$ is inversely proportional to $\| \nabla v_q \|_{L^\infty(\supp a)}$. Chebyshev's inequality combined with the global inductive bounds on $\nabla v_q$ and $\RR_q$ then controls the sizes of the space-time sets on which each local iterative step takes place. 

At this stage in the argument, it is not clear how to choose the value of the intermittency parameter $r_q$. It turns out that in order to propagate both $H^{\sfrac 12-}$ and $L^{\infty-}$ bounds, there exists a unique optimal choice of $r_q$! To see this, we inspect the simplest error term in $\RR_{q+1}$, namely the \emph{Nash error} $\RR_{q+1}^{\textnormal{Nash}}$, defined by solving the equation
\begin{equation}\notag
    w_{q+1}\cdot \nabla v_q = \div \RR_{q+1}^{\textnormal{Nash}} \, .
\end{equation}
Using that $\|\WW_{q+1,r_q,\xi}\|_{L^p}\approx r_q^{\sfrac 2p-1}$ and $\| a(\RR_q,\nabla v_q) \|_{L^{2p}}\approx \| \RR_q \|_{L^p}^{\sfrac 12}$, and using the heuristic that the most costly part of $\nabla v_q$ is $\nabla w_q$, we find that
\begin{equation*}
    \left\| \div^{-1} \left( w_{q+1}\cdot\nabla v_q \right) \right\|_{L^1} \lesssim \lambda_{q+1}^{-1} \cdot  \delta_{q+1}^{\sfrac 12}r_q \cdot \delta_q^{\sfrac 12} \lambda_q   \, , \qquad \left\| \div^{-1} \left( w_{q+1}\cdot\nabla v_q \right) \right\|_{L^\infty} \lesssim \lambda_{q+1}^{-1} \cdot r_q^{-1} \cdot r_{q-1}^{-1}\lambda_q \, . 
\end{equation*}
As $b\to 1^+$ and $\beta\to \sfrac{1}{2}^-$, matching the $L^1$ bound for the stress requires $r_q \les \lambda_{q+1}^{-\sfrac 12}\lambda_q^{\sfrac 12}$, while matching the $L^\infty$ bound requires $r_q \gtrsim \lambda_{q+1}^{-\sfrac 12}\lambda_q^{\sfrac 12}$; see \eqref{eq:drq:identity} and \eqref{eq:transport:Loo:ineq} for precise inequalities.  Thus our choice of $r_q$ is completely constrained by the simplest error term in the scheme.  Since we shall always quantify $r_q$ in terms of powers of the quotient of $\lambda_{q+1}^{-1} \lambda_q$, we refer to this constraint on $r_q$ as the {\em one-half rule for intermittency}. Of course, we must then show that the {\em transport and oscillation errors}, defined by solving the equations
\begin{equation}\notag
    \div \RR_{q+1}^{\textnormal{trans}} = \bigl(\partial_t + v_q \cdot \nabla\bigr) w_{q+1} \, , \qquad \div \RR_{q+1}^{\textnormal{osc}} = \div \bigl( \RR_q + w_{q+1}\otimes w_{q+1} \bigr) \, ,
\end{equation}
also respect this one-half rule which is dictated by the Nash error. 

Let us first consider the transport error. Recall cf.~\cite{DLSZ17,BV_EMS19} that $C^\alpha$-based convex integration schemes for the Euler equations  essentially use global Lagrangian coordinate systems, predicated on global $L^\infty$ bounds for $\nabla v_q$. Instead, as in~\cite{BMNV21} we are forced to implement \emph{local} Lagrangian coordinate systems predicated on the \emph{local} $L^\infty$ bounds for $\nabla v_q$ which are available on the support of $a(\RR_q,\nabla v_q)$. Pre-composing the high-frequency pipe flow $\WW_{q+1,r_q,\xi}$ with the local  Lagrangian flow map then gives that the transport error obeys bounds identical to those of the Nash error.
Thus, we may expect the transport error to also respect the one-half intermittency rule. 

Unfortunately, the composition of $\WW_{q+1,r_q,\xi}$ with Lagrangian flow maps introduces an \emph{intersection problem} in the oscillation error: between neighboring cutoffs $a$ and $a'$, it may be the case that
$$  a\bigl(\RR_q,\nabla v_q\bigr) \WW_{q+1,r_q,\xi} \otimes a'\bigl(\RR_q,\nabla v_q\bigr) \WW'_{q+1,r_q,\xi'} \neq 0 \, .  $$
The main innovation in Isett's proof of the Onsager conjecture \cite{Isett2018} was a ``gluing technique," which solved the intersection problem, but which required global $L^\infty$ bounds on $\nabla v_q$.  The localized nature of our scheme, combined with the inherently nonlocal nature of the Euler equations, appears to preclude the usage of a gluing technique, in the spirit of~\cite{Isett2018,BDLSV17}. 

We instead solve the intersection problem directly, using the sparsity of the pipe flows. At an intuitive level, the empty space in between neighboring pipes provides enough space for us to place new sets of intermittent pipes, which do not intersect the already existing ones. We refer to this as {\em pipe dodging}.  However, if one conceptualizes the spatial support of each $a(\RR_q, \nabla v_q)$ as being a spheroid of diameter $\lambda_q^{-1}$, then the one-half rule for intermittency does not provide enough sparsity to solve this intersection problem. Indeed, \cite[Proposition~4.8]{BMNV21} shows that pipe dodging on the support of such an isotropic cutoff requires a {\em three-quarters intermittency  rule}.  We address this issue by \emph{anisotropically shrinking} the diameter of the support of each amplitude function $a$, in a $\xi$-dependent way.  Specifically, if $a(\RR_q,\nabla v_q,\xi)$ is to be multiplied by a pipe flow parallel to $\xi$ as in \eqref{eq:amplitude:times:pipe}, then we extend the  support of $a(\RR_q,\nabla v_q,\xi)$ to length $\lambda_q^{-1}$ in the direction parallel to $\xi$ and $(\lambda_{q+1}r_q)^{-1}$ in the direction perpendicular to $\xi$.  We use the phrase \emph{relative intermittency} to quantify the aspect ratio of the support of $a(\RR_q,\nabla v_q,\xi)$ and implement it technically via a set of {\em checkerboard cutoffs}.  We refer to Subsection~\ref{sec:cutoff:checkerboard:definitions} for a construction of these anisotropic checkerboard cutoffs, Proposition~\ref{prop:disjoint:support:simple:alternate} for a proof that the one-half rule provides sufficient relative intermittency to solve the intersection problem, and Subsection~\ref{ss:stress:oscillation:2} for the implementation of these two ingredients in the context of the oscillation error.

Since the characteristic length scale of $\RR_q$ and $\nabla v_q$ is $\lambda_q^{-1}$, one may expect that introducing the artificially smaller length scale $(\lambda_{q+1} r_q)^{-1} \ll \lambda_q^{-1}$ will produce unnaturally larger error terms.  The first place to look for such a bad error term would be in the oscillation error terms which are given by
\begin{equation}  
\div^{-1} \left( \nabla \left( a(\RR_q,\nabla v_q,\xi)^2\right) \Bigl(\Id - \dashint_{\T^3}\Bigr) (\WW_{q+1,r_q,\xi} \otimes \WW_{q+1,r_q,\xi})  \right)\, . 
\label{eq:osc:1:rough}
\end{equation}
The first key insight is that the differential operator in the above expression is not the full gradient: it is the directional derivative $\xi\cdot\nabla$, as $\WW_{q+1,r_q,\xi}$ is parallel to $\xi$.  Hence, from the perspective of this error term, the anisotropy of $a(\RR_q,\nabla v_q,\xi)$ is essentially free, since in the direction of $\xi$ the amplitude function $a$ only oscillates at frequency $\lambda_q$.

However, the error term in \eqref{eq:osc:1:rough} presents other difficulties.  Since this term inherits its minimum effective frequency of $\lambda_{q+1}r_q$ from the mean-free part of  $\WW_{q+1,r_q,\xi} \otimes \WW_{q+1,r_q,\xi}$, the leftover error terms in \eqref{eq:osc:1:rough}  live at frequencies of absolute value in the range $[\lambda_{q+1}r_q,\lambda_{q+1}]$.  Simple heuristic estimates indicate that the lowest frequency portion of these error terms is too large in $L^1$ to be absorbed into $\RR_{q+1}$, while the highest frequency portion is too large in $L^\infty$ to be absorbed into $\RR_{q+1}$.  Rectifying the first issue requires identifying \emph{higher order stresses} $\RR_{q,n}$ living at intermediate frequencies $\lambda\qn \in [\lambda_{q+1}r_q,\lambda_{q+1}]$, which are corrected by corresponding \emph{higher order perturbations} 
$$w_{q+1,n}=a \bigl(\RR\qn,\nabla v_q,\xi\bigr)\WW_{q+1,r_{q,n},\xi} \,. $$
The minimum frequency of the increment $w_{q+1,n}$, which equals $\lambda_{q+1}r_{q,n}$, is defined to converge to $\lambda_{q+1}$ as $n$ approaches its maximum value of $\nmax$. This allows the $L^1$ stress estimates to just barely close. Rectifying the second issue requires a non-trivial estimate (see Lemma~\ref{lem:tricky:tricky}) on the $L^\infty$ size of the frequency projected squared pipe flow $\mathbb{P}_{[\lambda_{q,n'-1},\lambda_{q,n'}]} (\WW_{q+1,r_{q,n},\xi}\otimes \WW_{q+1,r_{q,n},\xi})$. Somewhat amazingly, this estimate respects the one-half rule in the sense that the $L^{\infty-}$ size of the resulting stress is exactly $1$ if one chooses $r_q=\lambda_q^{\sfrac 12}\lambda_{q+1}^{-\sfrac 12}$.  We then correct the higher order stresses $\RR\qn$ according to a generalization of the one-half rule; in other words, the pipes $\WW_{q+1,r_{q,n},\xi}$ used to correct $\RR\qn$, which lives at frequency $\lambda\qn \in [\lambda_{q+1}r_q,\lambda_{q+1}]$, have minimum frequency $\lambda\qn^{\sfrac 12}\lambda_{q+1}^{\sfrac 12}$. This is again the minimum amount of intermittency needed to ensure higher order pipe dodging, i.e., that pipes from overlapping cutoff functions $a(\RR\qn,\nabla v_q,\xi)$ and $a'(\RR_{q,n'},\nabla v_q,\xi')$ do not intersect.  Thus, $w_{q+1}$ is finally constructed as a sum of terms of the form $a(\RR\qn, \nabla v_q,\xi) \,  \WW_{q+1,r_{q,n},\xi}$, 
which collectively obey the inductive bounds required of velocity increments, i.e.~\eqref{eq:velocity:inc:basic} with $\delta_{q+1}^{\sfrac 12}$ replaced by a suitable $\delta_{q+1,n}^{\sfrac 12}$, and they also produce a stress $\RR_{q+1}$ obeying \eqref{eq:inductive:RRq:practice}.

In summary, in the iteration scheme described above, the one-half rule presents the Goldilocks amount of intermittency needed to obtain both $H^{\sfrac 12 -}$ and $L^{\infty-}$ bounds on the velocity. At a technical level, it appears that the choice of parameters in this scheme is essentially fixed, by scaling: the Nash,  transport, and oscillation errors each impose exactly the same intermittency restrictions.
Implementing the above strategy rigorously is made cumbersome by the need to precisely localize all parts of the argument on suitable regions of space-time. This technically involved part of the proof is encoded in the design of cutoff functions, recursively for the velocities and iteratively for the stresses, which effectively play the role of a joint Eulerian-and-Lagrangian wavelet decomposition (see Section~\ref{sec:cutoff}). This localization machinery was previously developed in our earlier joint work with Buckmaster and Masmoudi~\cite{BMNV21}, and this part of the argument can be used essentially out of the box. In this manuscript, we therefore just focus on the novel aspects of the intermittent convex-integration/Nash iteration scheme.

\subsection{Comparison and contrast with existing works}
\label{sec:comparison}

\subsubsection{H\"older schemes}
\label{sec:Holder}

The techniques in the present work share a number of generic features with the construction of non-conservative solutions in $C^\alpha_{t,x}$ for $\alpha<\sfrac 13$ in~\cite{Isett2018}, and its subsequent optimizations in~\cite{BDLSV17} and~\cite{Isett17}.  Foremost among these features is the usage of some variation of Mikado/pipe flows rather than Beltrami flows, an idea originating in \cite{DaneriSzekelyhidi17} and used additionally in recent works such as \cite{DK2020,GK2021}.  In contrast with Beltrami flows, Mikado/pipe flows enjoy stability on the full Lipschitz timescale, which appears necessary in order to reach sharp thresholds in the Nash and transport errors in both the intermittent and homogeneous settings.  In addition, we require the propagation of material derivative estimates
for the stress, as in the schemes in \cite{BDLISZ15} and \cite{Novack2020}, since in the absence of a gluing step in the iteration, these bounds do not come for free.

Implementation of these basic concepts, however, looks very different in the intermittent setting than in the homogeneous setting. The most glaring difference is in the type of derivative estimates which must be propagated on both the stress $\RR_q$ and the gradient of velocity $\nabla v_q$.  Sharp  material and spatial derivative estimates for homogeneous schemes have typically only been required at very low order, perhaps one or two material derivatives and three spatial derivatives.  Furthermore, such estimates can always be made globally due to the homogeneous character of the stress and velocity.  In our setting, sharp material and spatial derivative estimates have to be made both locally, and to essentially infinite order.  As in \cite{BMNV21}, propagating these estimates requires a careful construction of stress and velocity cutoffs, and a localized inverse divergence operator for which derivative estimates on the input lead directly to corresponding estimates on the output. We expect these tools to be widely applicable in problems which require sharp derivative estimates.

Furthermore, there are significant differences between the present work and \cite{Isett2018,BDLSV17,Isett17} in the estimation of nonlinear error terms.  The most obvious difference is in the approaches used to solve the intersection problem.  The gluing technique in \cite{Isett2018,BDLSV17} relied on a dynamic argument, which used classical stability properties of the Euler equations to localize the stress $\RR_q$ to disjoint regions in time.  Conversely, the pipe dodging technique we use is predicated entirely on an optimal exploitation of the sparsity of intermittent pipe flows.  While we rely on sharp local information about the deformations of various pipes subjected to a background transport velocity, the fact that the transport velocity field solves the Euler-Reynolds system is irrelevant.  

Let us emphasize that our estimates on the error term in \eqref{eq:osc:1:rough}, which includes the nonlinear self-interaction of intermittent pipe flows, are sharp in both $L^1$ and $L^\infty$.  This is in contrast to the estimates on the corresponding nonlinear error term in the homogeneous setting, which are strong enough to allow for $C^{\sfrac 12}$ regularity, and thus offer no relevant regularity restriction. 

Finally, one may draw a connection between our result and the problem of approximating a short embedding of a Riemannian manifold by an isometric embedding, for which there is some evidence that $C^{1,\sfrac 12}$ demarcates the sharp threshold between rigidity and flexibility \cite{Gromov16, DLSZ17}. Our result realizes a version of this ``$\sfrac 12$ threshold", but in the appropriate topology for a different PDE with a quadratic nonlinearity. 

\subsubsection{Intermittent schemes}
\label{sec:intermittency}
The usage of intermittency in Nash-style iterative schemes originated in the work of Buckmaster and the second author \cite{BV19}. The fundamental idea is that an $L^2$-normalized function with significant spatial concentrations has an $L^1$ norm which is much smaller than its $L^2$ norm.  The estimation of linear error terms in $L^1$ then relies crucially on this property. Intermittent building blocks have been used to great effect in a number of works since; we refer for example to \cite{cheskidov2020sharp,cheskidov2020nonuniqueness,BrueColombo2021,BN21,BCDL2021,Luo18}, and to the reviews \cite{BV_EMS19,BV20} and the references cited therein. The intermittent building block utilized in this paper was first used by Modena and Sz\'ekelyhidi in \cite{ModenaSZ17}.  The estimation in $L^1$ of the Nash and transport errors in our scheme relies in part on the intermittency of the pipe flows, and in this limited sense, intermittency serves the same purpose in our context as in other works.  

Sparsity factors into our arguments in several other important ways which however distinguish the present work from other intermittent schemes.  We first point to the oscillation error, in which the sparsity of pipe flows contributes favorably by providing the needed degrees of freedom to solve the intersection problem.  Secondly, and decidedly less favorably, intermittency serves to complicate any local or global $L^\infty$ estimates, especially for the Lagrangian transport maps.  As our previous joint work with Buckmaster and Masmoudi \cite{BMNV21} was the first example of a convex integration scheme which combined intermittency with transport maps, other intermittent convex integration schemes have generally not faced this difficulty; the only other exception to this is joint work of the first author with Beekie for the $\alpha$-Euler equations \cite{BN21}.  Third, the higher order stresses are a feature only shared with \cite{BMNV21}, although it is conceivable that higher-order stresses could sharpen the regularity estimates obtained in other intermittent Nash-style schemes.  Finally, both the sharp $L^{\infty-}$ and $H^{\sfrac 12-}$ require an almost geometric growth of frequencies, which again is a feature only shared with \cite{BMNV21} in the class of intermittent schemes, to the best of the authors' knowledge.

\subsubsection{The \texorpdfstring{$H^{\sfrac 12-}$}{H12} scheme in~\cite{BMNV21}}
\label{sec:BMNV}

More specific comparisons and differences may be identified between the present work and our previous paper joint with Buckmaster, Masmoudi~\cite{BMNV21}.  At a conceptual level, the most significant differences are the new constraints on the amount of intermittency which may be utilized. As described earlier, simultaneously reaching the $H^{\sfrac 12-}$ and $L^{\infty-}$ thresholds in the Nash and transport error terms requires a specific choice of the intermittency parameter $r_q$.  In \cite{BMNV21}, only a lower bound on intermittency was required since the final solution also enjoyed $H^{\sfrac 12-}$ regularity, but Lebesgue integrability only close to $L^4$. Similarly, enacting pipe dodging in the nonlinear error terms in \cite{BMNV21} required only a minimum amount of intermittency, and the self-interaction term in \eqref{eq:osc:1:rough} was essentially impervious to the choice of $r_q$.  In the current argument, the use of anistropy in the pipe dodging scheme improves the approach taken in \cite{BMNV21}, while simultaneously preserving the size of the error term \eqref{eq:osc:1:rough}.  Furthermore, analysis of this error term utilizes the fact that intermittency may not affect the $L^p$ norms of a function itself, but rather the $L^p$ norms of its derivatives. The simplest example of the latter concept is a one-dimensional shock, which is fully intermittent in the sense that it lies in  $B^{\sfrac 1p}_{p,\infty}$ for $1\leq p \leq \infty$, but has $L^p$ norms of order $1$ for all $p$.

At the technical level, there are a few noteworthy similarities and differences between \cite{BMNV21} and the present work.  First, we are able to reuse the framework of the mollification argument, the appendix full of technical lemmas on sums and iterates of operators, and the structure of the inverse divergence operator. The generalizations required for each of these tools are simple, and merely require replacing every instance of $L^1$ or $L^2$ norm in the previous arguments with an $L^\infty$ norm. Furthermore, all estimates related to flow maps (cf. Corollary~\ref{cor:deformation}) and deformations of intermittent pipe flows (cf. Lemma~\ref{lem:axis:control}) have been taken verbatim from \cite{BMNV21}. 
Next, the $L^2$ inductive estimates on velocity increments and the $L^1$ inductive estimates on the stress $\RR_q$ match those from \cite{BMNV21}.  However, we now propagate sharp $L^\infty$ bounds on both velocity increments and stresses, cf. \eqref{eq:inductive:assumption:uniform}, \eqref{eq:inductive:asumption:derivative:q:uniform}, and \eqref{eq:Rq:inductive:uniform}. Small power losses in frequency in these estimates are encoded using the parameter $\badshaq$.   We are   able to reuse the construction of the velocity and stress cutoff functions from \cite{BMNV21}.  However, while the old estimates deferred to the Sobolev inequality to achieve lossy uniform bounds (see the bounds for the parameters $\imax$ in \cite[Lemma~6.14]{BMNV21} and $\jmax$ in \cite[Lemma~6.35]{BMNV21}), the current argument appeals to the new, sharp, $L^\infty$ bounds which have been inductively propagated (see Lemma~\ref{lem:maximal:i} and Lemma~\ref{lem:maximal:j}).  

The identification of the error terms in Subsection~\ref{ss:stress:error:identification} is very similar to that in \cite{BMNV21}, save for two differences.  The first difference is the elimination of the unnecessary parameter $p$ from the scheme, which was used to minimize the accumulation of small power losses in frequency which arise from the repeated cycles of constructing higher order stresses and velocity increments. We instead minimize such losses by ensuring that an error term which arrives at the higher order stress $\RR_{q,n}$ has endured at most $\approx\log_2 n$ previous cycles of higher order stresses and increments.  This requires a choice of $\nmax$ which is large enough to guarantee that $\frac{\log_2 \nmax}{\nmax} \ll 1 $, cf.~\eqref{eq:nmax:cond:all}. Secondly, the identification and estimation of the divergence corrector errors are no longer trivial, due to the anistropy of the checkerboard cutoff functions.  However, we may again use that the anistropy of a cutoff function is fundamentally related to the direction of the axis of the associated pipe to ensure that divergence corrector bounds are satisfactory; see Subsection~\ref{ss:stress:divergence:correctors} for details.

\subsection*{Acknowledgements}
MN thanks Hyunju Kwon and Vikram Giri for many stimulating discussions during the special year on the $h$-principle at the Institute for Advanced Study. MN was supported by the NSF under Grant DMS-1926686 while a member at the IAS. VV is grateful to Tristan Buckmaster for infinitely many (for all practical purposes) discussions about convex integration, and for teaching him everything he knows about this subject. VV was supported in part by the NSF CAREER Grant DMS-1911413.
We thank Theodore Drivas for references and many discussions about structure function exponents in turbulent flows.

\section{Inductive  bounds and the proof of the main theorem}\label{section:inductive:assumptions}
 
\subsection{General notations}
\label{sec:inductive:general:notation}

Throughout the paper, we shall say that {\em the velocity field $v$ solves the Euler-Reynolds system with stress $\RR$, if $(v,\RR)$ solve}
\begin{equation*}
\partial_t v + \div(v\otimes v) + \nabla p = \div \RR,
\qquad \div v = 0\,,
\end{equation*}
for a uniquely defined zero mean pressure $p$.
As already discussed in \eqref{eq:Euler:Reynolds:again}, for $q\geq 0$ we consider a velocity field $v_q$ which solves the Euler-Reynolds system with stress $\RR_q$.

In order to circumvent the derivative-loss problem~\cite{DeLellisSzekelyhidi13}, we use the space-time mollification operator $\Pqxt$ defined in \eqref{mollifier:operators} below, to smoothen $v_q$ and define:
\begin{align}\label{vlq}
    \vlq := \Pqxt v_q 
    \,,
\end{align}
for all $q\geq 0$.
In particular, cf.~\eqref{mollifier:operators} we have that spatial mollification is performed at scale $\tilde \lambda_q^{-1}$ (which is just slightly smaller than $\lambda_q^{-1}$), while temporal mollification is done at scale $\tilde \tau_{q-1}$ (which is much smaller than $\tau_{q-1}$). Next, for all $q \geq 1$, define
\begin{align}\label{eq:cutoffs:wu}
    w_{q}:=v_{q}-\vlqminus, \qquad u_q:= \vlq - \vlqminus.
\end{align}
For consistency of notation, define $w_0 = v_0$ and $u_0 = v_{\ell_0}$.
Note that
\begin{align}
u_q = \Pqxt w_q   + (\Pqxt \vlqminus - \vlqminus)
\label{inductive:velocity:frequency}
\end{align}
so that we may morally think that $u_q = w_q + $ a small error term. We use the following notation for the material derivative corresponding to the vector field $\vlq$:
\begin{align}\label{eq:cutoffs:dtq}
    D_{t,q} := \partial_t + \vlq \cdot \nabla
    \, .
\end{align}
With this notation, we have that
\begin{align}\label{eq:cutoffs:dtqdtq-1}
  D_{t,q} = D_{t,q-1} +  u_q \cdot \nabla =: D_{t,q-1} + D_q \, .
\end{align}

\begin{remark}[\bf Geometric upper bounds with two bases]
For all $n \geq 0$ we define 
\begin{align}
\MM{n,N_*,\lambda,\Lambda} := \lambda^{\min\{n,N_*\}} \Lambda^{\max\{n-N_*,0\}}    \,.
\notag
\end{align}
This notation has the following consequence, which is used throughout the paper:  if $1 \leq \lambda \leq \Lambda$,  then 
\begin{align}
\MM{a,N_*,\lambda,\Lambda} \MM{b,N_*,\lambda,\Lambda} \leq \MM{a+b,N_*,\lambda,\Lambda}.
\notag
\end{align}
When either $a$ or $b$ are larger than $N_*$ the above inequality creates a loss; for $a+b\leq N_*$, it is an equality.
\end{remark}

\begin{remark}[\bf All norms are uniform in time]
Throughout this section, and the remainder of the paper, we shall use the notation $\norm{f}_{L^p}$ to denote $\norm{f}_{L^\infty_t (L^p(\T^3))}$. That is, all $L^p$ norms stand for {\em $L^p$ norms in  space, uniformly in time}. Similarly, when we wish to emphasize a set dependence of an $L^p$ norm, we write $\norm{f}_{L^p(\Omega)}$, for some space-time set $\Omega \subset \R \times \T^3$, to stand for $\norm{{\bf 1}_{\Omega}\; f}_{L^\infty_t (L^p(\T^3))}$.
\end{remark}

\subsection{Inductive estimates} 
\label{sec:inductive:estimates}

The proof is based on propagating estimates for solutions $(v_q,\RR_q)$ of the Euler-Reynolds system~\eqref{eq:Euler:Reynolds:again}, inductively for $q\geq 0$. In order to state these bounds, we first need to fix a number of parameters in terms of which these inductive estimates are stated. We start by picking a regularity exponent $\beta \in [\sfrac 13, \sfrac 12)$, else the theorem is known cf.~\cite{Isett2018,BDLSV17}, and a super-exponential rate parameter $b \in (1,\sfrac 32)$ such that $2\beta b < 1$. In terms of this choice of $\beta$ and $b$, a number of additional parameters ($\nmax, \ldots \Nfin$) are fixed, whose precise definition is summarized for convenience in items~\eqref{item:nmax:pmax:DEF}--\eqref{item:Nfin:DEF} of Section~\ref{sec:parameters:DEF}. Note that at this point the parameter $a_*(\beta,b)$ from item~\eqref{item:astar:DEF} in Section~\ref{sec:parameters:DEF} is not yet fixed. With this choice, we then introduce the fundamental $q$-dependent frequency and amplitude parameters from Section~\ref{ss:q:dependent:parameters}. We state here for convenience the main $q$-dependent parameters defined in  \eqref{def:lambda:q:actual}, \eqref{def:delta:q:actual}, \eqref{eq:Theta:q+1:actual}, \eqref{def:Gamma:q:actual}, and \eqref{def:tau:q:actual}:
\begin{subequations}
\label{eq:Gamma:q+1:def:*}
\begin{align}
\lambda_q 
&= 2^{ \lceil {(b^q) \log_2 a} \rceil} \approx \lambda_{q-1}^b \,, \\
\delta_q 
&=  \lambda_1^{\beta(b+1)} \lambda_q^{-2\beta} \,, \label{eq:delta:early}  \\
\tau_q^{-1} 
&= \delta_q^{\sfrac 12} \lambda_q \Gamma_{q+1}^{\cstar+11} \,, 
\\
\Theta_{q+1}
&= \lambda_{q+1} \lambda_q^{-1} \approx \lambda_q^{(b-1)} \,,\\
\Gamma_{q+1} &=  \Theta_{q+1}^{\eps_\Gamma}  \approx \lambda_q^{(b-1)\eps_\Gamma}\,, \label{eq:theta:def}
\end{align}
\end{subequations}
where the constant $\cstar$ is defined by \eqref{eq:cstar:DEF}, and $\eps_\Gamma$ is chosen as in~\eqref{eq:eps:Gamma:DEF}. 
Next, we define the $n$-dependent frequency, intermittency, and amplitude parameters
\begin{subequations}
\label{eq:n:params:heuristic}
\begin{align}
\lambda_{q,n} 
&\approx \begin{cases}
\lambda_q \Gamma_{q+1}^6 , & n=0 \\
\lambda_{q}^{\frac 12 -\frac{n}{2(\nmax+1)}} \lambda_{q+1}^{\frac 12 + \frac{n}{2(\nmax+1)} }, & 1\leq n \leq \nmax 
\end{cases} \, ,
\label{eq:def:lambda:rq}
\\  
r_{q+1,n} 
&\approx \lambda\qn^{\sfrac 12} \lambda_{q+1}^{-\sfrac 12} \Gamma_{q+1}^{-2} \,,  
\label{eq:def:lambda:r:q+1:n}\\
\qquad
\delta_{q+1,n}
&= \begin{cases}
\delta_{q+1} \Gamma_{q}^\shaq , & n=0 \\
\displaystyle\delta_{q+1,0} \lambda_{q}^{\sfrac 12}  \lambda_{q+1}^{-\sfrac 12} \Gamma_{q+1}^{14+\CLebesgue}, & n=1 \\
\displaystyle \delta_{q+1,0} \lambda_{q}  \lambda_{q,n-1}^{-1} \Gamma_{q+1}^{13+\CLebesgue} \Bigl(\Theta_{q+1}^{\frac{1}{2(\nmax+1)}} \Gamma_{q+1}^{9+\CLebesgue} \Bigr)^{\Upsilon(n)}, & 2\leq n \leq \nmax 
\end{cases} \, . \label{eq:new:delta:def}
\end{align}
\end{subequations}
In the above display, $\delta_{q+1,n}$ is defined to account for small losses (the quantity in parentheses) raised to a power $\Upsilon(n)$ (which is bounded independently of $q$, cf. \eqref{eq:upsawhat} and \eqref{eq:upsa:bound}).  Therefore one may adhere to the heuristic that $\delta_{q+1,n}$ is roughly speaking equal to $ \delta_{q+1}\lambda_q \lambda\qn^{-1}$. We refer also to \eqref{eq:lambda:q:n:def} and \eqref{eq:rqn:perp:definition}, where the precise meaning of $\approx$ in \eqref{eq:def:lambda:rq}--\eqref{eq:def:lambda:r:q+1:n} is given. 

\begin{remark}[\bf Usage of the symbols \texorpdfstring{$\approx$}{approx},  \texorpdfstring{$\lesssim$}{lesssim}, and choice of \texorpdfstring{$a$}{a}]
The $\approx$ symbols in \eqref{eq:Gamma:q+1:def:*} and \eqref{eq:n:params:heuristic} indicate that the left side of the $\approx$ symbol lies between two (universal) constant multiples of the right side, see e.g.~\eqref{eq:lambda:q:to:q+1}. Throughout the paper we make frequent use of the symbol $\lesssim$.  Any implicit constants indicated by $\lesssim$ are only allowed to depend on the parameters defined in Section~\ref{sec:parameters:DEF}, items \eqref{item:beta:DEF}--\eqref{item:Nfin:DEF}. The implicit constants in $\les$ are always independent of the parameters $a$ and $q$, appearing in \eqref{eq:delta:early}. This allows us at the end of the proof, cf.~item~\eqref{item:astar:DEF} in Section~\ref{sec:parameters:DEF} to choose $a_* (\beta,b)$ to be sufficiently large so that for all $a \geq a_*(\beta,b)$ and all $q\geq 0$, the parameter $\Gamma_{q+1}$ appearing in \eqref{eq:theta:def} is larger than all the implicit constants in $\les$ symbols encountered throughout the paper. That is, upon choosing $a_*$ sufficiently large, any inequality of the type $A \les B$ which appears in this manuscript, may be rewritten as $A \leq \Gamma_{q+1} B$, for any $q\geq 0$.
\end{remark}

In order to state the inductive assumptions we use four large integers, defined precisely in Section~\ref{sec:parameters:DEF}. For the moment we simply note that these fixed parameters are independent of $q$ and satisfy the ordering 
\begin{align}
1 \ll \NcutSmall \ll   \Nindvt \ll \Nindv \ll \Nfin 
\,.
\notag
\end{align} 
The precise definitions and the meaning of the $\ll$ symbol in are given in \eqref{eq:Ncut:DEF}, \eqref{eq:Nind:t:DEF}, \eqref{eq:Nind:v:DEF}, and \eqref{eq:Nfin:DEF}.

\subsubsection{Primary inductive assumption for velocity increments}
\label{sec:inductive:primary:velocity}
We make $L^2$ and $L^\infty$ inductive assumptions for $u_{q'}=v_{\ell_{q'}}-v_{\ell_{q'-1}}$ at levels $q'$ strictly below $q$. For all $0 \leq q' \leq q-1$ we assume that
\begin{subequations}
\label{eq:inductive:assumption:uq:all}
\begin{align}
\norm{\psi_{i,q'-1} D^{n} D^m_{t,q'-1} u_{q'}}_{L^2}
&\leq \delta_{q'}^{\sfrac 12} \MM{n,2 \Nindv,\lambda_{q'},\tilde{\lambda}_{q'}} \MM{m, \Nindvt ,\Gamma_{q'}^i \tau_{q'-1}^{-1}, \tilde{\tau}_{q'-1}^{-1}}
\label{eq:inductive:assumption:derivative}
\\
\norm{D^n D_{t,q'-1}^m u_{q'}}_{L^\infty(\supp \psi_{i,q'-1})}
&\leq \Gamma_{q'}^\badshaq \Theta_{q'}^{\sfrac 12}  \MM{n,2 \Nindv,\lambda_{q'},\tilde{\lambda}_{q'}} 
\MM{m,\Nindvt,  \Gamma_{q'}^{i+1} \tau_{q'-1}^{-1}, \tilde{\tau}_{q'-1}^{-1}}   \label{eq:inductive:assumption:uniform}
\end{align}
\end{subequations}
holds for all $0 \leq n+m\leq \Nfin$.

At level $q$, we assume that the velocity increment $w_q$ satisfies corresponding $L^2$ and $L^\infty$ bounds 
\begin{subequations}
\label{eq:inductive:assumption:wq:all}
\begin{align}
\norm{\psi_{i,q-1} D^{n} D^m_{t,q-1} w_{q}}_{L^2}
&\leq 
\Gamma_q^{-1} \delta_{q}^{\sfrac 12} \lambda_{q}^{n} 
\MM{m,\Nindvt, \Gamma_{q}^{i-1} \tau_{q-1}^{-1}, \Gamma_q^{-1} \tilde{\tau}_{q-1}^{-1}} 
\label{eq:inductive:assumption:derivative:q}
\\
\norm{D^n D_{t,q-1}^m w_q}_{L^\infty(\supp \psi_{i,q-1}) }
&\leq \Gamma_q^{\badshaq-1} \Theta_q^{\sfrac 12}  \lambda_q^n 
\MM{m,\Nindvt, \Gamma_{q}^{i} \tau_{q-1}^{-1}, \Gamma_q^{-1} \tilde{\tau}_{q-1}^{-1}} 
\label{eq:inductive:asumption:derivative:q:uniform}
\end{align}
\end{subequations}
for all $0 \leq n,m\leq 7\NindLarge$. 

\subsubsection{Inductive assumptions for the stress}
\label{sec:inductive:primary:stress}
For the Reynolds stress $\RR_q$, we make $L^1$ and $L^\infty$ inductive assumptions
\begin{subequations}
\label{eq:Rq:inductive:assumption:all}
\begin{align}
\bigl\|\psi_{i,q-1} D^n D_{t,q-1}^m \mathring R_{q}\bigr\|_{L^1} 
&\leq  \Gamma_q^\shaq  \delta_{q+1} \lambda_{q}^n \MM{m,\Nindt, \Gamma_{q}^{i+1} \tau_{q-1}^{-1}, \Gamma_q^{-1} \tilde \tau_{q-1}^{-1} }
\label{eq:Rq:inductive:assumption}
\\
\bigl\| D^n D_{t,q-1}^m \mathring{R}_q \bigr\|_{L^\infty(\supp \psi_{i,q-1})}
&\leq \Gamma_q^{\badshaq}  \lambda_q^n
\MM{m,\Nindvt,  \Gamma_{q}^{i+2} \tau_{q-1}^{-1}, \Gamma_q^{-1} \tilde{\tau}_{q-1}^{-1}} \label{eq:Rq:inductive:uniform}
\end{align}
\end{subequations}
for all $0\leq n,m\leq 3 \NindLarge$. 

\subsubsection{Inductive assumptions for the previous generation velocity cutoff functions}
\label{sec:cutoff:inductive}
More assumptions are needed in relation to the previous velocity perturbations and old cutoff functions. First, we assume that the velocity cutoff functions form a partition of unity for $q'\leq q-1$:
\begin{align}\label{eq:inductive:partition}
    \sum_{i\geq 0} \psi_{i,q'}^2 \equiv 1 \, , \qquad \mbox{and} \qquad \psi_{i,q'}\psi_{i',q'}=0 \quad \textnormal{for}\quad|i-i'| \geq 2 \, .
\end{align}
Second, we assume that there exists an $\imax = \imax(q') > 0$, which is bounded uniformly in $q'$ as
\begin{align}
\imax(q') \leq 1 + \badshaq + \frac{\sfrac 12 (b-1) + \beta b}{\eps_\Gamma (b-1) b}
\,,
\label{eq:imax:upper:lower}
\end{align}
such that for all $q'\leq q-1$, 
\begin{align}
\psi_{i,q'} &\equiv 0 \quad \mbox{for all} \quad i > \imax(q')\,,
\qquad \mbox{and} \qquad
\Gamma_{q'+1}^{\imax(q')} \leq \Gamma_{q'+1}^{\badshaq} \Theta_{q'}^{\sfrac 12} \delta_{q'}^{-\sfrac 12}
\label{eq:imax:old}\,.
\end{align}

\begin{remark}[{\bf Products of non-commuting operators}]
\label{eq:products}
The fact that space derivatives $D$ (we do not dinstinguish between $\partial_{x_1}, \partial_{x_2}, \partial_{x_3}$, but rather denote them all with $D$) and time derivatives $\partial_t$ do not commute with the  material derivative $D_{t,q}$ (see~\eqref{eq:cutoffs:dtq}), or with the directional derivative $D_q$ (see~\eqref{eq:cutoffs:dtqdtq-1}), requires that we inductively propagate {\rm mixed derivative} estimates for the velocity cutoff functions. An example of such a {\em mixed derivative} is  $D^{\alpha_1} D_{t,q}^{\beta_1} \ldots D^{\alpha_k} D_{t,q}^{\beta_k}$ for some multi-indices $\aaa = (\alpha_1,\ldots, \alpha_k)$ and $\bbb = (\beta_1, \ldots, \beta_k)$ where $\aaa, \bbb\in \mathbb{N}_0^k$. Throughout the paper, we will accordingly abbreviate these mixed derivative operators as
\begin{align}
  D^{\aaa} D_{t,q}^{\bbb} := \prod_{\ell=1}^k D^{\alpha_\ell} D_{t,q}^{\beta_\ell}
  \,, \qquad \mbox{and} \qquad 
  D^{\aaa} D_{q}^{\bbb} := \prod_{\ell=1}^k D^{\alpha_\ell} D_{q}^{\beta_\ell}
  \,,
  \label{eq:mixed:derivatives}
\end{align}
whenever $\aaa,\bbb \in \mathbb{N}_0^k$, and $q\geq 0$.
\end{remark}

For  all $0 \leq q' \leq q-1$ and $0 \leq i \leq \imax$ we assume the following pointwise derivative bounds for the cutoff functions $\psi_{i,q'}$. For mixed space and material derivatives (recall the notation from \eqref{eq:cutoffs:dtq}, \eqref{eq:mixed:derivatives}) we assume 
\begin{align}
&\frac{|D^{\aaa} D_{t,q'-1}^{\bbb} \psi_{i,q'}|}{\psi_{i,q'}^{1- (|\aaa|+|\bbb|)/\Nfin}}  
\les \MM{|\aaa|,\NindLarge , \Gamma_{q'}  \lambda_{q'}, \Gamma_{q'} \tilde \lambda_{q'} }
\MM{|\bbb|,\NindSmall - \NcutSmall,  \Gamma_{q'+1}^{i+3}  \tau_{q'-1}^{-1}, \Gamma_{q'+1}^{-1}  \tilde \tau_{q'}^{-1}}
\label{eq:sharp:Dt:psi:i:q:old}
\end{align}
for  $k \geq 0$ and $\aaa,\bbb \in {\mathbb N}_0^k$ with $|\aaa|+|\bbb| \leq \Nfin$. 
Lastly, we consider mixtures of space, material, and directional derivatives (recall the notation from \eqref{eq:cutoffs:dtqdtq-1},  \eqref{eq:mixed:derivatives}).
With $M, \aaa, \bbb$ and $k$ as above, and with $N\geq 0$, we assume 
\begin{align}
&\frac{| D^N D_{q'}^{\aaa} D_{t,q'-1}^{\bbb} \psi_{i,q'} |}{\psi_{i,q'}^{1- (N+|\aaa|+|\bbb|)/\Nfin}}  \notag\\
&\qquad  \les  \MM{N,\NindLarge,  \Gamma_{q'}  \lambda_{q'},  \Gamma_{q'} \tilde \lambda_{q'}  } 
(\Gamma_{q'+1}^{i-\cstar} \tau_{q'}^{-1})^{|\aaa|} 
\MM{\bbb,\Nindt-\NcutSmall,  \Gamma_{q'+1}^{i+3}  \tau_{q'-1}^{-1}, \Gamma_{q'+1}^{-1}  \tilde \tau_{q'}^{-1}}
\label{eq:sharp:Dt:psi:i:q:mixed:old}
\end{align}
for all $ N+ |\aaa| + |\bbb| \leq \Nfin$.

In addition to the above pointwise estimates for the cutoff functions $\psi_{i,q'}$, we also assume that we have a good $L^1$ control. More precisely, we postulate  that
\begin{align}
\norm{\psi_{i,q'}}_{L^1}  \lesssim  \Gamma_{q'+1}^{-2i+\CLebesgue} \qquad \mbox{where} \qquad \CLebesgue = \frac{4+b}{b-1}  
\label{eq:psi:i:q:support:old}
\end{align}
holds for $0\leq q' \leq q-1$ and all $0\leq i \leq \imax(q')$.

\subsubsection{Secondary inductive assumptions for velocities}
\label{sec:inductive:secondary:velocity}

Next, for $0\leq q'\leq q-1$, $0 \leq i \leq \imax$, $k\geq 1$,  and $\aaa, \bbb \in \N_0^k$, we assume that the following mixed space-and-material derivative bounds hold
\begin{align}
&\bigl\| D^{\aaa} D_{t,q'-1}^{\bbb} u_{q'} \bigr\|_{L^\infty(\supp \psi_{i,q'})} 
\notag\\
&\qquad \les 
(\Gamma_{q'+1}^{i+1} \delta_{q'}^{\sfrac 12}) \MM{|\aaa|,2\Nindv,\Gamma_{q'} \lambda_{q'},\tilde \lambda_{q'}} 
\MM{|\bbb|,\Nindt, \Gamma_{q'+1}^{i+3}  \tau_{q'-1}^{-1},  \Gamma_{q'+1}^{-1} \tilde \tau_{q'}^{-1}}
\label{eq:nasty:D:wq:old}
\end{align}
for $|\aaa|+|\bbb|  \leq \sfrac{3 \Nfin}{2} +1 $, 
\begin{align}
&\bigl\| D^{\aaa} D_{t,q'}^{\bbb}   D \vlqprime \bigr\|_{L^\infty(\supp \psi_{i,q'})} 
\notag\\
&\qquad \les
(\Gamma_{q'+1}^{i+1} \delta_{q'}^{\sfrac 12}\tilde \lambda_{q'}  ) 
\MM{|\aaa| , 2\Nindv,\Gamma_{q'} \lambda_{q'},\tilde \lambda_{q'}} \MM{|\bbb|,\Nindt,\Gamma_{q'+1}^{i-\cstar} \tau_{q'}^{-1},   \Gamma_{q'+1}^{-1} \tilde \tau_{q'}^{-1}}
\label{eq:nasty:D:vq:old}
\end{align}
for $|\aaa|+|\bbb|  \leq \sfrac{3 \Nfin}{2}$, 
and
\begin{align}
&\bigl\| D^{\aaa} D_{t,q'}^{\bbb} \vlqprime \bigr\|_{L^\infty(\supp \psi_{i,q'})} 
\notag\\
&\qquad \les
(\Gamma_{q'+1}^{i+1} \delta_{q'}^{\sfrac 12}\lambda_{q'}^2 ) 
\MM{|\aaa|,2 \Nindv,\Gamma_{q'} \lambda_{q'},\tilde \lambda_{q'}} 
\MM{|\bbb|,\Nindt,\Gamma_{q'+1}^{i-\cstar} \tau_{q'}^{-1},   \Gamma_{q'+1}^{-1} \tilde \tau_{q'}^{-1}}
\label{eq:bob:Dq':old}
\end{align}
for $|\aaa|+|\bbb|  \leq \sfrac{3 \Nfin}{2}  +1$. Lastly, for $N\geq 0$ and $N+ |\aaa|+|\bbb| \leq  \sfrac{3 \Nfin}{2} +1$, we postulate that mixed space-material-directional derivatives satisfy 
\begin{subequations}
\label{eq:nasty:Dt:wq:WEAK:all}
\begin{align}
&\bigl\| D^N  D_{q'}^{\aaa} D_{t,q'-1}^{\bbb}   u_{q'} \bigr\|_{L^\infty(\supp \psi_{i,q'})} \notag\\
&\qquad \les 
(\Gamma_{q'+1}^{i+1} \delta_{q'}^{\sfrac 12})^{|\aaa|+1} 
\MM{N+|\aaa|,2\Nindv,\Gamma_{q'} \lambda_{q'},\tilde \lambda_{q'}}   
\MM{|\bbb|,\Nindt,  \Gamma_{q'+1}^{i+3}  \tau_{q'-1}^{-1},  \Gamma_{q'+1}^{-1} \tilde \tau_{q'}^{-1}}   \label{eq:nasty:Dt:wq:old} \\
&\qquad \les 
(\Gamma_{q'+1}^{i+1} \delta_{q'}^{\sfrac 12}) \MM{N,2\Nindv,\Gamma_{q'} \lambda_{q'},\tilde \lambda_{q'}} (\Gamma_{q'+1}^{i-\cstar}  \tau_{q'}^{-1})^{|\aaa|}  
\MM{|\bbb|,\Nindt, \Gamma_{q'+1}^{i+3}  \tau_{q'-1}^{-1},  \Gamma_{q'+1}^{-1} \tilde \tau_{q'}^{-1}}
.
\label{eq:nasty:Dt:wq:WEAK:old}
\end{align}
\end{subequations}

\begin{remark}
\label{rem:D:t:q':orangutan}
As shown in \cite[Remark~3.4]{BMNV21}, \eqref{eq:nasty:Dt:wq:WEAK:old} automatically implies the bounds
\begin{align}
&\norm{D^N  D_{t,q'}^{M}  u_{q'} }_{L^\infty(\supp \psi_{i,q'})} 
\les 
(\Gamma_{q'+1}^{i+1} \delta_{q'}^{\sfrac 12}) \MM{N,2\Nindv,\Gamma_{q'} \lambda_{q'},\tilde \lambda_{q'}}
\MM{M,\Nindt, \Gamma_{q'+1}^{i-\cstar}  \tau_{q'}^{-1},  \Gamma_{q'+1}^{-1} \tilde \tau_{q'}^{-1}}
\label{eq:nasty:Dt:uq:orangutan}
\end{align}
for all $N+M \leq \sfrac{3\Nfin}{2}+1$, while in a similar way, \eqref{eq:sharp:Dt:psi:i:q:mixed:old} implies that
\begin{align}
\frac{| D^N  D_{t,q'}^{M}  \psi_{i,q'}|}{\psi_{i,q'}^{1- (N+M)/\Nfin}}  \les  \MM{N,\NindLarge,  \Gamma_{q'}  \lambda_{q'},  \Gamma_{q'} \tilde \lambda_{q'}  } 
\MM{M,\Nindt-\NcutSmall, \Gamma_{q'+1}^{i-\cstar} \tau_{q'}^{-1}, \Gamma_{q'+1}^{-1}  \tilde \tau_{q'}^{-1}}
\label{eq:nasty:Dt:psi:i:q:orangutan}
\end{align}
for all $N+M \leq \Nfin$.
\end{remark}

\subsection{Main inductive proposition}
The main inductive proposition, which propagates the inductive estimates in  Section~\ref{sec:inductive:estimates} from step $q$ to step $q+1$, is as follows.

\begin{proposition}\label{p:main}
Fix $\beta \in [\sfrac 13,\sfrac 12)$ and choose $b\in (1,\sfrac{1}{2\beta})$. Solely in terms of $\beta$ and $b$, define the parameters $\nmax$, $\CLebesgue$, $\mathsf{C_R}$, $\cstar$, $\eps_\Gamma$, $\badshaq$, $\alpha$, $\NcutSmall$, $\NcutLarge$, $\Nindt$, $\Nindv$, $\Ndec$, $\dpot$, and $\Nfin$, by the definitions in Section~\ref{sec:parameters:DEF}, items \eqref{item:beta:DEF}--\eqref{item:Nfin:DEF}.
Then, there exists a sufficiently large $a_* = a_*(\beta,b) \geq 1$, such that for any $a\geq a_*$, the following statement holds for any $q\geq 0$.
Given a velocity field $v_q$ which solves the Euler-Reynolds system with stress $\RR_q$, define $v_{\ell_q}, w_q$, and $u_q$ via \eqref{vlq}--\eqref{eq:cutoffs:wu}. Assume that $\{ u_{q'} \}_{q'=0}^{q-1}$ satisfies \eqref{eq:inductive:assumption:uq:all}, $w_q$ obeys \eqref{eq:inductive:assumption:wq:all}, $\RR_q$ satisfies \eqref{eq:Rq:inductive:assumption:all}, and that for every $q'\leq q-1$ there exists a partition of unity $\{ \psi_{i,q'}\}_{i\geq 0}$ such that properties \eqref{eq:inductive:partition}--\eqref{eq:imax:old} and estimates \eqref{eq:sharp:Dt:psi:i:q:old}--\eqref{eq:nasty:Dt:wq:WEAK:all} hold. Then, there exists a velocity field $v_{q+1}$, a stress $\RR_{q+1}$, and a partition of unity $\{\psi_{i,q}\}_{q\geq 0}$, such that $v_{q+1}$ solves the Euler-Reynolds system with stress $\RR_{q+1}$, $u_q$ satisfies \eqref{eq:inductive:assumption:uq:all} for $q'\mapsto q$, $w_{q+1}$ obeys \eqref{eq:inductive:assumption:wq:all} for $q\mapsto q+1$, $\RR_{q+1}$ satisfies \eqref{eq:Rq:inductive:assumption:all} for $q\mapsto q+1$, and the $\psi_{i,q}$ are such that \eqref{eq:inductive:partition}--\eqref{eq:nasty:Dt:wq:WEAK:all} hold when $q' \mapsto q$.
\end{proposition}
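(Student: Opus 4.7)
The proof will proceed by a multi-step construction at level $q+1$, carried out in the following order. First, I would fix the mollified velocity $\vlq = \Pqxt v_q$ and, using the inductive bounds \eqref{eq:inductive:assumption:wq:all} and \eqref{eq:Rq:inductive:assumption:all}, construct the new velocity cutoff partition $\{\psi_{i,q}\}_{i\geq 0}$ together with a second tier of stress cutoff functions (keyed to the size of $\RR_q$ at various amplitudes) and a third tier of anisotropic checkerboard cutoffs. The velocity and stress cutoffs should be reused essentially verbatim from \cite{BMNV21}, with the new feature that the maximal cutoff index $\imax(q)$ is now controlled by the sharp $L^\infty$ bound \eqref{eq:Rq:inductive:uniform} rather than by a lossy Sobolev argument, which yields the required bound \eqref{eq:imax:upper:lower}. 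The checkerboard cutoffs are the main new ingredient: for each direction $\xi\in \Xi$ used below, I would partition $\T^3$ into an anisotropic grid of cells of length $\lambda_q^{-1}$ parallel to $\xi$ and length $(\lambda_{q+1}r_{q,n})^{-1}$ in the directions perpendicular to $\xi$, smoothly cut off, and transported by the local Lagrangian flow $\Phi_{(i,k)}$ of $\vlq$.

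Second, I would construct the perturbation iteratively across the index $n = 0, 1, \ldots, \nmax$. Setting $\RR_{q,0}=\RR_q$, at each stage $n$ I define
\[
w_{q+1,n} \;=\; \sum_{i,j,k,\xi} a_{(\xi)}\!\bigl(\RR_{q,n}, \nabla \vlq, \psi_{i,q},\ldots\bigr)\; \bigl(\WW_{q+1,r_{q,n},\xi}\circ \Phi_{(i,k)}\bigr),
\]
where $a_{(\xi)}$ contains the checkerboard cutoff aligned with $\xi$, and the amplitude is chosen by a standard geometric lemma so that $a_{(\xi)}^2$ cancels $\RR_{q,n}$ to leading order. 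The intermittency parameter $r_{q,n}$ is fixed by the one-half rule $r_{q,n}\approx \lambda_{q,n}^{1/2}\lambda_{q+1}^{-1/2}$ described after \eqref{eq:def:lambda:r:q+1:n}. The corresponding Nash, transport, and oscillation errors must then be decomposed. The low-frequency part of the oscillation error at stage $n$ defines $\RR_{q,n+1}$, which is then corrected at the next stage; the high-frequency part and all divergence-corrector errors are sent into $\RR_{q+1}$. After $\nmax$ stages, I would set $w_{q+1}=\sum_{n=0}^{\nmax} w_{q+1,n}$ and define $v_{q+1}=v_q+w_{q+1}$.

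Third, I would estimate each error contribution in both $L^1$ and $L^\infty$ simultaneously on the support of each cutoff $\psi_{i,q}\psi_{j,q-1}$. For the Nash and transport errors, one integrates by parts once against the inverse divergence, picks up $\lambda_{q+1}^{-1}$, and uses the sharp pointwise derivative estimates for $\nabla \vlq$ and the local Lipschitz scale $\tau_q^{-1}$. For the oscillation errors, the key inputs are Proposition~\ref{prop:disjoint:support:simple:alternate} (\emph{pipe dodging}: overlapping anisotropic checkerboard cutoffs produce disjoint pipe supports) and Lemma~\ref{lem:tricky:tricky} (the sharp $L^\infty$ Littlewood--Paley bound on $\WW\otimes\WW$). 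Propagating sharp material derivative estimates requires the localized inverse divergence operator and the technical machinery on iterated operators from the appendix of \cite{BMNV21}, now adapted to $L^\infty$. One verifies that all error terms obey \eqref{eq:Rq:inductive:assumption} with an extra factor of $\delta_{q+2}/\delta_{q+1}=\lambda_{q+1}^{-2\beta(b-1)}$ in the $L^1$ bound, which is absorbed into $\Gamma_{q+1}^{\shaq}$, and obey \eqref{eq:Rq:inductive:uniform} with an $L^\infty$ loss encoded by $\Gamma_{q+1}^{\badshaq}$; the gain factor $\delta_{q+2}/\delta_{q+1}$ is what allows iteration across $n$, while the losses $\Gamma^{\badshaq}$ remain harmless because $\badshaq\,\eps_\Gamma(b-1)\ll 1$.

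The main obstacle, and the central novelty of this manuscript beyond \cite{BMNV21}, lies in the simultaneous verification of both \eqref{eq:Rq:inductive:assumption} and \eqref{eq:Rq:inductive:uniform} for the oscillation error \eqref{eq:osc:1:rough} at every stage $n$. The lower bound on $r_{q,n}$ forced by pipe dodging on isotropic cutoffs would violate the $L^\infty$ bound; the anisotropic checkerboard construction must precisely compensate, and one has to verify that the $\xi$-aligned geometry of the cutoff absorbs the shrunken transverse length scale for free because only the directional derivative $\xi\cdot\nabla$ acts nontrivially on $\WW_{q+1,r_{q,n},\xi}$. A second delicate point is choosing the sequence $\lambda_{q,n}$ so that each higher-order stress sees only $O(\log n)$ prior cycles of error propagation (avoiding the parameter $p$ used in \cite{BMNV21}) so that power losses $\Gamma^{\CLebesgue}$ remain summable; this in turn forces $\nmax$ to satisfy $\log_2\nmax/\nmax \ll 1$ as in the conditions on $\nmax$ cited after \eqref{eq:new:delta:def}. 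Once these two points are settled, the remaining propagation of \eqref{eq:sharp:Dt:psi:i:q:old}--\eqref{eq:nasty:Dt:wq:WEAK:all} and of the new partition-of-unity properties \eqref{eq:inductive:partition}--\eqref{eq:psi:i:q:support:old} follows by the cutoff-construction arguments of \cite{BMNV21} applied with the sharper $L^\infty$ inputs now available.
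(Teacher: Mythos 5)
Your proposal follows the paper's structure essentially step-for-step: mollify to get $\vlq$ and $\RR_{\ell_q}$; build velocity, stress, and anisotropic checkerboard cutoffs at level $q$, with the new $L^\infty$ bound replacing the lossy Sobolev argument in the control of $\imax$; run a sub-induction across $n = 0,\ldots,\nmax$ to produce higher-order stresses and perturbations; estimate Nash, transport, and oscillation errors via the localized inverse divergence, Lemma~\ref{lem:tricky:tricky} for the $L^\infty$ control of the frequency-projected squared pipe density, and Proposition~\ref{prop:disjoint:support:simple:alternate} for pipe dodging on anisotropic cells. This is the approach of the paper.

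One piece of the description is too coarse and would, if implemented literally, fail. You write that ``the low-frequency part of the oscillation error at stage $n$ defines $\RR_{q,n+1}$, which is then corrected at the next stage,'' i.e. a linear chain $\RR_{q,0}\to\RR_{q,1}\to\cdots\to\RR_{q,\nmax}$. In that architecture, $\RR_{q,n}$ accumulates $n$ generations of $\Gamma_{q+1}^{9+\CLebesgue}$-type power losses, so by stage $n\approx\nmax$ the losses scale like $\Gamma_{q+1}^{O(\nmax)}$, which is not summable. The mechanism the paper actually uses, encoded in \eqref{eq:rofn}, \eqref{eq:upsawhat}, and \eqref{eq:Hqnpnn:definition}, is to distribute the Littlewood--Paley-decomposed oscillation error $\HH_{q,n}^{\nn}$ produced at stage $\nn$ only across those $n$ with $n>r(\nn)=\frac{\nmax+\nn}{2}$; since the admissible range moves up by more than half the remaining gap at each generation, the maximal chain length is $\Upsilon(\nmax)\leq 2+\lceil\log_2\nmax\rceil$, cf.~\eqref{eq:upsa:bound}. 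You do cite the $\log_2\nmax/\nmax\ll1$ constraint, so you know the conclusion, but your sequential description does not produce it; the lower cutoff $r(\nn)$ on the target index $n$ is essential. Two smaller slips: $\RR_{q,0}=\RR_{\ell_q}$ (the mollified stress, not $\RR_q$), and $v_{q+1}=\vlq+w_{q+1}$ (not $v_q+w_{q+1}$); the latter is forced by the definition $w_{q+1}=v_{q+1}-\vlq$ in \eqref{eq:cutoffs:wu}.
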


The proof of Proposition~\ref{p:main} takes up the bulk of the remaining part of the paper, cf.~Sections~\ref{sec:building:blocks}--\ref{s:stress:estimates}. Here we just give a road map of which proofs are contained in what sections:
\begin{itemize}
    \item In Section~\ref{sec:building:blocks}, we recall the construction and important properties of intermittent pipe flows from \cite{BMNV21}.  We however prove a new estimate for squared pipe densities in Lemma~\ref{lem:tricky:tricky}, and an updated version of the pipe dodging strategy in Proposition~\ref{prop:disjoint:support:simple:alternate}.
    \item In Section~\ref{sec:mollification:stuff} we mollify the Euler-Reynolds system at level $q$, define $v_{\ell_q}$, and show that $u_q$ satisfies \eqref{eq:inductive:assumption:uq:all} with $q'$ replaced by $q$. This argument requires few changes when compared to~\cite[Section 5]{BMNV21}.
    \item In Section~\ref{sec:cutoff} we construct the velocity cutoffs at level $q$, namely $\{\psi_{i,q}\}_{i\geq 0}$, and show that the inductive assumptions \eqref{eq:inductive:partition}--\eqref{eq:nasty:Dt:wq:WEAK:all} hold for $q'$ replaced by $q$. This part of the argument is technically quite involved, but we take advantage of the fact that it is identical to the proof in~\cite[Section 6]{BMNV21}, except for the new bound for $\imax$. The new bound on $\imax$ is the only place where the propagated $L^\infty$ bounds are required, and we give the full details of this part of the argument in Lemma~\ref{lem:maximal:i}.
    \item In Section~\ref{sec:statements}, we present Proposition~\ref{p:main:inductive:q}, which gives the existence of a pair $(w_{q+1},\RR_{q+1})$ which satisfies the remaining inductive bounds, namely \eqref{eq:inductive:assumption:wq:all} and \eqref{eq:Rq:inductive:assumption:all}, with $q$ replaced by $q+1$.
    \item In Section~\ref{s:stress:estimates} we give the proof of Proposition~\ref{p:main:inductive:q}, thereby concluding the proof of Proposition~\ref{p:main}, once $a$ is taken sufficiently large with respect to $(\beta,b)$, as in Section~\ref{sec:parameters:DEF}, item~\eqref{item:astar:DEF}. This is the main part of the proof, and it is substantially different from the corresponding argument in~\cite[Section 8]{BMNV21}.
\end{itemize}

\subsection{Proof of the main theorem}
We conclude this section by showing how Proposition~\ref{p:main} implies Theorem~\ref{thm:main}, upon potentially choosing $a \geq a_*$ even larger, depending also on the functions $v_{\rm start}, v_{\rm end}$, and on the $T,\epsilon>0$ from the statement of Theorem~\ref{thm:main}. This argument is nearly identical to that in~\cite[Section 3.4]{BMNV21}. 
We also give here the proof that the constructed solutions lie in $C^0_t B^{\sfrac 13-}_{3,\infty}$, cf.~Remark~\ref{rem:ellthree:regularity} below.

First, let $a_*=a_*(\beta,b)$ be as in Proposition~\ref{p:main}, which holds for any $a\geq a_*$. Second, construct the pair $(v_0,\RR_0)$, which solve the Euler-Reynolds system, exactly as in~\cite[Equations (3.30)--(3.31)]{BMNV21}. In essence, $v_0$ is a temporal interpolation between mollified versions of $v_{\rm start}$ and $v_{\rm end}$, and $\RR_0$ is the resulting error made in the Euler equations~\eqref{eq:Euler} .  Third, we define
$v_{-1}= v_{\ell_{-1}}= u_{-1} \equiv 0$, and we let $\psi_{0,-1} \equiv 1$ and $\psi_{i,-1} \equiv 0$ for all $i\geq 1$. Lastly, it is convenient to denote $\tau_{-1}^{-1} = \Gamma_0 := \lambda_0^{\eps_\Gamma}$, $\tilde \tau_{-1}^{-1}= \Gamma_0^3 = \lambda_0^{3\eps_\Gamma}$, and $\Theta_0 = \lambda_0$. 

With these choices, we have already verified in~\cite[Section 3.4]{BMNV21} that if $a\geq a_*$ is taken to be sufficiently large, depending also on $v_{\rm start},v_{\rm end}, T, \epsilon$, then $u_{-1}\equiv 0$ satisfies \eqref{eq:inductive:assumption:derivative} (and trivially also \eqref{eq:inductive:assumption:uniform}), $w_0=v_0$ obeys \eqref{eq:inductive:assumption:derivative:q}, $\RR_0$ satisfies \eqref{eq:Rq:inductive:assumption}, and we have that  \eqref{eq:inductive:partition}--\eqref{eq:nasty:Dt:wq:WEAK:all} hold trivially. Thus it remains to show that $(v_0,\RR_0)$ obey the uniform estimates \eqref{eq:inductive:assumption:uniform} and \eqref{eq:Rq:inductive:uniform}, which were not present in~\cite{BMNV21}. But these estimates are easy to satisfy since both $\Gamma_0^{\badshaq-1} \Theta_0^{\sfrac 12} \geq a^{\sfrac 12}$, and $\Gamma_0^{\badshaq} = a^{\frac{1}{(b-1)(\nmax+1)}}$, may be made arbitrarily large, upon choosing $a$ to be sufficiently large. 

As such, the inductive estimates \eqref{eq:inductive:assumption:uq:all}--\eqref{eq:nasty:Dt:wq:WEAK:all} hold for the base case of the induction $q=0$, and we may inductively apply Proposition~\ref{p:main} for all $q\geq 1$, to produce a sequence of velocity fields $v_q$ which solve the Euler-Reynolds system with stress $\RR_q$, and a sequence of velocity cutoff functions $\psi_{i,q}$, such that the bounds \eqref{eq:inductive:assumption:uq:all}--\eqref{eq:nasty:Dt:wq:WEAK:all} hold for all $q\geq 0$. Then, by construction, we have that for any $\beta'<\beta$, the series $\sum_{q\geq0} (v_{q+1}-v_q) = \sum_{q\geq0} (w_{q+1} + (v_{\ell_q}-v_q))$ is absolutely summable in $C^0_t H^{\beta'}$, justifying the definition of the limiting velocity field $v = v_0 + \sum_{q\geq 0} (v_{q+1}-v_q) \in C^0_t H^{\beta'}$. As $\RR_q \to 0$ in $C^0_t L^1$, the function $v$ is a weak solution of the 3D Euler system \eqref{eq:Euler} . Moreover, as was shown in~\cite[Section 3.4]{BMNV21}, the $L^2$ distance between $v(\cdot,0)$ and $v_{\rm start}$, respectively $v(\cdot,T)$ and $v_{\rm end}$, is less than $\epsilon$.

In order to conclude the proof of the theorem, we only need to show that $v\in C^0_t L^{\frac{2-2\beta}{1-2\beta}}$. For this purpose, note that we have the identity $v= \lim_{q\to \infty} v_q = \sum_{q\geq 0} u_q$. Using the bounds on $u_q$ provided by \eqref{eq:inductive:assumption:uq:all} we may sum over $0\leq i \leq \imax(q)$ using the partition of unity property \eqref{eq:inductive:partition}, and use the definitions \eqref{eq:nmax:cond:3} and \eqref{eq:badshaq:def}, to arrive at
\begin{align*}
    \norm{u_q}_{L^2}  \leq C \delta_q^{\sfrac 12}= \lambda_1^{\frac{\beta(b+1)}{2}} \lambda_q^{-\beta}\,,
    \qquad \mbox{and} \qquad
    \norm{u_q}_{L^\infty}  \leq C \Gamma_q^{\badshaq} \Theta_q^{\frac 12} \approx C \lambda_q^{\frac{b-1}{b} (\frac 12 + \eps_\Gamma \badshaq)} 
    \leq
    C \lambda_q^{\frac{b-1}{b} (\frac 12 + \frac{b-1}{4b})} \,,
    \end{align*}
where the constant $C$ depends only on our upper bound for $\imax(q)$, and so only on $\beta$ and $b$ through \eqref{eq:imax:upper:lower}. Using Lebesgue interpolation, and the above established bounds, for $p\in [2,\infty)$ we obtain
\begin{equation}
    \left\| u_q \right\|_{L^p} 
    \leq 
    \left\| u_q \right\|_{L^2}^{\frac{2}{p}} 
    \left\| u_q \right\|_{L^\infty}^{1-\frac 2p} 
    \leq 
    C 
    \lambda_1^{\frac{\beta(b+1)}{p}}  
    \lambda_q^{-\frac{2\beta}{p} + (1-\frac 2p)\frac{b-1}{b} (\frac 12 + \frac{b-1}{4b})}  \, , \label{bounding:L3}
\end{equation}
where the constant $C\geq 1$ depends only on $\beta$ and $b$. Thus, in order to ensure the absolute summability of $\{u_q\}_{q\geq0}$ in $L^p$, the exponent of $\lambda_q$ appearing on the right side of \eqref{bounding:L3} must be strictly negative. After a short computation, we deduce that we must have 
\begin{align}
    p < p_*(\beta,b) =: 2 + \frac{8 \beta b}{(b-1)(b(b-1)+2)} 
    \,.
    \label{eq:endpoint:p}
\end{align}
At last, we may verify that $\frac{2-2\beta}{1-2\beta} < p_*(\beta,b)$ is equivalent to $\frac{(b-1)(b(b-1)+2)}{4b} < 1- 2\beta$, which in turn is satisfied whenever $2\beta b< 1$  and $\beta \in [\sfrac 13, \sfrac 12)$. 
This concludes the proof of Theorem~\ref{thm:main}.

\begin{remark}[\bf \texorpdfstring{$L^3$}{L3}-based Besov regularity]\label{rem:ellthree:regularity}
From \eqref{bounding:L3} and \eqref{eq:endpoint:p}, we deduce that for $p \in [2,p_*(\beta,b))$, and in particular for $p = \frac{2-2\beta}{1-2\beta}$, we have that $\norm{u_q}_{L^p} \leq C \lambda_1^{\frac{1}{p}} \lambda_q^{-\eta(p,\beta,b)}$, for some  $\eta(p,\beta,b) > 0$. We therefore have that
\begin{align*}
    \left\| u_q \right\|_{B^{0}_{p,\infty}} \leq C \lambda_1^{\frac 1p} \lambda_q^{-\eta(p,\beta,b)} \, ,
    \qquad \mbox{and} \qquad 
    \left\| u_q \right\|_{B^{\beta}_{2,\infty}} \leq 
    \left\| u_q \right\|_{B^{\beta}_{2,2}} 
    \leq C \left\| u_q \right\|_{H^\beta} 
    \leq C \, ,
\end{align*}
where the constant $C>0$ is independent of $q$. 
By interpolation, we have that whenever $s< \beta \theta$, where $\theta = \theta(p) \in (0,1)$ is defined by
solving
\begin{equation}\label{eq:solving:for:theta}
    \frac{1}{3} = \frac{1-\theta}{p} + \frac{\theta}{2} \qquad \implies \qquad \theta = \frac{2p-6}{3p-6} \, ,  
\end{equation}
we have the bound
\begin{align*}
    \left\| u_q \right\|_{B^s_{3,\infty}} 
    \leq C \left\| u_q \right\|_{B^0_{p,\infty}}^{1-\theta} \left\| u_q \right\|_{B^\beta_{2,\infty}}^\theta 
    \leq C \lambda_1^{\frac 1p} \lambda_q^{-(1-\theta) \eta(p,\beta,b)}  \,  ,
\end{align*}
for a constant $C>0$ which is independent of $q\geq 0$.
Taking $p = \frac{2-2\beta}{1-2\beta}$, we obtain from \eqref{eq:solving:for:theta} that $\theta = \frac{4\beta -1}{3\beta}$, and so for any $s < \frac{4\beta -1}{3}$, we have that the series $v = \sum_{q\geq 0} u_q$ is absolutely summable in $C^0_t B^{s}_{3,\infty}$, showing that $v \in C^0_t B^{s}_{3,\infty}$. It is clear that by letting $\beta$ be arbitrarily close to $\sfrac 12$, the value of $s$ may be taken arbitrarily close to $\sfrac 13$, the Onsager threshold.
\end{remark}

\section{Building blocks and pipe dodging}
\label{sec:building:blocks}

\newcommand{\bard}{{\Bar{d}}}

The main results in this section are Proposition~\ref{prop:pipeconstruction} (which describes the intermittent pipe flows and their properties), Lemma~\ref{lem:tricky:tricky} (which gives a sharp bound for the $L^\infty$ norm of frequency truncated square of pipe densities), and Proposition~\ref{prop:disjoint:support:simple:alternate} (which gives the proof of the one-half relative intermittency rule for pipe dodging). 
 First, we recall from~\cite[Lemma~2.4]{DaneriSzekelyhidi17} a version of the following geometric decomposition:
\begin{proposition}[\bf Choosing Vectors for the Axes]\label{p:split}
Let $B_{\sfrac 12}(\Id)$ denote the ball of symmetric $3\times 3$ matrices, centered at $\Id$, of radius $\sfrac 12$. Then, there exists a finite subset $\Xi \subset\mathbb{S}^{2} \cap {\mathbb{Q}}^{3}$, and smooth positive functions $\gamma_{\xi}\colon C^{\infty}\left(B_{\sfrac 12} (\Id)\right) \to \R$ for every $\xi \in \Xi$, such that for each $R\in B_{\sfrac 12} (\Id)$, we have the identity
\begin{equation}\label{e:split}
R = \sum_{\xi\in\Xi} \left(\gamma_{\xi}(R)\right)^2  \xi\otimes \xi.
\end{equation}
Additionally, for every $\xi$ in $\Xi$, there exist vectors $\xi',\xi'' \in \mathbb{S}^2 \cap \mathbb{Q}^3$ such that $\{\xi,\xi',\xi''\}$ is an orthonormal basis of $\R^3$, and there exists a least positive integer $n_\ast$ such that $n_\ast \xi, n_\ast \xi', n_\ast \xi'' \in \mathbb{Z}^3$, for every $\xi\in \Xi$.
\end{proposition}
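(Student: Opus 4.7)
My plan is to establish the decomposition in three stages: first a real-coefficient version near $\Id$ via linear algebra and the implicit function theorem, then rationalization of the vectors using density, and finally extension to the full ball $B_{\sfrac 12}(\Id)$ together with the integer scaling. For Stage 1, I observe that symmetric $3 \times 3$ matrices form a $6$-dimensional space and that the tensors $\xi \otimes \xi$ with $\xi \in \S^2$ positively span the positive semi-definite cone, in whose interior $\Id = \sum_{k=1}^3 e_k \otimes e_k$ lies. I pick a finite collection $\Lambda \subset \S^2$ with $\#\Lambda \geq 7$, containing the standard basis and additional directions so that the linear map $(c_\xi)_{\xi\in\Lambda} \mapsto \sum_{\xi \in \Lambda} c_\xi\, \xi \otimes \xi$ is surjective onto symmetric matrices and admits a strictly positive solution at $R = \Id$. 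Any smooth right inverse of this linear map yields smooth functions $c_\xi(R)$ satisfying $R = \sum c_\xi(R)\, \xi \otimes \xi$ on an open neighborhood of $\Id$; continuity then forces $c_\xi(R) > 0$ on a (possibly smaller) neighborhood, so $\gamma_\xi := \sqrt{c_\xi}$ is smooth and strictly positive there.

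For Stage 2, I replace the real $\xi \in \Lambda$ by nearby rational unit vectors. Density of $\S^2 \cap \Q^3$ in $\S^2$ together with density of the rational orthogonal group $O_3(\Q)$ in $O(3)$ --- obtained, for instance, from the Cayley parametrization applied to rational skew-symmetric matrices --- produces a dense subset of $\S^2$ consisting of rational unit vectors each admitting a rational orthonormal complement. Approximating each $\xi \in \Lambda$ by a vector in this subset, the openness of the two requirements (surjectivity of the linear map, and strict positivity of $c_\xi$) ensures that the decomposition and positivity persist. For Stage 3, I use compactness to cover $\overline{B_{\sfrac 12}(\Id)}$ by finitely many open sets $U_\alpha$ on each of which Stages 1--2 provide a local rational decomposition via a set $\Xi_\alpha$ and coefficient functions $c_{\xi,\alpha}$. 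Defining $\Xi$ as the union of all local rational sets and gluing via a smooth partition of unity $\{\chi_\alpha\}$ subordinate to the cover, I set $\gamma_\xi(R)^2 := \sum_\alpha \chi_\alpha(R)\, c_{\xi,\alpha}(R)$ with the convention that $c_{\xi,\alpha} \equiv 0$ if $\xi \notin \Xi_\alpha$. Because each local decomposition reproduces $R$ and the $\chi_\alpha$ sum to $1$, identity \eqref{e:split} follows. Finally, since $\Xi$ is finite and all coordinates are rational, the least common multiple of the denominators of the coordinates of $\xi, \xi', \xi''$ (as $\xi$ ranges over $\Xi$) gives the required integer $n_*$.

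The main obstacle I anticipate is Stage 3: the partition-of-unity construction produces only smooth \emph{nonnegative} $\gamma_\xi^2$, while the statement demands \emph{smooth positive} $\gamma_\xi$, and $\sqrt{\cdot}$ is not smooth at $0$. To resolve this, either (a) I augment each local $\Xi_\alpha$ so that every $\xi \in \Xi$ appears with strictly positive coefficient on every patch --- which is possible because strict positivity is an open condition, after perhaps enlarging the original $\Lambda$ --- or (b) I bypass the partition of unity altogether by fixing a single sufficiently large $\Xi$ from the start and using convexity to argue that $\overline{B_{\sfrac 12}(\Id)}$ lies in the interior of the positive cone generated by $\{\xi \otimes \xi\}_{\xi \in \Xi}$, after which a smooth selection theorem for the underdetermined positivity-constrained linear system yields the $\gamma_\xi$ globally in one step.
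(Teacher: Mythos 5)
The paper does not prove Proposition~\ref{p:split}; it recalls it verbatim from \cite[Lemma~2.4]{DaneriSzekelyhidi17}, so a self-contained argument such as yours is a genuine supplement rather than a restatement of what appears in the manuscript. Your three-stage outline is the standard route, and Stages~1--2 are sound: an affine right inverse of $c\mapsto\sum_{\xi}c_\xi\,\xi\otimes\xi$ together with strict positivity at $\Id$ gives smooth strictly positive coefficients on a neighborhood; rational unit vectors with rational orthonormal complements are dense (in fact \emph{every} $\xi\in\S^2\cap\Q^3$ already admits one, via the rational Householder reflection $\Id-2vv^T/|v|^2$ with $v=\xi-e_1$, so the appeal to density of $O_3(\Q)$ is more than is needed); and both spanning and strict positivity are open conditions, so rationalization preserves them.

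The gap you flag in Stage~3 is real, and your option~(a) is the correct repair, but it is cleanest if you reorder the argument rather than augmenting patch by patch. Fix a single rational $\Xi$ at the outset, large enough that the open convex cone $\mathcal{C}_\Xi=\{\sum_{\xi\in\Xi}c_\xi\,\xi\otimes\xi:c_\xi>0\}$ contains the compact set $\overline{B_{\sfrac 12} (\Id)}$; this is possible because the cones $\mathcal{C}_\Xi$ exhaust the interior of the positive semi-definite cone as $\Xi$ fills out $\S^2\cap\Q^3$, and that interior contains $\overline{B_{\sfrac 12} (\Id)}$ compactly. With $\Xi$ thus fixed once and for all, cover the ball by finitely many open sets $U_\alpha$ centered at points $R_\alpha$, on each of which the affine map $c_\alpha(R)=c^{(\alpha)}+L^+(R-R_\alpha)$ (with $c^{(\alpha)}>0$ a strictly positive solution of $Lc^{(\alpha)}=R_\alpha$) stays componentwise positive, and glue via a subordinate partition of unity $\{\chi_\alpha\}$. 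At each $R$ the glued $c(R)=\sum_\alpha\chi_\alpha(R)\,c_\alpha(R)$ is a \emph{convex combination of strictly positive vectors}, hence strictly positive (not merely nonnegative), and $Lc(R)=R$ follows from linearity and $\sum_\alpha\chi_\alpha\equiv 1$; the square root is then smooth, and $n_*$ is the least common multiple of denominators exactly as you say. Your alternative~(b) invokes an unspecified ``smooth selection theorem'' that would itself require an argument of this type; I would drop it in favor of (a) in the reordered form above.
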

 
We now recall \cite[Proposition 4.3]{BMNV21} and \cite[Proposition 4.4]{BMNV21} which rigorously construct the intermittent pipe flows and enumerate the necessary properties.
 
\begin{proposition}[\bf Rotating, Shifting, and Periodizing]\label{prop:pipe:shifted}
Fix $\xi\in\Xi$, where $\Xi$ is as in Proposition~\ref{p:split}. Let ${r^{-1},\lambda\in\mathbb{N}}$ be given such that $\lambda r\in\mathbb{N}$. Let $\varkappa:\mathbb{R}^2\rightarrow\mathbb{R}$ be a smooth function with support contained inside a ball of radius $\sfrac{1}{4}$. Then for $k\in\{0,...,r^{-1}-1\}^2$, there exist functions $\varkappa^k_{\lambda,r,\xi}:\mathbb{R}^3\rightarrow\mathbb{R}$ defined in terms of $\varkappa$, satisfying the following additional properties:
\begin{enumerate}[(1)]
    \item \label{item:point:1} 
    We have that  $\varkappa^k_{\lambda,r,\xi}$ is simultaneously $\left(\frac{\mathbb{T}^3}{\lambda r}  \right)$-periodic and $\left(\frac{\Tthreexi}{\lambda r n_\ast}  \right)$-periodic. Here, by  $\T^3_\xi$ we refer to a rotation of the standard torus such that $\T^3_\xi$ has a face perpendicular to $\xi$.
    
 \item \label{item:point:2}  Let $F_\xi$ be one of the two faces of the cube $\frac{\Tthreexi}{\lambda r n_\ast}$ which is perpendicular to $\xi$. Let $\mathbb{G}_{\lambda,r}\subset F_\xi\cap \twopi \mathbb{Q}^3$ be the grid consisting of $r^{-2}$-many points spaced evenly at distance $\twopi  (\lambda n_\ast  )^{-1}$ on $F_\xi$ and containing the origin.  Then each grid point $g_{k}$ for $k\in\{0,...,r^{-1}-1\}^{2}$ satisfies
    \begin{equation}\label{e:shifty:support}
    \left(\supp\varkappa_{\lambda,r,\xi}^k\cap F_\xi \right) \subset \bigl\{x: |x-g_{k}| \leq \twopi\left(4\lambda n_\ast\right)^{-1} \bigr\}.
    \end{equation}
    
    \item \label{item:point:2a} The support of $\varkappa_{\lambda,r,\xi}^k$ is a pipe (cylinder) centered around a $\left(\frac{\mathbb{T}^3}{\lambda r}  \right)$-periodic and $\left(\frac{\Tthreexi}{\lambda r n_\ast}  \right)$-periodic line parallel to $\xi$, which passes through the point $g_k$. The radius of the cylinder's cross-section is as in \eqref{e:shifty:support}.
    \item We have that $\xi \cdot \nabla \varkappa_{\lambda,r,\xi}^k = 0$.
    \item \label{item:point:3} For $k\neq k'$, $\supp \varkappa_{\lambda,r,\xi}^k \cap \supp \varkappa_{\lambda,r,\xi}^{k'}=\emptyset$.
\end{enumerate}
\end{proposition}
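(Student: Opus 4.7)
The plan is to build $\varkappa^k_{\lambda,r,\xi}$ by taking the given two-dimensional profile $\varkappa$, rescaling it to the target cross-sectional width $\sim(\lambda n_\ast)^{-1}$, extending it trivially in the $\xi$-direction to obtain a shear flow, translating it to be centered at the grid point $g_k$, and finally periodizing with respect to the two lattices that appear. The main obstacle will be the simultaneous compatibility of the two periodicities --- on the standard torus $\T^3$ and on the rotated torus $\Tthreexi$ --- which is precisely the reason $\Xi$ was chosen in $\mathbb{S}^2\cap\mathbb{Q}^3$ with an auxiliary integer $n_\ast$ in Proposition~\ref{p:split}.

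\textbf{Step 1: construction on a fundamental cell.} Pick the orthonormal frame $\{\xi,\xi',\xi''\}\subset \mathbb{S}^2\cap\mathbb{Q}^3$ from Proposition~\ref{p:split} and let $R_\xi$ be the rotation that sends the standard basis to $\{\xi,\xi',\xi''\}$. Define the rescaled 2D profile $\bar\varkappa(y)=\varkappa(\lambda n_\ast y/(2\pi))$, so that $\supp\bar\varkappa\subset\{|y|\leq 2\pi(4\lambda n_\ast)^{-1}\}$. For each grid index $k\in\{0,\dots,r^{-1}-1\}^2$, place this profile at the point $g_k\in F_\xi$ and extend trivially in the $\xi$-direction to obtain a pre-periodized 3D function
\begin{equation*}
\tilde\varkappa^k_{\lambda,r,\xi}(x) := \bar\varkappa\bigl(P_{\xi^\perp}(R_\xi^{-1}x - R_\xi^{-1}g_k)\bigr),
\end{equation*}
where $P_{\xi^\perp}$ projects onto the $(\xi',\xi'')$-plane. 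By construction this function is independent of the coordinate along $\xi$, which immediately gives $\xi\cdot\nabla \tilde\varkappa^k_{\lambda,r,\xi}=0$ (item~(4)), and its support is exactly a tube of the type described in item~(2a), passing through $g_k$.

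\textbf{Step 2: periodization and the two tori.} Now periodize by summing the translates over the lattice $(2\pi/(\lambda r n_\ast))\mathbb{Z}^3$ written in the rotated frame, i.e.\ set
\begin{equation*}
\varkappa^k_{\lambda,r,\xi}(x) := \sum_{\ell\in \mathbb{Z}^3} \tilde\varkappa^k_{\lambda,r,\xi}\bigl(x + \tfrac{2\pi}{\lambda r n_\ast} R_\xi \ell\bigr).
\end{equation*}
Because the translates are spaced at distance $\geq 2\pi(\lambda n_\ast)^{-1}$ in the cross-sectional plane while each translate has cross-sectional radius $\leq 2\pi(4\lambda n_\ast)^{-1}$, the supports inside a single cell of $\Tthreexi/(\lambda r n_\ast)$ are disjoint, and the sum defines a smooth function on the rotated torus $\Tthreexi/(\lambda r n_\ast)$. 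This gives one half of the periodicity in item~(1) and directly yields items~(2) and~(2a) after restricting attention to a face $F_\xi$.

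\textbf{Step 3: periodicity on the standard torus.} To obtain the second periodicity in item~(1) --- namely $(\T^3/(\lambda r))$-periodicity --- we use the rationality of $\xi,\xi',\xi''$: since $n_\ast\xi,n_\ast\xi',n_\ast\xi''\in\mathbb{Z}^3$, the lattice $(2\pi/(\lambda r n_\ast))\,R_\xi \mathbb{Z}^3$ contains the lattice $(2\pi/(\lambda r))\mathbb{Z}^3$ (each standard basis vector in the latter is an integer combination of $(2\pi/(\lambda r n_\ast))n_\ast\xi$, etc.). Consequently the sum above is automatically invariant under translations by $(2\pi/(\lambda r))\mathbb{Z}^3$, which is item~(1). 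The disjointness of supports for distinct $k,k'$ in item~(3) is then a direct consequence of the separation between the grid points $\{g_k\}$ within a single cell combined with the cross-sectional support bound \eqref{e:shifty:support}. The hard part, as noted, is Step~3: everything hinges on the fact that $n_\ast$ clears the denominators of $\xi,\xi',\xi''$, so that the two periodizations are consistent and no additional smoothing or cutting-off is required.
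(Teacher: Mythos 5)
Your construction follows the expected route (rescale the 2D profile to cross-sectional radius $\sim(\lambda n_\ast)^{-1}$, translate to $g_k$, extend trivially along $\xi$, periodize, and use the rationality of $\{\xi,\xi',\xi''\}$ together with the clearing denominator $n_\ast$ to reconcile the two tori). The key Step~3 argument — that $(2\pi/(\lambda r))\mathbb{Z}^3 \subset (2\pi/(\lambda r n_\ast))R_\xi\mathbb{Z}^3$, since $n_\ast e_i$ has integer coordinates in the $\{\xi,\xi',\xi''\}$ basis because $n_\ast\xi,n_\ast\xi',n_\ast\xi''\in\mathbb{Z}^3$ — is the right mechanism, though the parenthetical as written ($e_i$ being an integer combination of $\xi,\xi',\xi''$) drops the crucial factor of $n_\ast$; the correct statement is that $n_\ast e_i$ is an integer combination.

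There is, however, one genuine error in Step~2: the periodization sum
\begin{equation*}
\varkappa^k_{\lambda,r,\xi}(x) := \sum_{\ell\in \mathbb{Z}^3} \tilde\varkappa^k_{\lambda,r,\xi}\bigl(x + \tfrac{2\pi}{\lambda r n_\ast} R_\xi \ell\bigr)
\end{equation*}
does not converge. You have just observed that $\tilde\varkappa^k_{\lambda,r,\xi}$ is invariant under translations along $\xi$; hence translating by $\tfrac{2\pi}{\lambda r n_\ast}R_\xi(\ell_1,0,0)=\tfrac{2\pi\ell_1}{\lambda r n_\ast}\xi$ leaves the function unchanged, and the sum over $\ell_1\in\mathbb{Z}$ is infinite wherever the integrand is nonzero. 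The periodization must be taken only over the lattice of perpendicular translates, i.e.\ over $\ell\in\{0\}\times\mathbb{Z}^2$ (or, equivalently, one should periodize the planar function $\bar\varkappa$ over the 2D lattice $(2\pi/(\lambda r n_\ast))\mathbb{Z}^2$ in the $(\xi',\xi'')$-plane before extending along $\xi$). Once this correction is made, the $\xi$-invariance of the resulting function automatically supplies the missing periodicity in the $\xi$-direction at any scale, and items~(1)--(5) follow from the disjointness and spacing checks you carried out.
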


\begin{proposition}[\bf Construction and properties of shifted intermittent pipe flows]
\label{prop:pipeconstruction}
Fix a vector $\xi$ belonging to the set of rational vectors $\Xi \subset\mathbb{Q}^{3} \cap \mathbb{S}^2 $ from Proposition~\ref{p:split}, $r^{-1},\lambda \in \mathbb{N}$ with $\lambda r\in \mathbb{N}$, and large integers $3\Nfin$ and $\dpot$. There exist vector fields $\WW^k_{\xi,\lambda,r}:\mathbb{T}^3\rightarrow\mathbb{R}^3$ for $k\in\{0,...,r^{-1}-1\}^2$ and implicit constants depending on $\Nfin$ and $\dpot$ but not on $\lambda$ or $r$ such that:
\begin{enumerate}[(1)]
    \item\label{item:pipe:1} There exists $\varrho:\mathbb{R}^2\rightarrow\mathbb{R}$ given by the iterated Laplacian $\Delta^\dpot  \vartheta =: \varrho$ of a potential $\vartheta:\mathbb{R}^2\rightarrow\mathbb{R}$ with compact support in a ball of radius $\frac{1}{4}$ such that the following holds.  Let $\varrho_{\xi,\lambda,r}^k$ and $\vartheta_{\xi,\lambda,r}^k$ be defined as in Proposition~\ref{prop:pipe:shifted}, in terms of $\varrho$ and $\vartheta$ (instead of $\varkappa$).  Then there exists $\UU^k_{\xi,\lambda,r}:\mathbb{T}^3\rightarrow\mathbb{R}^3$ such that
    if $\{\xi,\xi',\xi''\} \subset \mathbb{Q}^3 \cap \mathbb{S}^2$ form an orthonormal basis of $\R^3$ with $\xi\times\xi'=\xi''$, then we have
    \begin{equation}
    \UU_{\xi,\lambda,r}^k
     =  -  \xi' \underbrace{\lambda^{-2\dpot} \xi''\cdot \nabla \Delta^{\dpot-1} \left(\vartheta_{\xi,\lambda,r}^k \right)}_{=:\varphi_{\xi,\lambda,r}^{\prime \prime k}}
     +  
     \xi'' \underbrace{\lambda^{-2\dpot} \xi'\cdot \nabla \Delta^{\dpot-1} \left( \vartheta_{\xi,\lambda,r}^k \right)
     }_{=:\varphi_{\xi,\lambda,r}^{\prime k}}
     \label{eq:UU:explicit}
        \,, 
    \end{equation}
and thus
\begin{equation}
\notag
\curl \UU^k_{\xi,\lambda,r} = \xi \lambda^{-2\dpot }\Delta^\dpot  \left(\vartheta^k_{\xi,\lambda,r}\right) = \xi \varrho^k_{\xi,\lambda,r} =: \WW^k_{\xi,\lambda,r}
\,,
\end{equation}
and 
\begin{equation}
    \xi \cdot \nabla \WW^k_{\xi,\lambda,r} 
    = \xi \cdot \nabla \UU^k_{\xi,\lambda,r}
    = 0
    \,.
    \label{eq:derivative:along:pipe}
\end{equation}
    \item\label{item:pipe:2} The sets of functions $\{\UU_{\xi,\lambda,r}^k\}_{k}$, $\{\varrho_{\xi,\lambda,r}^k\}_{k}$, $\{\vartheta_{\xi,\lambda,r}^k\}_{k}$, and $\{\WW_{\xi,\lambda,r}^k\}_{k}$ satisfy items~\ref{item:point:1}--\ref{item:point:3} in Proposition~\ref{prop:pipe:shifted}.
    \item\label{item:pipe:3} $\WW^k_{\xi,\lambda,r}$ is a stationary, pressureless solution to the Euler equations.
    \item\label{item:pipe:4} $\displaystyle{\dashint_{\mathbb{T}^3} \WW^k_{\xi,\lambda,r} \otimes \WW^k_{\xi,\lambda,r} = \xi \otimes \xi }$
    \item\label{item:pipe:5} For all $n\leq 3 \Nfin$, 
    \begin{equation}\label{e:pipe:estimates:1}
    {\left\| \nabla^n\vartheta^k_{\xi,\lambda,r} \right\|_{L^p(\mathbb{T}^3)} \lesssim \lambda^{n}r^{\left(\frac{2}{p}-1\right)} }, \qquad {\left\| \nabla^n\varrho^k_{\xi,\lambda,r} \right\|_{L^p(\mathbb{T}^3)} \lesssim \lambda^{n}r^{\left(\frac{2}{p}-1\right)} }
    \end{equation}
    and
    \begin{equation}\label{e:pipe:estimates:2}
    {\left\| \nabla^n\UU^k_{\xi,\lambda,r} \right\|_{L^p(\mathbb{T}^3)} \lesssim \lambda^{n-1}r^{\left(\frac{2}{p}-1\right)} }, \qquad {\left\| \nabla^n\WW^k_{\xi,\lambda,r} \right\|_{L^p(\mathbb{T}^3)} \lesssim \lambda^{n}r^{\left(\frac{2}{p}-1\right)} }.
    \end{equation}
    \item\label{item:pipe:6} Let $\Phi:\mathbb{T}^3\times[0,T]\rightarrow \mathbb{T}^3$ be the periodic solution to the transport equation
\begin{align}
\label{e:phi:transport}
\partial_t \Phi + v\cdot\nabla \Phi =0\,, 
\qquad 
\Phi_{t=t_0} &= x\, ,
\end{align}
with a smooth, divergence-free, periodic velocity field $v$. Then
\begin{equation}\label{eq:pipes:flowed:1}
\nabla \Phi^{-1} \cdot \left( \WW^k_{\xi,\lambda,r} \circ \Phi \right) = \curl \left( \nabla\Phi^T \cdot \left( \mathbb{U}^k_{\xi,\lambda,r} \circ \Phi \right) \right).
\end{equation}

\item\label{item:pipe:7} For $\mathbb{P}_{[\lambda_1,\lambda_2]}$ a Littlewood-Paley projector, $\Phi$ as in \eqref{e:phi:transport}, $A=(\nabla\Phi)^{-1}$, and for $i=1,2,3$,
\begin{align}
\bigg{[} \nabla \cdot \bigg{(} A \, \mathbb{P}_{[\lambda_1,\lambda_2]} \left(  \WW_{\xi,\lambda,r} \otimes  \WW_{\xi,\lambda,r} \right)(\Phi) A^T \bigg{)} \bigg{]}_i 
& = A_{m}^j \mathbb{P}_{[\lambda_1,\lambda_2]} \left( \WW^m_{\xi,\lambda,r}  \WW_{\xi,\lambda,r}^l\right)(\Phi) \partial_j A_{l}^i\nonumber\\
& = A_m^j \xi^m \xi^l \partial_jA_{l}^i \, \mathbb{P}_{[\lambda_1,\lambda_2]}\left( \left( \varrho^k_{\xi,\lambda,r} \right)^2 \right) \, .
\label{eq:pipes:flowed:2}
\end{align}
\end{enumerate}
\end{proposition}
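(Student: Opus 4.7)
\textbf{Proof proposal for Proposition~\ref{prop:pipeconstruction}.} The overall strategy is to build $\WW_{\xi,\lambda,r}^k$ as the curl of a potential $\UU_{\xi,\lambda,r}^k$, which is itself constructed from a scalar profile via the recipe in Proposition~\ref{prop:pipe:shifted}. Concretely, I would first choose a smooth compactly supported profile $\vartheta\colon\R^2\to\R$ in the ball of radius $\sfrac 14$, normalized so that $\dashint_{\T^2}(\Delta^\dpot \vartheta)^2 \,dx = 1$, and set $\varrho:=\Delta^\dpot\vartheta$. Applying Proposition~\ref{prop:pipe:shifted} to the pair $(\vartheta,\varrho)$ simultaneously gives the shifted, rotated, and periodized versions $\vartheta^k_{\xi,\lambda,r}$ and $\varrho^k_{\xi,\lambda,r}$, with the required support, periodicity, and disjointness (items \ref{item:point:1}--\ref{item:point:3}); this instantly yields item~\ref{item:pipe:2}. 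The definition \eqref{eq:UU:explicit} of $\UU_{\xi,\lambda,r}^k$ then respects the same supports and periodicities, since it is obtained by applying constant-coefficient differential operators to $\vartheta_{\xi,\lambda,r}^k$.

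Next, I would verify the curl identity and the properties \eqref{eq:derivative:along:pipe} and items~\ref{item:pipe:3}--\ref{item:pipe:5} by direct calculation. Since the functions $\vartheta_{\xi,\lambda,r}^k$ and $\varrho_{\xi,\lambda,r}^k$ depend only on the two variables perpendicular to $\xi$, decomposing $\nabla=\xi\partial_\xi+\xi'\partial_{\xi'}+\xi''\partial_{\xi''}$ and using $\partial_\xi\vartheta_{\xi,\lambda,r}^k=0$ gives $\Delta = \partial_{\xi'}^2+\partial_{\xi''}^2$ on these functions, and the explicit formula \eqref{eq:UU:explicit} then yields $\curl \UU_{\xi,\lambda,r}^k = \xi\,\lambda^{-2\dpot}\Delta^\dpot\vartheta_{\xi,\lambda,r}^k = \xi\varrho_{\xi,\lambda,r}^k$. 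The identity \eqref{eq:derivative:along:pipe} is immediate since $\partial_\xi$ annihilates the scalar profile. Stationarity (item~\ref{item:pipe:3}) follows because $\WW_{\xi,\lambda,r}^k\cdot\nabla\WW_{\xi,\lambda,r}^k = \varrho_{\xi,\lambda,r}^k(\xi\cdot\nabla)\WW_{\xi,\lambda,r}^k=0$ and $\div\WW_{\xi,\lambda,r}^k = \xi\cdot\nabla\varrho_{\xi,\lambda,r}^k=0$, and item~\ref{item:pipe:4} follows from the normalization of $\vartheta$ together with $\WW_{\xi,\lambda,r}^k=\xi\varrho_{\xi,\lambda,r}^k$ and unit-mean of $(\varrho_{\xi,\lambda,r}^k)^2$. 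The $L^p$ bounds in item~\ref{item:pipe:5} come from a standard scaling computation: $\vartheta_{\xi,\lambda,r}^k$ is supported in a cylinder of cross-sectional area $\sim(\lambda n_*)^{-2}$, periodized to scale $(\lambda r)^{-1}$, giving a $\sim r^2$ fraction of the ambient torus, while each $\nabla$ costs a factor of $\lambda$; the $\lambda^{-2\dpot}$ factors in \eqref{eq:UU:explicit} account for the loss of one derivative from $\WW$ to $\UU$.

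For items~\ref{item:pipe:6} and \ref{item:pipe:7}, I would exploit the fact that $A=(\nabla\Phi)^{-1}$ is the cofactor matrix of $\nabla\Phi$ (since $\Phi$ is volume-preserving), so that Piola's identity $\partial_j A^j_m = 0$ holds. For item~\ref{item:pipe:6}, applying the chain rule to $\curl(\nabla\Phi^T \cdot(\UU_{\xi,\lambda,r}^k\circ\Phi))$ and collecting terms produces exactly $A\cdot((\curl\UU_{\xi,\lambda,r}^k)\circ\Phi) = A\cdot(\WW_{\xi,\lambda,r}^k\circ\Phi)$, with all remaining second-derivative terms canceling by the symmetry of mixed partials combined with Piola. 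For item~\ref{item:pipe:7}, I would expand the divergence by Leibniz, use Piola once more to discard the term in which the derivative falls on $A^j_m$, and then substitute $\WW^m_{\xi,\lambda,r}\WW^l_{\xi,\lambda,r}=\xi^m\xi^l (\varrho_{\xi,\lambda,r}^k)^2$ inside the Littlewood--Paley projector (which commutes with the constant factor $\xi^m\xi^l$).

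The main obstacle is the algebraic manipulation in item~\ref{item:pipe:7}, which is delicate precisely because the Littlewood--Paley projector does \emph{not} commute with composition by $\Phi$, so one must carefully keep the projector acting on the \emph{unflowed} object $\WW_{\xi,\lambda,r}\otimes\WW_{\xi,\lambda,r}$ and track where the transport map and its inverse appear; Piola's identity is what allows the final cancellation of the otherwise problematic term $(\partial_j A^j_m)\,\mathbb{P}_{[\lambda_1,\lambda_2]}(\WW^m\WW^l)(\Phi)A^i_l$. Everything else is a routine, if lengthy, verification that closely follows the proofs of Propositions~4.3 and~4.4 in~\cite{BMNV21}, from which this statement is essentially recalled.
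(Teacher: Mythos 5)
The paper does not actually prove this proposition; it recalls it verbatim from the authors' earlier work \cite[Propositions~4.3 and~4.4]{BMNV21} with a citation and no argument. Your sketch therefore reconstructs an external proof, and it is close to the intended one, but there is a genuine gap in item~\eqref{item:pipe:7}. Expanding $\partial_j\bigl(A^j_m\,M^{ml}\,A^i_l\bigr)$ with $M^{ml} = \mathbb{P}_{[\lambda_1,\lambda_2]}(\WW^m\WW^l)\circ\Phi$ by the Leibniz rule produces \emph{three} terms, not two: the derivative may fall on $A^j_m$, on $M^{ml}$, or on $A^i_l$. You cancel the first by the Piola identity and keep the third as the answer, but you never address the middle term, $A^j_m\,\partial_j\bigl[\mathbb{P}(\WW^m\WW^l)\circ\Phi\bigr]\,A^i_l$. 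That term is not zero for free, and if it survived it would carry a factor of $\lambda_2$ from one derivative on the high-frequency object — exactly the contribution the identity is designed to suppress.

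The way to kill it is: by the chain rule and the identity $A^j_m\,\partial_j\Phi^n = \delta^n_m$, one collapses $A^j_m\,\partial_j\bigl(f\circ\Phi\bigr) = (\partial_m f)\circ\Phi$; applied with $f = \mathbb{P}(\WW^m\WW^l)$ this gives $\bigl(\partial_m \mathbb{P}(\WW^m\WW^l)\bigr)\circ\Phi = \mathbb{P}\bigl[(\div(\WW\otimes\WW))^l\bigr]\circ\Phi$, which vanishes because $\WW\otimes\WW$ is divergence-free — i.e., $\div\WW = 0$ and $\WW\cdot\nabla\WW = \varrho\,(\xi\cdot\nabla)\WW = 0$, precisely the computation you already used to verify item~\eqref{item:pipe:3}. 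So the missing ingredient is in your own toolkit; you simply did not deploy it where it was needed. The remainder of your sketch (profile choice, curl computation, stationarity, normalization, the scaling argument for item~\eqref{item:pipe:5}, and the Piola/cofactor mechanism for item~\eqref{item:pipe:6}) matches the argument in \cite{BMNV21}. One minor imprecision: your normalization $\dashint_{\T^2}(\Delta^\dpot\vartheta)^2 = 1$ is ill-posed as written since $\vartheta:\R^2\to\R$ has compact support; the natural normalization fixes $\int_{\R^2}\varrho^2$ (equivalently, requires $\dashint_{\T^3}(\varrho^k_{\xi,\lambda,r})^2 = 1$ after periodization), which is what item~\eqref{item:pipe:4} actually needs.
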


\begin{remark}
\label{sec:mollifiers:Fourier}
In \eqref{eq:pipes:flowed:2} and throughout the rest of the paper, for any interval $I \subset \mathbb{R}_+$ we use the notation 
\begin{equation}\label{eq:PP:def}
 \Proj_{I} 
\end{equation}
to denote the Fourier projection operator onto spatial frequencies $\xi$ such that $ |\xi| \in I$. When $I = [\lambda,\infty)$ we abbreviate this projection as $\Proj_{\geq \lambda}$, while for $I = [0,\lambda]$, we abbreviate this projection as $\Proj_{\leq \lambda}$.
\end{remark}

In order to propagate sharp $L^\infty$ estimates for nonlinear error terms, we will require the following estimates related to the mean-subtracted squared pipe densities.

\begin{lemma}\label{lem:tricky:tricky}
Let $\varrho^k_{\xi,\lambda,r}:\left(\frac{\T^3}{\lambda r}\right)\rightarrow\R$ be defined as in Proposition~\ref{prop:pipeconstruction}. Let $\lambda_1,\lambda_2$ be given with $\lambda r \leq \lambda_1 < \lambda ,\lambda_2 $, and set 
\begin{equation}
\vartheta=\left(\lambda_1^{-2}\Delta\right)^{-\dpot}\mathbb{P}_{[\lambda_1,\lambda_2)}\left((\varrho_{\xi,\lambda,r}^{k})^2-1\right) \, . \notag
\end{equation}
Then, for an arbitrary $\alpha\in(0,1]$ and $N\leq 2\Nfin$, we have the estimates
\begin{subequations}
\begin{align}
    \left\| D^{N} \mathbb{P}_{[\lambda_1,\lambda_2)} \left( \left(\varrho_{\xi,\lambda,r}^{k}\right)^2-1\right)  \right\|_{L^\infty} &\lesssim \left( \frac{\min(\lambda_2,\lambda)}{\lambda r} \right)^2  \min(\lambda_2,\lambda)^N \label{eq:tricky:bounds:1} \\
    \left\| D^N \vartheta \right\|_{L^\infty} &\lesssim \lambda^\alpha \left( \frac{\min(\lambda_2,\lambda)}{\lambda r} \right)^2 \MM{N,2\dpot,\lambda_1,\min(\lambda_2,\lambda)} \label{eq:tricky:bounds:2} 
    \, . 
\end{align}
\end{subequations}
\end{lemma}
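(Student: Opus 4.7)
The plan is to reduce both estimates to Fourier summation on the two-dimensional sublattice $\Lambda := (\lambda r)\Z^3 \cap \xi^\perp \subset \Z^3$, by exploiting two structural features of $\varrho := \varrho^k_{\xi,\lambda,r}$. First, \eqref{eq:derivative:along:pipe} gives $\xi\cdot\nabla\varrho=0$, so $\varrho$ (and any polynomial in $\varrho$) is constant along $\xi$, forcing the Fourier coefficients of $\varrho^2 - 1$ to vanish off $\xi^\perp$. Second, item~\eqref{item:pipe:2} of Proposition~\ref{prop:pipeconstruction} (invoking item~\eqref{item:point:1} of Proposition~\ref{prop:pipe:shifted}) says $\varrho$ is $(2\pi/(\lambda r))$-periodic, which restricts those coefficients further to $(\lambda r)\Z^3$. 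Writing $G := \varrho^2 - 1$, item~\eqref{item:pipe:4} of Proposition~\ref{prop:pipeconstruction} implies $\dashint_{\T^3}\varrho^2 = 1$, hence $\hat G(0) = 0$ and the Fourier support of $G$ lies in $\Lambda \setminus \{0\}$.

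Next I would establish two complementary pointwise bounds on $\hat G(k)$. A trivial estimate from item~\eqref{item:pipe:5} at $p=2$ yields $|\hat G(k)| \lesssim \|\varrho^2\|_{L^1} = \|\varrho\|_{L^2}^2 \lesssim 1$. For rapid decay at large $|k|$, $M$-fold integration by parts combined with the Leibniz rule and the $L^2$ derivative estimates in item~\eqref{item:pipe:5} gives $|\hat G(k)| \lesssim_M \|D^M(\varrho^2)\|_{L^1}\,|k|^{-M} \lesssim_M (\lambda/|k|)^M$ for any $M \geq 0$. Combining: $|\hat G(k)| \lesssim \min(1,(\lambda/|k|)^M)$ uniformly on $\Lambda$.

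For \eqref{eq:tricky:bounds:1}, I would bound
\begin{equation*}
\|D^N \mathbb{P}_{[\lambda_1,\lambda_2)} G\|_{L^\infty} \lesssim \sum_{k \in \Lambda,\ \lambda_1 \leq |k| < \lambda_2} |k|^N \,|\hat G(k)|,
\end{equation*}
and estimate the right-hand side dyadically. Since $\Lambda$ is a two-dimensional lattice of spacing $\lambda r$ and the hypothesis $\lambda_1 \geq \lambda r$ guarantees that the annulus intersects $\Lambda$ nontrivially, the number of $k \in \Lambda$ with $|k| \sim \mu$ is $\lesssim (\mu/(\lambda r))^2$. Each dyadic shell thus contributes $\lesssim (\mu/(\lambda r))^2 \mu^N \min(1,(\lambda/\mu)^M)$; for $M$ sufficiently large, shells with $\mu > \lambda$ are negligible, and the sum over dyadic $\mu \in [\lambda_1,\min(\lambda_2,\lambda)]$ is dominated by its top endpoint, yielding \eqref{eq:tricky:bounds:1}.

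The argument for \eqref{eq:tricky:bounds:2} is analogous, the only change being that $(\lambda_1^{-2}\Delta)^{-\dpot}$ contributes the Fourier multiplier $\lambda_1^{2\dpot}|k|^{-2\dpot}$, so the summand becomes $\lambda_1^{2\dpot}|k|^{N-2\dpot}\,|\hat G(k)|$. Now the dyadic sum is dominated by the bottom $\mu \sim \lambda_1$ when $N \leq 2\dpot$ (producing the $\lambda_1^{N}$ branch of $\MM{N,2\dpot,\lambda_1,\min(\lambda_2,\lambda)}$) and by the top $\mu \sim \min(\lambda_2,\lambda)$ when $N \geq 2\dpot$ (producing the $\lambda_1^{2\dpot}\min(\lambda_2,\lambda)^{N-2\dpot}$ branch). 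The arbitrary factor $\lambda^\alpha$ absorbs the logarithmic loss at the exponent transition (e.g.~$N = 2\dpot - 2$, where the dyadic sum of $\mu^0$ produces $\log(\min(\lambda_2,\lambda)/\lambda_1)$). The main technical obstacle is keeping the bookkeeping across these two regimes straight and verifying that the tail past $|k| = \lambda$ contributes only lower-order terms, so that the claimed bound is sharp in both $\min(\lambda_2,\lambda)$ and $\lambda_1$.
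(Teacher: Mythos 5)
Your proof is correct and takes essentially the same route as the paper's: reduce to a Fourier sum over the two-dimensional sublattice $(\lambda r)\Z^3\cap\xi^\perp$, bound each coefficient by $\|\varrho^2-1\|_{L^1}\lesssim 1$, count lattice points in the relevant annulus, and read off the Fourier multiplier of $(\lambda_1^{-2}\Delta)^{-\dpot}$. If anything your handling of the $\lambda_2>\lambda$ branch is cleaner: where the paper dispatches this case by appealing to $L^\infty$-boundedness of the sharp projector $\Proj_{[\lambda_1,\lambda_2)}$ (true only up to a logarithmic factor, ultimately absorbed by $\lambda^\alpha$), you close the tail of the lattice sum directly via the rapid decay $|\hat G(k)|\lesssim(\lambda/|k|)^M$ obtained from integration by parts together with item~\eqref{item:pipe:5}, which gives \eqref{eq:tricky:bounds:1} with no such loss. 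One small bookkeeping imprecision in your argument for \eqref{eq:tricky:bounds:2}: because the lattice count supplies an extra factor $(\mu/(\lambda r))^2$, the dyadic crossover in your sum occurs at $N=2\dpot-2$ rather than at $N=2\dpot$, so for $N\in\{2\dpot-1,2\dpot\}$ the sum is actually top-dominated; this is harmless since in that range $\lambda_1^{2\dpot}\min(\lambda_2,\lambda)^{N-2\dpot}\leq\lambda_1^N$, so the claimed $\MM{N,2\dpot,\lambda_1,\min(\lambda_2,\lambda)}$ bound is still met.
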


\begin{remark}
When $\lambda_2 \ll  \lambda$, we note that \eqref{eq:tricky:bounds:1} contains the nontrivial estimate
$$  \left\| \mathbb{P}_{[\lambda_1,\lambda_2)} \left(\left(\varrho_{e_3,\lambda,r}^{k}\right)^2-1 \right) \right\|_{L^\infty} \lesssim \left( \frac{\lambda_2}{\lambda r} \right)^2 \ll  \frac{1}{r^{2}} \approx \left\| \left(\varrho_{e_3,\lambda,r}^{k}\right)^2-1 \right\|_{L^\infty} \, , $$
which asserts that the $L^\infty$ norms of the Littlewood-Paley projections of the mean-subtracted pipe density increase with respect to frequency from a minimum of $1$ at $\lambda_2=\lambda r$ to $r^{-2}$ at $\lambda_2=\lambda$.
\end{remark}
\begin{proof}[Proof of Lemma~\ref{lem:tricky:tricky}]
For the sake of simplicity, we fix $\xi=e_3$, and abbreviate $   (\varrho_{e_3,\lambda,r}^{k})^2-1 = \Psi = \Psi(x_1,x_2)$.  Then we have from \eqref{eq:PP:def} that
\begin{align}\label{eq:fourier:series}
    \mathbb{P}_{[\lambda_1,\lambda_2)}\Psi(x) = \sum_{\substack{\lambda_1 \leq |k| < \lambda_2, \\ k \in \lambda r \mathbb{Z}^2 }} \hat{\Psi}(k) e^{   i k \cdot x} \, .
\end{align}
From \eqref{e:pipe:estimates:1}, we may bound
\begin{equation}\label{eq:Phi:hat:bound}
   \bigl| \hat{\Psi}(k) \bigr| \lesssim \left\| \Psi \right\|_{L^1(\T^3)} \lesssim \left\| \varrho_{e_3,\lambda,r} \right\|_{L^2}^2 +1 \lesssim 1 \, .
\end{equation}
A simple counting argument further yields that
\begin{equation}\label{eq:counting:frequencies}
    \left| \left\{\lambda_1 \leq |k| < \lambda_2 \, : \,  k \in \lambda r \mathbb{Z}^2\right\} \right| \lesssim \left(\frac{\lambda_2}{\lambda r} \right)^2 \, .
\end{equation}
Then in the case $\lambda_2 \leq \lambda$, the bounds   \eqref{eq:fourier:series}-\eqref{eq:counting:frequencies} give that
\begin{align}
    \left\| D^N \mathbb{P}_{[\lambda_1,\lambda_2)} \Psi \right\|_{L^\infty} \leq \lambda_2^N \sum_{\substack{\lambda_1 \leq |k| < \lambda_2 \\ k \in \lambda r \mathbb{Z}^2}} \bigl|\hat{\Psi}(k)\bigr| \lesssim  \lambda_2^N \left(\frac{\lambda_2}{\lambda r} \right)^2\, ,
\end{align}
which matches the desired bound in \eqref{eq:tricky:bounds:1}. To prove \eqref{eq:tricky:bounds:1} in the case that $\lambda_2>\lambda$, we simply appeal to the boundedness of $\mathbb{P}_{[\lambda_1,\lambda_2)}$ on $L^\infty$ and \eqref{e:pipe:estimates:1}.

In order to prove \eqref{eq:tricky:bounds:2}, standard Littlewood-Paley arguments and the above bound for $\mathbb{P}_{[\lambda_1,\lambda_2)}\Psi$ in $L^\infty$ again give that
\begin{align*}
    \lambda_1^{2\dpot} \left\| D^N
    \Delta^{-\dpot} \mathbb{P}_{[\lambda_1,\lambda_2)} \Psi \right\|_{L^\infty} \lesssim 
    \begin{dcases}
    \left(\frac{\min(\lambda_2,\lambda)}{\lambda r} \right)^2\lambda^\alpha\lambda_1^{2\dpot-(2\dpot-N)} &\mbox{if}\qquad 0 \leq N \leq 2\dpot \\
    \left(\frac{\min(\lambda_2,\lambda)}{\lambda r} \right)^2\lambda^\alpha\lambda_1^{2\dpot}\min(\lambda_2,\lambda)^{N-2\dpot} &\mbox{if}\qquad 2\dpot+1 \leq N \leq 2 \Nfin \, ,
    \end{dcases}
\end{align*}
where the factor of $\lambda^\alpha$ is used to absorb endpoint ($p=\infty$) losses, and $\alpha$ may be taken arbitrarily close to zero at the cost of changing the implicit constants. Translating the above display to incorporate the notation $\MM{N,2\dpot, \lambda_1,\min(\lambda_2,\lambda)}$ concludes the proof.
\end{proof}

We will require \cite[Lemma 4.7]{BMNV21}, which lists the geometric properties of deformed intermittent pipe flows.

\begin{lemma}[\bf Control on Axes, Support, and Spacing]
\label{lem:axis:control}
Consider a convex neighborhood  of space $\Omega\subset \mathbb{T}^3$. Let $v$ be an incompressible velocity field, and define the flow $X(x,t)$ and inverse $\Phi(x,t)=X^{-1}(x,t)$, which solves
\begin{align}\notag
\partial_t \Phi + v\cdot\nabla \Phi =0\,, 
\qquad
\Phi_{t=t_0} &= x\, .
\end{align}
Define $\Omega(t):=\{ x\in\mathbb{T}^3 : \Phi(x,t) \in \Omega \} = X(\Omega,t)$. For an arbitrary $C>0$, let $\tau>0$ be a parameter such that
\begin{equation}\notag
 \tau\leq\bigl(\delta_q^{\sfrac{1}{2}}\lambda_q\Gamma_{q+1}^{C+2}\bigr)^{-1}  \, .
\end{equation}
Furthermore, suppose that  the vector field $v$ satisfies the Lipschitz bound
\begin{equation}\notag
\sup_{t\in [t_0 - \tau,t_0+\tau]} \norm{\nabla v(\cdot,t) }_{L^\infty(\Omega(t))} \lesssim \delta_q^{\sfrac{1}{2}}\lambda_q\Gamma_{q+1}^C \,.
\end{equation}
Let $\WW^k_{\xi,\lambda_{q+1},r}:\mathbb{T}^3\rightarrow\mathbb{R}^3$ be a set of straight pipe flows constructed as in Proposition~\ref{prop:pipe:shifted} and Proposition~\ref{prop:pipeconstruction} which are $\sfrac{\mathbb{T}^3}{\lambda_{q+1}r}$-periodic for $\lambda_q \lambda_{q+1}^{-1}\leq r\leq 1$ and are concentrated around axes $\{A_i\}_{i\in\mathcal{I}}$ oriented in the vector direction $\xi$ for $\xi\in\Xi$, passing through the grid-points in item~\eqref{item:point:2} of Proposition~\ref{prop:pipe:shifted}.  Then $\WW:=\WW^k_{\xi,\lambda_{q+1},r}(\Phi(x,t)):\Omega(t)\times[t_0-\tau,t_0+\tau]$ satisfies the following conditions:
\begin{enumerate}[(1)]
	\item  We have the inequality
	\begin{equation}\label{eq:diameter:inequality}
	\textnormal{diam}(\Omega(t)) \leq \left(1+\Gamma_{q+1}^{-1}\right)\textnormal{diam}(\Omega) \, .
	\end{equation}
    \item If $x$ and $y$ with $x\neq y$ belong to a particular axis $A_i\subset\Omega$, then 
    \begin{equation}\label{e:axis:variation}
    \frac{X(x,t)-X(y,t)}{|X(x,t)-X(y,t)|} = \frac{x-y}{|x-y|} + \delta_i(x,y,t)    
    \end{equation}
    where $|\delta_i(x,y,t)|<\Gamma_{q+1}^{{-1}}$.
    \item Let $x$ and $y$ belong to $A_i \cap\Omega$, for some $i$, where the axes $A_i$ are   defined above.  Denote the length of the axis $A_i(t):=X(A_i\cap\Omega,t)$ in between $X(x,t)$ and $X(y,t)$ by $L(x,y,t)$.  Then
    \begin{equation}\label{e:axis:length}
    L(x,y,t) \leq \left(1+\Gamma_{q+1}^{-1}\right)\left| x-y \right| \, .
    \end{equation}
    \item The support of $\WW$ is contained in a $\displaystyle\left(1+\Gamma_{q+1}^{-1}\right)\twopi (4n_\ast\lambda_{q+1})^{-1}$-neighborhood of the set
    \begin{equation}\label{e:axis:union}
       \bigcup_{i} A_i(t) \, .
    \end{equation}
\item $\WW$ is ``approximately periodic" in the sense that for distinct axes $A_i,A_j$ with $i\neq j$, we have
\begin{equation}\label{e:axis:periodicity:1}
    \left(1-\Gamma_{q+1}^{-1}\right) \dist(A_i\cap\Omega,A_j\cap\Omega)
    \leq \dist\left(A_i(t),A_j(t)\right)
    \leq \left(1+\Gamma_{q+1}^{-1}\right) \dist(A_i\cap\Omega,A_j\cap\Omega) \, .
\end{equation}
\end{enumerate}
\end{lemma}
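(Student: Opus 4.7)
The unifying observation is that the smallness hypothesis on $\tau$ combined with the Lipschitz bound on $v$ gives $\tau \, \|\nabla v\|_{L^\infty(\Omega(t))} \lesssim \Gamma_{q+1}^{-2}$, which is much smaller than $\Gamma_{q+1}^{-1}$ provided $a_*$ is sufficiently large. Hence the flow $X(\cdot,t)$ differs from the identity, on the time window $[t_0-\tau,t_0+\tau]$ and on the set $\Omega(t)$, by multiplicative factors of the form $\exp(\mathcal{O}(\Gamma_{q+1}^{-2}))\le 1+\Gamma_{q+1}^{-1}$. All five conclusions will be established by Gronwall arguments of this type applied respectively to $X(x,t)-X(y,t)$, to its unit direction, to the Jacobian $\nabla X$, and to the distance between distinct axes.

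For item (1), let $x,y\in\Omega$ and set $\Delta(t):=X(x,t)-X(y,t)$. Then $\partial_t \Delta = v(X(x,t),t) - v(X(y,t),t)$, so the assumed Lipschitz bound and Gronwall give $|\Delta(t)|\le |x-y|\exp(\tau\|\nabla v\|_{L^\infty(\Omega(\cdot))})$, which is bounded by $(1+\Gamma_{q+1}^{-1})|x-y|$ after taking the supremum in $x,y\in\Omega$. For item (2), using the identity
\begin{equation*}
\frac{\Delta(t)}{|\Delta(t)|} - \frac{\Delta(t_0)}{|\Delta(t_0)|} = \int_{t_0}^t \frac{1}{|\Delta(s)|}\bigl(\mathrm{Id}-\tfrac{\Delta(s)\otimes \Delta(s)}{|\Delta(s)|^2}\bigr)\bigl(v(X(x,s),s)-v(X(y,s),s)\bigr)\,ds,
\end{equation*}
the projector is a contraction and the integrand is bounded by $\|\nabla v\|_{L^\infty}$ using the mean value theorem, which yields an error of size $\tau \|\nabla v\|_{L^\infty} \lesssim \Gamma_{q+1}^{-2}< \Gamma_{q+1}^{-1}$. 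For item (3), the curve $s\mapsto X(x+s\xi,t)$ (parametrizing the image of the portion of $A_i$ between $x$ and $y$) has length $\int_0^{|x-y|} |\nabla X(x+s\xi,t)\,\xi|\,ds$; the Jacobian satisfies $\partial_t \nabla X = (\nabla v)\circ X \cdot \nabla X$, so Gronwall gives $\|\nabla X-\mathrm{Id}\|_{L^\infty(\Omega(\cdot))}\le e^{\tau\|\nabla v\|_{L^\infty}}-1\lesssim \Gamma_{q+1}^{-2}$, and integrating along $[0,|x-y|]$ produces the claimed $(1+\Gamma_{q+1}^{-1})|x-y|$ bound.

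For item (4), recall from Proposition~\ref{prop:pipe:shifted} that $\supp \WW^k_{\lambda_{q+1},r,\xi}$ lies in the $\twopi(4n_\ast\lambda_{q+1})^{-1}$-tube around the straight axes $A_i$; composing with $\Phi$ maps this support into the $X$-image of these tubes. A point at distance $\le \twopi(4n_\ast\lambda_{q+1})^{-1}$ from $A_i\cap\Omega$ is, after flowing, within distance $(1+\Gamma_{q+1}^{-1})\twopi(4n_\ast\lambda_{q+1})^{-1}$ of $A_i(t)$, by the same Jacobian estimate used for item (1) applied to the transverse direction. Item (5) follows from applying the two-sided Gronwall estimate to the quantity $X(x,t)-X(y,t)$ for points $x\in A_i\cap\Omega$ and $y\in A_j\cap\Omega$ realizing (approximately) the distance between the two axes, and then infimizing.

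The main technical subtlety is that the hypothesis is a Lipschitz bound on $v$ restricted to $\Omega(t)$, which is itself defined via the flow; one must therefore verify consistency (roughly, a continuity/bootstrap argument showing that on $[t_0-\tau,t_0+\tau]$ the flowed points remain in the region where the Lipschitz bound applies). This is easy given the smallness of $\tau$ but deserves attention. Beyond that, all of the estimates are quantitative Gronwall inequalities whose exponentials, on the timescale $\tau$, produce the uniform constant $1+\Gamma_{q+1}^{-1}$ appearing throughout the conclusion.
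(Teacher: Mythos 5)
The paper does not reprove this lemma; it is taken verbatim from~\cite{BMNV21}, and your Gronwall-based argument is the natural and essentially correct approach for it. Your key observation $\tau\,\|\nabla v\|_{L^\infty(\Omega(\cdot))}\lesssim\Gamma_{q+1}^{-2}$ is exactly right, and items (3)--(5) are handled correctly via the Jacobian estimate. However, the subtlety you flag at the end is slightly misdiagnosed: it is not a bootstrap issue, but a convexity issue. In items (1) and (2) you bound $\partial_t\Delta = v(X(x,t),t)-v(X(y,t),t)$ by $\|\nabla v\|_{L^\infty(\Omega(t))}\,|\Delta(t)|$ ``using the mean value theorem,'' but that step requires the \emph{straight segment} between $X(x,t)$ and $X(y,t)$ to lie in $\Omega(t)$, and $\Omega(t)=X(\Omega,t)$ need not be convex even though $\Omega$ is. The clean fix is to route every Lipschitz estimate through the Jacobian, using the convexity of $\Omega$ rather than of $\Omega(t)$: for $x,y\in\Omega$ write
\begin{equation*}
X(y,t)-X(x,t)=\int_0^1 \nabla X(z_s,t)\,(y-x)\,ds\,,\qquad z_s=(1-s)x+sy\in\Omega\,,
\end{equation*}
so that each $X(z_s,t)\in\Omega(t)$ and the hypothesis $|\nabla v(X(z_s,t),t)|\le\|\nabla v\|_{L^\infty(\Omega(t))}$ applies along the whole path. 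Gronwall applied to $\nabla X-\mathrm{Id}$ (which you already carry out for item (3)) then gives $\|\nabla X-\mathrm{Id}\|_{L^\infty(\Omega)}\lesssim\Gamma_{q+1}^{-2}$, and all five conclusions follow from this single estimate---including (1) and (2)---without ever invoking a segment inside the possibly non-convex set $\Omega(t)$. With this correction the proof is complete.
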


The following proposition is a variation on the statement and proof of \cite[Proposition 4.8]{BMNV21}. For simplicity, we only consider $\xi = e_3$. The generalization to other vectors $\xi \in \Xi$ follows from incorporating a rotation into the argument; for further details we refer to the final paragraph of the proof of \cite[Proposition 4.8]{BMNV21}. The main difference in the new Proposition is that the set on which placements are made now has dimensions $(\lambda_{q+1}r_2)^{-1}\times(\lambda_{q+1}r_2)^{-1}\times(\lambda_{q+1}r_1)^{-1}$ as opposed to $(\lambda_{q+1}r_1)^{-1}\times(\lambda_{q+1}r_1)^{-1}\times(\lambda_{q+1}r_1)^{-1}$ in~\cite{BMNV21}.

\begin{proposition}[\bf Placing straight pipes which avoid bent pipes]
\label{prop:disjoint:support:simple:alternate} 

Let $\lambda_q\lambda_{q+1}^{-1} \leq r_1 \leq r_2 \leq 1$ be such that $\lambda_{q+1}r_2 \in \mathbb{N}$. Let $\Omega\subset \T^3$ be a rectangular prism with the following properties:
\begin{enumerate}[(1)]
    \item The longest axis of $\Omega$ is parallel to $e_3$ and has length precisely $(\lambda_{q+1}r_1)^{-1}$. 
    \item There exists a constant $\const_\Omega$ (bounded independently of $q$) such that the face of $\Omega$ which is perpendicular to $e_3$ is a square of side length precisely $\const_\Omega(\Gamma_{q+1}^{-1}\lambda_{q+1}r_2)^{-1}$.
    \item\label{item:pipe:placement:three}  There exists a constant $\const_P$ such that for any convex subset $\Omega'\subset\Omega$ with $\textnormal{diam}\left(\Omega'\right) \leq 2\sqrt{3}\pi \left(\lambda_{q+1} r_2 \right)^{-1}$, there exist at most $\const_P\Gamma_{q+1}$ segments of deformed $\sfrac{\T^3}{\lambda_{q+1}r_2}$-periodic pipes of length $4\pi\left(\lambda_{q+1} r_2 \right)^{-1}$. Here, by ``segments of deformed pipes," we mean the objects constructed in Propositions~\ref{prop:pipe:shifted} and \ref{prop:pipeconstruction} which satisfy the conclusions \eqref{eq:diameter:inequality}--\eqref{e:axis:periodicity:1} from Lemma~\ref{lem:axis:control} on $\Omega$. Let $\mathsf{P}$ denote the union of the supports of the deformed pipe segments.
\end{enumerate}
Then, there exists a {\em geometric constant} $C_*\geq 1$ such that if 
\begin{align}\label{eq:r1:r2:condition:alt}
C_* \const_\Omega^2 \const_P \Gamma_{q+1}^3 r_2^2 \leq r_1 \, ,
\end{align}
then there exists a set of pipe flows $\WW^{k_0}_{e_3,\lambda_{q+1},r_2} \colon \T^3 \to \R^3$ which are $\sfrac{\T^3}{\lambda_{q+1}r_2}$-periodic, concentrated to width $2\pi(4\lambda_{q+1} n_*)^{-1}$ around axes with vector direction $e_3$, satisfy the properties listed in Proposition~\ref{prop:pipeconstruction}, and
\begin{align}\label{e:disjoint:conclusion}
\supp  \WW^{k_0}_{e_3,\lambda_{q+1},r_2}  \cap \mathsf{P} \cap \Omega = \emptyset \, .
\end{align}
\end{proposition}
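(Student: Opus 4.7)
The plan is to reduce the three-dimensional placement problem to a two-dimensional packing argument in the plane $e_3^\perp$ and then to conclude by pigeonhole over the $r_2^{-2}$ choices of $k_0$. Since each candidate pipe $\WW^{k_0}_{e_3,\lambda_{q+1},r_2}$ is translation-invariant in the $e_3$ direction (see item~(4) of Proposition~\ref{prop:pipeconstruction}), its support is a union of infinite straight cylinders of radius $\approx \lambda_{q+1}^{-1}$ parallel to $e_3$. Denoting by $\pi\colon \R^3 \to e_3^\perp$ the orthogonal projection and setting $\mathsf{Q} := \pi(\mathsf{P}\cap \Omega)$, the disjointness \eqref{e:disjoint:conclusion} is equivalent to requiring that the projected axis lattice of $\WW^{k_0}$ avoid a $C\lambda_{q+1}^{-1}$-neighborhood of $\mathsf{Q}$ in $e_3^\perp$. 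Reducing modulo the planar period lattice $(\lambda_{q+1}r_2)^{-1}\Z^2$, the $r_2^{-2}$ admissible values of $k_0$ correspond to grid points in a single fundamental cell of area $(\lambda_{q+1}r_2)^{-2}$, and it suffices to find one such grid point whose $C\lambda_{q+1}^{-1}$-disk misses the fold of the fattened $\mathsf{Q}$ into this cell.

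The crux is an area estimate for this fattened folded set. By a volumetric count, $\Omega$ can be covered by at most $\lesssim C_\Omega^2 \Gamma_{q+1}^2 (r_2/r_1)$ convex subsets of diameter $\leq 2\sqrt{3}\pi (\lambda_{q+1}r_2)^{-1}$---the factor $r_2/r_1$ accounts for the elongated $e_3$-axis of length $(\lambda_{q+1}r_1)^{-1}$, while $C_\Omega^2 \Gamma_{q+1}^2$ tiles the perpendicular face---so hypothesis~(3) yields a total of $\lesssim C_\Omega^2 C_P \Gamma_{q+1}^3 (r_2/r_1)$ deformed pipe segments in $\Omega$. By Lemma~\ref{lem:axis:control}, each such segment is nearly straight on its own length scale $(\lambda_{q+1}r_2)^{-1}$ with transverse radius $\approx \lambda_{q+1}^{-1}$, so in the worst case that its tangent is nearly perpendicular to $e_3$ its $\pi$-image has area $\lesssim \lambda_{q+1}^{-2} r_2^{-1}$; thickening by an additional $C\lambda_{q+1}^{-1}$ preserves this order. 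Summing and folding (which cannot increase area), the fraction of the fundamental cell occupied is bounded by a universal constant times
\begin{equation*}
\frac{C_\Omega^2 C_P \Gamma_{q+1}^3 \, \lambda_{q+1}^{-2}/r_1}{(\lambda_{q+1}r_2)^{-2}} \;=\; C_\Omega^2 C_P \Gamma_{q+1}^3 \, \frac{r_2^2}{r_1} \, .
\end{equation*}
Hypothesis~\eqref{eq:r1:r2:condition:alt} forces this fraction to be at most $1/C_*$, and choosing $C_*$ larger than a geometric constant (related to the packing density of disks of radius $\lambda_{q+1}^{-1}$ in the cell) guarantees that the ``bad'' grid points---those whose disks meet the fattened folded $\mathsf{Q}$---form a proper subset of all $r_2^{-2}$ grid points, so pigeonhole produces the required $k_0$.

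The main technical subtlety will be the bookkeeping for the non-rigidity of the bent pipes, as quantified in Lemma~\ref{lem:axis:control}: axes may bend by a factor of $\Gamma_{q+1}^{-1}$, supports dilate by $(1+\Gamma_{q+1}^{-1})$, and inter-axis spacings distort by the same factor, so the worst-case projection area estimate and the near-disjointness of the $r_2^{-2}$ grid disks must be verified with these perturbations in place. Each of these corrections enters only at the level of universal constants and is absorbed into $C_*$. Once the case $\xi = e_3$ is settled, the generalization to arbitrary $\xi \in \Xi$ follows verbatim from the rotation argument at the end of the proof of \cite[Proposition~4.8]{BMNV21}, the only structural change from that proposition being the anisotropy of $\Omega$, which is precisely what the accounting above was designed to exploit.
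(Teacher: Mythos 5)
Your proposal is correct and follows essentially the same strategy as the paper's proof: count segments by tiling $\Omega$ with periodic cells and invoking hypothesis (3), bound the footprint of each segment's projection onto the face perpendicular to $e_3$, and then pigeonhole over the $r_2^{-2}$ shifts using \eqref{eq:r1:r2:condition:alt}. The only cosmetic difference is that you phrase the second step as an area-fraction estimate while the paper counts grid squares directly (as set up by Proposition~\ref{prop:pipe:shifted}, item (2)); since each grid square has area $\approx\lambda_{q+1}^{-2}$, the two formulations are equivalent.
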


\begin{proof}[Proof of Proposition~\ref{prop:disjoint:support:simple:alternate}]
The proof has been streamlined relative to the original version \cite[Proposition 4.8]{BMNV21}, although the fundamental ideas remain unchanged. We divide the proof into three steps, in which we count the number of segments of deformed pipe of length $\approx(\lambda_{q+1}r_2)^{-1}$, then project each segment onto the smallest face of $\Omega$ and cover it with squares of size $\approx \lambda_{q+1}^{-1}$, and finally use a pigeonhole argument and the bound \eqref{eq:r1:r2:condition:alt} to find a shift $k_0$ satisfying \eqref{e:disjoint:conclusion}.

\textbf{Step 1:}~To count the number of {deformed} segments of pipe which may comprise $\mathsf{P}\cap \Omega$, we appeal to assumption~\eqref{item:pipe:placement:three} and volume considerations.  The dimensions of $\Omega$ imply that $\Omega$ is composed of at most $\const_\Omega^2\Gamma_{q+1}^{2}\cdot r_2 r_1^{-1} $ periodic cells of side length $2\pi(\lambda_{q+1}r_2)^{-1}$. Applying \eqref{item:pipe:placement:three} with each of these cells implies that the number of distinct segments of pipe of length $4\pi(\lambda_{q+1}r_2)^{-1}$ comprising $\mathsf{P}$ is at most
$$\const_P\const_\Omega^2\Gamma_{q+1}^3\cdot r_2 r_1^{-1} \, . $$

\textbf{Step 2:}~We now measure the size of the shadows of the deformed segments of pipe when projected onto the face of $\Omega$ which is perpendicular to $e_3$. First, the length constraint on the segments of deformed pipe implies that the projection of any single segment onto the face of $\Omega$ which is perpendicular to $e_3$ has length at most $4\pi(\lambda_{q+1}r_2)^{-1}$. Now consider the grid $\mathbb{G}_{\lambda_{q+1},r_2}$ from Proposition~\ref{prop:pipe:shifted}, item \eqref{item:point:2}. This grid contains squares of diameter $\approx\lambda_{q+1}^{-1}$, each of which may contain part of the support of an $e_3$-oriented periodic pipe flow, or may be empty, depending on the choice of shift.  Applying a covering argument using the above derived length constraint and \eqref{e:axis:union}, we see that there exists a dimensional constant $C_*$ such that the number of grid squares needed to cover the projection of a single segment is at most $C_* r_2^{-1}$.  Since the number of segments was bounded by $\const_P \const_\Omega^2 \Gamma_{q+1}^3 \cdot r_2 r_1^{-1} $ from Step 1, we see that the \emph{total} number of grid squares needed to cover the projection of $\mathsf{P}$ is at most
$$ \const_P \const_\Omega^2 \Gamma_{q+1}^3 \cdot r_2 r_1^{-1} \cdot C_* r_2^{-1} \leq \const_P \const_\Omega^2 C_* \Gamma_{q+1}^3 r_1^{-1} \, . $$

\textbf{Step 3:}~In order to conclude the proof, we appeal to a pigeonhole argument, made possible by the bound from Step 2. Indeed, we have obtained an upper bound on the number of grid squares which are deemed ``occupied'' by projections of deformed segments of pipe.  Conversely, from Proposition~\ref{prop:pipe:shifted}, the number of possible choices for the shifts $k_0$ is $r_2^{-2}$. Applying assumption~\eqref{eq:r1:r2:condition:alt}, we conclude by the pigeonhole principle that there exists a ``free'' shift  $k_0$ 
such that \emph{none} of the occupied squares intersect the support of $\WW_{\lambda_{q+1},r_2,e_3}^{k_0}$. Thus we have proven \eqref{e:disjoint:conclusion}, concluding the proof of the lemma. 
\end{proof}

\section{Mollification}
\label{sec:mollification:stuff}

Let $\phi(\zeta):\mathbb{R}\rightarrow \mathbb{R}$ be a smooth, $C^\infty$ function compactly supported in the set $\{\zeta: |\zeta|\leq 1 \}$ which in addition satisfies
\begin{equation}\notag
\int_{\R} \phi(\zeta) \,d\zeta = 1, \qquad \int_{\R} \phi(\zeta) \zeta^n =0 \quad \forall n=1,2,...,\Nindv.
\end{equation}
Let $\tilde{\phi}(x):\mathbb{R}^3\rightarrow \mathbb{R}$ be defined by $\tilde{\phi}(x)=\phi(|x|)$. For $\lambda,\mu\in\mathbb{R}$, define
\begin{equation}\notag
    \phi_{\lambda}^{(x)}(x) = {{\lambda}^3} \tilde{\phi}\left( \lambda x \right), \qquad \phi_\mu^{(t)}(t) = \mu \phi(\mu t).
\end{equation}
For $q\in\mathbb{N}$, we will define the spatial and temporal convolution operators
\begin{equation}\label{mollifier:operators}
   \Pqx := \phi_{\tilde{\lambda}_q}^{(x)} \ast, \qquad \Pqt := \phi_{\tilde{\tau}_{q-1}^{-1}}^{(t)} \ast , \qquad \Pqxt := \Pqx \circ \Pqt.
\end{equation}

\begin{lemma}[\textbf{Mollifying the Euler-Reynolds system}]\label{lem:mollifying:ER}
Let $(v_q,\RR_q)$ solve the Euler-Reynolds system \eqref{eq:Euler:Reynolds:again}, and assume that $\psi_{i,q'}, u_{q'}$ for $q'<q$, $w_q$, and $\RR_q$ satisfy \eqref{eq:inductive:assumption:derivative}--\eqref{eq:nasty:Dt:wq:WEAK:old}. Then, we mollify $(v_q,\RR_q)$ at spatial scale $\tilde \lambda_q^{-1}$ and temporal scale $\tilde \tau_{q-1}$ (cf.~the notation in \eqref{mollifier:operators}), and accordingly define 
\begin{align}
\vlq:=\Pqxt v_q
\qquad \mbox{and} \qquad 
\RR_{\ell_q}:=\Pqxt \RR_q \,.
\label{eq:vlq:Rlq:def}
\end{align}
The mollified velocity $v_{\ell_q}$ satisfies the Euler-Reynolds system with stress $\RR_{\ell_q} +  \RR_{q}^{\textnormal{comm}}$, where the commutator stress $\RR_q^{\textnormal{comm}}$ satisfies the estimate (consistent with \eqref{eq:Rq:inductive:assumption} and \eqref{eq:Rq:inductive:uniform} at level $q+1$)
\begin{align}
\label{eq:Rqcomm:bound}
\bigl\| D^N \Dtq^M \RR_q^\textnormal{comm} \bigr\|_{L^\infty} 
\leq  \Gamma_{q+1}^{-1}  \shaqqplusone \delta_{q+2} \lambda_{q+1}^N
\MM{M, \Nindt, \tau_{q}^{-1}, \Gamma_q^{-1}\tilde \tau_q^{-1} }
\end{align}
for all  $N, M \leq 3 \Nindv$, and we have that
\begin{align}
\label{eq:vq:minus:mollified}
\norm{D^N D_{t,q-1}^M (v_{\ell_q} - v_q)}_{L^\infty} 
\leq 
\lambda_q^{-2}   \delta_{q}^{\sfrac 12} \MM{N,2\Nindv,\lambda_q,\tilde \lambda_q} \MM{M, \Nindvt ,\tau_{q-1}^{-1} , \Tilde{\tau}_{q-1}^{-1} \Gamma_q^{-1}}
\end{align}
for all  $N, M \leq 3 \Nindv$. 
Furthermore, $u_q = v_{\ell_q} - v_{\ell_{q-1}}$ satisfies the bound \eqref{eq:inductive:assumption:uq:all} with $q'$ replaced by $q$ 
\begin{subequations}
\begin{align}
    \left\| \psi_{i,q-1} D^N D_{t,q-1}^M u_q \right\|_{L^2} &\leq \delta_{q}^{\sfrac{1}{2}} \MM{N, 2 \Nindv,\lambda_{q},\tilde{\lambda}_q} \MM{M, \Nindvt ,\Gamma_{q}^i \tau_{q-1}^{-1}, \Tilde{\tau}_{q-1}^{-1}} \,,
    \label{eq:mollified:velocity}
    \\
\norm{D^N D_{t,q-1}^M u_{q}}_{L^\infty(\supp \psi_{i,q-1})} 
&\leq \Gamma_{q}^{\badshaq} \Theta_{q}^{\sfrac 12}  \MM{N,2\Nindv,\lambda_q, \tilde\lambda_q} 
\MM{M,\Nindvt,  \Gamma_q^{i+1} \tau_{q-1}^{-1}, \tilde{\tau}_{q-1}^{-1}}  \,,  \label{eq:inductive:assumption:uniform:mol}
\end{align}
\end{subequations}
for all $N+M\leq 2\Nfin$. Finally, $\RR_{\ell_q}$ satisfies bounds which extend \eqref{eq:Rq:inductive:assumption:all} to  the mollified stress
\begin{subequations}
\begin{align}
\bigl\| \psi_{i,q-1} D^N D_{t,q-1}^M \RR_{\ell_q} \bigr\|_{L^1} 
&\lesssim \Gamma_q^\shaq \delta_{q+1} \MM{N, 2 \Nindv ,\lambda_q,\Tilde{\lambda}_q}\MM{M, \NindRt, \Gamma_q^{i+2} \tau_{q-1}^{-1}, \Tilde{\tau}_{q-1}^{-1}}  \,,
\label{eq:mollified:stress:bounds}
\\
\bigl\| D^N D_{t,q-1}^M \mathring{R}_{\ell_q}\bigr\|_{L^\infty(\supp \psi_{i,q-1})}
&\lesssim \Gamma_q^\badshaq \MM{N,2\Nindv,\lambda_q, \tilde\lambda_q} 
\MM{M,\Nindvt,  \Gamma_q^{i+3} \tau_{q-1}^{-1} \,, \tilde{\tau}_{q-1}^{-1}} 
\label{eq:Rq:inductive:uniform:moll}
\end{align}
\end{subequations}
for all $N+M\leq 2 \Nfin$.
\end{lemma}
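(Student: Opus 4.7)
The plan is to carry out a standard mollification argument adapted to the local (cutoff-dependent) setting, using the vanishing-moments property of $\phi$ as the main quantitative tool, and then to propagate the new $L^\infty$-level bounds alongside the $L^1$/$L^2$ bounds inherited from~\cite{BMNV21}. I would apply $\Pqxt$ to \eqref{eq:Euler:Reynolds:again}; since mollification commutes with $\partial_t$, $\div$, and $\nabla$, the mollified velocity $v_{\ell_q}$ solves Euler--Reynolds with stress $\RR_{\ell_q}+\RR_q^{\textnormal{comm}}$ where
\begin{align*}
\RR_q^{\textnormal{comm}} \;=\; \Pqxt(v_q \mathring\otimes_s v_q) \;-\; v_{\ell_q} \mathring\otimes_s v_{\ell_q},
\end{align*}
modulo a pressure. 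The pressure can be reabsorbed since we are free to subtract the trace.

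For the estimate \eqref{eq:Rqcomm:bound} on $\RR_q^{\textnormal{comm}}$, I would use the standard Constantin--E--Titi identity expressing the commutator as a double mollification integral, and exploit the fact that $\phi$ has $\Nindv$ vanishing moments in each variable. Taylor expanding $v_q(x-y,t-s) - v_q(x,t)$ to order $\Nindv$ (in space) and analogously in time, the leading contributions annihilate and we are left with remainders of the form $\tilde\lambda_q^{-\Nindv}$ times $\Nindv$ spatial derivatives, and $\tilde\tau_{q-1}^{\Nindv}$ times $\Nindv$ time derivatives of the quadratic expression $v_q\otimes v_q$. Using \eqref{eq:inductive:assumption:uniform}, \eqref{eq:inductive:asumption:derivative:q:uniform}, and the secondary assumptions \eqref{eq:nasty:D:wq:old}--\eqref{eq:bob:Dq':old} to control these derivatives pointwise (with an appropriate $\Gamma$-power loss absorbed by the enormous gain $\tilde\lambda_q^{-\Nindv}$, $\tilde\tau_{q-1}^{\Nindv}$), the right-hand side of \eqref{eq:Rqcomm:bound} is recovered once $\Nindv$ is chosen large compared to $\cstar$, $\badshaq$, and the other fixed parameters. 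The same vanishing-moments argument, applied directly to $v_{\ell_q}-v_q = \Pqxt v_q - v_q$, yields \eqref{eq:vq:minus:mollified}: an $\Nindv$-fold Taylor expansion beats any prefactor once $N,M\le 3\Nindv$ because we have a full $\lambda_q^{-2}$ of slack relative to $\tilde\lambda_q$.

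For $u_q$ and the inductive bounds \eqref{eq:mollified:velocity}--\eqref{eq:inductive:assumption:uniform:mol}, I would use the decomposition \eqref{inductive:velocity:frequency}: $u_q = \Pqxt w_q + (\Pqxt v_{\ell_{q-1}} - v_{\ell_{q-1}})$. The first term inherits the $L^2$ and $L^\infty$ bounds from \eqref{eq:inductive:assumption:wq:all} at level $q$ once one verifies that $\Pqxt$ acts boundedly on $\psi_{i,q-1}$-localized norms; this requires showing that $\tilde\lambda_q^{-1}$ and $\tilde\tau_{q-1}$ are both small relative to the characteristic Eulerian and Lagrangian scales of the cutoff $\psi_{i,q-1}$, which is exactly the content of \eqref{eq:sharp:Dt:psi:i:q:old}--\eqref{eq:sharp:Dt:psi:i:q:mixed:old} and is proved in~\cite[Section~5]{BMNV21}. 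The second term is controlled by \eqref{eq:vq:minus:mollified} applied at level $q-1$, which is negligible. The commutator between $\Pqxt$ and $D_{t,q-1}$ is handled by the standard trick of writing $[\Pqxt, D_{t,q-1}] = [\Pqxt, u_{q-1}\cdot\nabla] + \ldots$ and bounding it using the previously established Lipschitz estimates on $v_{\ell_{q-1}}$; since the mollification scales are strictly sub-critical, this commutator is a lower-order correction.

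For $\RR_{\ell_q}$, the bounds \eqref{eq:mollified:stress:bounds}--\eqref{eq:Rq:inductive:uniform:moll} follow by applying the mollifier directly to \eqref{eq:Rq:inductive:assumption}--\eqref{eq:Rq:inductive:uniform} and paying a single $\Gamma_q$-factor loss (absorbed in the material-derivative count going from $\Gamma_q^{i+1}\tau_{q-1}^{-1}$ to $\Gamma_q^{i+2}\tau_{q-1}^{-1}$ in $L^1$ and from $\Gamma_q^{i+2}$ to $\Gamma_q^{i+3}$ in $L^\infty$) to account for the discrepancy between the mollifier support and the cutoff support, and for the commutator $[\Pqxt, D_{t,q-1}]\RR_q$. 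The main obstacle is this last commutator: one must verify that the bookkeeping of $\Gamma_q$-losses in both the spatial frequency and the material-derivative counts leaves enough room, and this is precisely the role of the parameter gap between $\NindSmall$, $\Nindvt$, and $\Nindv$. Everything else is a straightforward adaptation of the argument in~\cite[Section~5]{BMNV21}, with each occurrence of an $L^1$ or $L^2$ bound now paired with a structurally identical $L^\infty$ bound whose proof is formally the same.
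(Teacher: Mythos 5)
Your proposal is correct and takes essentially the same route as the paper's proof, which defers the $L^1$ and $L^2$ bounds to \cite[Lemma~5.1]{BMNV21} and derives the new $L^\infty$ bounds \eqref{eq:inductive:assumption:uniform:mol} and \eqref{eq:Rq:inductive:uniform:moll} via the decomposition $\Pqxt w_q = w_q + (\Pqxt - \Id) w_q$ together with the pre-existing $L^\infty$ estimate on $(\Pqxt-\Id)v_{\ell_{q-1}}$ from \cite[(5.43)]{BMNV21}, exactly as you describe. Two slips are only cosmetic: the term $(\Pqxt-\Id)v_{\ell_{q-1}}$ is controlled by a separate vanishing-moments estimate for the level-$(q-1)$ mollified velocity rather than by ``\eqref{eq:vq:minus:mollified} at level $q-1$,'' and $[\Pqxt,D_{t,q-1}]=[\Pqxt,v_{\ell_{q-1}}\cdot\nabla]$ rather than $[\Pqxt,u_{q-1}\cdot\nabla]$, though neither affects the argument.
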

\begin{proof}[Proof of Lemma~\ref{lem:mollifying:ER}]
The bounds in  \eqref{eq:Rqcomm:bound}--\eqref{eq:mollified:velocity}, and also \eqref{eq:mollified:stress:bounds}, match those of \cite[Lemma~5.1, equations (5.3)--(5.5) and (5.7)]{BMNV21}, and so we omit the proofs.  We note that the analogue of estimate \eqref{eq:vq:minus:mollified} in \cite[equation (5.4)]{BMNV21} contains a typo in the sharp material derivative cost.  Specifically, one may replace the cost of $\tau_{q-1}^{-1}\Gamma_{q}^{i-1}$ simply with $\tau_{q-1}^{-1}$ (which is actually the estimate that can be proved using the argument in \cite{BMNV21}). The only new estimates which would require a proof are \eqref{eq:inductive:assumption:uniform:mol} and \eqref{eq:Rq:inductive:uniform:moll}.

In order to give an idea of how to prove \eqref{eq:inductive:assumption:uniform:mol}, we follow the method of proof from \cite{BMNV21} for \eqref{eq:mollified:velocity}. When either $N\geq 3\NindLarge$ or $M\geq3\NindLarge $, an even stronger bound than \eqref{eq:inductive:assumption:uniform:mol} was previously established in \cite[Lemma~5.1, equation~(5.6)]{BMNV21}.  Thus, we only need to consider \eqref{eq:inductive:assumption:uniform:mol} for $N,M\leq 3\NindLarge$. We appeal to \eqref{inductive:velocity:frequency} and split $u_q = \Pqxt w_q + \left( \Pqxt v_{\ell_{q-1}} - v_{\ell_{q-1}}\right)$. Since the good term $(\Pqxt - \Id) v_{\ell_{q-1}}$ was \emph{already} estimated in $L^\infty$, cf. \cite[equation~(5.43)]{BMNV21} with a stronger bound than that required by \eqref{eq:inductive:assumption:uniform:mol}, we can consider just the main term $\Pqxt w_q$. We split $\Pqxt w_q$ as $\Pqxt w_q = w_q + (\Pqxt-\Id) w_q$. In view of \eqref{eq:inductive:asumption:derivative:q:uniform}, which provides a satisfactory bound on $w_q$, we are only left with $(\Pqxt-\Id) w_q$. However, this term was \emph{already} estimated in $L^\infty$ in \cite[equations~(5.33)--(5.35)]{BMNV21}, and so no new proof is required.  Thus \eqref{eq:inductive:assumption:uniform:mol} is satisfied. 

The proof of \eqref{eq:Rq:inductive:uniform:moll} utilizes the same methodology that produced bounds for $\Pqxt w_q$ from inductive assumptions on $w_q$.  Specifically, the material derivative bounds have been relaxed by a factor of $\Gamma_q$ (the second $\Gamma_q$ loss coming again from the fact that \eqref{eq:Rq:inductive:uniform:moll} is estimated on the support of $\psi_{i,q-1}$), the spatial derivative bounds have been relaxed from $\lambda_q$ to $\tilde\lambda_q$ when $N\geq 2\NindLarge$, and the available number of estimates on the un-mollified stress $\RR_q$ was much more than $2\NindLarge$, specifically $3\NindLarge$. We therefore omit any further discussion and refer the reader to the proof of \cite[Lemma~5.1]{BMNV21}.
\end{proof}

\section{Cutoffs}
\label{sec:cutoff}

\subsection{Velocity cutoff functions}
\label{sec:cutoff:velocity:definitions}
For all $q\geq 1$ and $0\leq m\leq\NcutSmall$, we construct the following cutoff functions.  The specifics of the construction and the proof are contained in \cite[Appendix A.2]{BMNV21}. To avoid abuse of notation, here we denote these smooth cutoffs using the capital letters $\Psi_{m,q}$ and $\tilde \Psi_{m,q}$, instead of the notation in \cite[Appendix A.2]{BMNV21} (which was $\psi_{m,q}$ and $\tilde \psi_{m,q}$).

\begin{lemma}\label{lem:cutoff:construction:first:statement}
For all $q\geq 1$ and $0\leq m \leq \NcutSmall$, there exist smooth cutoff functions $\tilde\Psi_{m,q},\Psi_{m,q}:[0,\infty)\rightarrow[0,1]$ which satisfy the following.
\begin{enumerate}[(1)]
    \item\label{item:cutoff:1} The function $\tilde\Psi_{m,q}$ satisfies ${\bf 1}_{[0,\frac{1}{4}\Gamma_q^{2(m+1)}]} \leq \tilde\Psi_{m,q} \leq {\bf 1}_{[0,\Gamma_q^{2(m+1)}]}$.
    	\item\label{item:cutoff:2}  The function $\Psi_{m,q}$ satisfies ${\bf 1}_{[1,\frac{1}{4}\Gamma_q^{2(m+1)}]} \leq \Psi_{m,q} \leq {\bf 1}_{[\frac{1}{4},\Gamma_q^{2(m+1)}]}$.
    \item For all $y\geq 0$, a partition of unity is formed as
    \begin{align}
    \tilde \Psi_{m,q}^2(y) + \sum_{{i\geq 1}} \Psi_{m,q}^2\bigl(\Gamma_{q}^{-2i(m+1)} y\bigr) = 1 \, .
    \label{eq:tilde:partition}
    \end{align}
    \item $\tilde\Psi_{m,q}$ and $\Psi_{m,q}(\Gamma_q^{-2i(m+1)}\cdot)$ satisfy
    \begin{align}
   \supp \tilde\Psi_{m,q}(\cdot) \cap \supp \Psi_{m,q}\bigl(\Gamma_q^{-2i(m+1)}\cdot\bigr) &= \emptyset \quad \textnormal{if} \quad i \geq 2,\notag\\
   \supp \Psi_{m,q}\bigl(\Gamma_q^{-2i(m+1)}\cdot\bigr) \cap \supp \Psi_{m,q}\bigl(\Gamma_q^{-2i'(m+1)}\cdot\bigr) &= \emptyset \quad \textnormal{if} \quad |i-i'|\geq 2. \label{eq:psi:support:base:case}
    \end{align}
    \item For $0\leq N \leq \Nfin$, when $0\leq y<\Gamma_q^{2(m+1)}$ we have
    \begin{align}
    {|D^N \tilde \Psi_{m,q}(y)|}    &\lesssim  {(\tilde \Psi_{m,q}(y))^{1-N/\Nfin}}
     \Gamma_q^{-2N(m+1)}. \notag
    \end{align}
For $\frac{1}{4}<y<1$ we have
    \begin{align}
     {|D^N  \Psi_{m,q}(y)|} &\lesssim {( \Psi_{m,q}(y))^{1- N / \Nfin}} \notag \, ,
    \end{align}
    while for $\frac{1}{4}\Gamma_q^{2(m+1)}<y<\Gamma_q^{2(m+1)}$ we have
    \begin{align}
     {|D^N  \Psi_{m,q}(y)|} &\lesssim \Gamma_q^{-2N(m+1)} {( \Psi_{m,q}(y))^{1- N / \Nfin}} \notag \, .
    \end{align}
In each of the above inequalities, the implicit constants depend on $N$ but not $m$ or $q$.
\end{enumerate}
\end{lemma}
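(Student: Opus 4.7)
The plan is to reduce everything to building a single smooth scalar profile $F:[0,\infty)\to[0,1]$, from which $\tilde\psi_{m,q}$ and $\psi_{m,q}$ are obtained by an elementary rescaling. Set $K:=\Gamma_q^{2(m+1)}$, which I may assume is $\geq 4$ by taking $a_*$ sufficiently large, and choose $F$ smooth, non-decreasing, with $F\equiv 0$ on $[0,1/4]$ and $F\equiv 1$ on $[1,\infty)$. Define
\begin{align*}
\tilde\psi_{m,q}^2(y) := 1 - F(y/K), \qquad \psi_{m,q}^2(y) := F(y) - F(y/K).
\end{align*}
Both expressions are non-negative (the second because $F$ is non-decreasing and $y\geq y/K$), and taking positive square roots produces $\tilde\psi_{m,q},\psi_{m,q}:[0,\infty)\to[0,1]$. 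The support properties (1)--(2) and (4) are immediate from the definition of $F$: for example, $\psi_{m,q}^2(y)=1$ precisely when $F(y)=1$ and $F(y/K)=0$, which holds exactly on $[1,K/4]$; and $\supp \psi_{m,q}(K^{-i}\cdot)\subset [K^i/4,K^{i+1}]$, which yields the claimed disjointness using $K\geq 4$. The partition of unity \eqref{eq:tilde:partition} is a telescoping identity
\begin{align*}
\sum_{i\geq 1} \psi_{m,q}^2(K^{-i}y) = \sum_{i\geq 1}\bigl(F(K^{-i}y)-F(K^{-i-1}y)\bigr) = F(K^{-1}y) = 1 - \tilde\psi_{m,q}^2(y),
\end{align*}
where the tail contribution $F(K^{-i-1}y)\to F(0)=0$ as $i\to\infty$.

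The main technical step, and the place where genuine work is required, is the self-similar derivative estimate in item (5): the bound $|D^N\psi_{m,q}|\lesssim \psi_{m,q}^{1-N/\Nfin}$ is strictly stronger than any naive smoothness estimate, since $\psi_{m,q}^{1-N/\Nfin}\to 0$ as $\psi_{m,q}\to 0$. The standard remedy is to choose $F$ inside a Gevrey-type class built from a controlled number of compositions of a single bump, arranged so that on the transition intervals $(1/4,1)$ and $(K/4,K)$ it satisfies the analogous estimates $|D^N F|\lesssim F^{1-N/\Nfin}$ and $|D^N(1-F)|\lesssim (1-F)^{1-N/\Nfin}$ for all $N\leq \Nfin$. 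Feeding this into the definition of $\psi_{m,q}^2$ and using the fact that on $(1/4,1)$ only the term $F(y)$ is active while on $(K/4,K)$ only $F(y/K)$ is active, one obtains $|D^N\psi_{m,q}^2|\lesssim \psi_{m,q}^{2(1-N/\Nfin)}$ in the unit-scale transition region and the same bound weighted by $K^{-N}=\Gamma_q^{-2N(m+1)}$ in the large-scale transition region.

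To pass from $\psi_{m,q}^2$ to $\psi_{m,q}$ itself, one argues by induction on $N$ using the identity
\begin{align*}
2\,\psi_{m,q}\,D^N \psi_{m,q} = D^N(\psi_{m,q}^2) - \sum_{0 < k < N}\binom{N}{k} D^k\psi_{m,q}\,D^{N-k}\psi_{m,q}
\end{align*}
on the open set $\{\psi_{m,q}>0\}$: dividing by $\psi_{m,q}$ and substituting the inductive hypothesis yields exactly the required estimate, provided $\Nfin$ is taken large enough to absorb the Fa\`a di Bruno combinatorial losses (which is consistent with the role of $\Nfin$ throughout the paper, cf.~Section~\ref{sec:parameters:DEF}). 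Since this construction is independent of $q$ and is carried out in complete detail in \cite[Appendix A.2]{BMNV21}, I only indicate its structure here; the main obstacle, should one wish to reproduce it, is organizing the Gevrey building block and the Fa\`a di Bruno expansion so that the weights $K^{-N}$ are tracked cleanly through the induction.
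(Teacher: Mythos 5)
Your construction is the expected one — the paper gives no proof of its own and defers to \cite[Appendix~A.2]{BMNV21}, which builds the cutoffs from a single monotone Gevrey profile in essentially the way you describe. The definitions $\tilde\psi_{m,q}^2 = 1-F(\cdot/K)$, $\psi_{m,q}^2 = F - F(\cdot/K)$ with $K=\Gamma_q^{2(m+1)}$, the telescoping identity for the partition of unity, and the support bookkeeping are all correct (one pedantic remark: you should insist $K>4$, not just $K\geq 4$, so that the closed supports $[0,K]$ and $[K^2/4,K^3]$ are genuinely disjoint rather than touching at a point — harmless, since $a_*$ is anyway taken large).

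There is, however, a real gap in your exponent accounting when passing from $\psi_{m,q}^2$ to $\psi_{m,q}$, and it is not of the type that ``taking $\Nfin$ large'' fixes. You assert $|D^N\psi_{m,q}^2|\lesssim\psi_{m,q}^{2(1-N/\Nfin)}$ and then claim the Leibniz identity
\begin{align*}
2\,\psi_{m,q}\,D^N\psi_{m,q} = D^N(\psi_{m,q}^2) - \sum_{0<k<N}\tbinom{N}{k}D^k\psi_{m,q}\,D^{N-k}\psi_{m,q}
\end{align*}
together with induction gives $|D^N\psi_{m,q}|\lesssim\psi_{m,q}^{1-N/\Nfin}$. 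It does not: the cross terms contribute $\psi_{m,q}^{2-N/\Nfin}$ under the inductive hypothesis, but the leading term contributes only $\psi_{m,q}^{2-2N/\Nfin}$, which is the larger of the two for $\psi_{m,q}\leq 1$. Dividing by $\psi_{m,q}$ then yields the weaker bound $|D^N\psi_{m,q}|\lesssim\psi_{m,q}^{1-2N/\Nfin}$, which for $N>\Nfin/2$ has a negative exponent and blows up as $\psi_{m,q}\to 0$, so it does not imply the stated estimate. The fix is simple and should be stated explicitly: build the profile $F$ so that $|D^N F|\lesssim\min\{F,1-F\}^{1-N/(2\Nfin)}$ for all $N\leq 2\Nfin$ (any Gevrey exponent is achievable for a fixed $\Nfin$). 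Then $|D^N(\psi_{m,q}^2)|\lesssim\psi_{m,q}^{2-N/\Nfin}$ (with the extra $K^{-N}$ on the outer transition interval), and your induction closes with the correct exponent. With that adjustment your outline is a faithful reconstruction of the cited argument.
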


\begin{definition}\label{def:istar:j}
Given $i,j,q \geq 0$, we define
\begin{align*}
i_* = i_*(j,q) = i_*(j) = \min\{ i \geq 0 \colon \Gamma_{q+1}^{i} \geq \Gamma_q^{j} \}.
\end{align*}
\end{definition}
\noindent Note that for $j=0$, we have that $i_*(j)=0$.

At stage $q\geq 1$ of the iteration (by convention $w_0=u_0=0$) and for $m\leq\NcutSmall$ and $j_m\geq 0$, we define
\begin{align}
h_{m,j_m,q}^2(x,t):= 
\sum_{n=0}^{\NcutLarge} \Gamma_{q+1}^{-2i_*\left(j_m\right)} \delta_{q}^{-1} \bigl(\lambda_{q}\Gamma_q\bigr)^{-2n} \bigl(\tau_{q-1}^{-1}\Gamma_{q+1}^{i_*(j_m)+2}\bigr)^{-2m}    |D^{n} D_{t,q-1}^m u_{q}(x,t)|^2 \, .
\label{eq:h:j:q:def}
\end{align}
One should view $h_{m,j_m,q}$ as a measurement of the extent to which the amplitude of $D_{t,q-1}^m u_q$ (or its spatial derivatives) exceeds $\Gamma_{q+1}^{i_*(j_m)}\approx \Gamma_q^j$, where $\tau_{q-1}^{-1}\Gamma_q^j$ is the material derivative cost on the support of $\psi_{j,q-1}$.  The extra room of $\Gamma_q$ in the spatial derivative cost and $\Gamma_{q+1}^2$ in the material derivative cost accrues extra factors of smallness for high numbers of derivatives.  This allows us to eventually plug in a very lossy bound for $|D^n D_{t,q-1}^m u_q |$ and still show that the resulting contribution to the sum is very small.  To measure the size of $h_{m,j_m,q}$ precisely, we now rescale and plug into a cutoff function.

\begin{definition}[\bf Intermediate Cutoff Functions]\label{def:intermediate:cutoffs}
Given $q\geq 1$, $m\leq\NcutSmall$, and $j_m\geq 0$ we define $\psi_{m,i_m,j_m,q}$ by
\begin{align}
 \psi_{m,i_m,j_m,q}(x,t) 
 &= \Psi_{m,q+1}  \bigl( \Gamma_{q+1}^{-2(i_m-i_*(j_m))(m+1)} h_{m,j_m,q}^2  (x,t) \bigr) 
\label{eq:psi:i:j:def}
\end{align}
for $i_m> i_*(j_m)$, 
while for $i_m=i_*(j_m)$,
\begin{align}
 \psi_{m,i_*(j_m),j_m,q}(x,t) 
 &= \tilde \Psi_{m,q+1} \left( h_{m,j_m,q}^2(x,t) \right).
\label{eq:psi:i:i:def}
\end{align}
The intermediate cutoff functions $\psi_{m,i_m,j_m,q}$ are equal to zero for $ i_m < i_*(j_m)$.  
\end{definition}
The indices $i_m$ and $j_m$ {were shown in see \cite[Lemma~6.14]{BMNV21} and \cite[equation~(6.27)]{BMNV21}} to run up to some maximal values $i_{\rm max}$ and $\tilde{i}_{\textnormal{max}}$, although in the present context, it will be necessary to propagate a much sharper bound on $\imax$; see Lemma~\ref{lem:maximal:i}. With this notation and in view of \eqref{eq:tilde:partition} and \eqref{eq:psi:support:base:case}, it immediately follows that
\begin{align}
\sum_{i_m\geq0} \psi_{m,i_m,j_m,q}^2 = \sum_{i_m\geq i_*(j_m)} \psi_{m,i_m,j_m,q}^2 = \sum_{\{ i_m \colon \Gamma_{q+1}^{i_m} \geq \Gamma_q^{j_m} \}} \psi_{m,i_m,j_m,q}^2 \equiv 1
\notag
\end{align}
for any $m$ and for $|i_m-i'_m|\geq 2$, 
\begin{equation}\notag
  \psi_{m,i_m,j_m,q}\psi_{m,i_m',j_m,q}=0.
\end{equation}

\begin{definition}[\bf $m^{\textnormal{th}}$ Velocity Cutoff Function]\label{def:psi:m:im:q:def}
For $i_m\geq 0$, we inductively define the $m^{\textnormal{th}}$ velocity cutoff function
\begin{equation}\label{eq:psi:m:im:q:def}
\psi_{m,i_m,q}^2 = \sum\limits_{\{j_m\colon i_m\geq i_*(j_m)\}} \psi_{j_m,q-1}^2 \psi_{m,i_m,j_m,q}^2.
\end{equation}
\end{definition}
Informally, one may interpret the definition of the $m^{\textnormal{th}}$ velocity cutoff in \eqref{eq:psi:m:im:q:def} as follows.  To control $D_{t,q}$, one should split into $D_{t,q-1}$ and $u_q\cdot \nabla$.  The inclusion of $\psi_{j_m,q-1}$ ensures that the cost of $D_{t,q-1}$ may be controlled by $\tau_{q-1}^{-1}\Gamma_q^{j}$ and the inclusion of $\psi_{m,i_m,j_m,q}$ ensures that the cost of $u_q \cdot \nabla$ may be controlled as well.  The index $m$ is included for technical reasons, as it is more convenient to control the size of $D_{t,q-1}^m u_q$ for fixed $m$.  Therefore, in reality we then control the size of $u_q\cdot\nabla$ only after incorporating the information provided by the different partitions of unity $\{\psi_{m,i_m,q}\}_{i_m}$ for $0 \leq m \leq \NcutSmall$.  Whichever value of $i_m$ is the largest at any point in spacetime then determines the material derivative cost there.

In order to define the full velocity cutoff function, we use the notation
\begin{equation}\notag
  \Vec{i} =  \{i_m\}_{m=0}^{\NcutSmall} = \left( i_0,...,i_{\NcutSmall} \right) \in \mathbb{N}_0^{\NcutSmall+1}
\end{equation}
to denote a tuple of non-negative integers of length $\NcutSmall+1$, and we shall denote 
$$
{\mathcal I}_i = \left\{\Vec{i} \in \mathbb{N}_0^{\NcutSmall+1}\colon\max\limits_{0\leq m\leq\NcutSmall} i_m =i\right\}.
$$

\begin{definition}[\bf Velocity cutoff function]
\label{def:psi:i:q:def}
For $0 \leq i \leq i_{\rm max}(q)$, we inductively define the velocity cutoff function $\psi_{i,q}$ as follows. When $q= 0$, we let 
\begin{align}
\psi_{i,0}=\begin{cases}1&\mbox{if }i=0\\ 0& \mbox{otherwise}.
\notag
\end{cases}
\end{align}
Then, we inductively on $q$ define
\begin{equation}
    \psi_{i,q}^2 = \sum\limits_{{\mathcal I}_i} \prod\limits_{m=0}^{\NcutSmall} \psi_{m,i_m,q}^2.
  \label{eq:psi:i:q:recursive}
\end{equation}
for all $q \geq 1$. 
\end{definition}

The sum used to define $\psi_{i,q}$ for $q\geq 1$ is over all tuples with a maximum entry of $i$.  The number of such tuples is $q$-independent since it has been demonstrated in \cite[Lemma 6.14]{BMNV21} that $i_m\leq\imax(q)$ (which implies $i\leq \imax(q)$), and $\imax(q)$ is bounded above independently of $q$.

For notational convenience, given an $\Vec{i}$ as in the sum of \eqref{eq:psi:i:q:recursive}, we shall denote 
\begin{align}
\supp \prod\limits_{m=0}^{\NcutSmall} \psi_{m,i_m,q}    = \bigcap_{m=0}^{\NcutSmall} \supp(\psi_{m,i_m,q}) =: \supp (\psi_{\Vec{i},q} )\,.
\notag
\end{align}
In particular, we will frequently use that $(x,t) \in \supp(\psi_{i,q})$ if and only if there exists $\Vec{i}\in \N_0^{\NcutSmall+1}$ such that $\max_{0\leq m\leq\NcutSmall} i_m =i$, and $(x,t) \in \supp(\psi_{\Vec{i},q})$.

\begin{proposition}\label{prop:no:proofs}
With the definitions of the velocity cutoff functions given in the previous subsection, the inductive assumptions from \eqref{eq:inductive:partition} and \eqref{eq:sharp:Dt:psi:i:q:old}--\eqref{eq:nasty:Dt:uq:orangutan} hold.
\end{proposition}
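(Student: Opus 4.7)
The plan is to exploit the fact that the cutoff functions $\psi_{m,i_m,j_m,q}$, $\psi_{m,i_m,q}$, and $\psi_{i,q}$ constructed in Definitions~\ref{def:intermediate:cutoffs}--\ref{def:psi:i:q:def} are identical to those in \cite[Section~6]{BMNV21}, so that the vast majority of the claimed estimates follow essentially verbatim from the arguments already established there. The partition of unity \eqref{eq:inductive:partition} is obtained by telescoping the elementary identity \eqref{eq:tilde:partition} through the recursive formulas \eqref{eq:psi:m:im:q:def}--\eqref{eq:psi:i:q:recursive}, using the inductive partition of unity at level $q-1$; the disjoint-support clause comes directly from \eqref{eq:psi:support:base:case}. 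The pointwise derivative bounds \eqref{eq:sharp:Dt:psi:i:q:old}, \eqref{eq:sharp:Dt:psi:i:q:mixed:old}, and the orangutan variant \eqref{eq:nasty:Dt:psi:i:q:orangutan} follow by applying Fa\`a di Bruno to the compositions \eqref{eq:psi:i:j:def}--\eqref{eq:psi:i:i:def}, inserting the inductive derivative bounds on $u_q$ from \eqref{eq:mollified:velocity} (which in turn descend from \eqref{eq:inductive:assumption:derivative}) into the definition \eqref{eq:h:j:q:def} of $h^2_{m,j_m,q}$, and using Lemma~\ref{lem:cutoff:construction:first:statement}. The $L^1$ estimate \eqref{eq:psi:i:q:support:old} is Chebyshev applied to $h^2_{m,j_m,q}$, again relying on the $L^2$ control \eqref{eq:mollified:velocity} and the choice $\CLebesgue = (4+b)/(b-1)$. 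The secondary bounds \eqref{eq:nasty:D:wq:old}--\eqref{eq:nasty:Dt:wq:WEAK:all} on $u_q$, $v_{\ell_q}$, and $Dv_{\ell_q}$ on $\supp \psi_{i,q}$ then follow from the upper bound $h^2_{m,j_m,q} \les \Gamma_{q+1}^{2(i+1)(m+1)}$ which is built in by construction, together with the Leibniz / commutator machinery of \cite[Section~6]{BMNV21}.

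The genuinely new content, and the single place where one must argue from scratch, is the improved upper bound \eqref{eq:imax:upper:lower} on $\imax(q)$, which I will establish as Lemma~\ref{lem:maximal:i}. In \cite{BMNV21}, the analogous bound was obtained via Sobolev embedding applied to the $L^2$ estimate \eqref{eq:inductive:assumption:derivative}, costing several derivatives and producing a non-sharp power; here I instead exploit the newly propagated $L^\infty$ bound \eqref{eq:inductive:asumption:derivative:q:uniform} on $w_q$ (equivalently, \eqref{eq:inductive:assumption:uniform:mol} on $u_q$ after accounting for the $(\Pqxt - \Id) v_{\ell_{q-1}}$ correction controlled in Lemma~\ref{lem:mollifying:ER}). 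The argument is by contradiction: suppose $\psi_{i,q}(x,t) \neq 0$ at some $(x,t)$ with $i$ strictly larger than the right-hand side of \eqref{eq:imax:upper:lower}. Unpacking Definition~\ref{def:psi:i:q:def}, this forces a pair $(m,j_m)$ with $i_m = i > i_*(j_m)$ such that $\psi_{m,i_m,j_m,q}(x,t) \neq 0$, whence Lemma~\ref{lem:cutoff:construction:first:statement}\eqref{item:cutoff:2} and \eqref{eq:psi:i:j:def} yield
\begin{equation*}
 h^2_{m,j_m,q}(x,t) > \tfrac{1}{4}\, \Gamma_{q+1}^{2(i-i_*(j_m))(m+1)} \, .
\end{equation*}
On the other hand, every summand in \eqref{eq:h:j:q:def} is pointwise bounded by $\Gamma_{q+1}^{-2i_*(j_m)} \delta_q^{-1} \Gamma_q^{2\badshaq} \Theta_q$ in view of \eqref{eq:inductive:assumption:uniform:mol} and the definition of the material-derivative weight $(\tau_{q-1}^{-1}\Gamma_{q+1}^{i_*(j_m)+2})^{-m}$, so that
\begin{equation*}
 \Gamma_{q+1}^{2(i - 2 i_*(j_m))(m+1)} \lesssim \Gamma_q^{2\badshaq}\, \Theta_q\, \delta_q^{-1} \, .
\end{equation*}
Solving for $i$ using $\Gamma_{q+1} = \Theta_{q+1}^{\eps_\Gamma}$, $\Theta_{q+1} \approx \lambda_q^{b-1}$, and $\delta_q \approx \lambda_q^{-2\beta}$, and absorbing implicit constants into a single $\Gamma_{q+1}$ factor via the choice of $a_*$, produces exactly \eqref{eq:imax:upper:lower}. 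The second inequality in \eqref{eq:imax:old} is then a direct arithmetic consequence of the same parameter identities.

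The main obstacle is not conceptual but bookkeeping: one must track the interplay between $\Gamma_q$ and $\Gamma_{q+1}$ powers, the supremum over $m \in \{0,\dots,\NcutSmall\}$ in the definition of $h^2_{m,j_m,q}$, and the relation between $i_*(j_m)$ and $j_m$, carefully enough to isolate the sharp exponent $\frac{\sfrac 12 (b-1)+\beta b}{\eps_\Gamma(b-1)b}$ appearing in \eqref{eq:imax:upper:lower}. The additive $1+\badshaq$ correction on the right-hand side of \eqref{eq:imax:upper:lower} is precisely the price paid by the $\Gamma_q^\badshaq$ loss in the uniform estimate \eqref{eq:inductive:asumption:derivative:q:uniform}, which quantifies the fact that the propagated $L^\infty$ bound is only sharp up to a small power loss. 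Everything else in the statement is a transcription of the corresponding statement in \cite[Section~6]{BMNV21} with no new ideas required.
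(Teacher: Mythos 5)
Your overall strategy matches the paper's: defer to \cite[Section~6]{BMNV21} for everything insensitive to the new $L^\infty$ bookkeeping, and re-prove only the sharpened bound on $\imax(q)$ (Lemma~\ref{lem:maximal:i}) by substituting the newly propagated $L^\infty$ estimate \eqref{eq:inductive:assumption:uniform:mol} for the lossy Sobolev step in \cite{BMNV21}. That much is correct.

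However, your proof of the $\imax$ bound has a gap. You claim that nonemptiness of $\supp\psi_{i,q}$ ``forces a pair $(m,j_m)$ with $i_m = i > i_*(j_m)$''. This is not forced. For the index $m$ realizing $i_m=i$, the witness $j_m$ furnished by \eqref{eq:psi:m:im:q:def} may satisfy $i_m = i_*(j_m)$, in which case $\psi_{m,i,j_m,q} = \tilde\psi_{m,q+1}(h^2_{m,j_m,q})$ by \eqref{eq:psi:i:i:def}, and the support condition of Lemma~\ref{lem:cutoff:construction:first:statement}\eqref{item:cutoff:1} gives only the \emph{upper} bound $h^2_{m,j_m,q}\leq\Gamma_{q+1}^{2(m+1)}$ --- no useful lower bound on $h^2$, hence no contradiction against the pointwise $L^\infty$ estimate. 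In that case one must argue differently: use that $(x,t)\in\supp\psi_{j_m,q-1}$, invoke the inductive hypothesis \eqref{eq:imax:old} at level $q-1$ to conclude $j_m\leq\imax(q-1)$, and bootstrap the resulting chain
\begin{equation*}
\Gamma_{q+1}^{i}\leq\Gamma_{q+1}\,\Gamma_q^{j_m}\leq\Gamma_{q+1}\,\Gamma_q^{\imax(q-1)}\leq\Gamma_{q+1}\,\Gamma_q^{\badshaq}\,\Theta_{q-1}^{\sfrac 12}\,\delta_{q-1}^{-\sfrac 12}
\end{equation*}
to the form $\Gamma_{q+1}^{\badshaq}\Theta_q^{\sfrac 12}\delta_q^{-\sfrac 12}$ using the parameter inequality $b\eps_\Gamma+\badshaq\eps_\Gamma+\sfrac 1{2b}\leq b\badshaq\eps_\Gamma+\sfrac 12+\beta$, itself a consequence of $\eps_\Gamma\leq\sfrac\beta b$. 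This is a genuinely separate case in Lemma~\ref{lem:maximal:i} and is not subsumed by your pigeonhole argument.

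There is also an arithmetic slip in your display. Combining the support bound $h^2_{m,j_m,q}>\tfrac14\Gamma_{q+1}^{2(i-i_*(j_m))(m+1)}$ with your upper bound $h^2_{m,j_m,q}\lesssim\Gamma_{q+1}^{-2i_*(j_m)}\delta_q^{-1}\Gamma_q^{2\badshaq}\Theta_q$ yields
\begin{equation*}
\Gamma_{q+1}^{2(i-i_*(j_m))(m+1)+2i_*(j_m)}\lesssim\Gamma_q^{2\badshaq}\,\Theta_q\,\delta_q^{-1}\,,
\end{equation*}
not $\Gamma_{q+1}^{2(i-2i_*(j_m))(m+1)}\lesssim\Gamma_q^{2\badshaq}\Theta_q\delta_q^{-1}$ as you wrote. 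The left-hand exponent is minimized at $m=0$, where it equals $2i$; that is exactly what closes the argument, whereas your version leaves a stray $i_*(j_m)$ that does not cancel.
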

For the proof, see \cite[Section 6]{BMNV21}.
 We however must provide a new estimate for $\imax(q)$ in order to prove \eqref{eq:imax:upper:lower} and \eqref{eq:imax:old}, and we give the details in the following lemma.

\begin{lemma}[\bf Maximal $i$ index in the definition of the cutoff]
\label{lem:maximal:i}
There exists $\imax = \imax(q) \geq 0$, determined by the formula \eqref{eq:imax:def} below, such that 
\begin{align}
\psi_{i,q} \equiv 0 \quad \mbox{for all} \quad i > i_{\rm max}
\label{eq:imax}
\end{align}
and
\begin{align}
\Gamma_{q+1}^{i_{\rm max}} \leq \Gamma_{q+1}^{\badshaq} \Theta_q^{\sfrac 12} \delta_q^{-\sfrac 12}
\label{eq:imax:bound}
\end{align}
for all $q\geq 0$. Moreover, assuming $\lambda_0$ is sufficiently large, $\imax(q)$ is bounded uniformly in $q$ as
\begin{align}
\imax(q)  \leq 1 + \badshaq + \frac{\sfrac{1}{2} (b-1) + \beta b}{\eps_\Gamma (b-1)b}
\,.
\label{eq:imax:upper:bound:uniform}
\end{align}
\end{lemma}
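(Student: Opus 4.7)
The strategy mirrors that of \cite[Lemma~6.14]{BMNV21}, but we now feed the sharp pointwise bound \eqref{eq:inductive:assumption:uniform:mol} for $u_q$ into the definition of $h_{m,j_m,q}^2$ in place of the previously-used, Sobolev-derived lossy bound. The candidate for the maximal index is
\[ i_{\max}(q) := \min\bigl\{ i \geq 0 \,:\, \Gamma_{q+1}^i \;\geq\; C_* \,\Gamma_q^{\badshaq} \,\Theta_q^{\sfrac 12} \delta_q^{-\sfrac 12} \bigr\}, \]
for a constant $C_*$ depending only on $(\beta,b,\NcutSmall)$. The bound \eqref{eq:imax:bound} is then immediate from this definition (for $a_*$ sufficiently large), and it remains to verify \eqref{eq:imax} and \eqref{eq:imax:upper:bound:uniform}.

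For \eqref{eq:imax}, unwinding \eqref{eq:psi:i:q:recursive} and \eqref{eq:psi:m:im:q:def}, it suffices to prove that whenever $i_m > i_{\max}(q)$ and $j_m \leq i_{\max}(q-1)$, the factor $\psi_{m,i_m,j_m,q}$ from Definition~\ref{def:intermediate:cutoffs} vanishes on $\supp \psi_{j_m,q-1}$. The key computation is an $L^\infty$ estimate for $h_{m,j_m,q}^2$ on this support: applying \eqref{eq:inductive:assumption:uniform:mol} with $i \mapsto j_m$ to each summand in \eqref{eq:h:j:q:def}, using the inequality $\Gamma_{q+1}^{i_*(j_m)} \geq \Gamma_q^{j_m}$ built into Definition~\ref{def:istar:j}, and summing the geometric series $\sum_n \Gamma_q^{-2n}$, one obtains
\[ h_{m,j_m,q}^2(x,t) \;\lesssim\; \Gamma_{q+1}^{-2 i_*(j_m)} \,\Gamma_q^{2\badshaq}\,\Theta_q\, \delta_q^{-1} \qquad \text{for all } (x,t) \in \supp \psi_{j_m,q-1}, \]
where the implicit constant depends only on $\NcutLarge$. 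Since $\psi_{m,q+1}$ is supported in $[\sfrac14, \Gamma_{q+1}^{2(m+1)}]$ by item~\eqref{item:cutoff:2} of Lemma~\ref{lem:cutoff:construction:first:statement}, the factor $\psi_{m,i_m,j_m,q}$ with $i_m > i_*(j_m)$ is nonzero only where $\Gamma_{q+1}^{2(i_m - i_*(j_m))(m+1)} \leq 4\, h_{m,j_m,q}^2$. Combining with the above display yields
\[ \Gamma_{q+1}^{(m+1) i_m} \;\lesssim\; \Gamma_{q+1}^{m\, i_*(j_m)} \,\Gamma_q^{\badshaq}\,\Theta_q^{\sfrac12}\delta_q^{-\sfrac12}. \]
Taking $(m+1)$-th roots and noting that the worst case is $m=0$ (the factor $\Gamma_{q+1}^{m\, i_*(j_m)/(m+1)}$ disappears), the resulting upper bound on $\Gamma_{q+1}^{i_m}$ agrees with the defining relation for $i_{\max}(q)$ up to constants, so enlarging $C_*$ once and for all forces $\psi_{m,i_m,j_m,q} \equiv 0$ whenever $i_m > i_{\max}(q)$.

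For the uniform bound \eqref{eq:imax:upper:bound:uniform}, we insert the explicit formulas from \eqref{eq:Gamma:q+1:def:*} into the defining inequality $\Gamma_{q+1}^{i_{\max}(q)-\badshaq} \leq C_* \Theta_q^{\sfrac12}\delta_q^{-\sfrac12}$. Using $\Gamma_{q+1} \approx \lambda_q^{(b-1)\eps_\Gamma}$, $\Theta_q \approx \lambda_q^{(b-1)/b}$, and $\delta_q^{-1} \approx \lambda_q^{2\beta}$ (up to the $\lambda_1$-prefactor, which is absorbed for $a_*$ large), taking logarithms base $\lambda_q$ and dividing by $(b-1)\eps_\Gamma$ produces
\[ i_{\max}(q) - \badshaq \;\leq\; \frac{\sfrac{1}{2}(b-1)/b + \beta}{(b-1)\eps_\Gamma} + 1 \;=\; 1 + \frac{\sfrac12(b-1) + \beta b}{\eps_\Gamma(b-1)b}, \]
exactly as claimed. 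The one required nuisance is careful bookkeeping of the $\Gamma_{q+1}^{O(1)}$ slack factors arising from (a) the gap $\Gamma_{q+1}^{i_*(j_m)} \in [\Gamma_q^{j_m}, \Gamma_q^{j_m}\Gamma_{q+1}]$ and (b) the implicit constants in $\lesssim$; the former is at most one power of $\Gamma_{q+1}$ per use, and the latter can be absorbed into $\Gamma_{q+1}$ once $a_*$ is taken large, which is the only mildly delicate bookkeeping step in the argument.
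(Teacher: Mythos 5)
Your overall strategy matches the paper's: fix an $m$ with $i_m = i$ maximal, estimate $h_{m,j_m,q}^2$ using the new $L^\infty$ bound \eqref{eq:inductive:assumption:uniform:mol} (the precise place where the inductively propagated uniform bounds are needed), and deduce the bound on $\Gamma_{q+1}^{i_m}$. Your $L^\infty$ estimate for $h_{m,j_m,q}^2$ on $\supp\psi_{j_m,q-1}$ is correct, and your treatment of the case $i_m > i_*(j_m)$ is essentially the same as the paper's (which uses pigeonhole on a single summand rather than summing the series — equivalent).

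However, there is a genuine gap: you have not addressed the case $i_m = i_*(j_m)$. When $i_m = i_*(j_m)$, the factor $\psi_{m,i_m,j_m,q}$ is defined via $\tilde\psi_{m,q+1}$, not $\psi_{m,q+1}$; see \eqref{eq:psi:i:i:def} versus \eqref{eq:psi:i:j:def}. The function $\tilde\psi_{m,q+1}$ is supported in $[0,\Gamma_{q+1}^{2(m+1)}]$ and equals $1$ near the origin, so your argument — which relies on $\psi_{m,q+1}$ being supported away from $0$, forcing $h_{m,j_m,q}^2 \geq \sfrac{1}{4}\Gamma_{q+1}^{2(i_m-i_*(j_m))(m+1)}$ — simply does not apply. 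In fact, for $i_m = i_*(j_m)$ your $h^2$ bound makes the argument of $\tilde\psi_{m,q+1}$ small, and $\psi_{m,i_m,j_m,q} = 1$ does {\it not} vanish. Thus your sufficiency reduction ``$\psi_{m,i_m,j_m,q}$ vanishes on $\supp\psi_{j_m,q-1}$'' is false in this regime. What one actually needs here is the observation that the two conditions $i_m = i_*(j_m) > i_{\max}(q)$ and $j_m \leq i_{\max}(q-1)$ are {\em jointly contradictory}. From $\Gamma_{q+1}^{i_m-1} < \Gamma_q^{j_m} \leq \Gamma_q^{i_{\max}(q-1)}$ and the inductive bound \eqref{eq:imax:old} at level $q-1$, one gets $\Gamma_{q+1}^{i_m} < \Gamma_{q+1}\,\Gamma_q^{\badshaq}\,\Theta_{q-1}^{\sfrac12}\delta_{q-1}^{-\sfrac12}$, and closing the argument requires the parameter inequality $\Gamma_{q+1}\,\Gamma_q^{\badshaq}\,\Theta_{q-1}^{\sfrac12}\delta_{q-1}^{-\sfrac12} \leq \Gamma_{q+1}^{\badshaq}\,\Theta_q^{\sfrac12}\delta_q^{-\sfrac12}$, which in turn rests on $\eps_\Gamma \leq \sfrac{\beta}{b}$. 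This is a structural parameter constraint, not slack absorbable by enlarging $a_*$, and it is a necessary ingredient you have omitted.

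A smaller point: your ``take $(m+1)$-th roots, $m=0$ is the worst case'' step is not cleanly justified. The simpler and correct route is to note that $(m+1)i_m - m\,i_*(j_m) = i_m + m(i_m - i_*(j_m)) \geq i_m$ (since $i_m \geq i_*(j_m)$), so $\Gamma_{q+1}^{i_m} \leq \Gamma_{q+1}^{(m+1)i_m - m\,i_*(j_m)}$ directly, with no roots and no case comparison in $m$. Your derivation of the uniform bound \eqref{eq:imax:upper:bound:uniform} is fine.
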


\begin{proof}[Proof of Lemma~\ref{lem:maximal:i}]
Assume $i\geq 0$ is such that $\supp(\psi_{i,q}) \neq  \emptyset$. We will prove that $\Gamma_{q+1}^i \leq \Gamma_{q+1}^\badshaq \Theta_q^{\sfrac 12} \delta_q^{-\sfrac 12}$. From \eqref{eq:psi:i:q:recursive} it follows that for any $(x,t) \in \supp (\psi_{i,q})$, there must exist at least one $\Vec{i} = (i_0,\ldots,i_{\NcutSmall})$ such that $\max\limits_{0\leq m\leq\NcutSmall} i_m =i$, and with $\psi_{m,i_m,q}(x,t) \neq 0$ for all $0\leq m \leq \NcutSmall$. Therefore, in light of \eqref{eq:psi:m:im:q:def}, for each such $m$ there exists a maximal $j_m$ such that $i_*(j_m) \leq i_m$, with $(x,t) \in \supp(\psi_{j_m,q-1}) \cap \supp(\psi_{m,i_m,j_m,q})$. In particular, this holds for any of the indices $m$ such that $i_m = i$. For the remainder of the proof, we fix such an index $0\leq m \leq \NcutSmall$.

If we have $i = i_m = i_{*}(j_m) = i_*(j_m,q)$, since $(x,t) \in \supp(\psi_{j_m,q-1}) $, then by the inductive assumption \eqref{eq:imax:old}, we have that $j_m \leq \imax(q-1)$. Then using $\Gamma_{q+1}^{i-1} < \Gamma_q^{j_m} \leq \Gamma_{q}^{\imax(q-1)}$ and \eqref{eq:imax:old}, we deduce that
\[
\Gamma_{q+1}^{i}
\leq \Gamma_{q+1} \Gamma_{q}^{\imax(q-1)}
\leq \Gamma_{q+1}
\Gamma_q^{\badshaq} \Theta_{q-1}^{\sfrac 12} \delta_{q-1}^{-\sfrac 12}
\leq
\Gamma_{q+1}^{\badshaq}
\Theta_{q}^{\sfrac 12} \delta_{q}^{-\sfrac 12}
\,.
\]
The last inequality above holds  
in light of the parameter inequality
$b \eps_\Gamma + \badshaq \eps_\Gamma + \sfrac{1}{2b}
\leq b \badshaq \eps_\Gamma + \sfrac 12 + \beta$, which in turn follows from $\eps_\Gamma \leq \sfrac{\beta}{b}$. Thus, in this case $\Gamma_{q+1}^i \leq \Gamma_{q+1}^\badshaq \Theta_q^{\sfrac 12} \delta_q^{-\sfrac 12}$ indeed holds.

On the other hand, if $i = i_m \geq i_{*}(j_m) +1$, by the definition of $\Psi_{m,q+1}$ in \eqref{eq:psi:i:j:def}, it follows that $|h_{m,j_m,q}(x,t)| \geq (\sfrac{1}{2}) \Gamma_{q+1}^{(m+1)(i_m - i_*(j_m))}$, and by the pigeonhole principle, there exists $0 \leq n \leq \NcutLarge$  with 
\begin{align*}
|D^n D_{t,q-1}^m u_q(x,t)| &\geq \frac{1}{2 \NcutLarge} \Gamma_{q+1}^{(m+1)(i_m - i_*(j_m))} {\Gamma_{q+1}^{i_*(j_m)}} \delta_q^{\sfrac 12} (\lambda_q \Gamma_q)^{n} (\tau_{q-1}^{-1} \Gamma_{q+1}^{i_*(j_m)+2})^m \notag\\
&\geq   \frac{1}{2 \NcutLarge} \Gamma_{q+1}^{i_m} \delta_q^{\sfrac 12} \lambda_q^{n} (\tau_{q-1}^{-1} \Gamma_{q+1}^{i_m+2})^m ,
\end{align*}
and we also know that $(x,t) \in \supp(\psi_{j_m,q-1})$. By \eqref{eq:inductive:assumption:uniform:mol},  the fact that $\NcutLarge \leq 2 \NindLarge$, and $\NcutSmall\leq \NindSmall$, we know that 
\begin{align*}
|D^n D_{t,q-1}^m u_q(x,t)| 
&\leq \Gamma_q^\badshaq \Theta_q^{\sfrac 12} \lambda_q^{n} (\tau_{q-1}^{-1} \Gamma_q^{j_m+1})^m \notag\\
&\leq \Gamma_q^\badshaq \Theta_q^{\sfrac 12} \lambda_q^{n}  (\tau_{q-1}^{-1} \Gamma_{q+1}^{i_*(j_m)+1})^m  \leq \Gamma_q^\badshaq \Theta_q^{\sfrac 12} \lambda_q^{n} (\tau_{q-1}^{-1} \Gamma_{q+1}^{i_m})^m \, .
\end{align*}
The proof is now completed, since the previous two inequalities and $i_m = i$ imply that 
\begin{equation}\label{eq:imax:contradiction:1}
 \Gamma_{q+1}^i \leq 2\NcutLarge \Gamma_q^\badshaq \Theta_q^{\sfrac 12}\delta_q^{-\sfrac 12} 
 \leq \Gamma_{q+1}^\badshaq \Theta_q^{\sfrac 12}\delta_q^{-\sfrac 12} \, .
\end{equation}

In view of the above inequality, the value of $i_{\rm max}$ is chosen as
\begin{align}
\imax(q) = \sup \{i' \, : \, \Gamma_{q+1}^{i'} \leq \Gamma_{q+1}^\badshaq \Theta_q^{\sfrac 12} \delta_q^{-\sfrac 12} \} \, .
\label{eq:imax:def}
\end{align}
With this definition, if $i > \imax(q)$, then $\Gamma_{q+1}^i > \Gamma_{q+1}^\badshaq \Theta_q^{\sfrac 12}\delta_q^{-\sfrac 12}$, and as such $\supp(\psi_{i,q})= \emptyset$. 
To show that $\imax(q)$ is bounded  independently of $q$, note that 
\begin{align*}
\frac{\log(  \Gamma_{q+1}^\badshaq \Theta_q^{\sfrac 12}\delta_q^{-\sfrac 12})}{\log (\Gamma_{q+1})} = \badshaq + \frac{\left( \sfrac{1}{2}(b-1) + \beta b\right) \log(\lambda_{q-1})}{\eps_\Gamma (b-1) \log (\lambda_q)} \to \badshaq + \frac{\sfrac{1}{2} (b-1) + \beta b}{\eps_\Gamma (b-1)b} \, ,
\end{align*}
as $q\to \infty$. Thus, assuming $\lambda_0$ is sufficiently large,  the bound \eqref{eq:imax:upper:bound:uniform} holds.
\end{proof}

\subsection{Temporal cutoff functions and flow maps}
\label{sec:cutoff:temporal:definitions}
Let $\chi:(-1,1)\rightarrow[0,1]$ be a $C^\infty$ function of compact support which induces a partition of unity according to 
\begin{align}
 \sum_{k \in \Z} \chi^2(\cdot - k) \equiv 1 \, .
\label{eq:chi:cut:partition:unity}
\end{align}
Consider the translated and rescaled function 
\begin{equation*}
    \chi\left(t \tau_{q}^{-1}\Gamma^{i-\cstar+2}_{q+1} - k\right) \, ,
\end{equation*}
which is supported in the set of times $t$ satisfying
\begin{equation}\label{eq:chi:support}
\left| t-\tau_q \Gamma_{q+1}^{-i+\cstar-2} k \right| \leq \tau_q\Gamma_{q+1}^{-i+\cstar-2} \qquad \iff \qquad t\in \left[ (k-1)\tau_q \Gamma_{q+1}^{-i+\cstar-2}, (k+1)\tau_q \Gamma_{q+1}^{-i+\cstar-2} \right]  \, .
\end{equation}
We then define temporal cut-off functions 
\begin{align}
 \chi_{i,k,q}(t)=\chi_{(i)}(t) =  \chi\left(t \tau_{q}^{-1}\Gamma^{i-\cstar+2}_{q+1} - k\right) \, .
 \label{eq:chi:cut:def}
\end{align}
It is then clear that 
\begin{align}
{|\partial_t^m \chi_{i,k,q}| \les (\Gamma_{q+1}^{i-\cstar+2} \tau_{q}^{-1})^m}
\label{eq:chi:cut:dt}
\end{align}
for $m\geq 0$ and
\begin{equation}\label{e:chi:overlap}
    \chi_{i,k_1,q}(t)\chi_{i,k_2,q}(t) = 0
\end{equation}
for all $t\in\mathbb{R}$ unless $|k_1-k_2|\leq 1$. We define
\begin{equation}\notag
    \chi_{(i, k \pm, q)}(t) := \bigl( \chi_{(i,k-1,q)}^2(t) + \chi_{(i,k,q)}^2(t) + \chi_{(i,k+1,q)}^2(t)  \bigr)^{\sfrac{1}{2}},
\end{equation}
which are cutoffs with the property that
\begin{equation}\notag
    \chi_{(i,k\pm,q)} \equiv 1 \qquad \textnormal{ on } \qquad \supp{(\chi_{(i,k,q)})}.
\end{equation}

Next, we define the cutoffs $\tilde\chi_{i,k,q}$ by
\begin{equation}\notag
\tilde\chi_{i,k,q}(t)=\tilde\chi_{(i)}(t) = \chi\left( t \tau_q^{-1}\Gamma_{q+1}^{i-\cstar} - \Gamma_{q+1}^{-\cstar}k \right).
\end{equation}
For comparison with \eqref{eq:chi:support}, we have that $\tilde\chi_{i,k,q}$ is supported in the set of times $t$ satisfying
\begin{equation}\notag
\bigl| t-\tau_q \Gamma_{q+1}^{-i+\cstar} k \bigr| \leq \tau_q\Gamma_{q+1}^{-i+\cstar}.
\end{equation}
As a consequence of these definitions and a sufficiently large choice of $\lambda_0$, if $(i,k)$ and $(\istar,\kstar)$ satisfy $\supp \chi_{i,k,q} \cap \supp \chi_{\istar,\kstar,q}\neq\emptyset$ and $\istar\in\{i-1,i,i+1\}$, then
\begin{equation}\label{eq:tilde:chi:contains}
\supp \chi_{i,k,q} \subset \supp \tilde\chi_{\istar,\kstar,q}.
\end{equation}

We can now make estimates regarding the flows of the vector field $\vlq$ on the support of a cutoff function. The proofs of Lemma~\ref{lem:dornfelder} and Corollary~\ref{cor:deformation} are contained in \cite[Section 6.4]{BMNV21}.

\begin{lemma}[\bf Lagrangian paths don't jump many supports]
\label{lem:dornfelder}
Let $q \geq 0$ and $(x_0,t_0)$ be given. Assume that the index $i$ is such that $\psi_{i,q}^2(x_0,t_0) \geq \kappa^2$, where $\kappa\in\left[\frac{1}{16},1\right]$. Then the forward flow $(X(t),t) := (X(x_0,t_0;t),t)$ of the velocity field $\vlq$ originating at $(x_0,t_0)$ has the property that $\psi_{i,q}^2(X(t),t) \geq\sfrac{\kappa^2}{2}$ for all $t$ be such that $|t - t_0|\leq (\delta_q^{\sfrac 12}\lambda_q \Gamma_{q+1}^{i+3})^{-1}$, which by \eqref{eq:Lambda:q:x:1:NEW} and \eqref{eq:tilde:lambda:q:def} is satisfied for $|t-t_0|\leq \tau_q \Gamma_{q+1}^{-i+5+\cstar}$.
\end{lemma}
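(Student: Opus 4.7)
The plan is to control the evolution of $\psi_{i,q}^2$ along the Lagrangian flow of $\vlq$ by exploiting the self-similar nature of the mixed derivative bound \eqref{eq:nasty:Dt:psi:i:q:orangutan}, which by Proposition~\ref{prop:no:proofs} is available at $q' = q$. Specifically, taking $N = 0$ and $M = 1$ in that estimate and then writing $D_{t,q}\psi_{i,q}^2 = 2\psi_{i,q} D_{t,q}\psi_{i,q}$, I obtain the pointwise inequality
\begin{align*}
\bigl| D_{t,q} \psi_{i,q}^2(x,t) \bigr| \lesssim \bigl( \psi_{i,q}^2(x,t) \bigr)^{1-\sfrac{1}{2\Nfin}}\, \Gamma_{q+1}^{i-\cstar} \tau_q^{-1}.
\end{align*}
The fractional power is the decisive feature: it forces any trajectory starting in the superlevel set $\{\psi_{i,q}^2 \geq \kappa^2\}$ to exit on a timescale governed not by a global Lipschitz constant but by $\Gamma_{q+1}^{-i+\cstar}\tau_q$, exactly the scaling one wants.

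Next I would define $f(s) := \psi_{i,q}^2\bigl(X(x_0,t_0;s), s\bigr)$, so that, since $X$ is the flow of $\vlq$, the chain rule gives $f'(s) = (D_{t,q}\psi_{i,q}^2)(X(s),s)$. The pointwise bound above then reduces matters to the scalar ODE inequality $|f'(s)| \leq C f(s)^{1-\eta} C_0$, where $\eta = \sfrac{1}{2\Nfin}$ and $C_0 = \Gamma_{q+1}^{i-\cstar}\tau_q^{-1}$. Computing $(f^\eta)' = \eta f^{\eta-1} f'$ and integrating from $t_0$ to $s$ yields
\begin{align*}
f(s)^\eta \;\geq\; f(t_0)^\eta - C\eta\, C_0\, |s-t_0| \;\geq\; \kappa^{2\eta} - C\eta\, C_0\, |s-t_0|.
\end{align*}
Requiring the right-hand side to remain at least $(\kappa^2/2)^\eta = 2^{-\eta}\kappa^{2\eta}$ translates into the sufficient condition $|s-t_0| \leq c\, \kappa^{2\eta}(1-2^{-\eta}) (C\eta)^{-1} C_0^{-1}$. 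Because $\kappa\geq\sfrac{1}{16}$ and $\eta = \sfrac{1}{(2\Nfin)}$ is small, the prefactor $\kappa^{2\eta}(1-2^{-\eta})/\eta$ is bounded below by a constant depending only on $\Nfin$; choosing $a_*$ large enough so that $\Gamma_{q+1}^5$ dominates this constant produces the claimed window $|s-t_0| \leq \tau_q\Gamma_{q+1}^{-i+5+\cstar}$.

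The equivalence between the two time bounds stated in the lemma is a one-line parameter check: using $\tau_q^{-1} = \delta_q^{\sfrac 12}\lambda_q\Gamma_{q+1}^{\cstar+11}$ one has $\tau_q \Gamma_{q+1}^{-i+5+\cstar} = (\delta_q^{\sfrac 12}\lambda_q\Gamma_{q+1}^{i+6})^{-1}$, which is strictly smaller than $(\delta_q^{\sfrac 12}\lambda_q\Gamma_{q+1}^{i+3})^{-1}$, so it suffices to prove the stronger conclusion under the smaller window. The main obstacle is entirely bookkeeping: verifying that the self-similar cutoff estimate \eqref{eq:nasty:Dt:psi:i:q:orangutan} is indeed valid for the freshly constructed cutoffs at level $q$ (which is precisely the content of Proposition~\ref{prop:no:proofs}) and ensuring that the $\Nfin$- and $\kappa$-dependent constants arising from the ODE comparison comfortably fit inside the $\Gamma_{q+1}^5$ safety margin. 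Beyond that no new analytic input is required.
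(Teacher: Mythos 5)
The paper does not reprove this lemma in-house (it refers to {\cite[Section 6.4]{BMNV21}}), so there is no internal proof to compare against; but your ODE argument, as written, does not establish the stated conclusion. Keeping explicit track of the constants, \eqref{eq:nasty:Dt:psi:i:q:orangutan} with $N=0$, $M=1$ at level $q'=q$ yields $|D_{t,q}\psi_{i,q}^2|\lesssim(\psi_{i,q}^2)^{1-1/(2\Nfin)}\,\Gamma_{q+1}^{i-\cstar}\tau_q^{-1}$, and your Gronwall-type comparison on $f^\eta$ with $\eta=1/(2\Nfin)$ gives the conclusion $f(s)\geq\kappa^2/2$ on the window $|s-t_0|\leq K\,\Gamma_{q+1}^{\cstar-i}\tau_q$ for some constant $K$ bounded above and below independently of $q$. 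Rewriting $\tau_q = (\delta_q^{\sfrac 12}\lambda_q\Gamma_{q+1}^{\cstar+11})^{-1}$, this window is $K\,(\delta_q^{\sfrac12}\lambda_q\Gamma_{q+1}^{i+11})^{-1}$, whereas the lemma claims the conclusion on $(\delta_q^{\sfrac12}\lambda_q\Gamma_{q+1}^{i+3})^{-1}$; your sufficient window is \emph{shorter} than the claimed one by a full factor of $\Gamma_{q+1}^{8}$. Your final sentence suggesting that ``choosing $a_*$ large enough so that $\Gamma_{q+1}^5$ dominates this constant'' closes the argument is going the wrong way: enlarging $a_*$ enlarges $\Gamma_{q+1}$ and only widens the gap, since the containment you need is $K\geq\Gamma_{q+1}^8$, not $\Gamma_{q+1}^5\geq K$. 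There is also a logical inversion at the point where you say ``it suffices to prove the stronger conclusion under the smaller window'': the lemma asserts the bound on the \emph{larger} window $(\delta_q^{\sfrac12}\lambda_q\Gamma_{q+1}^{i+3})^{-1}$, so proving it only on the smaller window $\tau_q\Gamma_{q+1}^{-i+5+\cstar}$ would not suffice even if the ODE estimate had closed.

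The underlying issue is that the self-similar cutoff estimate \eqref{eq:nasty:Dt:psi:i:q:orangutan} is a uniform estimate designed to hold for all mixed-order derivatives simultaneously, and for the single material derivative $D_{t,q}\psi_{i,q}$ it is lossy by several powers of $\Gamma_{q+1}$. The actual proof in {\cite[Section 6.4]{BMNV21}} works at the level of the generating functions $h_{m,j_m,q}^2$ of \eqref{eq:h:j:q:def}: one controls the $D_{t,q}$-variation of each $h_{m,j_m,q}^2$ along the flow directly from the inductive bounds on $D^n D_{t,q-1}^m u_q$ (and the smallness of $u_q\cdot\nabla$ acting on these), which recovers the much milder Lipschitz-in-time constant $\delta_q^{\sfrac12}\lambda_q\Gamma_{q+1}^{i+O(1)}$ for the level sets defining $\psi_{i,q}^2$, and then passes to the product/sum structure of Definition~\ref{def:psi:i:q:def}. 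Your black-boxing of the cutoff through \eqref{eq:nasty:Dt:psi:i:q:orangutan} cannot recover those saved powers of $\Gamma_{q+1}$.
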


\begin{definition}\label{def:transport:maps} We define $\Phi_{i,k,q}(x,t):=\Phi_{(i,k)}(x,t)$ to be the flows induced by $\vlq$ with initial datum at time $k {\tau_{q}}\Gamma_{q+1}^{-i}$ given by the identity, i.e.
\begin{equation}
\notag
(\partial_t + \vlq \cdot\nabla) \Phi_{i,k,q} = 0 \,, 
\qquad
\Phi_{i,k,q}(x,k{\tau_{q}}\Gamma_{q+1}^{-i})=x\, .
\end{equation}
\end{definition}

We will use $D\Phi_{(i,k)}$ to denote the gradient of $\Phi_{(i,k)}$.  The inverse of the matrix $D\Phi_{(i,k)}$ is denoted by $\left(D\Phi_{(i,k)}\right)^{-1}$, in contrast to $D\Phi_{(i,k)}^{-1}$, which is the gradient of the inverse map $\Phi_{(i,k)}^{-1}$.

\begin{corollary}[\bf Deformation bounds]
\label{cor:deformation}
For $k \in \Z$, $0 \leq i \leq  i_{\rm max}$, $q\geq 0$, and $2 \leq N \leq \sfrac{3\Nfin}{2}+1$, we have the following bounds on the support of $\psi_{i,q}(x,t){\tilde\chi_{i,k,q}(t)}$.
\begin{subequations}
\begin{align}
 \norm{D\Phi_{(i,k)} - {\rm Id}}_{L^\infty(\supp(\psi_{i,q} \tilde\chi_{i,k,q} ))} &\lesssim \Gamma_{q+1}^{-1}
 \label{eq:Lagrangian:Jacobian:1}\\
  \norm{D^N\Phi_{(i,k)} }_{L^\infty(\supp(\psi_{i,q} \tilde\chi_{i,k,q} ))} & \lesssim \Gamma_{q+1}^{-1} \MM{N-1, 2\Nindv, \Gamma_q\lambda_q,\tilde\lambda_q}\label{eq:Lagrangian:Jacobian:2}\\
  \norm{(D\Phi_{(i,k)})^{-1} - {\rm Id}}_{L^\infty(\supp(\psi_{i,q} \tilde\chi_{i,k,q} ))} & \lesssim \Gamma_{q+1}^{-1}\label{eq:Lagrangian:Jacobian:3}\\
  \norm{D^{N-1}\left((D\Phi_{(i,k)})^{-1}\right) }_{L^\infty(\supp(\psi_{i,q} \tilde\chi_{i,k,q} ))} & \lesssim \Gamma_{q+1}^{-1} \MM{N-1, 2\Nindv, \Gamma_q\lambda_q,\tilde\lambda_q} \label{eq:Lagrangian:Jacobian:4} \\
   \bigl\|D^N\Phi^{-1}_{(i,k)} \bigr\|_{L^\infty(\supp(\psi_{i,q} \tilde\chi_{i,k,q} ))} & \lesssim \Gamma_{q+1}^{-1} \MM{N-1, 2\Nindv, \Gamma_q\lambda_q,\tilde\lambda_q}\label{eq:Lagrangian:Jacobian:7}
\end{align}
Furthermore, we have the following bounds for $1\leq N+M\leq \sfrac{3\Nfin}{2}$:
\begin{align}
    \bigl\| D^{N-N'} D_{t,q}^M D^{N'+1} \Phi_{(i,k)} \bigr\|_{L^\infty(\supp(\psi_{i,q}\tilde\chi_{i,k,q}))} &\leq  \tilde{\lambda}_q^{N} \MM{M,\NindSmall,\Gamma_{q+1}^{i-\cstar} \tau_q^{-1},\tilde{\tau}_q^{-1}\Gamma_{q+1}^{-1}}\label{eq:Lagrangian:Jacobian:5}\\
     \bigl\| D^{N-N'} D_{t,q}^M D^{N'} (D \Phi_{(i,k)})^{-1} \bigr\|_{L^\infty(\supp(\psi_{i,q}\tilde\chi_{i,k,q}))} &\leq \tilde{\lambda}_q^{N} \MM{M,\NindSmall,\Gamma_{q+1}^{i-\cstar} \tau_q^{-1},\tilde{\tau}_q^{-1}\Gamma_{q+1}^{-1}}\label{eq:Lagrangian:Jacobian:6}
\end{align}
\end{subequations}
for all $0\leq N'\leq N$.
\end{corollary}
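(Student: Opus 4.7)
The plan is to derive all the estimates in Corollary~\ref{cor:deformation} from the inductive bounds on $v_{\ell_q}$ (provided by Lemma~\ref{lem:mollifying:ER}) together with Lemma~\ref{lem:dornfelder}, which ensures that Lagrangian paths do not leave the support of $\psi_{i,q}$ over the relevant timescale. The starting point is to observe that the temporal support of $\tilde\chi_{i,k,q}$ has length $\lesssim \tau_q\Gamma_{q+1}^{-i+\cstar}$, and by Lemma~\ref{lem:dornfelder} (applied with $\kappa$ just below $1$) any trajectory of $v_{\ell_q}$ initiated inside $\supp(\psi_{i,q})$ remains inside a $\psi_{i,q}$-level-set for the whole time $\tilde\chi_{i,k,q}\neq 0$. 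In particular, along the flow, I may use the sharp local estimates \eqref{eq:nasty:D:vq:old}--\eqref{eq:bob:Dq':old} for $v_{\ell_q}$ on $\supp(\psi_{i,q})$, which are already propagated inductively.

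For \eqref{eq:Lagrangian:Jacobian:1} I would differentiate the defining ODE $\partial_t \Phi_{(i,k)} + v_{\ell_q}\cdot\nabla\Phi_{(i,k)}=0$ once in space to get
\begin{equation*}
D_{t,q} D\Phi_{(i,k)} = -(Dv_{\ell_q})\,D\Phi_{(i,k)},
\end{equation*}
then integrate from the initial time $k\tau_q\Gamma_{q+1}^{-i}$ and apply Gr\"onwall. The local Lipschitz bound $\|Dv_{\ell_q}\|_{L^\infty(\supp\psi_{i,q})}\lesssim \Gamma_{q+1}^{i+1}\delta_q^{\sfrac 12}\tilde\lambda_q$ combined with the time-length $\tau_q\Gamma_{q+1}^{-i+\cstar}$ and the identity $\tau_q^{-1}=\delta_q^{\sfrac 12}\lambda_q\Gamma_{q+1}^{\cstar+11}$ produces a product of size $\Gamma_{q+1}^{-1}$, which is the target. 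Bound \eqref{eq:Lagrangian:Jacobian:3} then follows by the Neumann series, and \eqref{eq:Lagrangian:Jacobian:7} by the identity $D\Phi^{-1}=((D\Phi)^{-1})\circ\Phi^{-1}$ together with \eqref{eq:Lagrangian:Jacobian:2} and \eqref{eq:Lagrangian:Jacobian:4}.

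For the higher-order spatial bounds \eqref{eq:Lagrangian:Jacobian:2} and \eqref{eq:Lagrangian:Jacobian:4} I would proceed by induction on $N$, applying $D^{N-1}$ to the ODE for $D\Phi_{(i,k)}$ and expanding via Fa\`a di Bruno, so that one obtains
\begin{equation*}
D_{t,q} D^{N}\Phi_{(i,k)} = -\sum_{N_1+\cdots+N_j=N-1}(D^{j}v_{\ell_q})\ast\bigl(D^{N_1+1}\Phi_{(i,k)},\dots,D^{N_j+1}\Phi_{(i,k)}\bigr).
\end{equation*}
Each summand is controlled by the local velocity estimate \eqref{eq:nasty:D:vq:old} (which obeys $\MM{|\aaa|,2\Nindv,\Gamma_q\lambda_q,\tilde\lambda_q}$) and lower-order deformation bounds; the resulting expression telescopes into $\MM{N-1,2\Nindv,\Gamma_q\lambda_q,\tilde\lambda_q}$ after multiplication by the time length, since the $\Gamma_{q+1}^{-1}$ gain is preserved and the two-rate $\MM{\cdot}$ structure aggregates via the subadditivity noted in the ``geometric upper bounds'' remark. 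The inverse-matrix bound \eqref{eq:Lagrangian:Jacobian:4} follows from differentiating $(D\Phi)(D\Phi)^{-1}=\mathrm{Id}$ and inducting, using \eqref{eq:Lagrangian:Jacobian:1}--\eqref{eq:Lagrangian:Jacobian:2} already established.

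For the mixed space-material-derivative bounds \eqref{eq:Lagrangian:Jacobian:5}--\eqref{eq:Lagrangian:Jacobian:6} the crucial observation is that $D_{t,q}\Phi_{(i,k)}=0$, so $D_{t,q}$ acting on $D^{N'+1}\Phi_{(i,k)}$ is purely a commutator: $D_{t,q}D^{N'+1}\Phi_{(i,k)}=[D_{t,q},D^{N'+1}]\Phi_{(i,k)}$, which expands into a sum of products involving derivatives of $v_{\ell_q}$ and spatial derivatives of $\Phi$. Iterating this $M$ times converts each material derivative into one application of the velocity's material-derivative bound \eqref{eq:bob:Dq':old} (which carries the rate $\Gamma_{q+1}^{i-\cstar}\tau_q^{-1}$ up to level $\Nindt$), while the spatial derivatives continue to respect the $\tilde\lambda_q^N$ growth. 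Keeping careful track of the Fa\`a di Bruno combinatorics produces exactly \eqref{eq:Lagrangian:Jacobian:5}, and \eqref{eq:Lagrangian:Jacobian:6} follows by the same inverse-matrix argument as before. The main obstacle is a bookkeeping one: verifying that the two-rate $\MM{\cdot,2\Nindv,\Gamma_q\lambda_q,\tilde\lambda_q}$ structure for spatial derivatives and the corresponding $\MM{\cdot,\Nindt,\Gamma_{q+1}^{i-\cstar}\tau_q^{-1},\Gamma_{q+1}^{-1}\tilde\tau_q^{-1}}$ structure for material derivatives are preserved at every step of the induction without accumulating parasitic powers of $\Gamma_{q+1}$; this is precisely where the gap between the level-$\NindLarge$ small-rate regime and the level-$\Nfin$ large-rate regime in the inductive velocity estimates is exploited, and it is also why these bounds are inherited verbatim from the analogous computation in \cite[Section~6.4]{BMNV21}.
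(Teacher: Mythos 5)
The paper does not give its own proof of Corollary~\ref{cor:deformation}: immediately after stating it, the text says ``The proofs of Lemma~\ref{lem:dornfelder} and Corollary~\ref{cor:deformation} are contained in \cite[Section 6.4]{BMNV21}.'' Your sketch is a reconstruction of that deferred argument, and at a structural level it does match what appears in \cite{BMNV21}: the ODE $D_{t,q} D\Phi_{(i,k)} = -(Dv_{\ell_q})^T D\Phi_{(i,k)}$ with Gr\"onwall for the first-order bound, induction with Fa\`a di Bruno for the higher spatial derivatives, differentiation of $(D\Phi)(D\Phi)^{-1}=\Id$ for the inverse matrix, the chain rule for $\Phi^{-1}$, and the commutator expansion using $D_{t,q}\Phi_{(i,k)}=0$ for the mixed space-material bounds. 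The numerical headroom is also correct: the time interval $\approx\tau_q\Gamma_{q+1}^{-i+\cstar}$ times the local Lipschitz bound $\approx\Gamma_{q+1}^{i+2}\delta_q^{\sfrac 12}\tilde\lambda_q$ gives a product of order $\Gamma_{q+1}^{-4}$, comfortably below the target $\Gamma_{q+1}^{-1}$, so there is room for the extra factor of $\Gamma_{q+1}$ discussed below.

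The one step you state incorrectly is the invocation of Lemma~\ref{lem:dornfelder}. You write ``applied with $\kappa$ just below $1$ any trajectory initiated inside $\supp(\psi_{i,q})$ remains inside a $\psi_{i,q}$-level-set,'' but Lemma~\ref{lem:dornfelder} requires $\psi_{i,q}^2(x_0,t_0)\geq\kappa^2$ at the starting point, and points of $\supp(\psi_{i,q})$ can have $\psi_{i,q}$ arbitrarily small. The correct step is to use the partition of unity \eqref{eq:inductive:partition}: if $(x_0,t_0)\in\supp(\psi_{i,q})$, then $\sum_{i'=i-1}^{i+1}\psi_{i',q}^2(x_0,t_0)=1$, so $\psi_{i^*,q}^2(x_0,t_0)\geq\sfrac 13$ for some $i^*\in\{i-1,i,i+1\}$. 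Applying Lemma~\ref{lem:dornfelder} with that $i^*$ and $\kappa^2=\sfrac 13$ keeps the trajectory in $\supp(\psi_{i^*,q})$ over the needed timescale, and then the local velocity estimates at index $i^*$ (hence at index $i+1$ in the worst case) are the ones that feed the Gr\"onwall/Fa\`a di Bruno iteration, at the cost of an extra $\Gamma_{q+1}$. This is exactly the cost you must verify the $\Gamma_{q+1}^{-4}$ headroom absorbs, and it does. With that correction your outline is a faithful account of the argument in \cite[Section~6.4]{BMNV21} that the paper cites.
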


\subsection{Stress estimates and stress cutoff functions}
\label{sec:cutoff:stress:bounds:0}
Before giving the definition of the stress cutoffs, we first note that we can upgrade the $L^1$ and $L^\infty$ bounds for $\psi_{i,q-1} D^K D_{t,q-1}^M \RR_{\ell_q}$ available in \eqref{eq:mollified:stress:bounds} and \eqref{eq:Rq:inductive:uniform:moll}, respectively, to $L^1$ and $L^\infty$ bounds for $\psi_{i,q} D^K D_{t,q}^M \RR_{\ell_q}$. We claim that:
\begin{lemma}[\bf $L^1$ and $L^\infty$ estimates for zeroth order stress]
\label{lem:inductive:rq:dtq}
Let $\RR_{\ell_q}$ be as defined in \eqref{eq:vlq:Rlq:def}. For $q\geq 1$ and $0 \leq i \leq \imax(q)$ we have the estimates
\begin{subequations}
\begin{align}
\bigl\|D^K D_{t,q}^M \mathring R_{\ell_q}\bigr\|_{L^1(\supp \psi_{i,q} )}   & \lesssim \Gamma_q^{\shaq} \delta_{q+1} \MM{K,2\Nindv,\lambda_q\Gamma_q,\Tilde{\lambda}_q}\MM{M, \NindRt, \Gamma_{q+1}^{i-\cstar} \tau_q^{-1}, \Gamma_{q+1}^{-1} \Tilde{\tau}_{q}^{-1} }
\label{eq:Rn:inductive:dtq} \\
\bigl\| D^K D_{t,q}^M \mathring R_{\ell_q}\bigr\|_{L^\infty(\supp \psi_{i,q} )}  & \lesssim \Gamma_q^{\badshaq} \MM{K,2\Nindv,\lambda_q\Gamma_q,\Tilde{\lambda}_q}\MM{M, \NindRt, \Gamma_{q+1}^{i-\cstar} \tau_q^{-1}, \Gamma_{q+1}^{-1} \Tilde{\tau}_{q}^{-1} }
\label{eq:Rn:inductive:dtq:uniform}
\end{align}
\end{subequations}
for all $K+M \leq \sfrac{3\Nfin}{2}$. 
\end{lemma}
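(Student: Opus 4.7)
The plan is to decompose the material derivative via the identity $D_{t,q} = D_{t,q-1} + u_q\cdot\nabla$ from \eqref{eq:cutoffs:dtqdtq-1} and leverage the estimates for $\RR_{\ell_q}$ and $u_q$ already supplied by Lemma~\ref{lem:mollifying:ER}. The $L^1$ bound \eqref{eq:Rn:inductive:dtq} is obtained by the same scheme carried out in the analogous result of \cite[Section~6]{BMNV21}, so the new substantive task is the $L^\infty$ bound \eqref{eq:Rn:inductive:dtq:uniform}. The structure of the argument mirrors the $L^1$ case, but the loss of the small prefactor $\delta_{q+1}$ and the need to propagate only a single overall power of $\Gamma_q^\badshaq$ require a careful accounting of the contributions coming from the repeated factors of $u_q$.

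First I would expand $D_{t,q}^M$ as a sum over length-$M$ strings of operators drawn from $\{D_{t,q-1},\,u_q\cdot\nabla\}$ and then, upon applying the Leibniz rule to the outer $D^K$ together with each standalone $u_q\cdot\nabla$, rewrite each summand in the schematic form
\[
\prod_{\ell=1}^{L}\bigl(D^{a_\ell}D_{t,q-1}^{b_\ell}u_q\bigr)\cdot D^{N_0}D_{t,q-1}^{M_0}\RR_{\ell_q},
\]
with $L\leq M$, $M_0+\sum_\ell b_\ell\leq M-L$, and $N_0+\sum_\ell a_\ell=K+L$. This iterated commutator/product-rule manipulation is exactly the content of the combinatorial lemmas from the appendix of \cite{BMNV21}, which apply here verbatim.

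Second, on $\supp\psi_{i,q}$ the recursive Definition~\ref{def:psi:i:q:def} lets us pick a tuple $\vec i$ with $\max_m i_m=i$ together with a companion collection $\{j_m\}$ satisfying $i_\ast(j_m)\leq i_m\leq i$, so in particular the point lies in $\supp\psi_{j_m,q-1}$ for each $m$. Applying the $L^\infty$ bound \eqref{eq:inductive:assumption:uniform:mol} to each factor of $u_q$ and the $L^\infty$ bound \eqref{eq:Rq:inductive:uniform:moll} to the single factor of $\RR_{\ell_q}$, the spatial portion of the product estimate collapses via $\MM{a,\cdot,\cdot,\cdot}\MM{b,\cdot,\cdot,\cdot}\leq\MM{a+b,\cdot,\cdot,\cdot}$, while the material portion is first converted into a $D_{t,q-1}$-based $\MM$-symbol using $\Gamma_q^{j_m+1}\leq\Gamma_{q+1}^{i+O(1)}$ (from the definition of $i_\ast$), and then repackaged into the target $D_{t,q}$-based $\MM$-symbol via Remark~\ref{rem:D:t:q':orangutan}.

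The main obstacle is verifying that every application of $u_q\cdot\nabla$ is absorbed into a single material-derivative cost $\Gamma_{q+1}^{i-\cstar}\tau_q^{-1}$, despite the $L^\infty$ estimate $\|u_q\|_{L^\infty}\lesssim\Gamma_q^\badshaq\Theta_q^{\sfrac 12}$ being much larger than $\delta_q^{\sfrac 12}$. The key input is Lemma~\ref{lem:maximal:i}, which yields $\Gamma_q^\badshaq\Theta_q^{\sfrac 12}\delta_q^{-\sfrac 12}\leq\Gamma_{q+1}^{\imax}$; combined with the identity $\tau_q^{-1}=\delta_q^{\sfrac 12}\lambda_q\Gamma_{q+1}^{\cstar+11}$ and a sufficiently large choice of $\cstar$ (depending only on the uniform bound \eqref{eq:imax:upper:lower} and the implicit constants indicated by $\les$), each $u_q\cdot\nabla$ contributes at most $\Gamma_{q+1}^{i-\cstar}\tau_q^{-1}$. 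As a consequence, only a single overall $\Gamma_q^\badshaq$ factor persists in the final bound, since the multiplicative $\Gamma_q^\badshaq\Theta_q^{\sfrac 12}$ losses from the $u_q$-factors are each paired with a compensating $\delta_q^{\sfrac 12}$ hidden inside $\tau_q^{-1}$. Summing the at most $2^{M}\cdot M^{O(L)}$ schematic terms then yields \eqref{eq:Rn:inductive:dtq:uniform}.
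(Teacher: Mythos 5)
The overall decomposition $D_{t,q} = D_{t,q-1} + u_q\cdot\nabla$, the reliance on the combinatorial operator lemmas from the appendix of~\cite{BMNV21}, and the passage from level-$(q-1)$ cutoffs to level-$q$ cutoffs via the recursive definition of $\psi_{i,q}$ are all exactly in the spirit of the paper's two-step proof (\cite[Lemmas A.10, A.14]{BMNV21}), so your first two paragraphs are on target. However, there is a genuine gap in the third paragraph: you apply the \emph{uniform} bound \eqref{eq:inductive:assumption:uniform:mol}, $\|D^n D_{t,q-1}^m u_q\|_{L^\infty(\supp\psi_{i,q-1})}\lesssim\Gamma_q^\badshaq\Theta_q^{\sfrac 12}\cdots$, whose amplitude is independent of $i$. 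The paper instead uses the \emph{localized} bound \eqref{eq:nasty:D:wq:old} at level $q$, whose amplitude constant is $\Gamma_{q+1}^{i+1}\delta_q^{\sfrac 12}$, and this distinction is essential. With your choice, each insertion of $u_q\cdot\nabla$ costs $\Gamma_q^\badshaq\Theta_q^{\sfrac 12}\tilde\lambda_q$, and since $\tau_q^{-1}=\delta_q^{\sfrac 12}\tilde\lambda_q\Gamma_{q+1}^{\cstar+6}$, the requirement $\Gamma_q^\badshaq\Theta_q^{\sfrac 12}\tilde\lambda_q\lesssim\Gamma_{q+1}^{i-\cstar}\tau_q^{-1}$ reduces (after the $\cstar$'s cancel) to $\Gamma_q^\badshaq\Theta_q^{\sfrac 12}\delta_q^{-\sfrac 12}\lesssim\Gamma_{q+1}^{i+6}$, i.e.\ roughly $\Gamma_{q+1}^{\imax}\lesssim\Gamma_{q+1}^{i+6}$. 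This fails for $i\ll\imax$, in particular for $i=0$. Your proposed fix of ``a sufficiently large choice of $\cstar$'' cannot work, precisely because $\cstar$ appears in both $\Gamma_{q+1}^{i-\cstar}$ and in $\tau_q^{-1}$ with opposite sign, so it drops out of the balance. The correct move, as in the paper's Step 2, is to invoke \eqref{eq:nasty:D:wq:old} on $\supp\psi_{i,q}$ (valid at level $q$ thanks to Proposition~\ref{prop:no:proofs}), whose $i$-dependent amplitude $\Gamma_{q+1}^{i+1}\delta_q^{\sfrac 12}$ satisfies $\Gamma_{q+1}^{i+1}\delta_q^{\sfrac 12}\tilde\lambda_q\leq\Gamma_{q+1}^{i-\cstar-2}\tau_q^{-1}$, independent of the gap between $i$ and $\imax$.
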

\begin{proof}[Proof of Lemma~\ref{lem:inductive:rq:dtq}]
The estimate in \eqref{eq:Rn:inductive:dtq} parallels that of \cite[Lemma~6.28]{BMNV21}; the ingredients in the proof were the $L^1$ bounds for the mollified stress, which are available from \eqref{eq:mollified:stress:bounds} (see also \cite[Lemma~5.1]{BMNV21}), and two lemmas regarding sums and iterates of operators. For the sake of clarity, we thus focus on the proof of \eqref{eq:Rn:inductive:dtq:uniform}, which follows the same strategy as the original proof of \eqref{eq:Rn:inductive:dtq}.  The only change is that we simply substitute the $L^\infty$ bound furnished by \eqref{eq:Rq:inductive:uniform:moll} for each instance of an $L^1$ bound in the proof.

The first step is to apply \cite[Lemma~A.14 and Remark~A.15]{BMNV21} to the functions $v = v_{\ell_{q-1}}$, $f = \RR_{\ell_{q}}$, with $p=\infty$, and on the domain $\Omega = \supp (\psi_{i,q-1})$. The bound \cite[equation (A.50)]{BMNV21} holds in view of the inductive assumption \eqref{eq:nasty:D:vq:old} with $q' = q-1$, for the parameters $\const_v = \Gamma_{q}^{i+1} \delta_{q-1}^{\sfrac 12} $, $\lambda_v = \tilde \lambda_v = \tilde \lambda_{q-1}$, $\mu_v = \Gamma_q^{i-\cstar} \tau_{q-1}^{-1}$, $\tilde \mu_v = \Gamma_q^{-1} \tilde \tau_{q-1}^{-1}$, $N_x = 2\Nindv$, $N_t = \Nindt$, and for $N_* = \sfrac{3\Nfin}{2}$. On the other hand, the assumption \cite[equation (A.51)]{BMNV21} holds due to \eqref{eq:Rq:inductive:uniform:moll}, with the parameters $\const_f = \Gamma_q^{\badshaq}$, $\lambda_f = \lambda_q$, $\tilde \lambda_f = \tilde \lambda_q$, $N_x = 2\Nindv$, $\mu_f = \Gamma_q^{i+3} \tau_{q-1}^{-1}$, $\tilde \mu_f = \tilde \tau_{q-1}^{-1}$, $N_t = \Nindt$, and $N_\circ = 2\Nfin$. We thus conclude from \cite[equation (A.54)]{BMNV21} that
\begin{align*}
&\bigl\|D^{\aaa} D_{t,q-1}^{\bbb} \RR_{\ell_q}\bigr\|_{L^{\infty}(\supp(\psi_{i,q-1}))}   \les \Gamma_q^{\badshaq} \MM{|\aaa|,2\Nindv,\lambda_q,\tilde \lambda_q}   \MM{|\bbb|,\Nindt,\Gamma_q^{i+3} \tau_{q-1}^{-1},\tilde \tau_{q-1}^{-1}}
\end{align*}
whenever $|\aaa|+|\bbb| \leq \sfrac{3\Nfin}{2}$. Here we have used that $\tilde \lambda_{q-1} \leq \lambda_q$ and that $\Gamma_{q}^{i+1} \delta_{q-1}^{\sfrac 12} \tilde \lambda_{q-1} \leq \Gamma_q^{i+3} \tau_{q-1}^{-1} \leq \tilde \tau_{q-1}^{-1}$ (in view of \eqref{eq:Lambda:q:x:1:NEW}, \eqref{eq:Lambda:q:t:1}, and \eqref{eq:imax:old}). In particular, the definitions of $\psi_{i,q}$ in \eqref{eq:psi:i:q:recursive} and of $\psi_{m,i_m,q}$ in \eqref{eq:psi:m:im:q:def} imply that for all $|\aaa|+|\bbb| \leq \sfrac{3\Nfin}{2}$,
\begin{align}
&\bigl\|D^{\aaa} D_{t,q-1}^{\bbb}
 \RR_{\ell_q}\bigr\|_{L^{\infty}(\supp(\psi_{i,q}))} \les \Gamma_q^{\badshaq} \MM{|\aaa|,2\Nindv,\lambda_q,\tilde \lambda_q}   \MM{|\bbb|,\Nindt,\Gamma_{q+1}^{i+3} \tau_{q-1}^{-1},\tilde \tau_{q-1}^{-1}} \, .
\label{eq:cooper:2:f:2:temp:1}
\end{align}

The second step is to apply \cite[Lemma~A.10]{BMNV21} with $B = D_{t,q-1}$, $A = u_q \cdot \nabla$, $v = u_q$,  $f = \RR_{\ell_{q}}$, $p=\infty$, and $\Omega = \supp(\psi_{i,q})$. In this case $D^K (A+B)^M f = D^K D_{t,q}^M \RR_{\ell_q}$, which is exactly the object that we need to estimate in \eqref{eq:Rn:inductive:dtq:uniform}. The assumption \cite[equation (A.40)]{BMNV21} holds due to \eqref{eq:nasty:D:wq:old} at level $q$ (which holds due to Proposition~\ref{prop:no:proofs}) with $\const_v = \Gamma_{q+1}^{i+1} \delta_q^{\sfrac 12}$, $\lambda_v = \Gamma_q \lambda_q$, $\tilde \lambda_v = \tilde \lambda_q$, $N_x = 2\Nindv$, $\mu_v = \Gamma_{q+1}^{i+3} \tau_{q-1}^{-1}$, $\tilde \mu_v = \Gamma_{q+1}^{-1} \tilde \tau_q^{-1}$, $N_t = \Nindt$, and $N_* = \sfrac{3\Nfin}{2} + 1$. The assumption \cite[equation (A.41)]{BMNV21} holds due to \eqref{eq:cooper:2:f:2:temp:1} with the parameters $\const_f = \Gamma_q^{\badshaq}$, $\lambda_f = \lambda_q$, $\tilde \lambda_f = \tilde \lambda_q$, $N_x = 2\Nindv$, $\mu_f = \Gamma_{q+1}^{i+3} \tau_{q-1}^{-1}$, $\tilde \mu_f = \tilde \tau_{q-1}^{-1}$, $N_t = \Nindt$, and $N_* = \sfrac{3\Nfin}{2}$. The bound \cite[equation (A.44)]{BMNV21} and the parameter inequalities $\Gamma_{q+1}^{i+1} \delta_q^{\sfrac 12} \tilde \lambda_q \leq \Gamma_{q+1}^{i-\cstar-2} \tau_q^{-1} \leq \Gamma_{q+1}^{-1} \tilde \tau_q^{-1}$ and $\Gamma_{q+1}^{i+3} \tau_{q-1}^{-1} \leq \Gamma_{q+1}^{i-\cstar} \tau_q^{-1}$ (which hold due to \eqref{eq:Tau:q-1:q},  \eqref{eq:Lambda:q:x:1:NEW}, \eqref{eq:Lambda:q:t:1}, and \eqref{eq:imax:old}) then directly imply \eqref{eq:Rn:inductive:dtq:uniform}, concluding the proof.
\end{proof}

\begin{remark}[\bf $L^1$ and $L^\infty$ estimates for higher order stresses]
In order to verify the inductive assumptions in \eqref{eq:Rq:inductive:assumption} and \eqref{eq:Rq:inductive:uniform} for the new stress $\RR_{q+1}$, it will be necessary to consider a sequence of intermediate objects $\RR_{q,n}$ indexed by $n$ for $1\leq n \leq \nmax$. For notational convenience, when $n=0$, we define $\RR_{q,0}:=\RR_{\ell_q}$, and estimates on $\RR_{q,0}$ are already provided by Lemma~\ref{lem:inductive:rq:dtq}. For $1\leq n \leq\nmax$, the higher order stresses $\RR_{q,n}$ are defined in Section~\ref{ss:stress:definition}, specifically in \eqref{e:rqnp:definition}.  Note that the definition of $\RR_{q,n}$ is given as a finite sum of sub-objects $\HH_{q,n}^{n'}$ for $n'\leq n-1$ and thus requires induction on $n$.  The definition of $\HH_{q,n}^{n'}$ is contained in Section~\ref{ss:stress:error:identification}, specifically in \eqref{eq:Hqnpnn:definition}.  Estimates on $\HH_{q,n}^{n'}$ on the support of $\psi_{i,q}$ are stated in \eqref{e:inductive:n:2:Hstress} and \eqref{e:inductive:n:1:Hstress:unif} and proven in Section~\ref{ss:stress:oscillation:1}.  For the time being, we \emph{assume} that $\RR_{q,n}$ is {\em well-defined} and  satisfies  
\begin{align}
\bigl\| D^k D_{t,q}^m \mathring R_{q,n}\bigr\|_{L^1(\supp \psi_{i,q} )} &\les  \delta_{q+1,n}  \lambda\qn^k \MM{m, \NindRt,\Gamma_{q+1}^{i-\cstarn} \tau_{q}^{-1},\Gamma_{q+1}^{-1} \Tilde{\tau}_{q}^{-1}}
\label{eq:Rn:inductive:assumption} \\
\bigl\|   D^k D_{t,q}^m \mathring R_{q,n}\bigr\|_{L^\infty(\supp \psi_{i,q} )} 
&\les  \Gamma_{q}^{\badshaq} \Gamma_{q+1}^{14\Upsilon(n)} \lambda\qn^k \MM{m, \NindRt,\Gamma_{q+1}^{i-\cstarn} \tau_{q}^{-1},\Gamma_{q+1}^{-1} \Tilde{\tau}_{q}^{-1}}
\label{eq:Rn:inductive:assumption:unif}
\end{align}
for $k+m\leq \Nfn$.

For the purpose of defining the stress cutoff functions, the precise definitions of the $n$-dependent parameters $\delta_{q+1,n}, \lambda\qn$, $\Nfn$, and $\cstarn$ present in \eqref{eq:Rn:inductive:assumption} are not relevant. Note however that the definition for $\lambda\qn$ for $0\leq n \leq \nmax$ is given in \eqref{eq:def:lambda:rq}.  Similarly, for $0\leq n \leq \nmax$,  $\delta_{q+1,n}$ is defined in \eqref{eq:new:delta:def}. Finally, note that there are losses in the sharpness and order of the available derivative estimates in \eqref{eq:Rn:inductive:assumption} and \eqref{eq:Rn:inductive:assumption:unif} relative to \eqref{eq:Rn:inductive:dtq} and \eqref{eq:Rn:inductive:dtq:uniform}.  Specifically, the higher order estimates will only be proven up to $\Nfn$, which is a parameter that is decreasing with respect to $n$ and defined in \eqref{def:Nfn:formula}. For the moment it is only important to note  that $\Nfn \gg 14\Nindv$ for all $0\leq n \leq \nmax$, which is necessary in order to establish \eqref{eq:inductive:assumption:derivative:q} and \eqref{eq:Rq:inductive:assumption} at level $q+1$.  Similarly, there is a loss in the cost of sharp material derivatives in \eqref{eq:Rn:inductive:assumption}, as $\cstarn$ will be a parameter which is decreasing with respect to $n$.  When $n=0$, we set $\cstarn=\cstar$ so that \eqref{eq:Rn:inductive:dtq} is consistent with \eqref{eq:Rn:inductive:assumption}. For $1\leq n \leq\nmax$, $\cstarn$ is defined in \eqref{def:cstarn:formula}.
\end{remark}

For $q\geq 1$, $0 \leq i \leq i_{\rm max}$,  and $0 \leq n \leq n_{\rm max}$, we keep in mind the bound \eqref{eq:Rn:inductive:assumption} and define
\begin{align}
g_{i,q,n}^2(x,t) =  1 + \sum_{k=0}^{\NcutLarge}\sum_{m=0}^{\NcutSmall} 
\delta_{q+1,n}^{-2} (\Gamma_{q+1} \lambda\qn)^{-2k}  (\Gamma_{q+1}^{i-\cstarn+2} \tau_{q}^{-1})^{-2m} 
|D^k D_{t,q}^m \RR_{q,n}(x,t)|^2 \, .
\label{eq:g:i:q:n:def}
\end{align}
With this notation, for $j\geq 1$ the stress cut-off functions are defined by
\begin{align}
\omega_{i,j,q,n}(x,t) = \Psi_{0,q+1} \Big( \Gamma_{q+1}^{-2 j} \, g_{i,q,n}(x,t) \Big)
\,,
\label{eq:omega:cut:def}
\end{align}
while for $j=0$ we let 
\begin{align}
\omega_{i,0,q,n}(x,t) = \tilde\Psi_{0,q+1}\Big(g_{i,q,n}(x,t)\Big)
\,,
\label{eq:omega:cut:def:0}
\end{align}
where $\Psi_{0,q+1}$ and $\tilde \Psi_{0,q+1}$ are as in Lemma~\ref{lem:cutoff:construction:first:statement}.
The cutoff functions $\omega_{i,j,q,n}$ defined above will be shown to obey good estimates on the support of the velocity cutoffs $\psi_{i,q}$. An immediate consequence of \eqref{eq:tilde:partition} with $m=0$ is that for every fixed $i,n$, we have 
\begin{align}
\sum_{j\geq 0} \omega_{i,j,q,n}^2 = 1
\label{eq:omega:cut:partition:unity}
\end{align}
on $\T^3 \times \R$. Thus, $\{\omega_{i,j,q,n}^2 \}_{j\geq 0}$ is a partition of unity.

The following Corollary is quite similar to \cite[Corollary~6.34]{BMNV21}.  In fact the method of proof of that Corollary applies \emph{mutatis mutandis} after replacing each instance of $\RR_{q,n,p}$ and $\lambda\qnp$ with $\RR\qn$ and $\lambda\qn$, and so we omit the proof.
 
\begin{corollary}[\bf $L^\infty$ estimates for the higher order stresses]
\label{cor:D:Dt:Rn:sharp:new}
For $q \geq 0$, $0\leq i\leq \imax$, $0 \leq n \leq \nmax$, and $\aaa,\bbb \in \N_0^k$ we have 
\begin{align}
&\bigl\| D^{\aaa} D_{t,q}^{\bbb}  \RR_{q,n}\bigr\|_{L^\infty(\supp \psi_{i,q}  \omega_{i,j,q,n})} 
 \les \Gamma_{q+1}^{2(j+1)} \delta_{q+1,n}  
(\Gamma_{q+1} \lambda\qn)^{|\aaa|}  
\MM{|\bbb|, \NindRt,\Gamma_{q+1}^{i-\cstarn+2} \tau_{q}^{-1},\Gamma_{q+1}^{-1} \Tilde{\tau}_{q}^{-1}}
\label{eq:D:Dt:Rn:sharp:new}
\end{align}
for all $|\aaa| + |\bbb| \leq \Nfn-4$. 
\end{corollary}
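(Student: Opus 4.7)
The plan is to mirror the proof of \cite[Corollary~6.34]{BMNV21}, with the substitutions $\RR_{q,n,p}\leadsto \RR_{q,n}$ and $\lambda_{q,n,p}\leadsto\lambda\qn$ indicated in the paragraph preceding the statement. The argument is in two steps: first, read off a sharp pointwise \emph{pure}-derivative bound directly from the definition of the stress cutoff; second, interpolate against the inductive assumption \eqref{eq:Rn:inductive:assumption:unif} to reach order $\Nfn-4$, and then convert pure derivatives into mixed products $D^{\aaa}D_{t,q}^{\bbb}$ via the sums-of-operators machinery from the appendix of \cite{BMNV21}.

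For the first step, from \eqref{eq:omega:cut:def} and \eqref{eq:omega:cut:def:0} combined with items~\eqref{item:cutoff:1}--\eqref{item:cutoff:2} of Lemma~\ref{lem:cutoff:construction:first:statement}, one has the pointwise inequality $g_{i,q,n}(x,t) \leq \Gamma_{q+1}^{j+1}$ throughout $\supp \omega_{i,j,q,n}$, for every $j \geq 0$. Unpacking \eqref{eq:g:i:q:n:def}, each squared summand must be dominated by $g_{i,q,n}^2$, which yields
\begin{equation}
|D^k D_{t,q}^m \RR_{q,n}(x,t)| \lesssim \Gamma_{q+1}^{j+1}\,\delta_{q+1,n}\,(\Gamma_{q+1}\lambda\qn)^k\,(\Gamma_{q+1}^{i-\cstarn+2}\tau_q^{-1})^m
\notag
\end{equation}
for all $0\leq k\leq \NcutLarge$ and $0\leq m\leq \NcutSmall$. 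This is already stronger than \eqref{eq:D:Dt:Rn:sharp:new} (in its pure-derivative form) in that low-order regime, with a spare factor of $\Gamma_{q+1}^{j+1}$ relative to the claimed prefactor $\Gamma_{q+1}^{2(j+1)}$.

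For the second step, to reach pure-derivative orders $(k,m)$ with $k+m\leq \Nfn-4$ but with $k>\NcutLarge$ or $m>\NcutSmall$, we interpolate the above pointwise bound against the global inductive assumption \eqref{eq:Rn:inductive:assumption:unif} on $\supp \psi_{i,q}$. The inductive bound supplies control up to order $\Nfn$ with the coarser spatial cost $\lambda\qn$ (in place of $\Gamma_{q+1}\lambda\qn$) and the coarser material cost, with transition at $\NindRt$ to $\Gamma_{q+1}^{-1}\tilde\tau_q^{-1}$; the convex-combination interpolation yields precisely the $\MM{\cdot,\NindRt,\Gamma_{q+1}^{i-\cstarn+2}\tau_q^{-1},\Gamma_{q+1}^{-1}\tilde\tau_q^{-1}}$ structure in the statement. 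The loss factor $\Gamma_q^{\badshaq}\Gamma_{q+1}^{14\Upsilon(n)}$ appearing in \eqref{eq:Rn:inductive:assumption:unif} is bounded independently of $q$ by the parameter choices of Section~\ref{sec:parameters:DEF}, and is absorbed into the spare $\Gamma_{q+1}^{j+1}$ factor (even at $j=0$). Finally, to pass from pure derivatives $D^k D_{t,q}^m$ to mixed products $D^{\aaa}D_{t,q}^{\bbb}$, we apply \cite[Lemma~A.10 and Lemma~A.14]{BMNV21} with $v=v_{\ell_q}$, $B=D_{t,q}$, $A=v_{\ell_q}\cdot\nabla$, and $f=\RR_{q,n}$, exactly as in the cited corollary; the velocity bounds on $v_{\ell_q}$ required to invoke those lemmas are furnished by Proposition~\ref{prop:no:proofs}.

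The only real obstacle is the routine bookkeeping of verifying that the interpolation and commutator-passage parameters match up: the spatial loss $\Gamma_{q+1}$ in going from the inductive bound to the sharper one, the $\cstarn$-versus-$(\cstarn-2)$ discrepancy in the material cost, and the $\Gamma_{q+1}^{14\Upsilon(n)}$ loss must all fit inside the budget $\Gamma_{q+1}^{2(j+1)}/\Gamma_{q+1}^{j+1}=\Gamma_{q+1}^{j+1}$ provided by the gap between the low-order and target prefactors. Since $\Upsilon(n)$ is $q$-independent (cf.~\eqref{eq:upsa:bound}) and all of $\NcutLarge,\NcutSmall,\NindRt,\Nfn$ are chosen in Section~\ref{sec:parameters:DEF} with sufficient separation for this interpolation to close in \cite[Corollary~6.34]{BMNV21}, the same choices suffice here, and no new ideas are introduced.
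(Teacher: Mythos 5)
Your three-step skeleton --- a low-order pointwise bound extracted from $\supp\omega_{i,j,q,n}$, promotion to all orders up to $\Nfn-4$ via the inductive $L^\infty$ bound \eqref{eq:Rn:inductive:assumption:unif}, and passage from pure $D^kD_{t,q}^m$ to mixed $D^{\aaa}D_{t,q}^{\bbb}$ via the commutator machinery of \cite{BMNV21} --- is indeed the right framework for \cite[Corollary~6.34]{BMNV21}. However, the two quantitative claims carrying the weight of your argument are both incorrect, and together they leave the high-order range unproven.

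First, your support bound is off by a square. From \eqref{eq:omega:cut:def} and item~\eqref{item:cutoff:2} of Lemma~\ref{lem:cutoff:construction:first:statement}, a point of $\supp\omega_{i,j,q,n}$ with $j\geq 1$ must satisfy $\Gamma_{q+1}^{-2j}g_{i,q,n}\in[\sfrac 14,\Gamma_{q+1}^2]$, hence $g_{i,q,n}\leq\Gamma_{q+1}^{2(j+1)}$, not $\Gamma_{q+1}^{j+1}$; the case $j=0$ via \eqref{eq:omega:cut:def:0} and item~\eqref{item:cutoff:1} gives the same. (This is also what the pigeonhole lower bound $|D^kD_{t,q}^m\RR_{q,n}|\gtrsim\Gamma_{q+1}^{2j}\delta_{q+1,n}\cdots$ in the proof of Lemma~\ref{lem:maximal:j} presupposes.) So the pointwise bound from \eqref{eq:g:i:q:n:def} for $k\leq\NcutLarge$, $m\leq\NcutSmall$ carries the \emph{full} prefactor $\Gamma_{q+1}^{2(j+1)}\delta_{q+1,n}$, exactly matching \eqref{eq:D:Dt:Rn:sharp:new}; there is no spare factor of $\Gamma_{q+1}^{j+1}$.

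Second, your absorption step is therefore vacuous, and your accounting of the prefactor gap omits its dominant contribution. The amplitude mismatch between \eqref{eq:Rn:inductive:assumption:unif} and the target is not $\Gamma_q^\badshaq\Gamma_{q+1}^{14\Upsilon(n)}$ but $\Gamma_q^\badshaq\Gamma_{q+1}^{14\Upsilon(n)}\cdot\delta_{q+1,n}^{-1}\cdot\Gamma_{q+1}^{-2(j+1)}$, in which $\delta_{q+1,n}^{-1}$ is by far the largest factor and which you never mention. Moreover $\Gamma_q^\badshaq\Gamma_{q+1}^{14\Upsilon(n)}$ is not ``bounded independently of $q$'' --- only the \emph{exponents} $\badshaq$ and $\Upsilon(n)$ are $q$-independent; $\Gamma_q$, and a fortiori $\delta_{q+1,n}^{-1}$, diverge as $q\to\infty$. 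What actually closes the estimate once $|\aaa|>\NcutLarge$ or $|\bbb|>\NcutSmall$ is the gap between the \emph{derivative cost} factors: \eqref{eq:D:Dt:Rn:sharp:new} charges $\Gamma_{q+1}\lambda\qn$ per spatial derivative and $\Gamma_{q+1}^{i-\cstarn+2}\tau_q^{-1}$ per material derivative, against $\lambda\qn$ and $\Gamma_{q+1}^{i-\cstarn}\tau_q^{-1}$ in \eqref{eq:Rn:inductive:assumption:unif}, gaining $\Gamma_{q+1}$ per spatial and $\Gamma_{q+1}^2$ per material derivative. Since the definition \eqref{eq:j:max:def} gives $\Gamma_q^\badshaq\delta_{q+1,n}^{-1}\Gamma_{q+1}^{14\Upsilon(n)}\leq\Gamma_{q+1}^{2\jmax-2}$, and the parameter choices \eqref{eq:Ncut:DEF} combined with the bound \eqref{eq:jmax:bound} ensure $\NcutLarge$ and $2\NcutSmall$ exceed $2\jmax$ (up to an $O(1)$ offset), the accumulated gain $\Gamma_{q+1}^{\NcutLarge+1}$ (resp.\ $\Gamma_{q+1}^{2\NcutSmall+2}$) swamps the mismatch as soon as the low-order regime is left. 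This parameter counting is the substance of the high-order step and is absent from your proposal.
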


The next Lemma provides an estimate on the maximum value of $j$ for which $\psi_{i,q}\omega_{i,j,q,n}$ may be non-zero.  While the proof is similar in spirit to \cite[Lemma~6.35]{BMNV21}, we include the proof since propagating sharp $L^\infty$ estimates of the stress is one of the crucial new ideas in this paper.  

\begin{lemma}[\bf Maximal $j$ index in the stress cutoffs]
\label{lem:maximal:j}
Fix $q\geq 0$ and $0 \leq n \leq n_{\rm max}$. There exists a $\jmax = \jmax(q,n) \geq 1$,  which is bounded as
\begin{align}
\jmax(q,n) \leq \frac{1}{2}\left( 2 + \frac{\badshaq + 3}{b} +  \frac{ 2\beta b^2}{\eps_\Gamma(b-1)}\right)
\,,
\label{eq:jmax:bound}
\end{align}
such that for any $0 \leq i \leq \imax(q)$, we have 
\begin{align*}
\psi_{i,q} \, \omega_{i,j,q,n} \equiv 0 \qquad \mbox{for all} \qquad j > j_{\rm max}.
\end{align*}
Moreover, assuming that $a=\lambda_0$ is sufficiently large, we have the bound
\begin{align}\label{eq:new:jmax:bound}
\Gamma_{q+1}^{2j_{\rm max}(q,n)} 
\leq 
\Gamma_{q}^{\badshaq}   \delta_{q+1,n}^{-1}  \Gamma_{q+1}^{14\Upsilon(n) +3} \, .
\end{align}
\end{lemma}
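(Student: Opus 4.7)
The proof will mirror the structure of Lemma~\ref{lem:maximal:i}, with the $L^\infty$ estimate \eqref{eq:Rn:inductive:assumption:unif} for the higher order stress $\RR_{q,n}$ playing the role that \eqref{eq:inductive:assumption:uniform:mol} played there. The plan is to: (i) insert the sharp $L^\infty$ bound into the defining sum \eqref{eq:g:i:q:n:def} for $g_{i,q,n}^2$, (ii) observe that the resulting geometric series in $k$ and $m$ sums to a constant, (iii) compare the resulting pointwise upper bound for $g_{i,q,n}$ on $\supp(\psi_{i,q})$ against the level set condition forced by $\omega_{i,j,q,n}\neq 0$, and (iv) define $\jmax(q,n)$ by this threshold, finally verifying that it is bounded uniformly in $q$ by \eqref{eq:jmax:bound}.

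For step (i), fix $(x,t)\in\supp(\psi_{i,q})$ with $0\leq i\leq\imax(q)$. Substituting \eqref{eq:Rn:inductive:assumption:unif} into \eqref{eq:g:i:q:n:def} and using the elementary inequality $\Gamma_{q+1}^{i-\cstarn}\leq\Gamma_{q+1}^{i-\cstarn+2}$ to absorb the two $\MM{\,\cdot\,,\,\cdot\,}$-type factors against each other, each summand is bounded by
\begin{align*}
\delta_{q+1,n}^{-2}(\Gamma_{q+1}\lambda\qn)^{-2k}(\Gamma_{q+1}^{i-\cstarn+2}\tau_q^{-1})^{-2m}\cdot\bigl(\Gamma_q^{\badshaq}\Gamma_{q+1}^{14\Upsilon(n)}\lambda\qn^{k}\bigr)^{2}\MM{m,\NindRt,\Gamma_{q+1}^{i-\cstarn}\tau_q^{-1},\Gamma_{q+1}^{-1}\tilde\tau_q^{-1}}^{2},
\end{align*}
which simplifies to at most $\Gamma_{q+1}^{-2k-4m}\,\delta_{q+1,n}^{-2}\Gamma_q^{2\badshaq}\Gamma_{q+1}^{28\Upsilon(n)}$. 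Summing the geometric series over $0\leq k\leq\NcutLarge$ and $0\leq m\leq\NcutSmall$ absorbs into an implicit constant, and we obtain on $\supp(\psi_{i,q})$ the pointwise inequality
\begin{equation*}
g_{i,q,n}(x,t)\les \Gamma_q^{\badshaq}\delta_{q+1,n}^{-1}\Gamma_{q+1}^{14\Upsilon(n)}.
\end{equation*}

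For step (ii)--(iii), recall from Lemma~\ref{lem:cutoff:construction:first:statement}\eqref{item:cutoff:2} that $\psi_{0,q+1}$ is supported in $[\sfrac14,\Gamma_{q+1}^{2}]$. Hence $\omega_{i,j,q,n}(x,t)\neq 0$ with $j\geq 1$ forces $\Gamma_{q+1}^{-2j}g_{i,q,n}(x,t)\geq\sfrac14$, i.e.~$\Gamma_{q+1}^{2j}\leq 4g_{i,q,n}(x,t)$. Combining with the previous bound, any $j$ for which $\psi_{i,q}\,\omega_{i,j,q,n}\not\equiv 0$ must satisfy
\begin{equation*}
\Gamma_{q+1}^{2j}\les \Gamma_q^{\badshaq}\delta_{q+1,n}^{-1}\Gamma_{q+1}^{14\Upsilon(n)}.
\end{equation*}
Since $a$ may be taken so large that every implicit constant is $\leq\Gamma_{q+1}^{3}$, this proves \eqref{eq:new:jmax:bound} upon defining $\jmax(q,n)$ to be the largest integer meeting this bound.

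For step (iv), taking logarithms in \eqref{eq:new:jmax:bound} and using $\log\Gamma_{q+1}=b\log\Gamma_q$ together with $\log\delta_{q+1,n}^{-1}\leq\log\delta_{q+1}^{-1}+O(\log\Gamma_{q+1})\leq 2\beta b\log\lambda_q+O(\log\Gamma_{q+1})$ (where the $\delta_{q+1,n}$ adjustments for $n\geq 1$ in \eqref{eq:new:delta:def} contribute only factors of $\Gamma_{q+1}^{O(1)}$, $\Theta_{q+1}^{O(1/(\nmax+1))}$, and powers of $\lambda_q\lambda\qn^{-1}\leq 1$, all controlled by the $\Upsilon(n)$-independent constant already built into \eqref{eq:jmax:bound}) gives
\begin{equation*}
2\jmax(q,n)\leq\frac{\badshaq\log\Gamma_q}{\log\Gamma_{q+1}}+\frac{2\beta b\log\lambda_q}{\log\Gamma_{q+1}}+O(1)\longrightarrow\frac{\badshaq}{b}+\frac{2\beta b^2}{\eps_\Gamma(b-1)}+O(1)
\end{equation*}
as $q\to\infty$, which is precisely \eqref{eq:jmax:bound} after accounting for the constant term. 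The main obstacle, though fairly mild, is bookkeeping the many $\Gamma_{q+1}$-power losses so that the exponent $14\Upsilon(n)+3$ in \eqref{eq:new:jmax:bound} is exactly respected; this is done by absorbing every $\lesssim$ into a single $\Gamma_{q+1}^{3}$ once $a$ is chosen sufficiently large, consistent with the convention recalled after \eqref{eq:Gamma:q+1:def:*}.
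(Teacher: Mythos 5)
Your proof takes essentially the same route as the paper: insert the sharp propagated $L^\infty$ bounds for $\RR_{q,n}$ into the defining sum for $g_{i,q,n}$, translate the support condition $\supp\psi_{0,q+1}\subset[\sfrac14,\Gamma_{q+1}^2]$ into $\Gamma_{q+1}^{2j}\leq 4g_{i,q,n}$, and define $\jmax$ by the resulting threshold. The only presentational difference is that you upper-bound $g_{i,q,n}$ term-by-term and sum the geometric series, whereas the paper argues by contradiction together with the pigeonhole principle to isolate a single $(k,m)$ for which $|D^kD_{t,q}^m\RR_{q,n}|$ violates the $L^\infty$ bound; the two arguments are equivalent. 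The derivation of \eqref{eq:new:jmax:bound} is correct (modulo one small omission: for $n=0$ you should cite \eqref{eq:Rn:inductive:dtq:uniform} rather than \eqref{eq:Rn:inductive:assumption:unif}, as the paper does, though the two are consistent since $\Upsilon(0)=0$ and $\lambda_q\Gamma_q\leq\lambda_{q,0}$).

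There is, however, a genuine imprecision in your step (iv). You claim $\log\delta_{q+1,n}^{-1}\leq\log\delta_{q+1}^{-1}+O(\log\Gamma_{q+1})$, attributing the $n$-dependent adjustment in \eqref{eq:new:delta:def} to factors of ``$\Gamma_{q+1}^{O(1)}$, $\Theta_{q+1}^{O(1/(\nmax+1))}$, and powers of $\lambda_q\lambda\qn^{-1}\leq 1$.'' But the factor $\lambda_q\lambda_{q,n-1}^{-1}$ in $\delta_{q+1,n}$ is a non-negligible \emph{negative} power of $\Theta_{q+1}=\Gamma_{q+1}^{1/\eps_\Gamma}$ (not $\Gamma_{q+1}$), and since it makes $\delta_{q+1,n}$ smaller, it makes $\delta_{q+1,n}^{-1}$ larger by a factor that is $\Gamma_{q+1}^{O(1/\eps_\Gamma)}$, not $\Gamma_{q+1}^{O(1)}$; this contributes a term of order $1/\eps_\Gamma$ to $\log\delta_{q+1,n}^{-1}/\log\Gamma_{q+1}$ which is not ``already built into'' \eqref{eq:jmax:bound}. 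The paper instead bounds $\delta_{q+1,n}^{-1}\leq\delta_{q+2}^{-1}$, which also modifies the constant (to $\sfrac{2\beta b^2}{\eps_\Gamma(b-1)}$ rather than your computed $\sfrac{2\beta b}{\eps_\Gamma(b-1)}$ — there is a small arithmetic slip in your limit, where $2\beta b$ becomes $2\beta b^2$ without justification). None of this affects the essential conclusion, because both $\Theta_{q+1}$ and $\Gamma_{q+1}$ are fixed powers of $\lambda_q$, so $\jmax$ remains uniformly bounded in $q$ and $n$; but the precise form of \eqref{eq:jmax:bound} is not recovered by the argument as written. (As a side note, the paper's own proof obtains a bound including a term $14\Upsilon(\nmax)$, which also does not appear in the stated inequality \eqref{eq:jmax:bound} — the only facts used downstream are \eqref{eq:new:jmax:bound} and uniformity, both of which you establish.)
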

 \begin{proof}[Proof of Lemma~\ref{lem:maximal:j}]
We define $j_{\rm max}$ by
\begin{align}
\jmax = \jmax(q,n)= \frac{1}{2} \left\lceil \frac{\log(\Gamma_{q}^{\badshaq}    \delta_{q+1,n}^{-1})\Gamma_{q+1}^{14\Upsilon(n)+2}}{\log(\Gamma_{q+1})} \right\rceil
.
\label{eq:j:max:def}
\end{align}
To see that $j_{\rm max}$ may be bounded independently of $q$ and $n$,  we note that $\delta_{q+1,n}^{-1} \leq \delta_{q+2}^{-1}$, and thus 
\begin{align*}
2j_{\rm max} 
\leq 1 + \frac{\badshaq}{b} +  \frac{ \log(\delta_{q+2}^{-1})}{\log(\Gamma_{q+1})} + 14\Upsilon(n)+2 \to 3 + \frac{\badshaq}{b} + \frac{2\beta b^2}{\eps_\Gamma(b-1)} + 14\Upsilon(n) \quad \mbox{as} \quad q\to \infty \, .
\end{align*}
Thus, assuming that $a = \lambda_0$ is sufficiently large, we obtain that 
\begin{align}
2\jmax(q,n) \leq 4 + \frac{\badshaq}{b} +  \frac{ 2\beta b^2}{\eps_\Gamma(b-1)} + 14\Upsilon(\nmax)
\end{align}
for all $q\geq 0$ and $0 \leq n \leq \nmax$.

To conclude the proof of the Lemma, let $j>j_{\rm max}$, as defined in \eqref{eq:j:max:def}, and assume by contradiction that there exists a point $(x,t) \in \supp (\psi_{i,q} \omega_{i,j,q,n}) \neq \emptyset$. In particular, $j\geq 1$. Then, by \eqref{eq:g:i:q:n:def}--\eqref{eq:omega:cut:def} and the pigeonhole principle, we see that there exists $0 \leq k \leq \NcutLarge$ and $0 \leq m \leq \NcutSmall$ such that 
\begin{align*}
|D^k D_{t,q}^m \RR_{q,n}(x,t)| \geq \frac{\Gamma_{q+1}^{2j}}{\sqrt{8 \NcutLarge \NcutSmall}} \delta_{q+1,n} (\Gamma_{q+1} \lambda\qn)^k  (\Gamma_{q+1}^{i-\cstarn+2} \tau_{q}^{-1})^{m}.
\end{align*}
On the other hand, from \eqref{eq:Rn:inductive:dtq:uniform} and \eqref{eq:Rn:inductive:assumption:unif}, we have that 
\begin{align*}
|D^k D_{t,q}^m \RR_{q,n}(x,t)| 
\leq 
\Gamma_{q}^{\badshaq+1} \Gamma_{q+1}^{14\Upsilon(n)} \Gamma_{q+1} \lambda\qn^k (\Gamma_{q+1}^{i-\cstarn} \tau_{q}^{-1})^m
.
\end{align*}
The above two estimates imply that
\begin{align*}
 \Gamma_{q+1}^{2j} 
\leq  \Gamma_{q}^{\badshaq+1} \Gamma_{q+1}^{14\Upsilon(n)}  \sqrt{8 \NcutLarge \NcutSmall} \delta_{q+1,n}^{-1}
\leq \Gamma_{q}^{\badshaq+2} 
\Gamma_{q+1}^{14\Upsilon(n)}
\delta_{q+1,n}^{-1},
\end{align*}
which contradicts the fact that $j> j_{\rm max}$, as defined in \eqref{eq:j:max:def}.
\end{proof}

The following two lemmas correspond to \cite[Lemmas~6.36 and~6.38]{BMNV21}, respectively. As with Corollary~\ref{cor:D:Dt:Rn:sharp:new}, the method of proof applies \emph{mutatis mutandis} after dropping the unnecessary subscript $p$.  We therefore refer the reader to \cite{BMNV21} for further details.

\begin{lemma}[\bf Derivative bounds for the stress cutoffs]
\label{lem:D:Dt:omega:sharp}
For $q\geq 0$, $0 \leq n \leq \nmax$, $0 \leq i \leq \imax$, and $0 \leq j \leq \jmax$, we have that
\begin{align}
\frac{{\bf 1}_{\supp \psi_{i,q}} |D^N D_{t,q}^M \omega_{i,j,q,n}|}{\omega_{i,j,q,n}^{1-(N+M)/\Nfin}} 
\les (\Gamma_{q+1} \lambda\qn)^N \MM{M, \Nindt,\Gamma_{q+1}^{i-\cstarn+3} \tau_{q}^{-1},\Gamma_{q+1}^{-1} \Tilde{\tau}_{q}^{-1}}
\label{eq:D:Dt:omega:sharp}
\end{align}
for all $N + M \leq \Nfn-\NcutLarge-\NcutSmall-4$.
\end{lemma}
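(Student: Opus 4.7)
The plan is to treat $\omega_{i,j,q,n}$ as a composition of smooth scalar functions and apply a Faà di Bruno argument, following the template of \cite[Lemma~6.36]{BMNV21}. Specifically, recalling \eqref{eq:omega:cut:def}--\eqref{eq:omega:cut:def:0}, we have $\omega_{i,j,q,n}(x,t) = \psi_{0,q+1}(\Gamma_{q+1}^{-2j} g_{i,q,n}(x,t))$ for $j\geq 1$ (and the analogous expression with $\tilde\psi_{0,q+1}$ for $j=0$). The two key ingredients are the self-improving pointwise bounds on $\psi_{0,q+1}$ furnished by items~(5) of Lemma~\ref{lem:cutoff:construction:first:statement}, and the sharp pointwise estimates on $\RR_{q,n}$ and its derivatives on $\supp(\psi_{i,q}\omega_{i,j,q,n})$ provided by Corollary~\ref{cor:D:Dt:Rn:sharp:new}. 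Note that the factor $\omega_{i,j,q,n}^{1-(N+M)/\Nfin}$ on the left-hand side of \eqref{eq:D:Dt:omega:sharp} is precisely the shape produced by the self-improving derivative bound on $\psi_{0,q+1}$, while the $(\Gamma_{q+1}\lambda\qn)^N \MM{M,\Nindt,\ldots}$ cost on the right-hand side matches the sharp derivative cost for $\RR_{q,n}$ on this support.

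Next, I would expand $D^N D_{t,q}^M \omega_{i,j,q,n}$ using Faà di Bruno with respect to the outer function $\psi_{0,q+1}$ and the inner function $\Gamma_{q+1}^{-2j} g_{i,q,n}$. Since $g_{i,q,n}^2$ is itself a finite sum (of $(\NcutLarge+1)(\NcutSmall+1)$-many terms) of normalized squares $\delta_{q+1,n}^{-2}(\Gamma_{q+1}\lambda\qn)^{-2k}(\Gamma_{q+1}^{i-\cstarn+2}\tau_q^{-1})^{-2m}|D^k D_{t,q}^m \RR_{q,n}|^2$, one further applies Leibniz to each squared factor, and then Corollary~\ref{cor:D:Dt:Rn:sharp:new} to bound each resulting derivative of $\RR_{q,n}$. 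Every such bound produces exactly one factor of $\Gamma_{q+1}^{2(j+1)} \delta_{q+1,n}$, which pairs against the normalization constants in $g^2$ so that the derivative counting on the amplitude is clean: each space derivative contributes $(\Gamma_{q+1}\lambda\qn)$ and each material derivative contributes the minimum of $\Gamma_{q+1}^{i-\cstarn+3}\tau_q^{-1}$ and $\Gamma_{q+1}^{-1}\tilde\tau_q^{-1}$. The self-improving bounds on $\psi_{0,q+1}$ then yield the correct power of $\omega_{i,j,q,n}$, since on the region where $\omega_{i,j,q,n}$ is nontrivial one may relate $g_{i,q,n}$ to $\Gamma_{q+1}^{2j}$ and absorb the scaling consistently.

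The only genuine technical obstacle is the non-commutativity of $D$ and $D_{t,q}$: when $D_{t,q}^M$ lands on $g_{i,q,n}$, which itself contains inner material derivatives of $\RR_{q,n}$, one must pay commutator terms of the form $[D_{t,q}, D] = -D\vlq \cdot \nabla$. This is exactly the situation handled abstractly in the sum-and-iterate lemmas of \cite[Appendix~A]{BMNV21}, where the sharp bounds on $D\vlq$ (propagated via \eqref{eq:nasty:D:vq:old} at level $q$, which is valid by Proposition~\ref{prop:no:proofs}) feed in cleanly. The derivative budget $N+M \leq \Nfn -\NcutLarge - \NcutSmall - 4$ is precisely what is needed so that, after Faà di Bruno introduces up to $N+M$ extra derivatives on top of the $\NcutLarge$ spatial and $\NcutSmall$ material derivatives already present in $g^2$, one remains within the $\Nfn-4$ derivative range granted by Corollary~\ref{cor:D:Dt:Rn:sharp:new}. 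Since each of these steps parallels the $p$-free analogues in \cite[Lemma~6.36]{BMNV21}, with every instance of $\RR_{q,n,p}$ and $\lambda_{q,n,p}$ replaced by $\RR_{q,n}$ and $\lambda_{q,n}$ and with the same bookkeeping of $\Gamma_{q+1}$ losses, the proof goes through \emph{mutatis mutandis} and we omit the routine calculations.
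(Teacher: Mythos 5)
Your proposal is correct and mirrors the paper's own treatment, which simply invokes \cite[Lemma~6.36]{BMNV21} and observes that the argument applies \emph{mutatis mutandis} after dropping the now-unnecessary subscript $p$. Your sketch — Faà di Bruno on the composition $\psi_{0,q+1}\circ(\Gamma_{q+1}^{-2j}g_{i,q,n})$, the sharp stress estimates from Corollary~\ref{cor:D:Dt:Rn:sharp:new} feeding the inner-function derivatives, the sum-and-iterate machinery of \cite[Appendix~A]{BMNV21} for the $[D,D_{t,q}]$ commutators via the $D\vlq$ bounds, and the derivative budget accounting $N+M\leq \Nfn-\NcutLarge-\NcutSmall-4$ — is an accurate reconstruction of what that cited proof does.
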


\begin{lemma}[\bf $L^r$ norm of the stress cutoffs]
\label{lem:omega:support}
Let $q \geq 0$ and define $\psi_{i\pm,q}=\left( \psi_{i-1,q}^2 + \psi_{i,q}^2 + \psi_{i+1,q}^2 \right)^{\sfrac 12}$.  Then for $r\geq 1$ we have that
\begin{align}
\norm{\omega_{i,j,q,n}}_{L^r\left(\supp \psi_{i\pm,q} \right)}  \lesssim  \Gamma_{q+1}^{-\sfrac{2j}{r}} 
\label{eq:omega:support}
\end{align}
holds for all $0\leq i \leq \imax$, $0 \leq j \leq\jmax$, and $0\leq n \leq \nmax$. The implicit constant is independent of $i,j,q,n$.
\end{lemma}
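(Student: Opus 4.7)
The plan is a Chebyshev argument. For $j=0$ the bound is trivial since $|\omega_{i,0,q,n}| \leq 1$ and $\supp \psi_{i\pm,q}$ has bounded measure, so I focus on $j \geq 1$. Fix one of the three indices $i' \in \{i-1,i,i+1\}$ and argue on $\supp \psi_{i',q}$; summing three such estimates gives the result on $\supp \psi_{i\pm,q}$.

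First I would unpack what it means for $(x,t) \in \supp(\omega_{i,j,q,n})$ when $j \geq 1$. By the definition \eqref{eq:omega:cut:def} and Lemma~\ref{lem:cutoff:construction:first:statement}(2), this forces $g_{i,q,n}^2(x,t) \geq \tfrac{1}{16}\Gamma_{q+1}^{4j}$, and then by the definition \eqref{eq:g:i:q:n:def} and the pigeonhole principle there exists a pair $(k,m)$ with $0 \leq k \leq \NcutLarge$ and $0 \leq m \leq \NcutSmall$ such that
\begin{equation*}
\bigl| D^k D_{t,q}^m \RR_{q,n}(x,t) \bigr| \gtrsim \Gamma_{q+1}^{2j}\, \delta_{q+1,n}\,(\Gamma_{q+1}\lambda_{q,n})^k\,(\Gamma_{q+1}^{i-\cstarn+2}\tau_q^{-1})^m.
\end{equation*}
Consequently $\supp(\omega_{i,j,q,n}) \cap \supp(\psi_{i',q})$ is contained in the union, over the finitely many pairs $(k,m)$, of the superlevel sets of $|D^k D_{t,q}^m \RR_{q,n}|$ at the above threshold, intersected with $\supp(\psi_{i',q})$.

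Next I would apply Chebyshev's inequality together with the inductive $L^1$ stress bound \eqref{eq:Rn:inductive:assumption} (valid on $\supp\psi_{i',q}$ for $i' \in \{i-1,i,i+1\}$, and which requires $k+m \leq \Nfn$, which is amply satisfied by $\NcutLarge + \NcutSmall$). Each superlevel set has measure controlled by
\begin{equation*}
\frac{\delta_{q+1,n}\,\lambda_{q,n}^k\,\MM{m,\NindRt,\Gamma_{q+1}^{i'-\cstarn}\tau_q^{-1},\Gamma_{q+1}^{-1}\tilde\tau_q^{-1}}}{\Gamma_{q+1}^{2j}\,\delta_{q+1,n}\,(\Gamma_{q+1}\lambda_{q,n})^k\,(\Gamma_{q+1}^{i-\cstarn+2}\tau_q^{-1})^m} \;\lesssim\; \Gamma_{q+1}^{-2j-k-2m},
\end{equation*}
where in the last inequality I use that $|i-i'|\leq 1$ and compare $\MM{m,\NindRt,\Gamma_{q+1}^{i'-\cstarn}\tau_q^{-1},\Gamma_{q+1}^{-1}\tilde\tau_q^{-1}}$ to $(\Gamma_{q+1}^{i-\cstarn+2}\tau_q^{-1})^m$. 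Summing in $k$ and $m$ (the geometric series converge, absorbing the small losses above $\NindRt$ into the implicit constant provided $a$ is large) gives
\begin{equation*}
\bigl| \supp(\omega_{i,j,q,n}) \cap \supp(\psi_{i',q}) \bigr| \;\lesssim\; \Gamma_{q+1}^{-2j}.
\end{equation*}
Finally, since $\|\omega_{i,j,q,n}\|_{L^\infty}\leq 1$, interpolating against $L^\infty$ yields $\|\omega_{i,j,q,n}\|_{L^r(\supp\psi_{i',q})} \lesssim \Gamma_{q+1}^{-2j/r}$. Summing over the three values of $i'$ completes the bound \eqref{eq:omega:support}.

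The only nontrivial step is verifying that the $m$-dependent factor always gives at least a $\Gamma_{q+1}^{-2m}$ gain relative to the denominator's $\Gamma_{q+1}^{2m}$ factor. For $m \leq \NindRt$ this is immediate from the definition of $\MM{\cdot}$. For $\NindRt < m \leq \NcutSmall$, one picks up a bounded loss of $(\Gamma_{q+1}\tilde\tau_q/\tau_q)^{m-\NindRt}$, which is controlled by parameter choices ($\NcutSmall \ll \NindRt$ and $\tilde\tau_q/\tau_q \leq \Gamma_{q+1}^{-c}$ for some $c$); this is the main bookkeeping obstacle but is purely technical, and parallels the proof of the analogous \cite[Lemma~6.38]{BMNV21}.
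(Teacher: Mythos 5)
Your argument is correct and is essentially the standard Chebyshev argument that \cite{BMNV21} uses for the corresponding lemma (the present paper does not reprove it but refers the reader there after dropping the subscript $p$). The chain of reductions is right: for $j\geq 1$, $(x,t)\in\supp\omega_{i,j,q,n}$ forces $g_{i,q,n}^2(x,t)\geq\tfrac14\Gamma_{q+1}^{4j}$, the summand ``$+1$'' in \eqref{eq:g:i:q:n:def} is then negligible, pigeonhole produces a pair $(k,m)$ with a pointwise lower bound on $|D^kD_{t,q}^m\RR_{q,n}|$, and Chebyshev against the inductive $L^1$ bound \eqref{eq:Rn:inductive:assumption} on $\supp\psi_{i',q}$ (uniformly in $t$, by the $L^\infty_t L^1_x$ convention) bounds the measure of each superlevel set by a summable quantity, after which $\|\omega\|_{L^\infty}\leq 1$ and interpolation give the $L^r$ estimate.

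Two small bookkeeping points. First, the displayed gain $\Gamma_{q+1}^{-2j-k-2m}$ is slightly off for $i'=i+1$: there $\MM{m,\NindRt,\Gamma_{q+1}^{i'-\cstarn}\tau_q^{-1},\cdot}/(\Gamma_{q+1}^{i-\cstarn+2}\tau_q^{-1})^m = \Gamma_{q+1}^{(i'-i-2)m}=\Gamma_{q+1}^{-m}$, not $\Gamma_{q+1}^{-2m}$; the geometric series in $k,m$ still converges and the conclusion $\lesssim\Gamma_{q+1}^{-2j}$ is unaffected. Second, your worry about the regime $\NindRt<m\leq\NcutSmall$ is moot: by \eqref{eq:Nind:t:DEF} one has $\NcutSmall\leq\Nindt=\NindRt$, so every relevant $m\leq\NcutSmall$ falls in the unsaturated branch of $\MM{\cdot}$ and the ``bounded loss'' case you describe never occurs. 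You in fact note the inequality $\NcutSmall\ll\NindRt$ in the same sentence, which makes the preceding discussion of $\NindRt<m$ internally contradictory; it should simply be deleted.
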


\subsection{Anisotropic checkerboard cutoff functions}
\label{sec:cutoff:checkerboard:definitions}

We construct anisotropic checkerboard cutoff functions which are well-suited for intermittent pipe flows with axes parallel to $e_3$.  The construction for general $\xi\in\Xi$ follows by rotation.  Consider a partition of $\T^3$ into the rectangular prisms defined using
\begin{equation}\label{eq:prism:zero}
    \left\{ (x_1,x_2,x_3)\in\T^3 \, : \, 0 \leq x_1,x_2 \leq \const_\Gamma \Gamma_{q+1}\left(\lambda_{q+1}r_{q+1,n} \right)^{-1}, \, 0 \leq x_3 \leq 2\pi \lambda\qn^{-1} \right\}
\end{equation}
and its translations by 
$$\bigl(l_1 \const_\Gamma \Gamma_{q+1}(\lambda_{q+1}r_{q+1,n})^{-1},l_2 \const_\Gamma \Gamma_{q+1}(\lambda_{q+1}r_{q+1,n})^{-1},l_3 2\pi \lambda\qn^{-1}\bigr) \, $$
for
$$ l_1,l_2\in\{0,\dots, \sfrac{\const_\Gamma}{2\pi}\Gamma_{q+1}^{-1}\lambda_{q+1}r_{q+1,n} -1\} \, ,  \qquad l_3 \in \{0,\dots,\lambda\qn-1\} \, ,
$$
where $\const_\Gamma \geq 1$ ensures that the prisms evenly partition $[-\pi,\pi]^3$ and is bounded above independently of $q$. Index these prisms by integer triples $\vec{l}=(l_1,l_2,l    _3)$. Let $\mathcal{X}_{q,n,e_3,\vec{l}}$ be a $C^\infty$ partition of unity adapted to this checkerboard of anisotropic rectangular prisms which satisfies
\begin{equation}
\label{eq:checkerboard:useful:1}
\sum_{\vec{l}} \bigl(\mathcal{X}_{q,n,e_3,\vec{l}}\bigr)^2 = 1
\end{equation}
for any $q$ and $n$. Specifically, we impose that spatial derivatives applied to cutoffs belonging to this partition of unity cost $\approx\Gamma_{q+1}(\lambda_{q+1}r_{q+1,n})^{-1}$ in the $x_1$ and $x_2$ directions, and $\approx\lambda\qn^{-1}$ in the $x_3$ direction, so that
$$ \bigl\| \partial_1^{M_1} \partial_2^{M_2} \partial_3^M \mathcal{X}_{q,n,e_3,\vec{l}} \bigr\|_{L^\infty} \lesssim \left(\lambda_{q+1}r_{q+1,n}\Gamma_{q+1}^{-1}\right)^{M_1+M_2} \lambda\qnn^M \,  $$
for $M_1,M_2,M\leq 3\Nfin$. Furthermore, for $\vec{l},\vec{l}^*$ such that 
$$ |l_1-l_1^*| \geq 2, \qquad |l_2-l_2^*| \geq 2, \qquad |l_3-l_3^*| \geq 2,$$
we impose that
\begin{equation}\notag
\mathcal{X}_{q,n,e_3,\vec{l}} \; \mathcal{X}_{q,n,e_3,\vec{l}^*} = 0 \, .
\end{equation}
Incorporating rotations into the above construction, we may similarly produce cutoff functions $\mathcal{X}_{q,n,\xi,\vec{l}}$ satisfying analogous properties for $\xi\in\Xi$. Note that if $\{\xi,\xi',\xi''\}$ forms an orthonormal basis for $\R^3$, then
\begin{equation}\label{eq:nice:derivative:estimate}
\bigl\| \left(\xi'\cdot\nabla\right)^{M_1} \left(\xi''\cdot\nabla\right)^{M_2} (\xi\cdot\nabla)^M \mathcal{X}_{q,n,\xi,\vec{l}} \bigr\|_{L^\infty} \lesssim \left(\lambda_{q+1}r_{q+1,n}\Gamma_{q+1}^{-1}\right)^{M_1+M_2} \lambda\qnn^M \, . 
\end{equation}

\begin{definition}[\bf Anisotropic checkerboard cutoff function]\label{def:checkerboard}
Given $q$, {$\xi\in\Xi$}, $0\leq n\leq \nmax$, $i\leq \imax$, and $k\in\mathbb{Z}$, we define
\begin{equation}\label{eq:checkerboard:definition}
    \zeta_{q,i,k,n,{\xi},\vec{l}}\,(x,t) = \mathcal{X}_{q,n,{\xi},\vec{l}}\left(\Phi_{i,k,q}(x,t)\right).
\end{equation}
\end{definition}
These cutoff functions satisfy properties which we enumerate in the following lemma.

\begin{lemma}\label{lem:checkerboard:estimates}
The cutoff functions $\{\zeta_{q,i,k,n,{\xi},\vec{l}}\}_{\vec{l}}$ satisfy the following properties:
\begin{enumerate}[(1)]
    \item\label{item:check:1} The material derivative $\Dtq (\zeta_{q,i,k,n,{\xi},\vec{l}})$ vanishes.  
    \item\label{item:check:2} For each $t\in\mathbb{R}$ and all $x=(x_1,x_2,x_3)\in\mathbb{T}^3$, 
    \begin{equation}\label{eq:checkerboard:partition}
    \sum_{\vec{l}} \bigl(\zeta_{q,i,k,n,{\xi},\vec{l}}\,(x,t)\bigr)^2 = 1 \, .
    \end{equation}
    \item\label{item:check:3} 
    Let $A=(\nabla\Phi_{i,k,q})^{-1}$.  Then we have the spatial derivative estimate
    \begin{align}\label{eq:checkerboard:derivatives}
        \bigl\| D^{N_1} \Dtq^M  ({\xi^\ell A_\ell^j \partial_j} )^{N_2} \zeta_{q,i,k,n,\xi,\vec{l}} \bigr\|_{L^\infty\left(\supp \psi_{i,q}\tilde\chi_{i,k,q} \right)} &\lesssim \left(\Gamma_{q+1}^{-1} \lambda_{q+1} r_{q+1,n} \right)^{N_1} \lambda\qn^{N_2} \notag\\
        &\qquad \qquad \times \MM{M,\Nindt,\Gamma_{q+1}^{i-\cstarzero}\tau_q^{-1},\tilde\tau_q^{-1}\Gamma_{q+1}^{-1}} \, .
    \end{align}
    for all $N_1+N_2+M\leq\sfrac{3\Nfin}{2}+1$.
    \item\label{item:check:4} There exists an implicit dimensional constant $\const_\chi$ independent of $q$, $n$, $k$, $i$, and $\vec{l}$ such that for all $(x,t)\in\supp\psi_{i,q}\tilde\chi_{i,k,q}$, the support of $\zeta_{q,i,k,n,\xi,\vec{l}}\, (\cdot,t)$ satisfies
    \begin{equation}\label{eq:checkerboard:support}
        \textnormal{diam} ( \supp ( \zeta_{q,i,k,n,\xi,\vec{l}}\,(\cdot,t) ) ) \lesssim \lambda\qn^{-1} \, . 
    \end{equation}
\end{enumerate}
\end{lemma}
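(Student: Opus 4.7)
The plan is to verify the four claims in sequence, using only the defining properties of $\Phi_{i,k,q}$ and $\mathcal{X}_{q,n,\xi,\vec{l}}$ together with Corollary~\ref{cor:deformation}. Items~\eqref{item:check:1} and~\eqref{item:check:2} are immediate: for~\eqref{item:check:1}, since $\mathcal{X}_{q,n,\xi,\vec{l}}$ is time-independent and $\Phi_{i,k,q}$ satisfies $D_{t,q}\Phi_{i,k,q}=0$ by Definition~\ref{def:transport:maps}, the chain rule gives $D_{t,q}\zeta_{q,i,k,n,\xi,\vec{l}}=(D_{t,q}\Phi_{i,k,q}^{a})\,(\partial_{a}\mathcal{X}_{q,n,\xi,\vec{l}})(\Phi_{i,k,q})=0$, while~\eqref{item:check:2} follows by composing the partition of unity \eqref{eq:checkerboard:useful:1} with $x\mapsto\Phi_{i,k,q}(x,t)$.

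The heart of the argument is item~\eqref{item:check:3}. The key observation is that the operator $\mathcal{D}_{\xi}:=\xi^{\ell}A_{\ell}^{j}\partial_{j}$ realizes the pullback of $\xi\cdot\nabla$ under composition with $\Phi_{i,k,q}$: since $A=(D\Phi_{i,k,q})^{-1}$ gives the identity $A_{\ell}^{j}\partial_{j}\Phi_{i,k,q}^{a}=\delta_{\ell}^{a}$, for every smooth $f\colon\T^3\to\R$ one has $\mathcal{D}_{\xi}(f\circ\Phi_{i,k,q})=(\xi\cdot\nabla f)\circ\Phi_{i,k,q}$, and iteration yields
\begin{equation*}
\mathcal{D}_{\xi}^{N_{2}}\zeta_{q,i,k,n,\xi,\vec{l}}=\bigl((\xi\cdot\nabla)^{N_{2}}\mathcal{X}_{q,n,\xi,\vec{l}}\bigr)\circ\Phi_{i,k,q}.
\end{equation*}
Because the composand remains time-independent, the same argument as in~\eqref{item:check:1} shows $D_{t,q}^{M}\mathcal{D}_{\xi}^{N_{2}}\zeta_{q,i,k,n,\xi,\vec{l}}=0$ whenever $M\geq 1$, so \eqref{eq:checkerboard:derivatives} is trivial in that regime. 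For $M=0$, one applies the Fa\`a di Bruno formula to $D^{N_{1}}[((\xi\cdot\nabla)^{N_{2}}\mathcal{X}_{q,n,\xi,\vec{l}})\circ\Phi_{i,k,q}]$, bounding each spatial derivative of $(\xi\cdot\nabla)^{N_{2}}\mathcal{X}_{q,n,\xi,\vec{l}}$ by \eqref{eq:nice:derivative:estimate} (costing at most $\Gamma_{q+1}^{-1}\lambda_{q+1}r_{q+1,n}$ in the perpendicular directions and $\lambda_{q,n}$ along $\xi$), and each higher-order derivative of $\Phi_{i,k,q}$ by \eqref{eq:Lagrangian:Jacobian:1}--\eqref{eq:Lagrangian:Jacobian:2}. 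The parameter inequality $\Gamma_{q}\lambda_{q}\ll\Gamma_{q+1}^{-1}\lambda_{q+1}r_{q+1,n}$, which follows from the choice $r_{q+1,n}\approx\lambda_{q,n}^{\sfrac 12}\lambda_{q+1}^{-\sfrac 12}\Gamma_{q+1}^{-2}$ in \eqref{eq:def:lambda:r:q+1:n} together with the geometric growth of $\lambda_q$, ensures that partitions allocating derivatives of order $\geq 2$ to $\Phi_{i,k,q}$ are strictly subleading, so the sum collapses to the advertised $(\Gamma_{q+1}^{-1}\lambda_{q+1}r_{q+1,n})^{N_{1}}\lambda_{q,n}^{N_{2}}$.

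Item~\eqref{item:check:4} reduces to the Lagrangian picture. The support of $\mathcal{X}_{q,n,\xi,\vec{l}}$ is a rectangular prism with side lengths $\const_{\Gamma}\Gamma_{q+1}(\lambda_{q+1}r_{q+1,n})^{-1}$ perpendicular to $\xi$ and $2\pi\lambda_{q,n}^{-1}$ parallel to $\xi$; substituting the value of $r_{q+1,n}$ from \eqref{eq:def:lambda:r:q+1:n} one verifies that both side lengths are bounded by $\lambda_{q,n}^{-1}$ up to a bounded power of $\Gamma_{q+1}$, so the Lagrangian diameter of the support is $\lesssim\lambda_{q,n}^{-1}$. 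By \eqref{eq:Lagrangian:Jacobian:1} and \eqref{eq:Lagrangian:Jacobian:3}, the map $\Phi_{i,k,q}(\cdot,t)$ is bi-Lipschitz on $\supp(\psi_{i,q}\tilde\chi_{i,k,q})$ with constants $1\pm\Gamma_{q+1}^{-1}$, so the Eulerian diameter of $\supp(\zeta_{q,i,k,n,\xi,\vec{l}}(\cdot,t))$ inherits the same bound up to a harmless factor $1+\Gamma_{q+1}^{-1}$, yielding \eqref{eq:checkerboard:support}. The main technical obstacle is the anisotropic Fa\`a di Bruno bookkeeping in~\eqref{item:check:3}, where the many cross-terms mixing $\xi$-parallel and $\xi$-perpendicular derivatives of $\mathcal{X}_{q,n,\xi,\vec{l}}$ with multi-order Jacobians of $\Phi_{i,k,q}$ must all be checked to be subleading via the parameter inequalities recorded in Section~\ref{sec:parameters:DEF}.
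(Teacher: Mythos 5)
Your proof is correct and follows essentially the same route as the paper's: items~(1) and~(2) are handled identically; the key step in item~(3) is precisely the contraction identity $\xi^\ell A_\ell^j\partial_j\zeta_{q,i,k,n,\xi,\vec{l}}=(\xi\cdot\nabla\mathcal{X}_{q,n,\xi,\vec{l}})\circ\Phi_{i,k,q}$, which is what the paper uses; and item~(4) reduces to the Lagrangian-diameter argument via the near-identity Jacobian bounds, as in the paper (which cites \eqref{eq:diameter:inequality}). One small added value in your write-up is that you make explicit the observation that $\mathcal{D}_\xi^{N_2}\zeta_{q,i,k,n,\xi,\vec{l}}=((\xi\cdot\nabla)^{N_2}\mathcal{X}_{q,n,\xi,\vec{l}})\circ\Phi_{i,k,q}$ is again a composition with $\Phi_{i,k,q}$ of a time-independent function, so the full expression vanishes whenever $M\geq 1$ — the paper instead cites \eqref{eq:Lagrangian:Jacobian:6} (material-derivative bounds on $(D\Phi)^{-1}$) and leaves this cancellation implicit. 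A minor wording issue: in item~(4) you say the side lengths are bounded by $\lambda_{q,n}^{-1}$ ``up to a bounded power of $\Gamma_{q+1}$'', but \eqref{eq:checkerboard:support} has no $\Gamma_{q+1}$ slack, so you actually need (and do have, from \eqref{eq:eps:still:doing:it}) the sharper inequality $\Gamma_{q+1}^3(\lambda_{q,n}\lambda_{q+1})^{-\sfrac 12}\leq\lambda_{q,n}^{-1}$, i.e. $\Gamma_{q+1}^6\lambda_{q,n}\leq\lambda_{q+1}$; it would be cleaner to state that directly rather than suggest a $\Gamma_{q+1}$-power loss.
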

\begin{proof}[Proof of Lemma~\ref{lem:checkerboard:estimates}]
The proof of \eqref{item:check:1} is immediate from \eqref{eq:checkerboard:definition}.  \eqref{eq:checkerboard:partition} follows from \eqref{item:check:1} and \eqref{eq:checkerboard:useful:1}. To verify \eqref{item:check:3}, the only nontrivial calculations are those including the differential operator $(\xi^\ell A_{\ell}^j\partial_j)$. Using the Leibniz rule, the contraction
$$ \xi^\ell A_\ell^j \partial_j \zeta_{q,i,k,n,\xi,\vec{l}} = \xi^\ell A_\ell^j (\partial_m \mathcal{X}_{q,n,\xi,\vec{l}})(\Phi_{i,k,q}) \partial_j \Phi_{i,k,q}^m= \xi^m (\partial_m \mathcal{X}_{q,n,\xi,\vec{l}})(\Phi_{i,k,q}) \, , $$
\eqref{eq:nice:derivative:estimate}, and \eqref{eq:Lagrangian:Jacobian:6} gives the desired estimate. The proof of \eqref{eq:checkerboard:support} follows from the construction of $\mathcal{X}_{q,n,\xi,\vec{l}}$ and the Lipschitz bound obeyed by $\nabla \vlq$ on the support of $\psi_{i,q}$; see for example \eqref{eq:diameter:inequality}.
\end{proof}

\subsection{Definition of the cumulative cutoff function}
\label{sec:cutoff:total:definitions}
Finally, combining the  cutoff functions defined in Definition~\ref{def:psi:i:q:def}, \eqref{eq:omega:cut:def}--\eqref{eq:omega:cut:def:0}, and \eqref{eq:chi:cut:def}, we define the cumulative cutoff function by
\begin{align*}
 {\eta_{i,j,k,q,n,\xi,\vec{l}}\,(x,t)=\psi_{i,q}(x,t) \omega_{i,j,q,n}(x,t) \chi_{i,k,q}(t)\zeta_{q,i,k,n,\xi,\vec{l}}\,(x,t) \, .}
\end{align*}
Since the values of $q$ and $n$ are clear from the context and the values of $\xi$ and $\vec{l}$ are irrelevant in many arguments, we may abbreviate the above using any of
\begin{align*}
\eta_{i,j,k,q,n,\xi,\vec{l}}\,(x,t)=\eta_{i,j,k,q,n,\xi}(x,t)=\eta_{(i,j,k)}(x,t)  = \psi_{(i)}(x,t) \omega_{(i,j)}(x,t) \chi_{(i,k)}(t) \zeta_{(i,k)}(x,t) \, .
\end{align*}
It follows from \eqref{eq:inductive:partition} at level $q$, \eqref{eq:omega:cut:partition:unity}, \eqref{eq:chi:cut:partition:unity}, and \eqref{eq:checkerboard:partition} that for every ${(q,n,\xi)}$ fixed, we have
\begin{align}
 \sum_{i,j \geq 0} \sum_{k\in \Z} \sum_{\vec{l}} \eta_{i,j,k,q,n,\xi,\vec{l}}\, ^2(x,t) = 1 \, .
 \label{eq:eta:cut:partition:unity}
\end{align}
The sum in $i$ goes up to $i_{\rm max}$ (defined in \eqref{eq:imax:def}), while the sum in $j$ goes up to $j_{\rm max}$ (defined in \eqref{eq:j:max:def}).

We conclude this section with support estimates on the cumulative cutoff functions $\eta_{i,j,k,q,n,\xi,\vec{l}}$.

\begin{lemma}\label{lemma:cumulative:cutoff:Lp}
For $r_1, r_2 \in [1,\infty]$ with $\frac{1}{r_1}+\frac{1}{r_2}=1$ and any $0\leq i \leq \imax$, $0\leq j \leq \jmax$, and $\xi \in \Xi$, we have that
\begin{equation}
\label{item:lebesgue:1}
\sum_{\vec{l}} \left| \supp ( \eta_{i,j,k,q,n,\xi,\vec{l}} ) \right| \lessg \Gamma_{q+1}^{\frac{-2i+\CLebesgue}{r_1} + \frac{-2j}{r_2} + 2 }
\, .
\end{equation}
\end{lemma}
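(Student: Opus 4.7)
My plan is to combine the finite-overlap property of the checkerboard cutoffs, a Hölder inequality in the spatial variable, and the $L^r$ control on the separate cutoff factors furnished by \eqref{eq:psi:i:q:support:old} and Lemma~\ref{lem:omega:support}.

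The first step is to dispose of the sum over $\vec{l}$. Since $\{\mathcal{X}_{q,n,\xi,\vec{l}}\}_{\vec{l}}$ is a partition of unity adapted to an anisotropic checkerboard on $\T^3$ with uniformly bounded overlap (cf.~Section~\ref{sec:cutoff:checkerboard:definitions}), and since $\Phi_{i,k,q}(\cdot,t)$ is a bi-Lipschitz diffeomorphism with Jacobian close to the identity on $\supp(\psi_{i,q}\tilde\chi_{i,k,q})$ by \eqref{eq:Lagrangian:Jacobian:1}--\eqref{eq:Lagrangian:Jacobian:3}, the composed functions $\zeta_{q,i,k,n,\xi,\vec{l}}$ also have uniformly bounded overlap. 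Writing $\sum_{\vec{l}} \mathbf{1}_{\supp(\zeta_{q,i,k,n,\xi,\vec{l}})} \lesssim 1$ pointwise yields
\[
\sum_{\vec{l}} |\supp(\eta_{i,j,k,q,n,\xi,\vec{l}})| \;\lesssim\; \bigl|\supp(\psi_{i,q}\, \omega_{i,j,q,n}\, \chi_{i,k,q})\bigr|.
\]

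Second, I factor the space-time measure of $\supp(\psi_{i,q}\omega_{i,j,q,n}\chi_{i,k,q})$ into a temporal integral against $\mathbf{1}_{\supp(\chi_{i,k,q})}$ and a spatial slice. By \eqref{eq:chi:support} the time support has length $\lesssim \tau_q \Gamma_{q+1}^{-i+\cstar-2}$. For the spatial slice at a fixed time $t$, Hölder's inequality on $\T^3$ with conjugate exponents $r_1,r_2$ gives
\[
\bigl|\supp_x(\psi_{i,q}\omega_{i,j,q,n})(\cdot,t)\bigr|
\leq \bigl|\supp_x \psi_{i,q}(\cdot,t)\bigr|^{1/r_1}\, \bigl|\supp_x \omega_{i,j,q,n}(\cdot,t)\bigr|^{1/r_2}.
\]
The inductive bound \eqref{eq:psi:i:q:support:old} propagated to level $q$ by Proposition~\ref{prop:no:proofs}, combined with the explicit level-set construction of $\psi_{i,q}$ (each factor $\psi_{m,i_m,j_m,q}$ is a smooth cutoff whose effective support is comparable in measure to the set where it is $\geq 1/2$, by the sharp derivative control in Lemma~\ref{lem:cutoff:construction:first:statement}), yields $|\supp_x\psi_{i,q}(\cdot,t)| \lesssim \Gamma_{q+1}^{-2i+\CLebesgue}$ uniformly in $t$. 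Analogously, Lemma~\ref{lem:omega:support} with $r=1$, together with the fact that $\omega_{i,j,q,n}$ is built from the base cutoff $\psi_{0,q+1}$ of Lemma~\ref{lem:cutoff:construction:first:statement} applied to the auxiliary function $g_{i,q,n}$, gives $|\supp_x\omega_{i,j,q,n}(\cdot,t) \cap \supp_x\psi_{i\pm,q}(\cdot,t)| \lesssim \Gamma_{q+1}^{-2j}$ pointwise in $t$.

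Combining, I obtain
\[
\sum_{\vec{l}} |\supp \eta_{i,j,k,q,n,\xi,\vec{l}}|
\lesssim \tau_q \Gamma_{q+1}^{-i+\cstar-2}\, \Gamma_{q+1}^{(-2i+\CLebesgue)/r_1}\, \Gamma_{q+1}^{-2j/r_2}.
\]
To recover the stated bound it remains to absorb the prefactor $\tau_q\Gamma_{q+1}^{-i+\cstar-4}$ into an implicit constant; since $\tau_q = (\delta_q^{\sfrac12}\lambda_q \Gamma_{q+1}^{\cstar+11})^{-1}$ by \eqref{eq:Gamma:q+1:def:*} and $\delta_q^{\sfrac12}\lambda_q \geq 1$, we have $\tau_q \Gamma_{q+1}^{\cstar-4} \lesssim \Gamma_{q+1}^{-15} \lesssim 1$, and $\Gamma_{q+1}^{-i} \leq 1$ for $i\geq 0$, so this factor is bounded by a fixed constant, completing the proof. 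The main obstacle is the third step: converting the $L^1$-type bound \eqref{eq:psi:i:q:support:old} into a genuine measure bound on the support of a smooth cutoff, which requires exploiting the level-set structure of the construction (namely that the tapering region has measure comparable to the region where the cutoff is $\gtrsim 1$), rather than a naive $L^r$-Chebyshev argument.
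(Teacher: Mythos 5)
Your Step 1 (disposing of the $\vec l$ sum via the finite-overlap property of the anisotropic checkerboard and the bi-Lipschitz control \eqref{eq:Lagrangian:Jacobian:1}--\eqref{eq:Lagrangian:Jacobian:3} on $\Phi_{i,k,q}$) is fine and is essentially the same observation the paper invokes when it cites the partition of unity \eqref{eq:checkerboard:partition}.

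Step 2 is an unnecessary detour that rests on a misreading of what $|\supp(\cdot)|$ means here. By the paper's convention all norms (and hence all measures) are taken in space and uniformly in time, as the paper's proof makes explicit (``for each fixed time $t$''), and as the downstream use in Lemma~\ref{lem:a_master_est_p} confirms, where $|\supp(\eta)|^{1/r}$ multiplies a spatial $L^\infty$ norm to produce a spatial $L^r$ bound. There is no temporal integral to peel off. You happen to get away with this because you later absorb $\tau_q\Gamma_{q+1}^{-i+\cstar-2}\lesssim 1$, but the factor should never have appeared.

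Step 3 contains the genuine gap, and you flag it yourself but do not close it. You apply H\"older to indicator functions of supports, which requires knowing that $|\supp_x\psi_{i,q}(\cdot,t)|\lesssim\Gamma_{q+1}^{-2i+\CLebesgue}$. But the inductive assumption \eqref{eq:psi:i:q:support:old} controls only $\|\psi_{i,q}\|_{L^1}$, and a smooth cutoff can in principle have a support much larger than its $L^1$ mass. Your appeal to the ``level-set structure'' of Lemma~\ref{lem:cutoff:construction:first:statement} is plausible heuristically but is not a proof; in particular the composite cutoff $\psi_{i,q}$ in \eqref{eq:psi:i:q:recursive} is a sum of products over tuples $\vec i$, and one would have to propagate a two-sided comparison through that recursion. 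The paper sidesteps the issue entirely by never passing to indicator functions: since $\sum_{i'}\psi_{i',q}^2\equiv 1$ with only adjacent indices overlapping (and likewise $\sum_{j'}\omega_{i,j',q,n}^2\equiv 1$), one has the pointwise inequality
\begin{equation*}
\mathbf{1}_{\supp(\psi_{i,q})\cap\supp(\omega_{i,j,q,n})}
\leq
\bigl(\psi_{i-1,q}^2+\psi_{i,q}^2+\psi_{i+1,q}^2\bigr)^{\sfrac 12}
\bigl(\omega_{i,j-1,q,n}^2+\omega_{i,j,q,n}^2+\omega_{i,j+1,q,n}^2\bigr)^{\sfrac 12},
\end{equation*}
and one then applies H\"older with exponents $r_1,r_2$ to the \emph{functions} on the right. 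The first factor is handled by $\|\psi_{i\pm,q}\|_{L^{r_1}}\leq\|\psi_{i\pm,q}\|_{L^1}^{1/r_1}$ (valid since $\psi\leq 1$) together with \eqref{eq:psi:i:q:support:old}, and the second by Lemma~\ref{lem:omega:support}. This is what produces the shifted indices $i-1,j-1$ and hence the $\Gamma_{q+1}^{+2}$ loss in the exponent, which incidentally does not appear in your bookkeeping either: your step 3, if the support estimate were available, would give $\Gamma_{q+1}^{(-2i+\CLebesgue)/r_1-2j/r_2}$, strictly stronger than the stated bound, which is a sign the intermediate estimate is not quite the right one.
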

\begin{proof}[Proof of Lemma~\ref{lemma:cumulative:cutoff:Lp}]
From \eqref{eq:psi:i:q:support:old} at level $q$ and \eqref{eq:omega:support}, we have that for each fixed time $t$,
\begin{align*}
    \left|\supp (\psi_{i,q}) \cap \supp (\omega_{i,j,q,n}) \right| &\leq \norm{\left(\psi_{i-1,q}^2 + \psi_{i,q}^2 + \psi_{i+1,q}^2\right)^{\sfrac 12} \left(\omega_{i,j-1,q,n}^2 + \omega_{i,j,q,n}^2 + \omega_{i,j+1,q,n}^2 \right)^{\sfrac 12}}_{L^1} \notag\\
    &\lesssim \Gamma_{q+1}^{\frac{-2(i-1)+\CLebesgue}{r_1}} \Gamma_{q+1}^{\frac{-2(j-1)}{r_2}} \, .
\end{align*}
Using the fact that $\{\eta_{q,i,k,n,\xi,\vec{l}}\}_{\vec{l}}$ forms a partition of unity from \eqref{eq:checkerboard:partition} and $\frac{1}{r_1}+\frac{1}{r_2}=1$ gives the desired estimate.
\end{proof}

\section{Inductive propositions}
\label{sec:statements}

\subsection{Induction on \texorpdfstring{$q$}{q}} 
The main claim of this section is an induction on $q$. Notice that the estimates in this proposition match the inductive assumptions \eqref{eq:inductive:assumption:wq:all} and \eqref{eq:Rq:inductive:assumption:all} at level $q+1$.

\begin{proposition}[\textbf{Inductive Step on $q$}]\label{p:main:inductive:q}
Given the velocity field $\vlq$ which solves the Euler-Reynolds system with stress $\RR_{\ell_q} + \RR_{q}^{\textnormal{comm}}$, where $\vlq$, $\RR_{\ell_q}$, and $\RR_{q}^{\textnormal{comm}}$ satisfy  the conclusions of Lemma~\ref{lem:mollifying:ER} in addition to \eqref{eq:inductive:assumption:derivative}--\eqref{eq:nasty:Dt:wq:WEAK:old}, there exist $v_{q+1}=\vlq+w_{q+1}$ and $\RR_{q+1}$ which satisfy the following:
\begin{enumerate}[(1)]
\item $v_{q+1}$ solves the Euler-Reynolds system with stress $\RR_{q+1}$.
\item For all $k,m \leq 7\Nindv$, we have
\begin{subequations}
\begin{align}
     \left\| \psi_{i,q} D^k \Dtq^m w_{q+1} \right\|_{L^2} 
 &\leq \Gamma_{q+1}^{-1}\delta_{q+1}^{\sfrac{1}{2}}\lambda_{q+1}^k \MM{m,\Nindt, \Gamma_{q+1}^{i-1} \tau_q^{-1}, \Gamma_{q+1}^{-1} \tilde{\tau}_q^{-1}}
 \label{e:main:inductive:q:velocity} \\
 \norm{D^k D_{t,q}^m w_{q+1}}_{L^\infty(\supp \psi_{i,q})} 
&\leq  \Gamma_{q+1}^{\badshaq - 1} \Theta_{q+1}^{\sfrac 12}  \lambda_{q+1}^k 
\MM{m,\Nindvt, \Gamma_{q+1}^{i}\tau_q^{-1}, \Gamma_{q+1}^{-1} \tilde{\tau}_{q}^{-1}} \,.
\label{e:main:inductive:q:velocity:unif} 
\end{align}
\end{subequations}

\item For all $k,m \leq 3\Nindv$, we have
\begin{subequations}
\begin{align}
\bigl\| \psi_{i,q} D^k \Dtq^m \RR_{q+1} \bigr\|_{L^1} 
&\leq \shaqqplusone  \delta_{q+2} \lambda_{q+1}^k \MM{m,\Nindt,\Gamma_{q+1}^{i+1} \tau_q^{-1},\Gamma_{q+1}^{-1}\tilde\tau_q^{-1}}
\label{e:main:inductive:q:stress}
\\
\bigl\| D^k D_{t,q}^m \mathring{R}_{q+1}\bigr\|_{L^\infty(\supp \psi_{i,q})}
&\leq \Gamma_{q+1}^{\badshaq}  \lambda_{q+1}^k
\MM{m,\Nindvt,  \Gamma_{q+1}^{i+2}\tau_q^{-1}, \Gamma_{q+1}^{-1} \tilde{\tau}_{q}^{-1}}  \, .
\label{e:main:inductive:q:stress:unif}
\end{align}
\end{subequations}
\end{enumerate}
\end{proposition}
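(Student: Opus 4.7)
The plan is to construct $w_{q+1}$ as a finite sum of higher-order increments $w_{q+1} = \sum_{n=0}^{\nmax} w_{q+1,n}^{(p)} + w_{q+1,n}^{(c)}$, where $w_{q+1,n}^{(p)}$ is a principal piece designed to cancel a higher-order stress $\RR_{q,n}$ (with $\RR_{q,0} = \RR_{\ell_q}$) and $w_{q+1,n}^{(c)}$ is a small divergence-free corrector. Using Proposition~\ref{p:split} applied to $\Id - \Gamma_{q+1}^{-2j-2}\delta_{q+1,n}^{-1}\RR_{q,n}/\omega_{i,j,q,n}^2$ on the support of the cumulative cutoff $\eta_{i,j,k,q,n,\xi,\vec{l}}$, I would define
\begin{align*}
w_{q+1,n}^{(p)} = \sum_{i,j,k,\xi,\vec{l}} \eta_{i,j,k,q,n,\xi,\vec{l}}\, a_{i,j,k,q,n,\xi,\vec{l}}\, \bigl(\WW^{k_0}_{\xi,\lambda_{q+1},r_{q+1,n}} \circ \Phi_{i,k,q}\bigr),
\end{align*}
with $r_{q+1,n}$ as in \eqref{eq:def:lambda:r:q+1:n}, amplitude $a \approx \Gamma_{q+1}^{j+1}\delta_{q+1,n}^{\sfrac 12}\gamma_\xi(\cdot)$, and shift $k_0 = k_0(i,j,k,n,\xi,\vec{l})$ selected via Proposition~\ref{prop:disjoint:support:simple:alternate} so that $\supp\WW^{k_0}_{\xi,\lambda_{q+1},r_{q+1,n}}\circ\Phi_{i,k,q}$ is disjoint from pipes used in the previously placed increments $w_{q+1,n'}^{(p)}$ for $n'<n$ on the support of the corresponding checkerboard $\zeta_{q,i,k,n,\xi,\vec{l}}$. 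The corrector $w_{q+1,n}^{(c)}$ comes from the identity \eqref{eq:pipes:flowed:1}, rewriting $\nabla\Phi^{-1}\cdot(\WW\circ\Phi)$ as a curl of $\nabla\Phi^T(\UU\circ\Phi)$; this trades one power of $\lambda_{q+1}$ for a factor of $\Gamma_{q+1}^{-1}$ (the deformation smallness from \eqref{eq:Lagrangian:Jacobian:3}) to keep the corrector small.

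Given this Ansatz, the proof decomposes into a chained induction on $n$ driven by the oscillation self-interaction. Writing the new stress equation after adding $w_{q+1,n}^{(p)}$ produces four types of errors: a Nash error $\div^{-1}(w_{q+1,n}^{(p)}\cdot \nabla v_{\ell_q})$, a transport error $\div^{-1}(D_{t,q}w_{q+1,n}^{(p)})$, a divergence corrector error involving $w_{q+1,n}^{(c)}$, and an oscillation term $\RR_{q,n} + \div^{-1}\div(w_{q+1,n}^{(p)}\otimes w_{q+1,n}^{(p)})$. The off-diagonal pieces of the oscillation term vanish by pipe dodging \eqref{e:disjoint:conclusion}. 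The diagonal piece is handled using \eqref{eq:pipes:flowed:2}, which after unpacking and applying the Littlewood–Paley decomposition $1 = \Proj_{\leq \lambda_{q,n}} + \sum_{n<n'\leq \nmax}\Proj_{[\lambda_{q,n'-1},\lambda_{q,n'}]} + \Proj_{>\lambda_{q+1}}$ to the mean-free squared pipe density splits the error into: the low-frequency part (which, combined with the choice of amplitude squared, cancels $\RR_{q,n}$ up to a commutator), a chain of intermediate-frequency pieces which are absorbed into the next higher-order stresses $\RR_{q,n'}$ for $n'>n$ (verifying \eqref{eq:Rn:inductive:assumption}--\eqref{eq:Rn:inductive:assumption:unif} at the next level of the $n$-induction via the inverse divergence operator from~\cite{BMNV21}), and a residual high-frequency piece which is absorbed into $\RR_{q+1}$ itself. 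The Nash, transport, and divergence-corrector errors are likewise split by a frequency projection and fed into the same ladder. Iterating from $n=0$ up to $n=\nmax$ produces the stress $\RR_{q+1}$ consisting only of the terms at frequencies $\geq \lambda_{q,\nmax}\approx \lambda_{q+1}$.

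The velocity bound \eqref{e:main:inductive:q:velocity} then follows from Proposition~\ref{prop:pipeconstruction} item \eqref{item:pipe:5} applied with $p=2$ (so $\|\WW\|_{L^2}\approx 1$), combined with Corollary~\ref{cor:D:Dt:Rn:sharp:new} and Lemma~\ref{lem:omega:support} to control the amplitude, summed over $i,j,k,\vec l$ via the partition \eqref{eq:eta:cut:partition:unity} and \eqref{item:lebesgue:1}. The crucial $L^\infty$ bound \eqref{e:main:inductive:q:velocity:unif} uses $\|\WW\|_{L^\infty}\lesssim r_{q+1,n}^{-1}$ together with the sharp $L^\infty$ amplitude bound from \eqref{eq:Rn:inductive:assumption:unif} — here the one-half choice $r_{q+1,n}\approx \lambda_{q,n}^{\sfrac 12}\lambda_{q+1}^{-\sfrac 12}$ is exactly what converts the product into the target $\Gamma_{q+1}^{\badshaq-1}\Theta_{q+1}^{\sfrac 12}$ after summation over $n$, using the geometric smallness $\delta_{q+1,n}$ from \eqref{eq:new:delta:def}. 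The corresponding stress bounds \eqref{e:main:inductive:q:stress}--\eqref{e:main:inductive:q:stress:unif} follow from the inverse-divergence bookkeeping above, with the $L^\infty$ bound on the self-interaction term crucially relying on Lemma~\ref{lem:tricky:tricky} to prevent the frequency-localized density squared from saturating at $r_{q+1,n}^{-2}$.

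The hard part is twofold. First, closing the $n$-induction on the higher-order stresses simultaneously in $L^1$ (sharp) and $L^\infty$ (with small $\Upsilon(n)$ losses) requires that every transfer of an error from level $n$ to level $n'>n$ picks up at most $O(\log_2 n)$ applications of the inverse divergence, so that the losses $\Gamma_{q+1}^{14\Upsilon(n')}$ in \eqref{eq:Rn:inductive:assumption:unif} accumulate controllably under the constraint $\log_2\nmax / \nmax \ll 1$; this forces the careful tracking of which oscillation error terms are routed to which $\RR_{q,n'}$. Second, and most delicately, the anisotropic checkerboard $\zeta_{q,i,k,n,\xi,\vec{l}}$ has effective frequency $\lambda_{q+1}r_{q+1,n}\Gamma_{q+1}^{-1}$ in the directions transverse to $\xi$, which is much larger than the natural frequency $\lambda_{q,n}$ of $\RR_{q,n}$; the directional derivative structure \eqref{eq:derivative:along:pipe} and Lemma~\ref{lem:checkerboard:estimates}\eqref{item:check:3} ensure that this anisotropy is ``free'' in the dominant oscillation error \eqref{eq:osc:1:rough}, but it nontrivially complicates the divergence-corrector and transport-error estimates, which must be checked to respect the same one-half intermittency threshold as the Nash error. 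Verifying this sharp balance, together with the sharp pipe-dodging count in Proposition~\ref{prop:disjoint:support:simple:alternate}, is where the proof departs most significantly from~\cite{BMNV21}.
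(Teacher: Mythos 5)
Your outline follows the same route as the paper's proof via Proposition~\ref{p:inductive:n:2}: the inner induction on $n$, higher-order stresses $\RR_{q,n}$ sourced by Littlewood--Paley projections of the pipe self-interaction, Type~2 intersection errors killed by Proposition~\ref{prop:disjoint:support:simple:alternate}, the sharp $L^\infty$ bound from Lemma~\ref{lem:tricky:tricky}, and the Levi-Civita/Piola cancellations controlling the anisotropic checkerboard derivative in the divergence corrector.

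One organizational claim is inaccurate: the paper does \emph{not} route the Nash, transport, or divergence-corrector errors through the higher-order ladder, and the final $\RR_{q+1}$ is not confined to frequencies $\geq\lambda_{q,\nmax}$. After a single application of $\divH+\divR$, these three linear errors go straight into $\RR_{q+1}^{\nn}$ (see \eqref{eq:idiing:RRqnn}) and are bounded once and for all against $\shaqqplusone\delta_{q+2}$ in $L^1$ and $\Gamma_{q+1}^{\badshaq-1}$ in $L^\infty$ in Lemmas~\ref{l:transport:error}, \ref{l:Nash:error}, \ref{l:divergence:corrector:error}. Only the Type~1 self-interaction $\mathcal{O}_{\nn,1,1}$ is frequency-decomposed and fed into the $\RR_{q,n}$'s, and only for $n>r(\nn)=(\nmax+\nn)/2$, which is precisely what yields the $\Upsilon(n)\approx\log_2\nmax$ loss count you describe. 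Routing the linear errors through the ladder would not cause a contradiction (they already meet the tighter final threshold), but it is unnecessary and would feed extra mass into the $\RR_{q,n}$'s and hence the $w_{q+1,n}$'s. Two smaller slips: the corrector smallness in \eqref{eq:w:oxi:c:est} is the ratio $r_{q+1,\nn}$ between the amplitude frequency $\lambda_{q+1}r_{q+1,\nn}\Gamma_{q+1}^{-1}$ and the pipe frequency $\lambda_{q+1}$, not the deformation bound \eqref{eq:Lagrangian:Jacobian:3}; and the argument of $\gamma_\xi$ in \eqref{eq:a:xi:def} is the flow-conjugated $R_{q,\nn,j,i,k}/(\delta_{q+1,\nn}\Gamma_{q+1}^{2j+4})$ from \eqref{eq:rqnpj}, not a quotient by $\omega_{i,j,q,\nn}^2$ (the stress cutoff already ensures $|\RR_{q,\nn}|\lesssim\Gamma_{q+1}^{2j+2}\delta_{q+1,\nn}$ on $\supp\omega_{i,j,q,\nn}$). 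None of these are fatal to your scheme.
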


\subsection{Notations}\label{ss:notations:prep}
The proof of Proposition~\ref{p:main:inductive:q} will be achieved through an induction with respect to $\tilde{n}$, where $0\leq \nn \leq \nmax$ corresponds to the addition of the perturbation $\displaystyle w_{q+1,\nn}$. We shall employ the notation:
\begin{enumerate}[(1)]
    \item \label{item:notation:1} $\nn$ - An integer taking values $0\leq \nn \leq \nmax$ over which induction is performed, indexing the component $w_{q+1,\nn}$ of the velocity increment $w_{q+1}$.
    We emphasize that the use of $\nn$ at various points in statements and estimates means that we are \emph{currently} working on the inductive step at level $\nn$.
    \item \label{item:notation:2} $n$ - An integer taking values $1\leq n \leq \nmax$ which correspond to the higher order stresses $\RR_{q,n}$.  Occasionally, we shall use the notation $\RR_{q,0}=\RR_{\ell_q}$ to streamline an argument.  We emphasize that $n$ will be used at various points in statements and estimates to reference \textit{higher order} objects in addition to those at level $\nn$, and so will satisfy the inequality $\nn\leq n$.  
    \item \label{item:notation:3} ${\HH}_{q,n}^{n'}$ - The component of $\RR_{q,n}$ originating from an error term produced by the addition of $w_{q+1,n'}$.  The parameter $n'$ will always be a \emph{subsidiary} parameter used to reference objects created at or \emph{below} the level $\nn$ that we are currently working on, and so will satisfy $n'\leq \nn$.
    \item \label{item:notation:4} $\LPqn$ - We use the spatial Littlewood-Paley projectors $\LPqn$ defined by
    \begin{align}
    \LPqn=\begin{cases}
    \mathbb{P}_{\left[\lambda_q^{\sfrac 12}\lambda_{q+1}^{\sfrac
    12}\Gamma_{q+1},\lambda_{q,1}\right)} & \mbox{if }n=1 \, , \\
    \mathbb{P}_{\left[\lambda_{q,n-1},\lambda_{q,n}\right)} &\mbox{if } 2\leq n \leq \nmax \, ,  \\
    \mathbb{P}_{\geq\lambda_{q,\nmax}}& \mbox{if } n=\nmax+1 \, ,
    \label{def:LPqnp}
    \end{cases}
    \end{align}
where $\Proj_{[\lambda_1,\lambda_2)}$ is defined in Remark~\ref{sec:mollifiers:Fourier}  as $\Proj_{\geq \lambda_1} \Proj_{< \lambda_2}$. 
Errors which include the frequency projector $\mathbb{P}_{[q,\nmax+1]}$ will be small enough to be absorbed into $\RR_{q+1}$. We note that if $0\leq \nn \leq \nmax$, then from \eqref{eq:def:lambda:r:q+1:n}, any $\frac{\mathbb{T}^3}{\lambda_{q+1}r_{q+1,\nn}}$-periodic function satisfies
    \begin{align}
    f 
    = \dashint_{\mathbb{T}^3} f + \mathbb{P}_{\geq \lambda_{q+1}r_{q+1,\nn}} f  
    = \dashint_{\mathbb{T}^3} f + \sum_{n=\nn+1}^{\nmax+1} \LPqn f \, .  \label{e:Pqnp:identity}
    \end{align}
    \item \label{eq:notation:new} In order to later deduce a useful refinement of \eqref{e:Pqnp:identity}, we set 
    \begin{align}
        r(\nn) = \begin{cases}
     0 & \mbox{if } \nn=0 \, , \\
    \frac{\nmax+\nn}{2} &\mbox{if } 1\leq \nn \leq \nmax-1 \, .   \label{eq:rofn}
    \end{cases}
    \end{align}
    \item In order to keep track of small losses related to the process of building a stress $\RR\qnn$, corrector $w_{q+1,\nn}$, and new stresses $\RR_{q,n}$ for $n>\nn$, we define
    \begin{align}
        \Upsilon(n) = \begin{cases}
     0 & \mbox{if } n=0 \, , \\
    1 &\mbox{if } 1\leq n \leq \frac{\nmax}{2} \\
    k &\mbox{if } \frac{2^{k-1}-1}{2^{k-1}} \nmax < n \leq \frac{2^k-1}{2^k} \nmax \\
    2 + \lceil \log_2 (\nmax) \rceil  & \mbox{if } n=\nmax
    \, .   \label{eq:upsawhat}
    \end{cases}
    \end{align}
    $\Upsilon(\nn)$ gives an upper bound on the number of steps in the induction on $\nn$ it takes to produce the \emph{entire} error term $\RR\qnn$. A consequence of \eqref{eq:rofn} and \eqref{eq:upsawhat} is that  \begin{align}\label{eq:upsa:ineq}
       n>r(\nn) \qquad \implies \qquad  \Upsilon(n) \geq \Upsilon(\nn) + 1 \, .
    \end{align}
    To prove this, first consider the case $n=\nmax$. Then for all $0\leq \nn \leq \nmax-1$, we have that $r(\nn)<\nmax$, and so \eqref{eq:upsa:ineq} should hold for all $\nn < \nmax$.  Since $\nn<\nmax$, there exists a minimum value of $k$, say $k_\nn$, such that $\nn\leq \nmax- \frac{\nmax}{2^{k_\nn}}$, which implies that $\Upsilon(\nn) \leq k_\nn$.  For $k=\lceil\log_2(\nmax)\rceil+2$, however, we have that $\nmax-\frac{\nmax}{2^{k-1}}\geq\nmax - \frac{1}{2}$, and so it must be the case that $k_\nn \leq \lceil\log_2(\nmax)\rceil+1$, which proves \eqref{eq:upsa:ineq} in the case $n=\nmax$, and shows that
    \begin{equation}\label{eq:upsa:bound}
        \Upsilon(n) \leq 2 + \lceil\log_2(\nmax)\rceil \qquad \forall n \leq \nmax \, .
    \end{equation}
    To prove \eqref{eq:upsa:ineq} in the remaining cases, note that if $\nn=0$, then $n > r(0) \implies n\geq 1$ and so \eqref{eq:upsa:ineq} holds. If $\nn=1$, then $n > \frac{\nmax+1}{2}$, and again \eqref{eq:upsa:ineq} holds. Finally, if $2\leq \nn \leq \nmax-1$ and $\Upsilon(\nn)=k$, then 
    \begin{align*}
        n &> \frac{\nmax+\nn}{2} > \frac{\nmax+\frac{2^{k-1}-1}{2^{k-1}}{\nmax}}{2} = \frac{2^k-1}{2^k} \nmax \qquad \implies \qquad \Upsilon(n) \geq k+1 \, .
    \end{align*}
    \item \label{item:notation:5} $\RR_{q+1}^\nn$ - For any $0\leq \nn \leq\nmax-1$, this is any stress term which satisfies the estimates required of $\RR_{q+1}$ and which has already been estimated at the $\nn^{th}$ stage of the induction; that is, error terms arising from the addition of $w_{q+1,n'}$ for $n'\leq \nn$.  We \emph{exclude} $\RR_{q}^{\textnormal{comm}}$ from $\RR_{q+1}^\nn$, only absorbing it at the very end when we define $\RR_{q+1}$. Thus
    \begin{equation}\label{RR:q+1:n-1:to:n}
    \RR_{q+1}^{\nn+1} = \RR_{q+1}^{\nn} + \left(\textnormal{errors coming from }w_{q+1,\nn+1}\textnormal{ that also go into }\RR_{q+1}\right) \, .
    \end{equation}
   We adopt the convention that $\RR_{q+1}^{-1}=0$.
    \item We adopt the convention that $\sum_{n=0}^{-1} f(n) \equiv \sum_{n=\nmax+1}^{\nmax} f(n)  \equiv 0$ denotes an \emph{empty} summation.
\end{enumerate}

\subsection{Induction on \texorpdfstring{$\tilde{n}$}{tilden}}\label{ss:induction:nn}

We split the verification of Proposition~\ref{p:main:inductive:q} using a sub-inductive procedure on the parameter $\nn$. Note that summing \eqref{e:inductive:n:2:velocity}-\eqref{e:inductive:n:1:Rstress:unif} over $0\leq \nn \leq \nmax$, appealing to \eqref{eq:delta:q:nn:pp:ineq} and \eqref{eq:ineq:badshaq}, and using the extra factor of $\Gamma_{q+1}^{-1}$ to kill implicit constants, we have matched the desired bounds in \eqref{e:main:inductive:q:velocity}-\eqref{e:main:inductive:q:stress:unif}.

\begin{proposition}[\textbf{Induction on $\nn$: From $\nn-1$ to $\nn$ for $0\leq\nn\leq\nmax$}]\label{p:inductive:n:2}
Under the assumptions of Proposition~\ref{p:main:inductive:q} and Lemma~\ref{lem:mollifying:ER}, we let $0\leq \nn \leq\nmax$ be given, and let 
$v_{q,\nn-1} = \vlq+\sum\limits_{n'=0}^{\nn-1}w_{q+1,n'}$,
$\RR_{q+1}^{\nn-1}$, and $\HH\qn^{n'}$ be given for $0\leq n'\leq \nn-1$ and $\nn\leq n\leq\nmax$, such that the following are satisfied:
\begin{enumerate}[(1)]
\item ${v_{q,\nn-1}}$ solves the Euler-Reynolds system with stress
\begin{align}\label{e:inductive:n:2:eulerreynolds}
\mathbf{1}_{\{\nn=0\}} \RR_{\ell_q} + \RR_{q+1}^{\nn-1} +  \sum\limits_{n'=0}^{\nn-1}\sum\limits_{n>r(n')}^{\nmax} \HH\qn^{n'}   +  \RR_{q}^{\textnormal{comm}}  \,.
\end{align}
\item For all $k+m \leq \NN{\textnormal{fin},\textnormal{n}'}-\NcutSmall-\NcutLarge-2\Ndec-9$ and $0\leq n'\leq\nn-1$, 
\begin{subequations}
\begin{align}
\label{e:inductive:n:2:velocity}
 \left\| D^k \Dtq^m w_{q+1,n'} \right\|_{L^2\left(\supp\psi_{i,q}\right)} &\lesssim \delta_{q+1,n'}^{\sfrac{1}{2}} \Gamma_{q+1}^{3}  \lambda_{q+1}^k \MM{m,\Nindt, \tau_q^{-1}\Gamma_{q+1}^{i-\cstarnprime+4}, \tilde{\tau}_q^{-1}\Gamma_{q+1}^{-1}} \\
\label{e:inductive:n:1:velocity:unif}
\norm{D^k D_{t,q}^m w_{q+1,n'}}_{L^\infty(\supp \psi_{i,q})}
&\lesssim \Gamma_q^{\frac{\badshaq}{2}}\Gamma_{q+1}^{7\Upsilon(n')+\frac{7}{2}} r_{q+1,n'}^{-1} \lambda_{q+1}^k 
\MM{m,\Nindvt, \tau_q^{-1}\Gamma_{q+1}^{i-\cstarnprime+4}, \Gamma_{q+1}^{-1} \tilde{\tau}_{q}^{-1}}  \, .
\end{align}
\end{subequations}

\item For all $k,m \leq 3\Nindv$ and $1\leq \nn \leq \nmax$,
\begin{subequations}
\begin{align}
\label{e:inductive:n:2:Rstress}
\bigl\| \psi_{i,q} D^k \Dtq^m \RR_{q+1}^{\nn-1} \bigr\|_{L^1} &\lesssim \Gamma_{q+1}^{\shaq-1} \delta_{q+2} \lambda_{q+1}^k \MM{m,\Nindt,\Gamma_{q+1}^{i+1} \tau_q^{-1},\Gamma_{q+1}^{-1}\tilde\tau_q^{-1}} \\
\label{e:inductive:n:1:Rstress:unif}
\bigl\| D^k D_{t,q}^m \mathring{R}_{q+1}^{\nn-1}\bigr\|_{L^\infty(\supp \psi_{i,q})}
&\lesssim \Gamma_{q+1}^{\badshaq-1}  \lambda_{q+1}^k
\MM{m,\Nindvt,  \Gamma_{q+1}^{i+1}\tau_q^{-1}, \Gamma_{q+1}^{-1} \tilde{\tau}_{q}^{-1}}  \, .
\end{align}
\end{subequations}
 
\item For $0\leq n'\leq\nn-1$, $r(n') < n \leq \nmax$, and all $k+m\leq\Nfn$,
\begin{subequations}
\begin{align}
\label{e:inductive:n:2:Hstress}
\bigl\| D^k \Dtq^m \HH\qn^{n'} \bigr\|_{L^1\left(\supp\psi_{i,q}\right)} &\lesssim \delta_{q+1,n} \lambda\qn^k \MM{m,\Nindt, \tau_q^{-1}\Gamma_{q+1}^{i-\cstarn}, \tilde{\tau}_q^{-1}\Gamma_{q+1}^{-1}} \, , \\
\label{e:inductive:n:1:Hstress:unif}
\bigl\| D^k D_{t,q}^m \HH\qn^{n'}\bigr\|_{L^\infty(\supp \psi_{i,q})}
&\les 
\Gamma_{q}^{\badshaq}\Gamma_{q+1}^{14\Upsilon(n)}  \lambda\qn^k
\MM{m,\Nindvt, \tau_q^{-1}\Gamma_{q+1}^{i-\cstarn}, \tilde{\tau}_{q}^{-1}\Gamma_{q+1}^{-1}} \, .
\end{align}
\end{subequations}
\end{enumerate}
Then if $0 \leq \nn \leq \nmax-1$, there exists $w_{q+1,\nn}$, $\RR_{q+1}^{\nn}$, and $\HH\qn^{n'}$ for $0\leq n' \leq \nn$, such that \eqref{e:inductive:n:2:eulerreynolds}--\eqref{e:inductive:n:1:Hstress:unif} are satisfied with $\nn-1$ replaced with $\nn$.  If $\nn = \nmax$, then there exists $w_{q+1,\nmax}$ and $\RR_{q+1}$ such that $v_{q+1}:=v_{q,\nmax-1}+w_{q+1,\nmax}$ solves the Euler-Reynolds system with stress $\RR_{q+1}$, and $v_{q+1}$, $w_{q+1}$, and $\RR_{q+1}$ satisfy conclusions \eqref{e:main:inductive:q:velocity}--\eqref{e:main:inductive:q:stress:unif} from Proposition~\ref{p:main:inductive:q}.
\end{proposition}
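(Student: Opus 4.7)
My plan is to follow the ``higher-order stress'' scheme of~\cite[Sections 7--8]{BMNV21}, adapted to the anisotropic checkerboard cutoffs of Subsection~\ref{sec:cutoff:checkerboard:definitions} and to the sharper intermittency scaling~\eqref{eq:def:lambda:r:q+1:n}. The first step is to aggregate the existing stress components at frequency scale $\lambda\qnn$ into a single higher-order stress
\begin{equation*}
\RR\qnn := \mathbf{1}_{\{\nn=0\}} \RR_{\ell_q} + \sum_{\substack{0\leq n'\leq\nn-1 \\ r(n')<\nn}} \HH_{q,\nn}^{n'} \, ,
\end{equation*}
and to verify that it obeys~\eqref{eq:Rn:inductive:assumption}--\eqref{eq:Rn:inductive:assumption:unif} at level $\nn$. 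For $\nn=0$ this is precisely Lemma~\ref{lem:inductive:rq:dtq}; for $\nn\geq 1$ it follows by summing~\eqref{e:inductive:n:2:Hstress}--\eqref{e:inductive:n:1:Hstress:unif} and invoking the uniform bound~\eqref{eq:upsa:bound}. This legitimizes the stress cutoffs $\omega_{i,j,q,\nn}$ of Section~\ref{sec:cutoff}, the maximal index bound~\eqref{eq:new:jmax:bound}, and the pointwise estimate of Corollary~\ref{cor:D:Dt:Rn:sharp:new}.

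Next I would construct $w_{q+1,\nn}$ as a sum of Lagrangian-deformed intermittent pipe flows localized by the cumulative cutoffs $\eta_{i,j,k,q,\nn,\xi,\vec l}$. For each admissible tuple, the geometric decomposition of Proposition~\ref{p:split} applied to a normalized form of $\RR\qnn\circ\Phi_{i,k,q}^{-1}$ produces amplitudes $\gamma_\xi$, and a shift $k_0 = k_0(i,j,k,\xi,\vec l)$ is chosen via Proposition~\ref{prop:disjoint:support:simple:alternate} (applied on the prism of dimensions $(\lambda_{q+1}r_{q+1,\nn})^{-1}\times(\lambda_{q+1}r_{q+1,\nn})^{-1}\times\lambda\qnn^{-1}$) so that the pipe $\WW^{k_0}_{\xi,\lambda_{q+1},r_{q+1,\nn}}$ avoids the (deformed) supports of all previously placed pipes indexed by $n'<\nn$. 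The sparsity input~\eqref{eq:r1:r2:condition:alt} reduces via~\eqref{eq:def:lambda:r:q+1:n} to a parameter inequality valid on account of $\nn>r(n')$ for the relevant $n'$. The increment is then defined in divergence-free form via the curl identity~\eqref{eq:pipes:flowed:1}, with amplitude proportional to $(\delta_{q+1,\nn}\Gamma_{q+1}^{2j})^{\sfrac 12}\gamma_\xi\,\eta_{i,j,k,q,\nn,\xi,\vec l}$, plus a divergence corrector. The estimates~\eqref{e:inductive:n:2:velocity}--\eqref{e:inductive:n:1:velocity:unif} then follow from Proposition~\ref{prop:pipeconstruction}, Corollary~\ref{cor:deformation}, Lemma~\ref{lemma:cumulative:cutoff:Lp}, and the relation between $r_{q+1,\nn}$, $\delta_{q+1,\nn}$, $\lambda\qnn$ and $\lambda_{q+1}$ dictated by~\eqref{eq:n:params:heuristic}.

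The third step is the identification of the new error. Writing $v_{q,\nn}=v_{q,\nn-1}+w_{q+1,\nn}$, the added terms produce: a Nash error $\div^{-1}(w_{q+1,\nn}\cdot\nabla\vlq)$, a transport error $\div^{-1}\Dtq w_{q+1,\nn}$, an oscillation error $\div^{-1}\div(\RR\qnn + w_{q+1,\nn}\otimes w_{q+1,\nn})$, a divergence corrector error, and cross-interactions $\div^{-1}\div(w_{q+1,\nn}\otimes w_{q+1,n'}+\mathrm{sym})$ with $n'<\nn$ (resolved again by pipe dodging). The algebraic cancellation $\dashint\WW\otimes\WW=\xi\otimes\xi$ (Proposition~\ref{prop:pipeconstruction}) combined with $\sum_\xi\gamma_\xi^2\xi\otimes\xi=\mathrm{Id}$ eliminates the zero-frequency part of the oscillation error. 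The remaining contributions are split via the Littlewood-Paley identity~\eqref{e:Pqnp:identity}: projections $\LPqn$ with $\nn<n\leq r(\nn)$ and frequencies $\geq\lambda_{q+1}$ are absorbed into $\RR_{q+1}^{\nn}$, establishing~\eqref{e:inductive:n:2:Rstress}--\eqref{e:inductive:n:1:Rstress:unif}; projections with $n>r(\nn)$ define $\HH_{q,n}^{\nn}$, establishing~\eqref{e:inductive:n:2:Hstress}--\eqref{e:inductive:n:1:Hstress:unif}. The Nash and transport errors contribute only to $\RR_{q+1}^{\nn}$, via the one-half intermittency rule of Subsection~1.2 and the inverse divergence operator of~\cite{BMNV21}.

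The main obstacle is the sharp $L^\infty$ bound~\eqref{e:inductive:n:1:Hstress:unif} on the high-frequency oscillation contribution. Invoking~\eqref{eq:pipes:flowed:2}, this contribution is schematically proportional to $(\xi\cdot\nabla a^2)\cdot\LPqn((\varrho^{k_0}_{\xi,\lambda_{q+1},r_{q+1,\nn}})^2-1)\circ\Phi_{i,k,q}$ (plus commutator terms from the inverse divergence and Lagrangian deformation). The anisotropy of the checkerboard cutoff~\eqref{eq:checkerboard:derivatives} ensures that the critical derivative $\xi\cdot\nabla a^2$ costs only $\lambda\qnn$ (and not $\lambda_{q+1}r_{q+1,\nn}$) in the $\xi$ direction, hence is essentially free. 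Combining this with the new estimate of Lemma~\ref{lem:tricky:tricky}, namely $\|\LPqn((\varrho^{k_0})^2-1)\|_{L^\infty}\lesssim(\lambda\qn/(\lambda_{q+1}r_{q+1,\nn}))^2$, yields the desired bound, provided the $\Gamma_{q+1}^{14\Upsilon(n)}$ bookkeeping is respected; this is enforced by~\eqref{eq:upsa:ineq}, which guarantees that each passage from $\RR\qnn$ to $\HH_{q,n}^{\nn}$ increases $\Upsilon$ by at least one, matching the one additional factor of $\Gamma_{q+1}^{14}$ incurred in the computation. Ensuring that the divergence corrector error (whose directional structure differs from that of the principal part) still respects these bounds via Subsection~\ref{sec:cutoff:checkerboard:definitions} is the remaining technical burden. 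When $\nn=\nmax$ no further higher-order stresses are produced, all remaining errors collapse into $\RR_{q+1}^{\nmax}=:\RR_{q+1}-\RR_q^{\textnormal{comm}}$, and~\eqref{eq:Rqcomm:bound} together with summation in $\nn$ yields~\eqref{e:main:inductive:q:stress}--\eqref{e:main:inductive:q:stress:unif}, completing Proposition~\ref{p:main:inductive:q}.
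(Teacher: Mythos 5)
Your proposal's overall architecture closely tracks the paper's proof: defining $\RR_{q,\nn}$ by aggregating the inherited $\HH_{q,\nn}^{n'}$, building $w_{q+1,\nn}$ from anisotropic checkerboard cutoffs and shifts chosen by Proposition~\ref{prop:disjoint:support:simple:alternate}, exploiting that the derivative landing on $a_{(\xi)}^2$ in the oscillation error is $\xi\cdot\nabla$ (cheap, at frequency $\lambda_{q,\nn}$), and invoking Lemma~\ref{lem:tricky:tricky} together with the $\Upsilon$-bookkeeping of~\eqref{eq:upsa:ineq} for the sharp $L^\infty$ stress bounds. The remaining error accounting — Nash, transport, divergence correctors, Type 2 errors killed by pipe dodging — all matches Section~\ref{s:stress:estimates}.

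There is, however, one genuine gap in your step three. You write that after invoking~\eqref{e:Pqnp:identity}, the ``projections $\LPqn$ with $\nn<n\leq r(\nn)$ \ldots{} are absorbed into $\RR_{q+1}^{\nn}$,'' whereas the ones with $n>r(\nn)$ define $\HH_{q,n}^{\nn}$. If you literally tried to estimate the pieces at $\nn < n \leq r(\nn)$ and place them in $\RR_{q+1}^{\nn}$, the proof would not close: such a piece contributes in $L^1$ at scale $\delta_{q+1,\nn}\lambda_{q,\nn}\lambda_{q,n-1}^{-1}$, which for $n$ near $\nn+1$ is $\approx\delta_{q+1,\nn}$, vastly larger than the required $\Gamma_{q+1}^{\shaq-1}\delta_{q+2}$ when $\beta b$ is close to $\sfrac12$. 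What actually saves the day is the observation in~\eqref{eq:nmax:nn:mean}: because $\WW_{\xi,q+1,\nn}\otimes\WW_{\xi,q+1,\nn}$ is $(\sfrac{\T^3}{\lambda_{q+1}r_{q+1,\nn}})$-periodic, its mean-free part has no Fourier support below $\lambda_{q+1}r_{q+1,\nn}\approx\lambda_{q,\nn}^{\sfrac12}\lambda_{q+1}^{\sfrac12}\Gamma_{q+1}^{-2}$, and a direct computation shows $\lambda_{q,n}<\lambda_{q+1}r_{q+1,\nn}$ precisely when $n\leq r(\nn)$ — hence those projections \emph{vanish identically}. This vanishing, not absorption, is the mechanism that makes the $r(\cdot)$ bookkeeping close (and is exactly why $\Upsilon(n)\geq\Upsilon(\nn)+1$ for $n>r(\nn)$ suffices, cf.~\eqref{eq:upsa:ineq}). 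You implicitly use this fact in your definition of $\RR_{q,\nn}$ (where you restrict to $r(n')<\nn$), so you seem to know it at the inductive-hypothesis level; but a correct write-up must state and prove the vanishing at the step where the new $\HH_{q,n}^{\nn}$ are produced, rather than attempt an estimate that cannot hold.
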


\newcommand{\ijklwhstarzero}{{(\istar,\jstar,\kstar,0,\lstar,\wstar,\hstar)}}
\newcommand{\ijknlwhstarzero}{{(\istar,\jstar,\kstar,\nstar,\lstar,\wstar,\hstar)}}
\newcommand{\ijklwhzero}{{(i,j,k,0,l,w,h)}}
\newcommand{\Phiikstar}{\Phi_{(\istar,\kstar)}}
\newcommand{\vecl}{\vec{l}}
\newcommand{\veclstar}{\vecl^*}

\section{Proving the main inductive estimates}
\label{s:stress:estimates}

\subsection{Definition of \texorpdfstring{$\RR\qnn$}{rqnp} and \texorpdfstring{$w\qplusnn$}{wqnp}}\label{ss:stress:definition}

In this section we define the stresses $\RR\qnn$ and the perturbations $w\qplusnn$ used to correct them.  For $0\leq \nn \leq \nmax$, we define
\begin{equation}\label{e:rqnp:definition}
    \RR\qnn = \mathbf{1}_{\{\nn=0\}} \RR_{\ell_q} + \sum_{0\leq n'\leq \nn-1} \HH\qnn^{n'} \, .
\end{equation}
In Subsection~\ref{ss:stress:error:identification}, we will show that $\HH^{n'}_{q,\nn}$ is zero in certain parameter regimes, although for the moment this is irrelevant. 
Now for any fixed values of $\nn$, $i$, $j$, and $k$, we may define
\begin{equation}\label{eq:rqnpj}
R_{q,\nn,j,i,k}=\nabla\Phi_{(i,k)}\left(\delta_{q+1,\nn}\Gamma^{2j+4}_{q+1}\Id - \mathring{R}\qnn\right)\nabla\Phi_{(i,k)}^T \, .
\end{equation}
Let $\xi\in\Xi$ be a vector from Proposition~\ref{p:split}. For all $\xi\in\Xi$, we define the coefficient function $a_{\xi,i,j,k,q,\nn,\vecl}$ by
\begin{equation}
a_{\xi,i,j,k,q,\nn,\vecl}:=a_{\xi,i,j,k,q,\nn}:=a_{(\xi)}=\delta_{q+1,\nn}^{\sfrac 12}\Gamma^{j+2}_{q+1}\eta_{i,j,k,q,\nn,\xi,\vecl}\, \gamma_{\xi}\left(\frac{R_{q,\nn,j,i,k}}{\delta_{q+1,\nn}\Gamma^{2j+4}_{q+1}}\right) \, .
\label{eq:a:xi:def}
\end{equation}
From Corollary~\ref{cor:D:Dt:Rn:sharp:new}, we see that on the support of $\eta_{(i,j,k)}$ we have $|\RR_{q,\nn}| \lesssim \Gamma_{q+1}^{2j+2} \delta_{q+1,\nn}$, and thus by estimate \eqref{eq:Lagrangian:Jacobian:1} from Corollary~\ref{cor:deformation}, we have that
$$  \left| \frac{R_{q,\nn,j,i,k}}{\delta_{q+1,\nn}\Gamma^{2j+4}_{q+1}} - \Id \right| \leq \Gamma_{q+1}^{-1} < \frac 12  $$
once  $\lambda_0$ is sufficiently large. Thus we may apply Proposition~\ref{p:split}.

The coefficient function $a_{(\xi)}$ is then multiplied by an intermittent pipe flow defined in Proposition~\ref{prop:pipeconstruction} (with $\lambda=\lambda_{q+1}$ and $r=r_{q+1,\nn}$)
$$ \nabla \Phi_{(i,k)}^{-1}  \WW^s_{\xi,\lambda_{q+1},r_{q+1,\nn}} \circ \Phi_{(i,k)},  $$
where the superscript $s=s(i,j,k,\nn,\vecl)$ indicates the placement of the intermittent pipe flow $\WW^{s}_{\xi,\lambda_{q+1},r_{q+1,\nn}}$ (cf. \eqref{item:pipe:2} from Proposition~\ref{prop:pipeconstruction}), which depends on $i$, $j$, $k$, $\nn$, and $\vecl$ and is only relevant in Section~\ref{ss:stress:oscillation:2}.   To ease notation, we will suppress the superscript $s$ (except in Section~\ref{ss:stress:oscillation:2}), and use the shorthand notation
\begin{align} 
\WW_{\xi,q+1,\nn} := \WW_{\xi,\lambda_{q+1},r_{q+1,\nn}}^s \, . 
\label{eq:W:xi:q+1:nn:def}
\end{align}
We will also adopt the same notational conventions for the potentials $\UU_{\xi,q+1,\nn}$. Furthermore, \eqref{eq:pipes:flowed:1} from Proposition~\ref{prop:pipeconstruction} gives that we can now write the principal part of the first term of the perturbation as
\begin{equation}\label{wqplusoneonep}
    w_{q+1,\nn}^{(p)} = \sum_{i,j,k}\sum_{\vecl}\sum_{\xi} a_{(\xi)} \curl \left( \nabla\Phi_{(i,k)}^T \mathbb{U}_{\xi,q+1,\nn} \circ \Phi_{(i,k)} \right): = \sum_{i,j,k}\sum_{\vecl}\sum_{\xi} w_{(\xi)} \, .
\end{equation}
The notation $w_{(\xi)}$ implicitly encodes all indices and thus will be a useful shorthand for the principal part of the perturbation. To make the perturbation divergence free, we add
\begin{equation}\label{wqplusoneonec}
    w_{q+1,\nn}^{(c)} = \sum_{i,j,k}\sum_{\vecl}\sum_{\xi} \nabla a_{(\xi)} \times \left( \nabla\Phi_{(i,k)}^T \mathbb{U}_{\xi,q+1,\nn}\circ \Phi_{(i,k)} \right) = \sum_{i,j,k}\sum_{\vecl}\sum_\xi w_{(\xi)}^{(c)}
\end{equation}
so that
\begin{equation}\label{wqplusoneone}
    w_{q+1,\nn} = w_{q+1,\nn}^{(p)} + w_{q+1,\nn}^{(c)} = \sum_{i,j,k}\sum_{\vecl}\sum_{\xi} \curl \left( a_{(\xi)} \nabla\Phi_{(i,k)}^T \mathbb{U}_{\xi,q+1,\nn} \circ \Phi_{(i,k)} \right) \, .
\end{equation}

\subsection{Estimates for \texorpdfstring{$w\qplusnn$}{wqn}}\label{ss:stress:w:estimates}

In this section, we verify \eqref{e:inductive:n:2:velocity} and \eqref{e:inductive:n:1:velocity:unif}. We first estimate the $L^r$ norms of the coefficient functions $a_{(\xi)}$. We have consolidated the proofs for each value of $\nn$ into the following lemma.
\begin{lemma}
\label{lem:a_master_est_p}
For $N,N',N'',M$ with $N', N'' \in \{0,1\}$ and $N + N' +M \leq \Nfnn-\NcutSmall-\NcutLarge-4$, and $r,r_1,r_2\in[1,\infty]$ with $\frac{1}{r_1}+\frac{1}{r_2}=1$, we have the following estimate.
\begin{align}
\bigl\|D^{N-N''} D_{t,q}^M (\xi^\ell A_\ell^p \partial_p)^{N'} D^{N''} a_{\xi,i,j,k,q,\nn,\vec{l}}\bigr\|_{L^r} &\lessg | \supp ( \eta_{i,j,k,q,\nn,\xi,\vec{l}} ) |^{\sfrac 1r} \delta_{q+1,\nn}^{\sfrac 12} \Gamma_{q+1}^{j+2} \left(\Gamma_{q+1}^{-1}\lambda_{q+1}r_{q+1,\nn}\right)^N \notag\\
&\qquad \qquad \times \left(\Gamma_{q+1}\lambda\qnn\right)^{N'} \MM{M, \NindSmall, \tau_{q}^{-1}\Gamma_{q+1}^{i-\cstarnn+3}, \tilde\tau_{q}^{-1}\Gamma_{q+1}^{-1}}\label{e:a_master_est_p}.
\end{align}
In the case that $r=\infty$, the above estimate gives that
\begin{align}
\bigl\| D^{N-N''} D_{t,q}^M  (\xi^\ell A_\ell^p \partial_p)^{N'} D^{N''} a_{\xi,i,j,k,q,\nn,\vec{l}}\bigr\|_{L^\infty} 
&\lessg \Gamma_{q}^{\frac{\badshaq}{2}} \Gamma_{q+1}^{7\Upsilon(\nn) + \frac{7}{2}} \left(\Gamma_{q+1}^{-1}\lambda_{q+1}r_{q+1,\nn}\right)^N \notag\\
&\qquad \qquad \times \left(\Gamma_{q+1}\lambda\qnn\right)^{N'} \MM{M,\Nindt, \tau_{q}^{-1}\Gamma_{q+1}^{i-\cstarnn+3},\tilde\tau_q^{-1}\Gamma_{q+1}^{-1}} \label{e:a_master_est_p_uniform} \, .
\end{align}
\end{lemma}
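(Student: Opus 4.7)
The plan is to distribute the derivatives $D^N D_{t,q}^M (\xi^\ell A_\ell^j \partial_j)^{N'}$ across the three pieces of $a_{(\xi)}$ (the constant prefactor $\delta_{q+1,\nn}^{\sfrac 12}\Gamma_{q+1}^{j+2}$, the cumulative cutoff $\eta_{(i,j,k)}=\psi_{(i)}\omega_{(i,j)}\chi_{(i,k)}\zeta_{(i,k)}$, and $\gamma_\xi\circ(R_{q,\nn,j,i,k}/(\delta_{q+1,\nn}\Gamma_{q+1}^{2j+4}))$) using Leibniz's rule, and then to bound each resulting term via the sharp estimates already established for each factor. For the four pieces of $\eta_{(i,j,k)}$ we use \eqref{eq:sharp:Dt:psi:i:q:old}--\eqref{eq:sharp:Dt:psi:i:q:mixed:old} for $\psi_{(i)}$, Lemma~\ref{lem:D:Dt:omega:sharp} for $\omega_{(i,j)}$, \eqref{eq:chi:cut:dt} for $\chi_{(i,k)}$, and Lemma~\ref{lem:checkerboard:estimates} for $\zeta_{(i,k)}$. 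Crucially, \eqref{eq:checkerboard:derivatives} gives that the specialized directional derivative $\xi^\ell A_\ell^j\partial_j$ only costs $\lambda\qnn$ when applied to $\zeta_{(i,k)}$, in contrast to the $\Gamma_{q+1}^{-1}\lambda_{q+1}r_{q+1,\nn}$ cost of a generic spatial derivative; this is what permits the improved bound $(\Gamma_{q+1}\lambda\qnn)^{N'}$ on the right-hand side of \eqref{e:a_master_est_p}.

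For the $\gamma_\xi$ factor we apply the Faà di Bruno formula. Since Corollary~\ref{cor:D:Dt:Rn:sharp:new} and Corollary~\ref{cor:deformation} combined with the normalization by $\delta_{q+1,\nn}\Gamma_{q+1}^{2j+4}$ ensure that the argument of $\gamma_\xi$ lies in $B_{\sfrac 12}(\Id)$ on the support of $\omega_{(i,j)}$, we may regard $\gamma_\xi$ as smooth with uniformly bounded derivatives. Composing these bounds gives derivative estimates for $\gamma_\xi(R_{q,\nn,j,i,k}/(\delta_{q+1,\nn}\Gamma_{q+1}^{2j+4}))$ which are all dominated by the cost $\Gamma_{q+1}\lambda\qnn$ per generic derivative and $\lambda\qnn$ per specialized $\xi$-directional derivative, using that the material derivative $\Dtq$ commutes with the flow map $\Phi_{(i,k)}$ to absorb most of the complexity into controlling $\Dtq^m \RR\qnn$ via \eqref{eq:D:Dt:Rn:sharp:new}. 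The directional derivative bound for $\gamma_\xi \circ (R/\cdots)$ follows from the identity $\xi^\ell A_\ell^j \partial_j = \xi \cdot \nabla_{\Phi}$ (i.e.\ a $\xi$-directional derivative in Lagrangian coordinates), so derivatives land on the checkerboard-supported anisotropic variable with the $\lambda\qnn$ cost.

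The $L^r$ bound \eqref{e:a_master_est_p} then follows by factoring out the $L^\infty$ bounds on all pieces except $\eta_{(i,j,k)}$, for which we retain an $L^r$ estimate in the form ${\bf 1}_{\supp \eta_{(i,j,k)}}$. The resulting support estimate $|\supp(\eta_{i,j,k,q,\nn,\xi,\vecl})|^{\sfrac 1r}$ is directly given by Lemma~\ref{lemma:cumulative:cutoff:Lp} (applied pointwise in $\vecl$). To deduce the $L^\infty$ bound \eqref{e:a_master_est_p_uniform} from \eqref{e:a_master_est_p} with $r=\infty$, we must convert the factor $\delta_{q+1,\nn}^{\sfrac 12}\Gamma_{q+1}^{j+2}$ into the stated quantity $\Gamma_q^{\sfrac{\badshaq}{2}}\Gamma_{q+1}^{7\Upsilon(\nn)+\sfrac 72}$. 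This uses the bound on $j_{\max}$ from \eqref{eq:new:jmax:bound}, which gives $\Gamma_{q+1}^{2j_{\max}} \leq \Gamma_q^{\badshaq}\delta_{q+1,\nn}^{-1}\Gamma_{q+1}^{14\Upsilon(\nn)+3}$; taking square roots and absorbing the $\Gamma_{q+1}^{2}$ produces exactly the claimed $L^\infty$ prefactor.

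The main technical obstacle is bookkeeping in the Faà di Bruno expansion for $\gamma_\xi(R/\cdots)$: one must track how the $N$ spatial derivatives, $M$ material derivatives, and $N'$ specialized directional derivatives split among multiple copies of derivatives of $R_{q,\nn,j,i,k}$, and verify that the cumulative derivative count on each copy remains within the regime where \eqref{eq:D:Dt:Rn:sharp:new} applies (i.e., within $\Nfn-4$). The constraint $N+N'+M\leq \Nfnn-\NcutSmall-\NcutLarge-4$ in the hypothesis is precisely calibrated to allow this, since the cutoff derivative bounds from Lemma~\ref{lem:D:Dt:omega:sharp} only hold up to order $\Nfn-\NcutSmall-\NcutLarge-4$, and this is the most restrictive of the derivative counts among all the factors. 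The anisotropy in the $\xi$-directional derivative must be handled consistently across the $\gamma_\xi$ and $\zeta_{(i,k)}$ factors, which is why the identity $\xi^\ell A_\ell^j\partial_j = \xi\cdot\nabla_\Phi$ is essential: it synchronizes the directional derivative with the intrinsic coordinate system in which both the stress $R_{q,\nn,j,i,k}$ (via $\nabla\Phi_{(i,k)}$) and the checkerboard cutoff are naturally resolved.
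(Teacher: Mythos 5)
Your proposal follows the same outline as the paper's proof: apply the Leibniz rule across the factors $\psi_{(i)}$, $\omega_{(i,j)}$, $\chi_{(i,k)}$, $\zeta_{(i,k)}$ and the $\gamma_\xi$ composition (via a Fa\`a di Bruno-type lemma, which in the paper is imported from~\cite{BMNV21}), exploit the anisotropic gain from \eqref{eq:checkerboard:derivatives} for the checkerboard factor, pass from $L^\infty$ to $L^r$ via $\norm{f}_{L^r}\leq \norm{f}_{L^\infty}|\supp f|^{\sfrac1r}$, and then use \eqref{eq:new:jmax:bound} to extract the uniform prefactor $\Gamma_q^{\sfrac{\badshaq}{2}}\Gamma_{q+1}^{7\Upsilon(\nn)+\sfrac72}$. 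One small inaccuracy to flag: the $\gamma_\xi\bigl(R_{q,\nn,j,i,k}/(\delta_{q+1,\nn}\Gamma_{q+1}^{2j+4})\bigr)$ factor does \emph{not} benefit from a Lagrangian directional-derivative gain — by \eqref{eq:D:Dt:Rn:sharp:new} every spatial derivative of the composed argument costs $\Gamma_{q+1}\lambda\qnn$ regardless of direction, which happens to coincide with the claimed $(\Gamma_{q+1}\lambda\qnn)^{N'}$, so the anisotropic gain is confined to $\zeta_{(i,k)}$ as you correctly note earlier. Also, \eqref{e:a_master_est_p} retains $|\supp(\eta_{i,j,k,q,\nn,\xi,\vecl})|^{\sfrac1r}$ verbatim, so Lemma~\ref{lemma:cumulative:cutoff:Lp} is not used here — that estimate enters only later when summing over $\vecl$.
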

\begin{proof}
[Proof of Lemma~\ref{lem:a_master_est_p}]
We first compute \eqref{e:a_master_est_p} for the case $r=\infty$. Recalling estimate \eqref{eq:D:Dt:Rn:sharp:new}, we have that for all $N+M\leq\Nfnn-4$,
\begin{align}
\bigl\| D^N D_{t,q}^M\mathring{R}_{q,\nn} \bigr\|_{L^{\infty}(\supp \eta_{(i,j,k)})}
&\lessg \delta_{q+1,\nn}\Gamma^{2j+2}_{q+1}
 \left(\Gamma_{q+1}\lambda\qnn\right)^N
\MM{M, \NindSmall, \tau_{q}^{-1}\Gamma_{q+1}^{i-\cstarnn+2}, \tilde\tau_{q}^{-1}\Gamma_{q+1}^{-1} }.\nonumber
\end{align}
From Corollary~\ref{cor:deformation}, we have that for all $N+M\leq \sfrac{3\Nfin}{2}$,
\begin{align*}
\left\| D^N D_{t,q}^M D \Phi_{(i,k)} \right\|_{L^\infty(\supp(\psi_{i,q}\chi_{i,k,q}))} &\leq \tilde{\lambda}_q^{N} \MM{M,\NindSmall,\Gamma_{q+1}^{i-\cstar} \tau_q^{-1},\tilde{\tau}_q^{-1}\Gamma_{q+1}^{-1}}.
\end{align*}
Thus from the Leibniz rule and definition \eqref{eq:rqnpj}, for $N+M\leq\Nfnn-4$,
\begin{align}
&\norm{D^N D_{t,q}^M R_{q,\nn,j,i,k} }_{L^{\infty}(\supp \eta_{(i,j,k)})} \lessg \delta_{q+1,\nn}\Gamma^{2j+4}_{q+1}\left(\Gamma_{q+1}\lambda\qnn\right)^N \MM{M, \NindSmall, \tau_{q}^{-1}\Gamma_{q+1}^{i-\cstarnn+2}, \tilde\tau_{q}^{-1}\Gamma_{q+1}^{-1}}\label{eq:davidc:1}
\,.\end{align}
The above estimates allow us to apply \cite[Lemma~A.5]{BMNV21} with $N=N'$, $M=M'$ so that $N+M\leq\Nfnn-4$, $\psi = \gamma_{\xi,}$,
$\Gamma_\psi=1$, $v = \vlq$, $D_t = D_{t,q}$, $h(x,t) = R_{q,\nn,j,i,k}(x,t)$, $C_h = \delta_{q+1,\nn}\Gamma_{q+1}^{2j+4} = \Gamma^2$, $\lambda=\tilde\lambda = \lambda\qnn\Gamma_{q+1}$, $\mu = \tau_{q}^{-1} \Gamma_{q+1}^{i-\cstarnn+2}$, $\tilde \mu = \tilde \tau_{q}^{-1}\Gamma_{q+1}^{-1}$, and $N_t=\Nindt$. We obtain that for all $N+M\leq\Nfnn-4$,
\begin{align*}
\norm{ D^ND_{t,q}^M \gamma_{\xi}\left(\frac{R_{q,\nn,j,i,k}}{\delta_{q+1,\nn,\pp}\Gamma^{2j+4}_{q+1}}\right)}_{L^{\infty}(\supp \eta_{(i,j,k)})} &\lesssim \left(\Gamma_{q+1}\lambda\qnn\right)^N \MM{M, \NindSmall, \tau_{q}^{-1}\Gamma_{q+1}^{i-\cstarnn+2}, \tilde\tau_{q}^{-1}\Gamma_{q+1}^{-1}} \,.
\end{align*}
From the above bound, definition \eqref{eq:a:xi:def}, the Leibniz rule, estimate \eqref{eq:nasty:Dt:psi:i:q:orangutan} at level $q$ in conjunction with \eqref{eq:Nind:cond:2}, \eqref{eq:Lagrangian:Jacobian:6}, \eqref{eq:chi:cut:dt}, \eqref{eq:D:Dt:omega:sharp}, and \eqref{eq:checkerboard:derivatives}, we obtain that for $N+N'+M\leq\Nfnn -\NcutLarge - \NcutSmall-4$,
\begin{align*}
\bigl\| D^ND_{t,q}^M ( \xi^\ell A_\ell^p \partial_p)^{N'} a_{\xi,i,j,k,q,\nn,\vec{l}}\bigr\|_{L^\infty}
& \lessg \delta_{q+1,\nn}^{\sfrac 12} \Gamma_{q+1}^{j+2} (\Gamma_{q+1}^{-1}\lambda_{q+1}r_{q+1,\nn})^N \\
&\qquad \qquad \times (\Gamma_{q+1}\lambda\qnn)^{N'} \MM{M,\Nindt, \tau_{q}^{-1}\Gamma_{q+1}^{i-\cstarnn+3},\tilde\tau_q^{-1}\Gamma_{q+1}^{-1}} \, .
\end{align*}
Then, using \eqref{eq:new:jmax:bound} the above bound becomes \eqref{e:a_master_est_p_uniform} for $N''=0$.  The proof for $N''=1$ is nearly identical, and we omit the details.  When $r\neq \infty$, we use $\left\| f \right\|_{L^r}\leq \left\| f \right\|_{L^\infty} | \{ \supp f \} |^{\sfrac 1r} $ and the demonstrated bound for $r=\infty$ to obtain \eqref{e:a_master_est_p} for the full range of $r$.
\end{proof}
An immediate consequence of Lemma~\ref{lem:a_master_est_p} is that we have estimates for the velocity increments themselves. These are summarized in the following corollary. The proofs for $r\neq \infty$ are analogous to those from \cite[Corollary 8.2]{BMNV21} and therefore use Lemma~\ref{l:slow_fast}. We only note that the gap between the spatial derivative cost of $a_{(\xi)}$ ($\lambda_{q+1}r_{q+1,\nn}\Gamma_{q+1}^{-1}$ from Lemma~\ref{lem:a_master_est_p}) and the minimum frequency of $\WW_{\xi,q+1,\nn}$ ($\lambda_{q+1}r_{q+1,\nn}$ from \eqref{eq:W:xi:q+1:nn:def} and Proposition~\ref{prop:pipeconstruction}) is now only $\Gamma_{q+1}$, and so we need the inequality \eqref{eq:lambdaqn:identity:2} in order to satisfy \eqref{eq:slow_fast_3}. The assumption \eqref{eq:slow_fast_4} follows from \eqref{eq:lambdaqn:identity:3}. The estimates for $r=\infty$ follow directly from \eqref{e:a_master_est_p_uniform} and \eqref{e:pipe:estimates:2}.
\begin{corollary}
\label{cor:corrections:Lp}
For $N+M\leq \Nfnn-\NcutSmall-\NcutLarge-2\Ndec-8$ and $(r,r_1,r_2)\in\left\{(1,2,2),(2,\infty,1)\right\}$, for $w_{(\xi)}$ we have the estimates
\begin{subequations}
\begin{align}
\norm{D^ND_{t,q}^M w_{(\xi)}}_{L^r} 
&\lessg | \supp (\eta_{i,j,k,q,\nn,\xi,\vec{l}}) |^{\sfrac 1r} \delta_{q+1,\nn}^{\sfrac 12}\Gamma^{j+2}_{q+1} {\left(\rqnperptilde\right)}^{\frac2r-1} \lambda_{q+1}^N  \MM{M, \NindSmall, \tau_{q}^{-1}\Gamma_{q+1}^{i-\cstarnn+3}, \tilde\tau_{q}^{-1}\Gamma_{q+1}^{-1}}\label{eq:w:oxi:est}
\\
\norm{D^ND_{t,q}^M w_{(\xi)}}_{L^\infty}
&\lessg 
r_{q+1,\nn}^{-1} \lambda_{q+1}^N \Gamma_{q}^{\frac{\badshaq}{2}} \Gamma_{q+1}^{7\Upsilon(\nn) + \frac{7}{2}} \MM{M,\Nindt, \tau_{q}^{-1}\Gamma_{q+1}^{i-\cstarnn+3},\tilde\tau_q^{-1}\Gamma_{q+1}^{-1}}\, .
\label{eq:w:oxi:unif}
\end{align}
\end{subequations}
For $N+M\leq \Nfnn-\NcutSmall-\NcutLarge-2\Ndec-9$ and $(r,r_1,r_2)\in\left\{(1,2,2),(2,\infty,1)\right\}$, we have that
\begin{subequations}
\begin{align}
\bigl\| D^ND_{t,q}^M w_{(\xi)}^{(c)}\bigr\|_{L^r} &\lessg
\frac{ \lambda_{q+1}r_{q+1,\nn}}{\lambda_{q+1}} |\supp(\eta_{i,j,k,q,\nn,\xi,\vec{l}})|^{\sfrac 1r}
\delta_{q+1,\nn}^{\sfrac 12}\Gamma^{j+2}_{q+1}
{\left(r_{q+1,\nn}\right)}^{\frac2r-1}  \notag\\
&\qquad \qquad \qquad \times\lambda_{q+1}^N \MM{M, \NindSmall, \tau_{q}^{-1}\Gamma_{q+1}^{i-\cstarnn+3}, \tilde\tau_{q}^{-1}\Gamma_{q+1}^{-1}}
\label{eq:w:oxi:c:est} \\
\bigl\| D^ND_{t,q}^M w_{(\xi)}^{(c)}\bigr\|_{L^\infty} & \lesssim r_{q+1,\nn}^{-1} \frac{\lambda_{q+1}r_{q+1,\nn}}{\lambda_{q+1}} \Gamma_{q}^{\frac{\badshaq}{2}} \Gamma_{q+1}^{7\Upsilon(\nn) + \frac{7}{2}} \lambda_{q+1}^{N}
\MM{M,\Nindt,\tau_{q}^{-1}\Gamma_{q+1}^{i-\cstarnn+3},\tilde\tau_q^{-1}\Gamma_{q+1}^{-1}} \label{eq:w:oxi:c:unif}
\end{align}
\end{subequations}
\end{corollary}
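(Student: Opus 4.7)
The plan is to exploit the structure of both terms as a product of a slow amplitude and a fast, deformed pipe flow, and then combine sharp estimates on each factor via a Leibniz argument (for $L^\infty$) or via the slow-fast multiplier lemma~\ref{l:slow_fast} (for finite $r$). Using Proposition~\ref{prop:pipeconstruction}\eqref{item:pipe:6}, we may rewrite
\begin{equation*}
w_{(\xi)} = a_{(\xi)}\,\bigl(\nabla\Phi_{(i,k)}\bigr)^{-1}\bigl(\WW_{\xi,q+1,\nn}\circ \Phi_{(i,k)}\bigr),
\qquad
w_{(\xi)}^{(c)} = \nabla a_{(\xi)}\times\bigl(\nabla\Phi_{(i,k)}^{T}\,\UU_{\xi,q+1,\nn}\circ\Phi_{(i,k)}\bigr),
\end{equation*}
so the coefficient $a_{(\xi)}$ plays the role of the slow function, with effective spatial frequency $\Gamma_{q+1}^{-1}\lambda_{q+1}r_{q+1,\nn}$ (cf.~Lemma~\ref{lem:a_master_est_p}), while $\WW_{\xi,q+1,\nn}\circ\Phi_{(i,k)}$ oscillates with minimum frequency $\lambda_{q+1}r_{q+1,\nn}$, giving a spectral gap of only $\Gamma_{q+1}$, just enough to apply the slow-fast lemma.

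First, I would handle the $L^\infty$ estimates \eqref{eq:w:oxi:unif} and \eqref{eq:w:oxi:c:unif}, which are the simpler endpoint and do not require slow-fast. I would apply the Leibniz rule, distributing $D^N D_{t,q}^M$ across the three factors: the amplitude $a_{(\xi)}$ (estimated via \eqref{e:a_master_est_p_uniform}), the Jacobian or its inverse (estimated via Corollary~\ref{cor:deformation}, using \eqref{eq:Lagrangian:Jacobian:1}--\eqref{eq:Lagrangian:Jacobian:6}), and the composed pipe $\WW_{\xi,q+1,\nn}\circ \Phi_{(i,k)}$ (estimated via \eqref{e:pipe:estimates:2} at $p=\infty$, yielding $\|\nabla^N\WW\|_{L^\infty}\lesssim\lambda_{q+1}^N r_{q+1,\nn}^{-1}$, together with the chain rule and $D_{t,q}(\WW\circ\Phi)=0$ since $D_{t,q}\Phi_{(i,k)}=0$). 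The factor $r_{q+1,\nn}^{-1}$ coming from $\WW$ at the $L^\infty$ level, combined with $\Gamma_q^{\badshaq/2}\Gamma_{q+1}^{7\Upsilon(\nn)+7/2}$ from \eqref{e:a_master_est_p_uniform}, produces the right-hand side of \eqref{eq:w:oxi:unif}. For the corrector, the $\nabla a_{(\xi)}$ contributes a net extra $\lambda_{q+1}r_{q+1,\nn}/\lambda_{q+1}$ relative to $a_{(\xi)}$ once we account for the $\lambda_{q+1}^{-1}$ gain of $\UU$ over $\WW$ (compare the two lines of \eqref{e:pipe:estimates:2}), giving \eqref{eq:w:oxi:c:unif}.

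Second, for the finite $r$ estimates \eqref{eq:w:oxi:est} and \eqref{eq:w:oxi:c:est}, the plan is to apply Lemma~\ref{l:slow_fast} with slow function $a_{(\xi)}$ and fast function $\WW_{\xi,q+1,\nn}\circ \Phi_{(i,k)}$ (or, in the corrector case, $\UU_{\xi,q+1,\nn}\circ\Phi_{(i,k)}$), in the spirit of \cite[Corollary~8.2]{BMNV21}. The slow-fast parameters are set as follows: for $a_{(\xi)}$ the spatial derivative cost is $\Gamma_{q+1}^{-1}\lambda_{q+1}r_{q+1,\nn}$ and the sharp directional derivative cost along $\xi$ is the much smaller $\lambda\qnn$, which is exactly the constant appearing in the $N'=1$ refinement of Lemma~\ref{lem:a_master_est_p}. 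The material derivative parameters come from the $M$-bound in \eqref{e:a_master_est_p}. The $L^{r_1}$ estimates on $a_{(\xi)}$ then get multiplied by the $L^{r_2}$ estimate on the (deformed) pipe, which by Proposition~\ref{prop:pipeconstruction}\eqref{item:pipe:5} contributes $r_{q+1,\nn}^{2/r-1}$ via the fact that the flow map is a volume-preserving diffeomorphism (using \eqref{eq:Lagrangian:Jacobian:3}). The volume factor $|\supp\eta_{i,j,k,q,\nn,\xi,\vec l}|^{1/r}$ then arises directly from applying $L^\infty$ bounds on $a_{(\xi)}$ on a set of the controlled size provided by Lemma~\ref{lemma:cumulative:cutoff:Lp}.

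The main obstacle is verifying the slow-fast separation hypotheses \eqref{eq:slow_fast_3} and \eqref{eq:slow_fast_4} in this sharp regime: the spectral gap between the amplitude's frequency $\Gamma_{q+1}^{-1}\lambda_{q+1}r_{q+1,\nn}$ and the pipe's minimum frequency $\lambda_{q+1}r_{q+1,\nn}$ is only $\Gamma_{q+1}$, which is much tighter than in previous iterations of this technology. This is precisely where the sharp parameter inequality \eqref{eq:lambdaqn:identity:2} is used to verify \eqref{eq:slow_fast_3}, while \eqref{eq:lambdaqn:identity:3} supplies \eqref{eq:slow_fast_4}; without these, one cannot absorb the loss and close the estimate. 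A secondary, bookkeeping-level difficulty is matching material derivative costs: the amplitude tolerates $\tau_q^{-1}\Gamma_{q+1}^{i-\cstarnn+3}$ while the background velocity $\vlq$ only gives a Lipschitz bound proportional to $\tau_q^{-1}\Gamma_{q+1}^{i-\cstarnn+2}$ on $\supp\psi_{i,q}\tilde\chi_{i,k,q}$, so one must carefully propagate the extra $\Gamma_{q+1}$ through the chain rule when differentiating $\WW\circ\Phi_{(i,k)}$ in time, which is standard once the flow map estimates from Corollary~\ref{cor:deformation} are in hand.
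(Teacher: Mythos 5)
Your proposal is correct and takes essentially the same approach as the paper: the finite-$r$ bounds follow from Lemma~\ref{l:slow_fast} applied to $a_{(\xi)}$ (spatial cost $\Gamma_{q+1}^{-1}\lambda_{q+1}r_{q+1,\nn}$) and the deformed pipe (minimal frequency $\lambda_{q+1}r_{q+1,\nn}$), with the narrow $\Gamma_{q+1}$-sized gap made admissible by \eqref{eq:lambdaqn:identity:2} and \eqref{eq:lambdaqn:identity:3}, while the $r=\infty$ bounds follow directly from \eqref{e:a_master_est_p_uniform} and \eqref{e:pipe:estimates:2} via the Leibniz rule, using $D_{t,q}\Phi_{(i,k)}=0$. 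One small inaccuracy: the $N'=1$ directional-derivative refinement of Lemma~\ref{lem:a_master_est_p} that you invoke plays no role here, since Lemma~\ref{l:slow_fast} only sees the isotropic cost $\Gamma_{q+1}^{-1}\lambda_{q+1}r_{q+1,\nn}$ for the slow factor; that refinement is reserved for the oscillation errors.
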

\begin{remark}
Note that the above estimates verify the bounds \eqref{e:inductive:n:2:velocity} and \eqref{e:inductive:n:1:velocity:unif} after summing on $(i,j,k,\nn,\xi,\vec{l})$ and using \eqref{item:lebesgue:1} with $r_1=\infty$ and $r_2=2$.  Then from \eqref{wqplusoneonep}--\eqref{wqplusoneone}, \eqref{eq:w:oxi:est}--\eqref{eq:w:oxi:c:unif}, and the parameter inequalities \eqref{ineq:rq:useful}, \eqref{eq:delta:q:nn:pp:ineq}, and \eqref{eq:ineq:badshaq}, the bounds \eqref{e:main:inductive:q:velocity} and \eqref{e:main:inductive:q:velocity:unif} follow after using the extra factor of $\Gamma_{q+1}^{-1}$ to absorb implicit constants.
\end{remark}

\subsection{Identification of error terms}\label{ss:stress:error:identification}

Recall that $v_{q,\nn-1}$ is divergence-free and is a solution to the Euler-Reynolds system with stress given in  \eqref{e:inductive:n:2:eulerreynolds}.  Now using the definition of $\RR_{q,\nn}$ from \eqref{e:rqnp:definition} for $0\leq \nn \leq \nmax$, we add $w_{q+1,\nn}$ as defined in \eqref{wqplusoneone}, we have that $v_{q,\nn}:= v_{q,\nn-1} + w_{q+1,\nn}$ solves
\begin{align}
    \partial_t v_{q,\nn} + \div \left( v_{q,\nn}\otimes v_{q,\nn} \right) + \nabla p_{q,\nn-1} 
    &= \div\left(\RR_{q+1}^{\nn-1}\right) 
    + \div \Biggl( \sum\limits_{n'=0}^{\nn-1} \sum\limits_{n>r(n')}^{\nmax} \HH_{q,n}^{n'} \Biggr) + \div\RR_{q}^{\textnormal{comm}}\notag\\
    &\quad + \Dtq w_{q+1,\nn} + w_{q+1,\nn}\cdot \nabla \vlq + 2 \sum_{n'\leq \nn-1} \div\left( w_{q+1,n'} \otimes_{\rm s} w_{q+1,\nn}\right)\notag\\
    &\quad + \div \left( w_{q+1,\nn} \otimes w_{q+1,\nn}+\RR_{q,\nn} \right).\label{e:expand:nn}
\end{align}
Here we use the notation $
a \otimes_{\rm s} b = \frac 12 (a   \otimes b + b   \otimes a)$.
The first term on the right hand side is $\RR_{q+1}^{\nn-1}$, which for $\nn\geq 1$ satisfies the same estimates as $\RR_{q+1}^\nn$ by \eqref{e:inductive:n:2:Rstress} and will thus be absorbed into $\RR_{q+1}^{\nn}$.  The second term, save for the fact that the sum is over $n'\leq\nn-1$ rather than $n'\leq\nn$ and is therefore missing the terms $\HH_{q,n}^\nn$, matches \eqref{e:inductive:n:2:eulerreynolds} at level $\nn$ (i.e. replacing every instance of $\nn-1$ with $\nn$). We apply the inverse divergence operators from Proposition~\ref{prop:intermittent:inverse:div} to the transport and Nash errors to obtain
$$ \Dtq w_{q+1,\nn} + w_{q+1,\nn}\cdot\nabla\vlq = \div \left( (\divH + \divR) \left( \Dtq w_{q+1,\nn} + w_{q+1,\nn}\cdot\nabla\vlq\right)\right) + \nabla \pi,   $$
and these errors are absorbed into $\RR_{q+1}^\nn$ or the new pressure.  We will show in Section~\ref{ss:stress:oscillation:2} that the interaction of $w_{q+1,\nn}$ with previous terms $w_{q+1,n'}$ is a Type 2 oscillation error so that
\begin{equation}\label{nn:overlap:definition}
2 \sum_{0 \leq n'\leq \nn-1} w_{q+1,n'}\otimes_{\rm s} w_{q+1,\nn} =0  \, .
\end{equation}
So to verify \eqref{e:inductive:n:2:eulerreynolds} at level $\nn$, only the analysis of last line of the right-hand side of \eqref{e:expand:nn} remains. 

For a fixed $\nn$, throughout this section we will consider sums over indices $(\xi,i,j,k,\vecl)$, where the direction vector $\xi$ takes on one of the finitely many values in Proposition~\ref{prop:pipeconstruction}, $0 \leq i \leq \imax(q)$ indexes the velocity cutoffs, $0\leq j \leq \jmax(q,\nn)$ indexes the stress cutoffs, the parameter $k \in \Z$ indexes the time cutoffs defined in \eqref{eq:chi:cut:def}, and lastly, $\vecl \in \N^3_0$ indexes the checkerboard cutoffs from Definition~\ref{def:checkerboard}.  For brevity of notation, we denote sums over such indexes as 
\begin{equation*}
\sum_{\xi,i,j,k,\vecl} \,.
\end{equation*}
Moreover, we shall denote as
\begin{equation}\notag
    \sum_{\neq\{\xi,i,j,k,\vecl\}} 
\end{equation}
the {\em double-summation} over indexes $(\xi,i,j,k, \vecl)$ and $(\xistar, \istar, \jstar, \kstar, \veclstar)$ which belong to the set
\begin{equation}\notag
    \left\{ (\xi,i,j,k,\vecl)  \, , (\xistar, \istar, \jstar, \kstar, \veclstar): \xi\neq\xistar \lor i \neq \istar \lor j \neq \jstar \lor k \neq \kstar  \lor \vecl\neq\veclstar \right \} \, .
\end{equation}
We may now write out the self-interaction of $w_{q+1,\nn}$ as
\begin{align}
    \div\left(w_{q+1,\nn}\otimes w_{q+1,\nn}\right) &= \sum_{\xi,i,j,k,\vecl} \div \left( \curl\bigl(a_{(\xi)} \nabla\Phi_{(i,k)}^T \UU_{\xi,q+1,\nn} \bigr) \otimes \curl\bigl( a_{(\xi)}\nabla\Phi_{i,k}^T \UU_{\xi,q+1,\nn} \bigr) \right)\notag \\
    & + \sum_{\neq\{\xi,i,j,k,\vecl\}} \div \left( \curl\bigl(a_{(\xi)} \nabla\Phi_{(i,k)}^T \UU_{\xi,q+1,\nn} \bigr) \otimes \curl\bigl( a_{(\xistar)}\nabla\Phi_{(\istar,\kstar)}^T \UU_{\xistar,q+1,\nn} \bigr) \right)\notag\\
    &=: \div \mathcal{O}_{\nn,1} + \div \mathcal{O}_{\nn,2} \label{e:split:nn:1}.
\end{align}
We will show that $\mathcal{O}_{\nn,2}$ is a Type 2 oscillation error so that 
\begin{equation}\label{eq:something:is:zero}
\mathcal{O}_{\nn,2} = 0 \, . 
\end{equation}
Splitting $\mathcal{O}_{\nn,1}$ gives
\begin{align}
    \div \mathcal{O}_{\nn,1} &= \sum_{\xi,i,j,k,\vecl} \div \left( \bigl( a_{(\xi)} \nabla\Phi_{(i,k)}^{-1} \WW_{\xi,q+1,\nn}\circ\Phi_{(i,k)} \bigr) \otimes \bigl( a_{(\xi)} \nabla\Phi_{(i,k)}^{-1} \WW_{\xi,q+1,\nn}\circ\Phi_{(i,k)} \bigr) \right) \notag \\
    &\qquad + 2 \sum_{\xi,i,j,k,\vecl} \div \left( \bigl( a_{(\xi)} \nabla\Phi_{(i,k)}^{-1} \WW_{\xi,q+1,\nn}\circ\Phi_{(i,k)} \bigr) \otimes_{\rm s} \bigl( \nabla a_{(\xi)} \times \bigl( \nabla\Phi_{(i,k)}^{T} \UU_{\xi,q+1,\nn}\circ\Phi_{(i,k)}\bigr)  \bigr)\right) \notag \\
    &\qquad + \sum_{\xi,i,j,k,\vecl} \div \left( \bigl( \nabla a_{(\xi)} \times \bigl( \nabla\Phi_{(i,k)}^{T} \UU_{\xi,q+1,\nn}\circ\Phi_{(i,k)}\bigr) \bigr) \otimes \bigl( \nabla a_{(\xi)} \times \bigl( \nabla\Phi_{(i,k)}^{T} \UU_{\xi,q+1,\nn}\circ\Phi_{(i,k)}\bigr) \bigr) \right)\notag\\
    &:= \div\left( \mathcal{O}_{\nn,1,1}+\mathcal{O}_{\nn,1,2}+\mathcal{O}_{\nn,1,3} \right).\label{e:split:nn:2}
\end{align}
The last two of these terms are divergence corrector errors and will therefore be absorbed into $\RR_{q+1}^\nn$ and estimated in Section~\ref{ss:stress:divergence:correctors}.  So the only terms which we have yet to identify from \eqref{e:expand:nn} are $\mathcal{O}_{\nn,1,1}$ and $\RR_{q,\nn}$.

Recall cf.~\eqref{eq:W:xi:q+1:nn:def} that $\WW_{\xi,q+1,\nn}$ is periodized to scale $\left(\lambda_{q+1}r_{q+1,\nn}\right)^{-1}$.  Using \eqref{e:Pqnp:identity}, we have that
\begin{align*}
\WW_{\xi,q+1,\nn}\otimes\WW_{\xi,q+1,\nn} &= \dashint_{\mathbb{T}^3}{\WW_{\xi,q+1,\nn}\otimes\WW_{\xi,q+1,\nn}} + \sum_{n=\nn+1}^{\nmax+1} \mathbb{P}_{[q,n]} \left(\WW_{\xi,q+1,\nn}\otimes\WW_{\xi,q+1,\nn}\right) \, .
\end{align*}
Using \eqref{item:pipe:4} and \eqref{eq:pipes:flowed:2} from Proposition~\ref{prop:pipeconstruction} in combination with the above identity, and the convention that $\bullet$ denotes the unspecified components of a vector field, we then split $\mathcal{O}_{\nn,1,1}$ as
\begin{align}
    \div\left(\mathcal{O}_{\nn,1,1}\right) &= \sum_{\xi,i,j,k,\vecl} \div\left( a_{(\xi)}^2 \nabla\Phi_{(i,k)}^{-1} \left(\xi\otimes \xi\right) \nabla\Phi_{(i,k)}^{-T} \right) \notag \\
    &\quad + \sum_{\xi,i,j,k,\vecl} \div\bigg{(} a_{(\xi)}^2 \nabla\Phi_{(i,k)}^{-1}  \sum_{n=\nn+1}^{\nmax+1} \mathbb{P}_{[q,n]} (\WW\otimes\WW)_{\xi,q+1,\nn}(\Phi_{(i,k)}) \nabla\Phi_{(i,k)}^{-T} \bigg{)}\notag\\
    &= \div \sum_{\xi,i,j,k,\vecl} \delta_{q+1,\nn}\Gamma_{q+1}^{2j+4}\eta_{(i,j,k)}^2 \gamma_\xi^2\left(\frac{R_{q,\nn,j,i,k}}{\delta_{q+1,\nn}\Gamma_{q+1}^{2j+4}}\right)\nabla\Phi_{(i,k)}^{-1}\left(\xi\otimes\xi\right)\nabla\Phi_{(i,k)}^{-T} \notag\\
    &\quad + \sum_{\xi,i,j,k,\vecl} \nabla a_{(\xi)}^2\nabla\Phi_{(i,k)}^{-1} \sum_{n=\nn+1}^{\nmax+1} \mathbb{P}_{[q,n]} (\WW\otimes\WW)_{\xi,q+1,\nn}(\Phi_{(i,k)}) \nabla\Phi_{(i,k)}^{-T}\notag\\
    &\quad + \sum_{\xi,i,j,k,\vecl} a_{(\xi)}^2(\nabla\Phiik^{-1})_{\theta}^\alpha \sum_{n=\nn+1}^{\nmax+1} \mathbb{P}_{[q,n]} (\WW^\theta\WW^\gamma)_{\xi,q+1,\nn}(\Phiik) \partial_\alpha(\nabla\Phiik^{-1})_{\gamma}^\bullet \, .
\label{eq:euler:reynolds:gross:nn}
\end{align}
By \eqref{e:split} from Proposition~\ref{p:split}, identity \eqref{eq:rqnpj}, and \eqref{eq:checkerboard:partition}, we obtain that
\begin{align}
    & \sum_{i,j,k,\xi} \sum_{\vecl} \delta_{q+1,\nn}\Gamma_{q+1}^{2j+4}\eta_{i,j,k,q,\nn,\xi,\vecl}^2 \, \gamma_\xi^2\left(\frac{R_{q,\nn,j,i,k}}{\delta_{q+1,\nn}\Gamma_{q+1}^{2j+4}}\right)\nabla\Phi_{(i,k)}^{-1}\left(\xi\otimes\xi\right)\nabla\Phi_{(i,k)}^{-T} \notag\\
    &\qquad = \sum_{i,j,k,\xi} \delta_{q+1,\nn}\Gamma_{q+1}^{2j+4} \psi_{i,q}^2 \omega_{i,j,q,\nn}^2 \chi_{i,k,q}^2 \gamma_\xi^2\left(\frac{R_{q,\nn,j,i,k}}{\delta_{q+1,\nn}\Gamma_{q+1}^{2j+4}}\right)\nabla\Phi_{(i,k)}^{-1}\left(\xi\otimes\xi\right)\nabla\Phi_{(i,k)}^{-T} \notag\\
    &\qquad =  \sum_{i,j,k} \psi_{i,q}^2 \omega_{i,j,q,\nn}^2 \chi_{i,k,q}^2 \left( \delta_{q+1,\nn}\Gamma_{q+1}^{2j+4}\Id- \RR_{q,\nn}\right)\notag\\
    &\qquad = - \RR_{q,\nn} + \Id \biggl(\sum_{\; i,j,k} \psi_{i,q}^2 \omega_{i,j,q,\nn}^2 \chi_{i,k,q}^2 \delta_{q+1,\nn}\Gamma_{q+1}^{2j+4}\biggr) \,,\label{eq:cancellation:plus:pressure:nn}
\end{align}
where in the last equality we have appealed to the fact that $\eta^2_{i,j,k}$ forms a partition of unity, cf. \eqref{eq:eta:cut:partition:unity}. The second term on the right hand side of \eqref{eq:cancellation:plus:pressure:nn} is a pressure term.

Returning to the second and third lines in \eqref{eq:euler:reynolds:gross:nn}, we first note that when $\nn=0$, \eqref{eq:def:lambda:r:q+1:n} gives that $\lambda_{q+1}r_{q+1,0}=\lambda_{q+1}^{\sfrac 12}\lambda\qn^{\sfrac 12}\Gamma_{q+1}^{-2}=\lambda_{q+1}^{\sfrac 12} \lambda_q^{\sfrac 12}\Gamma_{q+1}$. Then from \eqref{def:LPqnp} and \eqref{eq:W:xi:q+1:nn:def}, for all $1\leq n \leq \nmax+1$, we deduce that $\mathbb{P}_{[q,n]}\left(\WW_{\xi,q+1,0}\otimes\WW_{\xi,q+1,0}\right)\neq 0$.  Conversely, when $1 \leq \nn \leq \nmax$, for all $n \geq \nn + 1$ such that $\lambda_{q,n} < \lambda_{q+1}^{\sfrac 12} \lambda_{q,\nn}^{\sfrac 12}\Gamma_{q+1}^{-2}$,
i.e. such that the maximal frequency of $\Proj_{[q,n]}$ is less than the minimal frequency of $\mathbb{P}_{\neq 0}\left(\WW_{\xi,q+1,\nn} \otimes \WW_{\xi,q+1,\nn}\right)$, we have that $\mathbb{P}_{[q,n]} \left(\WW_{\xi,q+1,\nn}\otimes\WW_{\xi,q+1,\nn}\right) = 0$. Using \eqref{eq:def:lambda:rq}, we write that
\begin{align}
& \qquad \underbrace{\lambda_q^{\frac 12 - \frac{n}{2(\nmax+1)}} \lambda_{q+1}^{\frac 12 + \frac{n}{2(\nmax+1)}}  }_{ = \lambda_{q,n}}
< \underbrace{\lambda_{q+1}^{\frac 12} \lambda_q^{\frac 14 - \frac{\nn}{4(\nmax+1)}} \lambda_{q+1}^{\frac 14 + \frac{\nn}{4(\nmax+1)}} \Gamma_{q+1}^{-2}}_{ = \lambda_{q+1}^{\sfrac 12} \lambda_{q,\nn}^{\sfrac 12} \Gamma_{q+1}^{-2} } \notag\\
\Leftrightarrow 
& \qquad \lambda_q^{\frac 14  + \frac{\nn - 2 n}{4(\nmax+1)} }
< 
\lambda_{q+1}^{\frac 14 + \frac{\nn-2n}{4(\nmax+1)}} \Gamma_{q+1}^{-2}
\notag\\
\Leftrightarrow &\qquad 2\varepsilon_\Gamma < \frac 14  + \frac{\nn - 2 n}{4(\nmax+1)} 
\notag\\
\Leftrightarrow & \qquad 8 \varepsilon_\Gamma (\nmax+1) < \nmax + 1 + \nn - 2 n
\notag\\
\Leftarrow & \qquad n \leq \frac{\nmax + \nn}{2} \, , \qquad \frac{1}{2} - 4\varepsilon_\Gamma (\nmax+1) > 0
\,. \label{eq:nmax:nn:mean}
\end{align}
The second inequality in the last line follows from \eqref{eq:eps:still:doing:it}. Based on \eqref{eq:nmax:nn:mean} and \eqref{eq:rofn}, we apply Proposition~\ref{prop:intermittent:inverse:div} in the parameter regimes $\nn=0,1\leq n \leq \nmax$ and $1\leq \nn \leq \nmax-1, r(\nn) = \frac{\nmax+\nn}{2} < n \leq \nmax$ to define 
\begin{align}
    &\HH_{q,n}^\nn := \divH \bigg{(} \sum_{\xi,i,j,k} \nabla a_{(\xi)}^2\nabla\Phi_{(i,k)}^{-1} \mathbb{P}_{[q,n]} (\WW_{\xi,q+1,\nn}\otimes\WW_{\xi,q+1,\nn})(\Phi_{(i,k)}) \nabla\Phi_{(i,k)}^{-T}\notag\\
    &\qquad \qquad \qquad + \sum_{\xi,i,j,k} a_{(\xi)}^2(\nabla\Phiik^{-1})_{\theta}^\alpha \mathbb{P}_{[q,n]} (\WW_{\xi,q+1,\nn}^\theta\WW_{\xi,q+1,\nn}^\gamma)(\Phiik) \partial_\alpha(\nabla\Phiik^{-1})_{\gamma}^\bullet \bigg{)} \, . \label{eq:Hqnpnn:definition}
\end{align}
The terms from \eqref{eq:euler:reynolds:gross:nn} with $\mathbb{P}_{[q,\nmax+1]}$ will be absorbed into $\RR_{q+1}^\nn$. We will show shortly that the terms $\HH_{q,n}^\nn$ in \eqref{eq:Hqnpnn:definition} are precisely the terms needed to make \eqref{e:expand:nn} match \eqref{e:inductive:n:2:eulerreynolds} at level $\nn$.

Recall from \eqref{RR:q+1:n-1:to:n} that $\RR_{q+1}^\nn$ will include $\RR_{q+1}^{\nn-1}$ in addition to error terms arising from the addition of $w_{q+1,\nn}$ which are small enough to be absorbed in $\RR_{q+1}$.  Then to check \eqref{e:inductive:n:2:eulerreynolds}, we return to \eqref{e:expand:nn} and use \eqref{nn:overlap:definition}, \eqref{e:split:nn:1}, \eqref{e:split:nn:2}, \eqref{eq:euler:reynolds:gross:nn}, \eqref{eq:cancellation:plus:pressure:nn}, \eqref{eq:nmax:nn:mean}, \eqref{eq:rofn}, and \eqref{eq:Hqnpnn:definition} to write 
\begin{align}
    &\partial_t v_{q,\nn} + \div \left( v_{q,\nn}\otimes v_{q,\nn} \right) + \nabla p_{q,\nn-1} \notag\\
    &= \div\RR_{q}^{\textnormal{comm}} + \div \biggl( \sum\limits_{\; n'=0}^{\nn-1} \sum\limits_{n> r(n')}^{\nmax} \HH_{q,n}^{n'} \biggr) + \div\left(\RR_{q+1}^{\nn-1}\right)\notag\\
    &\quad + \Dtq w_{q+1,\nn} + w_{q+1,\nn}\cdot \nabla \vlq + \div \left( \mathcal{O}_{\nn,1,2} + \mathcal{O}_{\nn,1,3} \right) + \div \left( \mathcal{O}_{\nn,1,1} + \RR_{q,\nn} \right) \notag \\
    &= \div\RR_{q}^{\textnormal{comm}} + \div \biggl( \sum\limits_{\; n'=0}^{\nn-1} \sum\limits_{n> r(n')}^{\nmax} \HH_{q,n}^{n'} \biggr) \notag\\
    &\quad + \div \bigg{(} \RR_{q+1}^{\nn-1} + (\divH + \divR) \left(\Dtq w_{q+1,\nn} +  w_{q+1,\nn}\cdot\nabla\vlq \right)  +  \mathcal{O}_{\nn,1,2} + \mathcal{O}_{\nn,1,3} \bigg{)} + \nabla \pi \notag\\
    &\quad +\div\bigg{[}  \left(\divH+\divR\right) \bigg{(} \sum_{\xi,i,j,k,\vecl} \nabla a_{(\xi)}^2\nabla\Phi_{(i,k)}^{-1}  \sum_{n> r(\nn)}^{\nmax+1}\mathbb{P}_{[q,n]} (\WW\otimes\WW)_{\xi,q+1,\nn}(\Phi_{(i,k)}) \nabla\Phi_{(i,k)}^{-T} \bigg{)}  \notag \\
    &\qquad\qquad + (\divH+ \divR) \bigg{(} \sum_{\xi,i,j,k,\vecl} a_{(\xi)}^2(\nabla\Phiik^{-1})_{\theta}^\alpha \sum_{n> r(\nn)}^{\nmax+1} \mathbb{P}_{[q,n]} 
    (\WW^\theta\WW^\gamma)_{\xi,q+1,\nn}(\Phiik) \partial_\alpha(\nabla\Phiik^{-1})_{\gamma}^\bullet \bigg{)} \bigg{]} \label{eq:id:nn:0} \\
    &= \div\RR_{q}^{\textnormal{comm}} + \div \biggl( \sum\limits_{\; n'=0}^{\nn} \sum\limits_{n> r(n')}^{\nmax} \HH_{q,n}^{n'} \biggr) + \div \RR_{q+1}^{\nn} + \nabla \pi \, , \label{eq:id:nn:10}
\end{align}
where
\begin{align}
\RR_{q+1}^{\nn} &= \RR_{q+1}^{\nn-1} + (\divH + \divR) \left(\Dtq w_{q+1,\nn} +  w_{q+1,\nn}\cdot\nabla\vlq \right)  + \mathcal{O}_{\nn, \rm corr} + \pi \Id \notag\\
&\quad +\divH \bigg{(} \sum_{\xi,i,j,k,\vecl} \nabla a_{(\xi)}^2\nabla\Phi_{(i,k)}^{-1}  \mathbb{P}_{[q,\nmax+1]} (\WW\otimes\WW)_{\xi,q+1,\nn}(\Phi_{(i,k)}) \nabla\Phi_{(i,k)}^{-T} \bigg{)}  \notag \\
&\quad + \divH \bigg{(} \sum_{\xi,i,j,k,\vecl} a_{(\xi)}^2(\nabla\Phiik^{-1})_{\theta}^\alpha \mathbb{P}_{[q,\nmax+1]} 
(\WW^\theta\WW^\gamma)_{\xi,q+1,\nn}(\Phiik) \partial_\alpha(\nabla\Phiik^{-1})_{\gamma}^\bullet \bigg{)} \notag\\
&\quad +\divR \bigg{(} \sum_{\xi,i,j,k,\vecl} \nabla a_{(\xi)}^2\nabla\Phi_{(i,k)}^{-1}  \sum_{n>r(\nn)}^{\nmax+1}\mathbb{P}_{[q,n]} (\WW\otimes\WW)_{\xi,q+1,\nn}(\Phi_{(i,k)}) \nabla\Phi_{(i,k)}^{-T} \bigg{)}  \notag \\
&\quad + \divR \bigg{(} \sum_{\xi,i,j,k,\vecl} a_{(\xi)}^2(\nabla\Phiik^{-1})_{\theta}^\alpha \sum_{n>r(\nn)}^{\nmax+1} \mathbb{P}_{[q,n]} 
(\WW^\theta\WW^\gamma)_{\xi,q+1,\nn}(\Phiik) \partial_\alpha(\nabla\Phiik^{-1})_{\gamma}^\bullet \bigg{)} \, . \label{eq:idiing:RRqnn}
\end{align}
We first emphasize that to obtain \eqref{eq:id:nn:10}, we have used that the Type 2 oscillation errors from \eqref{nn:overlap:definition} and \eqref{eq:something:is:zero} will be shown to vanish.  In addition, the symmetric stress $\mathcal{O}_{\nn, \rm corr}$ will be defined in  The equality \eqref{eq:id:nn:10} completes the proof of \eqref{e:inductive:n:2:eulerreynolds} at level $\nn$.

\subsection{Type 1 oscillation errors}\label{ss:stress:oscillation:1}

Recall from \eqref{eq:id:nn:10} that there are two main categories of Type 1 oscillation errors which arise from the addition of $w_{q+1,\nn}$: the higher order stresses $H_{q,n}^{\nn}$, which are defined and non-vanishing in \eqref{eq:Hqnpnn:definition} in the parameter regimes $\nn=0, 1\leq n \leq \nmax$ and $1\leq \nn < \nmax, r(\nn) < n \leq\nmax$, and the portions of $\RR_{q+1}^{\nn}$, which are defined in the last four lines of \eqref{eq:idiing:RRqnn}.  To estimate these error terms, we will first analyze a single term of the form
\begin{align}
 & \left(\divH + \divR \right) \Bigg{(} \sum_{\xi,i,j,k,\vecl} \nabla a_{(\xi)}^2\nabla\Phi_{(i,k)}^{-1}  \LPqn (\WW_{\xi,q+1,\nn}\otimes\WW_{\xi,q+1,\nn})(\Phi_{(i,k)}) \nabla\Phi_{(i,k)}^{-T}\notag\\
    &\qquad\qquad\qquad + \sum_{\xi,i,j,k,\vecl} a_{(\xi)}^2(\nabla\Phiik^{-1})_{\theta}^\alpha \LPqn (\WW_{\xi,q+1,\nn}^\theta\WW_{\xi,q+1,\nn}^\gamma)(\Phiik) \partial_\alpha(\nabla\Phiik^{-1})_{\gamma}^\bullet \Bigg{)} \notag\\
    &\qquad =: \mathcal{O}_{n,\nn} + \mathcal{O}_{n,\nn}^* \, , \label{eq:type:1:general}
\end{align}
where $\bullet$ refers to the unspecified components of a vector field, and superscripts on $\WW_{\xi,q+1,\nn}$ refer to components of vectors over which summation is performed. In the above display, we allow $0\leq \nn \leq \nmax$ and $r(\nn) < n \leq \nmax+1$, thus including both $H_{q,n}^{\nn}$ from \eqref{eq:Hqnpnn:definition} and all Type 1 error terms in \eqref{eq:idiing:RRqnn}. 

\begin{lemma}\label{lem:oscillation:general:estimate}
The terms $\mathcal{O}_{n,\nn}$ and $\mathcal{O}_{n,\nn}^*$ defined in \eqref{eq:type:1:general} satisfy the following estimates.
\begin{enumerate}[(1)]
\item\label{item:Onpnp:1}  For all error terms $\mathcal{O}_{n,\nn}^*$, which are the outputs of $\divR$, we have for all $N,M\leq 3\NindLarge$ that
\begin{equation}\label{eq:Onpnp:estimate:1}
\left\| D^N \Dtq^M \mathcal{O}_{n,\nn}^* \right\|_{L^\infty} \leq {\delta_{q+2}} \lambda_{q+1}^{N-1} \tau_q^{-M} \, .
\end{equation}
\item\label{item:Onpnp:2}  For $0 \leq \nn\leq \nmax$ and $n=\nmax+1$, the high frequency, local part of the Type 1 errors satisfies
\begin{subequations}
\begin{align}
\left\| D^N \Dtq^M \mathcal{O}_{\nmax+1,\nn} \right\|_{L^1\left(\supp\psi_{i,q}\right)} &\lesssim \Gamma_{q+1}^{\shaq-1}  \delta_{q+2} \lambda_{q+1}^N \MM{M, \Nindt, \tau_q^{-1}\Gamma_{q+1}^{i-\cstarnn+4}, \Gamma_{q+1}^{-1}\tilde\tau_q^{-1}}\label{eq:Onpnp:estimate:2} \\
\left\| D^N \Dtq^M \mathcal{O}_{\nmax+1,\nn} \right\|_{L^\infty\left(\supp\psi_{i,q}\right)} 
 & \lesssim \Gamma_{q+1}^{\badshaq-1} \lambda_{q+1}^N \MM{M, \Nindt, \tau_q^{-1}\Gamma_{q+1}^{i-\cstarnn+4}, \Gamma_{q+1}^{-1}\tilde\tau_q^{-1}}\label{eq:Onpnp:estimate:2:new}
\end{align}
\end{subequations}
for all $N,M\leq 3\NindLarge$.
\item\label{item:Onpnp:3} For $0\leq \nn < \nmax$ and $ r(\nn) < n \leq\nmax$, the medium frequency, local part of the Type 1 errors satisfies
\begin{subequations}
\begin{align}
\left\| D^N \Dtq^M \mathcal{O}_{n,\nn} \right\|_{L^1(\supp \psi_{i,q})} &\lesssim \delta_{q+1,n} \lambda\qn^N \MM{M, \Nindt, \tau_q^{-1}\Gamma_{q+1}^{i-\cstarnn+4}, \Gamma_{q+1}^{-1}\tilde\tau_q^{-1}}    \label{eq:Onpnp:estimate:3} \\
\left\| D^N \Dtq^M \mathcal{O}_{n,\nn} \right\|_{L^\infty(\supp \psi_{i,q} )}
&\lesssim \Gamma_q^{\badshaq}\Gamma_{q+1}^{14\Upsilon(n)} \lambda\qn^N \MM{M, \Nindt, \tau_q^{-1}\Gamma_{q+1}^{i-\cstarnn+4}, \Gamma_{q+1}^{-1}\tilde\tau_q^{-1}} \label{eq:Onpnp:estimate:3:new}
\end{align}
\end{subequations}
for all $N+M \leq \Nfn$.
\end{enumerate}
\end{lemma}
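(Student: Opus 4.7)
The plan is to apply the intermittent inverse divergence operator of Proposition~\ref{prop:intermittent:inverse:div} to the argument of $(\divH+\divR)$ in \eqref{eq:type:1:general}, combining sharp $L^\infty$ bounds on the projected squared pipe densities from Lemma~\ref{lem:tricky:tricky} with the coefficient bounds of Lemma~\ref{lem:a_master_est_p}, the deformation bounds of Corollary~\ref{cor:deformation}, and the Lebesgue support estimate of Lemma~\ref{lemma:cumulative:cutoff:Lp}. Part~\eqref{item:Onpnp:1} is essentially free: by construction $\divR$ is the arbitrarily-regular remainder in the iterative inverse divergence construction, and we feed enough derivatives into it that the output is bounded by $\delta_{q+2}\lambda_{q+1}^{-1}$ uniformly in space-time, at the cost of demanding that $\Ndec$ and $\dpot$ be large. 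So the heart of the argument is in \eqref{item:Onpnp:2} and \eqref{item:Onpnp:3}.

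For \eqref{item:Onpnp:3}, fix $r(\nn)<n\leq\nmax$ and consider a single summand indexed by $(\xi,i,j,k,\vec l)$. Using that $\xi\cdot\nabla \WW_{\xi,q+1,\nn}=0$, the divergence in \eqref{eq:type:1:general} applied to the transported tensor is equivalent, up to commutators with $\nabla\Phi_{(i,k)}^{-1}$ (which by \eqref{eq:Lagrangian:Jacobian:3} are close to the identity), to the directional derivative $\xi^\ell A_\ell^j\partial_j$ acting on $a_{(\xi)}^2$ against $\LPqn(\WW\otimes\WW)\circ\Phi_{(i,k)}$. This is the crucial exploitation of anisotropy: the costly direction for $a_{(\xi)}$ is $\lambda\qnn$ rather than $\lambda_{q+1}r_{q+1,\nn}\Gamma_{q+1}^{-1}$, cf.\ Lemma~\ref{lem:a_master_est_p} with $N'=1$. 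After applying $\divH$, which gains $\lambda\qn^{-1}$ from the $\LPqn$ projection, the $L^\infty$ contribution of $\LPqn\bigl(\WW\otimes\WW-\Xint-\bigr)\circ\Phi_{(i,k)}$ is controlled by Lemma~\ref{lem:tricky:tricky} by $(\lambda\qn/(\lambda_{q+1}r_{q+1,\nn}))^2$, while the amplitude factor is $\lesssim\delta_{q+1,\nn}\Gamma_{q+1}^{2j+4}\lambda\qnn$. Multiplying these three pieces and invoking the parameter identity $\lambda_{q+1}r_{q+1,\nn}\approx\lambda_{q+1}^{1/2}\lambda\qnn^{1/2}\Gamma_{q+1}^{-2}$ from \eqref{eq:def:lambda:r:q+1:n} telescopes into the target size $\delta_{q+1,n}$ via the defining formula \eqref{eq:new:delta:def}, yielding the pointwise $L^\infty$ bound \eqref{eq:Onpnp:estimate:3:new} after one uses $\Gamma_{q+1}^{2j}\lesssim\Gamma_q^\badshaq\delta_{q+1,\nn}^{-1}\Gamma_{q+1}^{14\Upsilon(\nn)+3}$ from \eqref{eq:new:jmax:bound} and the fact that $\Upsilon(n)\geq\Upsilon(\nn)+1$ whenever $n>r(\nn)$, cf.\ \eqref{eq:upsa:ineq}. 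To pass from $L^\infty$ to $L^1$ and sum in $(i,j,k,\vec l,\xi)$, one writes the $L^1$ norm of the full sum as a square-sum against $\psi_{i,q}$ and uses Lemma~\ref{lemma:cumulative:cutoff:Lp} with $(r_1,r_2)=(\infty,1)$, which contributes $\Gamma_{q+1}^{-2j+2}$; together with the extra $\Gamma_{q+1}^{-4}$ afforded by the gap between $\Gamma_{q+1}^{2j+4}$ in the amplitude and $\Gamma_{q+1}^{2j}$ in the size, this produces the $L^1$ bound \eqref{eq:Onpnp:estimate:3}. Spatial and material derivatives distribute across $\nabla a^2$, $\nabla\Phi^{-1}$, $\LPqn(\WW\otimes\WW)\circ\Phi$, and $A$ by Leibniz, and are estimated using Corollary~\ref{cor:deformation} (space and material derivatives of $\Phi_{(i,k)}^{\pm 1}$), Lemma~\ref{lem:a_master_est_p} (derivatives of $a_{(\xi)}$), Lemma~\ref{lem:tricky:tricky} with $\lambda_1=\lambda\qn,\lambda_2=\lambda_{q,n+1}$ (spatial derivatives of the projected pipe density), and the fact that material derivatives commute with composition by $\Phi_{(i,k)}$; the time-cutoff $\chi_{i,k,q}$ supplies the $\MM{\cdot,\Nindt,\cdot,\cdot}$ profile in $M$ via \eqref{eq:chi:cut:dt}. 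All these bounds combine cleanly because $\lambda\qn<\min(\lambda_{q,n+1},\lambda_{q+1})$, so the minima in Lemma~\ref{lem:tricky:tricky} are realized by the upper endpoint of $\LPqn$.

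Part~\eqref{item:Onpnp:2} is handled by the same mechanism with $\LPqn$ replaced by $\mathbb{P}_{\geq\lambda_{q,\nmax}}$. Now the gain from $\divH$ is $\lambda_{q,\nmax}^{-1}\approx\lambda_{q+1}^{-1}\Gamma_{q+1}^{-1}$ up to $\Gamma$ factors (since $\nmax$ is chosen in \eqref{eq:nmax:cond:all} so that $\lambda_{q,\nmax}\approx\lambda_{q+1}\Gamma_{q+1}^{-O(1)}$); the $L^\infty$ bound from Lemma~\ref{lem:tricky:tricky} saturates at the maximal value $r_{q+1,\nn}^{-2}$; and the product, after again invoking \eqref{eq:def:lambda:r:q+1:n} and \eqref{eq:new:delta:def}, is bounded by $\delta_{q+2}\Gamma_{q+1}^{\shaq-1}$ in $L^1$ and $\Gamma_{q+1}^{\badshaq-1}$ in $L^\infty$, which are precisely the sizes required to be absorbed into $\RR_{q+1}^\nn$. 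The material derivative counts are inherited in the same way. The hard part of the argument is thus the bookkeeping: one must verify that the cost of the anisotropic amplitude $a_{(\xi)}$ in the $\xi$-direction (Lemma~\ref{lem:a_master_est_p} with $N'=1$) combined with the frequency-localization gain from Lemma~\ref{lem:tricky:tricky} balances the amplitude size so as to reproduce exactly $\delta_{q+1,n}$ for $r(\nn)<n\leq\nmax$, and exactly $\delta_{q+2}\Gamma_{q+1}^{\shaq-1}$ for $n=\nmax+1$, with enough room to sum over the finitely many indices $\xi,i,j,k,\vec l$ using the support measure from Lemma~\ref{lemma:cumulative:cutoff:Lp}. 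The material-derivative profile $\MM{M,\Nindt,\tau_q^{-1}\Gamma_{q+1}^{i-\cstarnn+4},\Gamma_{q+1}^{-1}\tilde\tau_q^{-1}}$ appearing on the right in \eqref{eq:Onpnp:estimate:2}--\eqref{eq:Onpnp:estimate:3:new} is then the worst of the profiles on the three factors and on $\nabla\Phi_{(i,k)}^{-1}$, accounting for one additional $\Gamma_{q+1}$ coming from the Leibniz counting.
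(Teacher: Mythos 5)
You have identified the right tools and the right mechanism: apply Proposition~\ref{prop:intermittent:inverse:div} to $G\,\varrho\circ\Phi$, where $G$ is the slow tensor of \eqref{eq:f:zeta:1} carrying the anisotropic directional derivative $\xi^\ell A_\ell^j\partial_j$ on $a_{(\xi)}^2$, feed in the $L^\infty$ estimate of Lemma~\ref{lem:tricky:tricky} for the Littlewood--Paley-projected squared pipe density, sum using Lemma~\ref{lemma:cumulative:cutoff:Lp}, and telescope into $\delta_{q+1,n}$ (resp.\ $\Gamma_{q+1}^{\shaq-1}\delta_{q+2}$) via \eqref{eq:new:delta:def} and $\Upsilon(n)\geq\Upsilon(\nn)+1$. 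This is precisely the paper's route. A few bookkeeping slips to flag: the gain from $\divH$ in \eqref{eq:inverse:div:stress:1} is $\zeta^{-1}$ with $\zeta$ the \emph{minimum} frequency of $\LPqn(\varrho^2)$, i.e.\ $\max\{\lambda_{q+1}r_{q+1,\nn},\lambda_{q,n-1}\}\geq \lambda_{q,n-1}$, not $\lambda\qn$ as you write; this is exactly the factor in the denominator of \eqref{eq:new:delta:def}, and the ratio $\lambda\qn/\lambda_{q,n-1}=\Theta_{q+1}^{\sfrac{1}{2(\nmax+1)}}$ is the per-step loss raised to the power $\Upsilon(n)$, so the off-by-one index matters for the precise accounting. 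Relatedly, when invoking Lemma~\ref{lem:tricky:tricky} for the derivative cost you want $(\lambda_1,\lambda_2)=(\lambda_{q,n-1},\lambda\qn)$, not $(\lambda\qn,\lambda_{q,n+1})$. You also elide the case split used in the paper between $n=1$ (which forces $\nn=0$ and requires $\zeta=\lambda_q^{\sfrac 12}\lambda_{q+1}^{\sfrac 12}\Gamma_{q+1}$) and $n\geq 2$, and the endpoint $\nn=\nmax$, where $\lambda_{q+1}r_{q+1,\nmax}>\lambda_{q,\nmax}$ forces $\zeta=\mu$ in part~\eqref{item:Onpnp:2}. Finally, for part~\eqref{item:Onpnp:1} you should note that the $\lambda_{q+1}^{-10}$ in \eqref{eq:inverse:div:error:stress:bound} is needed to absorb the $\lambda_{q+1}^{3}$ cardinality of the $\vec l$ sum. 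None of these derail the argument, but they need correcting for the estimates to close as stated.
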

\begin{remark}\label{rem:oscillation:general}
In order to verify \eqref{e:inductive:n:2:Hstress} for $n'=\nn$ and $r(\nn)<n\leq \nmax$, we first note that $\mathcal{O}_{n,\nn}=\HH_{q,n}^{\nn}$, and the inequality $  \Gamma_{q+1}^{i-\cstarnn+4}\leq \Gamma_{q+1}^{i-\cstarn} \, ,  $ holds from $\nn \leq n-1$ and \eqref{def:cstarn:formula}.  Then \eqref{eq:Onpnp:estimate:3} provides the desired bound. \eqref{e:inductive:n:1:Hstress:unif} follows similarly from \eqref{eq:Onpnp:estimate:3:new}. The bound in \eqref{e:inductive:n:2:Rstress} follows from \eqref{eq:Onpnp:estimate:1} and \eqref{eq:Onpnp:estimate:2}, since $\cstarnn\geq 4$ from \eqref{eq:cstar:DEF} and \eqref{def:cstarn:formula}.  The bound in \eqref{e:inductive:n:1:Rstress:unif} follows from \eqref{eq:Onpnp:estimate:1} and \eqref{eq:Onpnp:estimate:2:new}. Lastly, when $\nn=\nmax$, and hence $n=\nmax+1$, \eqref{eq:Onpnp:estimate:1}, \eqref{eq:Onpnp:estimate:2}, and \eqref{eq:Onpnp:estimate:2:new} match \eqref{e:main:inductive:q:stress} and \eqref{e:main:inductive:q:stress:unif}.
\end{remark}

\begin{proof}[Proof of Lemma~\ref{lem:oscillation:general:estimate}]
We use \eqref{item:pipe:1} from Proposition~\ref{prop:pipeconstruction} and the notation $A=(\nabla\Phi)^{-1}$ to rewrite \eqref{eq:type:1:general} as
\begin{align}
&\left(\divH + \divR \right) \biggl( \sum_{\; \xi,i,j,k, \vecl} \LPqn \left( \left(\varrho_{\xi,\lambda_{q+1},r_{q+1,\nn}}\right)^2 \right)(\Phi_{(i,k)}) \xi^\theta\xi^\gamma \left(A_{(i,k)}\right)_{\theta}^\alpha \left(  \partial_\alpha a_{(\xi)}^2 \left(A_{(i,k)}\right)_{\gamma}^\bullet + a_{(\xi)}^2 \partial_\alpha\left( A_{(i,k)} \right)_{\gamma}^\bullet \right) \biggr) \, . \notag
\end{align}
Next, we must identify the functions and the values of the parameters which will be used in the application of Proposition~\ref{prop:intermittent:inverse:div}.  We first address the bounds required in \eqref{eq:inverse:div:DN:G}, \eqref{eq:DDpsi}, and \eqref{eq:DDv}, which we can treat simultaneously for items \eqref{item:Onpnp:1}, \eqref{item:Onpnp:2}, and \eqref{item:Onpnp:3}.  Afterwards, we split the proof into two parts. First, we set $n=\nmax+1$ and prove \eqref{eq:Onpnp:estimate:1}, \eqref{eq:Onpnp:estimate:2}, and \eqref{eq:Onpnp:estimate:2:new} for any value of $\nn$.  Next, we consider $0\leq \nn <\nmax$ and $r(\nn)<n\leq\nmax$ and prove \eqref{eq:Onpnp:estimate:1} in the remaining cases, as we simultaneously prove \eqref{eq:Onpnp:estimate:3} and \eqref{eq:Onpnp:estimate:3:new}.

Returning to \eqref{eq:inverse:div:DN:G}, we will verify that this inequality holds with  $v=\vlq$, $D_t=\Dtq=\partial_t + \vlq \cdot \nabla$, and $\displaystyle N_* = M_* = \lfloor \sfrac{\Nsharp}{2} \rfloor$, where $\Nsharp=\Nfnn-\NcutSmall-\NcutLarge-5$.  In order to verify the assumption $N_*- \dpot \geq 2\Ndec + 4$, we use that $\Ndec$ and $\dpot$ satisfy \eqref{eq:lambdaqn:identity:3}. We fix values of $(i,j,k,\nn,\xi,\vec{l})$ and set
\begin{align}
    G^\bullet = \xi^\theta \xi^\gamma \left(A_{(i,k)}\right)_\theta^{\alpha} \left( \partial_\alpha a_{(\xi)}^2  \left(A_{(i,k)}\right)_{\gamma}^\bullet + a_{(\xi)}^2  \partial_\alpha\left(A_{(i,k)}\right)_{\gamma}^\bullet \right) 
    \, . 
    \label{eq:f:zeta:1}
\end{align}
Note crucially that the differential operator falling on $a_{\xi,i,j,k,q,\nn,\vecl}^2$ in the first term is precisely $\xi^\theta \left(A_{(i,k)}\right)_\theta^{\alpha} \partial_\alpha$, which from \eqref{eq:checkerboard:derivatives} and \eqref{e:a_master_est_p} will obey a good bound. We now establish \eqref{eq:inverse:div:DN:G}--\eqref{eq:DDv} with the parameter choices
\begin{align}
\const_{G,1} &= |\supp (\eta_{i,j,k,q,\nn,\xi,\vec{l}})| \delta_{q+1,\nn}\lambda\qnn \Gamma_{q+1}^{2j+5} \, , \qquad \qquad 
 \const_{G,\infty} = \Gamma_q^{\badshaq}\Gamma_{q+1}^{14\Upsilon(\nn)+8}\lambda\qnn
\, , 
\label{eq:crazy:const:G}
\end{align}
$\lambda=\lambda_{q+1}r_{q+1,\nn}\Gamma_{q+1}^{-1}$, $M_t=\Nindt$, $\nu=\tau_q^{-1}\Gamma_{q+1}^{i-\cstarnn+4}$, $\tilde\nu=\tilde\tau_q^{-1}\Gamma_{q+1}^{-1}$, and $\lambda' = \tilde\lambda_q$.

To establish an $L^1$ bound for the first term from \eqref{eq:f:zeta:1}, we appeal to Lemma~\ref{lem:a_master_est_p}, estimate \eqref{e:a_master_est_p} with $N'=1$, and \eqref{eq:Lagrangian:Jacobian:6} to deduce that
\begin{align}
&\Biggl\| D^N \Dtq^M \left( \xi^\theta \left(A_{(i,k)}\right)_{\theta}^{\alpha}  \partial_\alpha a_{(\xi)}^2 \left(A_{(i,k)}\right)_{\gamma}^\bullet \xi^\gamma \right) \Biggr\|_{L^1} \notag\\
&\qquad \lessg | \supp(\eta_{i,j,k,q,\nn,\xi,\vec{l}}) | \delta_{q+1,\nn}\lambda\qnn \Gamma_{q+1}^{2j+5} \left(\Gamma_{q+1}^{-1}\lambda_{q+1}r_{q+1,\nn}\right)^{N } \MM{M,\Nindt,\tau_q^{-1}\Gamma_{q+1}^{i-\cstarnn+4},\tilde\tau_q^{-1}\Gamma_{q+1}^{-1}}
\label{eq:G:estimate:1}
\end{align}
holds for all $N,M\leq \lfloor \sfrac{1}{2}\left( \Nfnn-\NcutSmall-\NcutLarge-5 \right) \rfloor$. It is precisely at this point that we have used that the differential operator $\xi^\theta (A_{(i,k)})_\theta^\alpha\partial_\alpha$ costs only $\lambda\qnn\Gamma_{q+1}$. For the $L^\infty$ bound on the same term, we argue similarly except we apply estimate~\eqref{e:a_master_est_p_uniform} to obtain
\begin{align}
&\Biggl\| D^N \Dtq^M \left( \xi^\theta \left(A_{(i,k)}\right)_{\theta}^{\alpha}  \partial_\alpha a_{(\xi)}^2 \left(A_{(i,k)}\right)_{\gamma}^\bullet \xi^\gamma \right) \Biggr\|_{L^\infty} \notag\\
&\qquad \lessg \Gamma_q^{\badshaq}\Gamma_{q+1}^{14\Upsilon(\nn)+8}\lambda\qnn \left(\Gamma_{q+1}^{-1}\lambda_{q+1}r_{q+1,\nn}\right)^{N} \MM{M,\Nindt,\tau_q^{-1}\Gamma_{q+1}^{i-\cstarnn+4},\tilde\tau_q^{-1}\Gamma_{q+1}^{-1}} \, .
\label{eq:G:estimate:1:1}
\end{align}
For the second term from \eqref{eq:f:zeta:1}, we can appeal to \eqref{eq:Lagrangian:Jacobian:6} and use that $\tilde\lambda_q\leq\lambda\qnn$ for all $\nn$ to deduce that for $N,M\leq \lfloor \sfrac{1}{2}\left( \Nfnn-\NcutSmall-\NcutLarge-5 \right) \rfloor $, we have
\begin{align*}
&\left\|  D^N \Dtq^M  \partial_\alpha \left( A_{(i,k)} \right)_{\gamma}^\bullet \right\|_{L^\infty(\supp\psi_{i,q}\tilde\chi_{i,k,q})}  \lesssim \lambda\qnn^{N+1} \MM{M,\Nindt,\tau_q^{-1}\Gamma_{q+1}^{i-\cstar+1},\tilde\tau_q^{-1}\Gamma_{q+1}^{-1}}.
\end{align*}
Combining this with Lemma~\ref{lem:a_master_est_p}, estimate \eqref{e:a_master_est_p} in the case $p=1$ and \eqref{e:a_master_est_p_uniform} in the case $p=\infty$ produces identical bounds as for the first term and in the range
$N,M\leq \lfloor \sfrac{1}{2}\left( \Nfnn-\NcutSmall-\NcutLarge-5 \right) \rfloor $. Adding both estimates together shows that \eqref{eq:inverse:div:DN:G} has been satisfied for both $p=1,\infty$.

We set the flow in Proposition~\ref{prop:intermittent:inverse:div} as  $\Phi=\Phi_{i,k}$, which by definition satisfies $\Dtq \Phi_{i,k}=0$. Appealing to \eqref{eq:Lagrangian:Jacobian:2} and \eqref{eq:Lagrangian:Jacobian:7}, we have that \eqref{eq:DDpsi} is satisfied. From \eqref{eq:nasty:D:vq:old} at level $q$, which follows from Proposition~\ref{prop:no:proofs}, the choice of $\nu$ from earlier, and \eqref{eq:Lambda:q:x:1:NEW}, we have that $Dv = D \vlq$ satisfies the bound \eqref{eq:DDv}.

{\bf Proof of items \eqref{item:Onpnp:1} and \eqref{item:Onpnp:2} for $0\leq\nn\leq\nmax$ and $n=\nmax+1$.\,}
We first assume that $\nn<\nmax$. With the goal of verifying~\eqref{item:inverse:i}--\eqref{item:inverse:iii} of Proposition~\ref{prop:intermittent:inverse:div}, we choose $\zeta,\mu,\Lambda,\rho$ and $\varrho$ as 
\begin{align}
\zeta =\lambda_{q,\nmax}\,,
\qquad \mu&=\lambda_{q+1}r_{q+1,\nn}\,,
\qquad \Lambda=\lambda_{q+1}\,, \notag \\
\varrho = \LPqnmax \left(\left(\varrho_{\xi,\lambda_{q+1},r_{q+1,\nn}}\right)^2 \right) \, , &\qquad 
\vartheta = \lambda_{q,\nmax}^{2\dpot}\Delta^{-\dpot} \LPqnmax\left( \varrho^2_{\xi,\lambda_{q+1},r_{q+1,\nn}}\right) \, , \label{eq:case:1:1:1}
\end{align}
where we recall that $\varrho_{\xi,\lambda_{q+1},r_{q+1,\nn}}$ is defined in~Propositions~\ref{prop:pipe:shifted} and~\ref{prop:pipeconstruction}. We then have by definition that \eqref{item:inverse:i} from Proposition~\ref{prop:intermittent:inverse:div} is satisfied. By property~\eqref{item:point:1} of Proposition~\ref{prop:pipe:shifted}, we have that the functions $\varrho$ and $\vartheta$ defined in \eqref{eq:case:1:1:1} are both periodic to scale $\left(\lambda_{q+1}r_{q+1,\nn}\right)^{-1}$, and so \eqref{item:inverse:ii} is satisfied. In the case $p=1$, the estimates in \eqref{eq:DN:Mikado:density} follow with $\const_{*,1}=1$ from standard Littlewood-Paley arguments (see also the discussion in part (b) of \cite[Remark~A.21]{BMNV21}) and item \eqref{item:pipe:5} from Proposition~\ref{prop:pipeconstruction}. In the case $p=\infty$, the estimates follow from Lemma~\ref{lem:tricky:tricky}, \eqref{eq:tricky:bounds:2} with the choices $\const_{*,\infty}=r_{q+1,\nn}^{-2}$, $\lambda_1=\lambda_{q,\nmax},\lambda_2=\infty,\lambda=\lambda_{q+1},r=r_{q+1,\nn}$. We recall from \eqref{eq:alpha:equation:1} the choice of $\alpha=\varepsilon_\Gamma \frac{b-1}{b}$, so that the loss $\lambda_{q+1}^{\alpha}$ gives exactly a loss of $\Gamma_{q+1}$. From \eqref{eq:tilde:lambda:q:def}, \eqref{eq:rqn:perp:definition}, and the temporary assumption that $\nn < \nmax$, we have that
$$  \tilde\lambda_q \ll \lambda_{q+1}r_{q+1,\nn}\Gamma_{q+1}^{-1} \ll \lambda_{q+1}r_{q+1,\nn} \leq \lambda_{q,\nmax} \leq \lambda_{q+1}, $$
and so \eqref{eq:inverse:div:parameters:0} is satisfied. From \eqref{eq:lambdaqn:identity:2} we have that
$$  \lambda_{q+1}^4 \leq \left( \frac{\lambda_{q+1}r_{q+1,\nn}}{2\pi \sqrt{3} \Gamma_{q+1}^{-1}\lambda_{q+1}r_{q+1,\nn}} \right)^\Ndec = \left( \frac{\Gamma_{q+1}}{2\pi\sqrt{3}} \right)^\Ndec \, , $$
and so \eqref{eq:inverse:div:parameters:1} is satisfied. Applying the estimate \eqref{eq:inverse:div:stress:1} for $p=1$ with $\alpha$ as in  \eqref{eq:alpha:equation:1}, recalling the value for $\const_{G,1}$ in \eqref{eq:crazy:const:G}, summing over $i$ and using \eqref{eq:inductive:partition} at level $q$, summing over $j,k,\xi$, summing over $\vec{l}$ and using \eqref{item:lebesgue:1} with $r_1=\infty$ and $r_2=2$, and appealing to \eqref{eq:crazy:const:G:ineq} and \eqref{eq:hopeless:mess:new}, we obtain that for $N,M\leq \lfloor \sfrac{1}{2}\left(\Nfnn-\NcutSmall-\NcutLarge-5\right) \rfloor - \dpot$,
\begin{align}
\left\| D^N \Dtq^M  \mathcal{O}_{n,\nn} \right\|_{L^1\left(\supp\psi_{i,q}\right)}  &\lesssim  \lambda_{q+1}^{N} \delta_{q+1,\nn} \lambda\qnn \Gamma_{q+1}^{8} \lambda_{q,\nmax}^{-1} \MM{M,\Nindt,\tau_q^{-1}\Gamma_{q+1}^{i-\cstarnn+4},\tilde\tau_q^{-1}\Gamma_{q+1}^{-1}}\notag\\
&\lessg \Gamma_{q+1}^{\shaq-1} \delta_{q+2} \lambda_{q+1}^N \MM{M,\NindSmall,\tau_q^{-1}\Gamma_{q+1}^{i-\cstarnn+4}, \tilde\tau_q^{-1}\Gamma_{q+1}^{-1}} \,.
 \label{eq:H:estimate:1}
\end{align}
Applying the same steps but in the case $p=\infty$ and using the parameter inequality \eqref{eq:clickity:clackity} yields the bound
\begin{align}
\left\| D^N \Dtq^M  \mathcal{O}_{n,\nn} \right\|_{L^\infty\left(\supp\psi_{i,q}\right)} &\lesssim \Gamma_q^{\badshaq}\Gamma_{q+1}^{14\Upsilon(\nn)+9}\lambda\qnn 
r_{q+1,\nn}^{-2}  \lambda_{q,\nmax}^{-1}
\lambda_{q+1}^{N}   \MM{M,\Nindt,\tau_q^{-1}\Gamma_{q+1}^{i-\cstarnn+4},\tilde\tau_q^{-1}\Gamma_{q+1}^{-1}}\notag\\
& \lessg \Gamma_{q+1}^{\badshaq-2} \lambda_{q+1}^N \MM{M,\NindSmall,\tau_q^{-1}\Gamma_{q+1}^{i-\cstarnn+4}, \tilde\tau_q^{-1}\Gamma_{q+1}^{-1}} \,.
 \label{eq:H:estimate:1:1}
\end{align}
in the same range of $N,M$. The proof is complete after using \eqref{eq:nfnn:mess}, which gives that the range of derivatives allowed in \eqref{eq:H:estimate:1} and \eqref{eq:H:estimate:1:1} is as much as is needed in \eqref{eq:Onpnp:estimate:2}.

Following the parameter choices in \cite[Remark~A.19]{BMNV21}, we set $N_\circ=M_\circ=3\NindLarge$, and $\Nsharp = \Nfnn-\NcutSmall-\NcutLarge-5$.  From \eqref{eq:nfnn:mess:2}, we have that the condition $N_\circ \leq \sfrac{\Nsharp}{4}$ is satisfied. 
The inequalities \eqref{eq:inverse:div:v:global} and \eqref{eq:inverse:div:v:global:parameters} follow from the discussion in \cite[Remark~A.19]{BMNV21}.  The inequality in \eqref{eq:riots:4} follows from the choices $\lambda = \lambda_{q+1}r_{q+1,\nn}\Gamma_{q+1}^{-1}$, $\zeta=\lambda_{q,\nmax}  \geq \lambda_{q+1}r_{q+1,\nn}\Gamma_{q+1}^{-1}$, \eqref{eq:Lambda:q:t:1}, and \eqref{eq:CF:new}.  Having satisfied these assumptions, we may now appeal to estimate \eqref{eq:inverse:div:error:stress:bound} for $p=\infty$ and sum over all parameters $(i,j,k,\xi,\vec{l})$. Since $\vec{l}$ takes at most $\lambda_{q+1}^3$ values, $i$, and $j$ are bounded independently of $q$, and $k$ corresponds to a partition of unity in time, we obtain \eqref{eq:Onpnp:estimate:1} for the case $\nn<\nmax$ and $n=\nmax+1$.

Recall that we began this case with the temporary assumption that $\nn<\nmax$.  In the case $\nn=\nmax$, we have from \eqref{eq:rqn:perp:definition} that $\lambda_{q+1}r_{q+1,\nmax}>\lambda_{q,\nmax}$. Then we can set $\zeta=\mu=\lambda_{q+1}r_{q+1,\nmax}$ and substitute $\mathbb{P}_{\geq \lambda_{q+1}r_{q+1,\nn}}$ for $\mathbb{P}_{[q,\nmax]}$.  The only change is that \eqref{eq:H:estimate:1} and \eqref{eq:H:estimate:1:1} become stronger, since $\lambda_{q,\nmax} < \lambda_{q+1}r_{q+1,\nmax}$, and so the desired estimates follow by arguing as before.  We omit further details.  

{\bf Proof of item~\eqref{item:Onpnp:3} and of item \eqref{item:Onpnp:1} when $0\leq \nn< \nmax$ and $r(\nn)<n\leq\nmax$.\,}
We set 
\begin{align}
\zeta &= 
\begin{dcases} \max\left\{\lambda_{q+1}r_{q+1,\nn},\lambda_{q,n-1}\right\} & \mbox{if} \quad 2 \leq n \leq \nmax \\
\lambda_{q+1}^{\sfrac 12}\lambda_q^{\sfrac 12}\Gamma_{q+1} &\mbox{if} \quad n=1 \, ,
\end{dcases}
\qquad  \mu=\lambda_{q+1}r_{q+1,\nn}\,, \qquad \Lambda=\lambda\qn \, , \label{eq:case:1:1:redux}
\end{align}
and
\begin{align}
\varrho &= \LPqn \left(\left(\varrho_{\xi,\lambda_{q+1},r_{q+1,\nn}}\right)^2 \right) \, , \qquad
\vartheta = \zeta^{2\dpot}\Delta^{-\dpot} \LPqn \left( \varrho^2_{\xi,\lambda_{q+1},r_{q+1,\nn}}\right)
\, . \notag
\end{align}
We then have by definition that \eqref{item:inverse:i} from Proposition~\ref{prop:intermittent:inverse:div} is satisfied.  By property~\eqref{item:point:1} of Proposition~\ref{prop:pipe:shifted}, $\varrho$ and $\vartheta$ are both periodic to scale $\left(\lambda_{q+1}r_{q+1,\nn}\right)^{-1}$, and so \eqref{item:inverse:ii} is satisfied.  The estimates in \eqref{eq:DN:Mikado:density} follow with $\const_{*,1}=1$ in the case $p=1$ as before. In the case $p=\infty$, we appeal to Lemma~\ref{lem:tricky:tricky}, \eqref{eq:tricky:bounds:2} with $\lambda_1=\zeta$, $\lambda_2=\lambda\qn=\Lambda<\lambda_{q+1}$, and $r=r_{q+1,\nn}$ to deduce that \eqref{eq:DN:Mikado:density} holds with $\const_{*,\infty}=\left(\frac{\lambda\qn}{\lambda_{q+1}r_{q+1,\nn}}\right)^2$. We again set $\alpha$ as in \eqref{eq:alpha:equation:1}. From \eqref{eq:rqn:perp:definition} and the condition that $r(\nn)<n$, we have that if $n \neq 1$, then
$$  
\tilde\lambda_q \leq \lambda_{q+1}r_{q+1,\nn}\Gamma_{q+1}^{-1} \ll \lambda_{q+1}r_{q+1,\nn} \leq \max\left\{\lambda_{q+1}r_{q+1,\nn},\lambda_{q,n-1}\right\} \leq \lambda_{q,n} \, ,
$$
and so \eqref{eq:inverse:div:parameters:0} is satisfied if $n\neq 1$. If $n=1$, then it must be the case that $\nn=0$, and so
$$  \tilde\lambda_q \leq \lambda_{q+1}r_{q+1,0}\Gamma_{q+1}^{-1} \ll \lambda_{q+1}r_{q+1,0} \leq \lambda_{q,1} \, . $$
From \eqref{eq:lambdaqn:identity:2}, the inequality $\lambda\qn\leq\lambda_{q+1}$, and the choices of $\mu$ and $\lambda$, we have that \eqref{eq:inverse:div:parameters:1} is satisfied. 

We now use the definition of $\const_{G,p}$ in \eqref{eq:crazy:const:G} and apply the estimate \eqref{eq:inverse:div:stress:1}.  In the case that $p=1$ and $n=1$, then we must have $\nn=0$, and so for all $N,M\leq \lfloor \sfrac{1}{2}\left(\Nfnn-\NcutSmall-\NcutLarge-5\right) \rfloor - \dpot$, we sum over $(i,j,k,\xi,\vec{l})$ as before and obtain that
\begin{align}
\left\| D^N \Dtq^M \mathcal{O}_{0,1} \right\|_{L^1\left(\supp\psi_{i,q}\right)} 
&\lesssim \Gamma_{q}^{\shaq} \delta_{q+1} \tilde\lambda_q \Gamma_{q+1}^{9} \bigl(\lambda_{q}^{\sfrac 12}\lambda_{q+1}^{\sfrac 12}\Gamma_{q+1}\bigr)^{-1} \lambda_{q,1}^{N} \MM{M,\Nindt,\tau_q^{-1}\Gamma_{q+1}^{i-\cstarzero+4},\tilde\tau_q^{-1}\Gamma_{q+1}^{-1}} \notag \\
 &\lessg \delta_{q+1,1} \lambda_{q,1}^N \MM{M,\NindSmall,\tau_q^{-1}\Gamma_{q+1}^{i-\cstarzero+4}, \tilde\tau_q^{-1}\Gamma_{q+1}^{-1}} \, . \label{eq:H:estimate:1:redux}
\end{align}
The inequality in the last line follows immediately from the definitions in \eqref{eq:delta:appendix:def}. Alternatively, if $n>1$, then $\zeta^{-1} \leq \lambda_{q,n-1}^{-1}$ from \eqref{eq:case:1:1:redux}, and so if $N,M\leq \lfloor \sfrac{1}{2}\left(\Nfnn-\NcutSmall-\NcutLarge-5\right) \rfloor - \dpot$,
\begin{align}
\left\| D^N \Dtq^M \mathcal{O}_{\nn, n} \right\|_{L^1\left(\supp\psi_{i,q}\right)} & \lesssim \delta_{q+1,\nn} \lambda\qnn \Gamma_{q+1}^{8} \lambda_{q,n-1}^{-1} \lambda\qn^{N} \MM{M,\Nindt,\tau_q^{-1}\Gamma_{q+1}^{i-\cstarnn+4},\tilde\tau_q^{-1}\Gamma_{q+1}^{-1}} \notag \\
& \lessg \delta_{q+1,n} \lambda\qn^N \MM{M,\NindSmall,\tau_q^{-1}\Gamma_{q+1}^{i-\cstarnn+4}, \tilde\tau_q^{-1}\Gamma_{q+1}^{-1}} \, . \label{eq:H:estimate:1:redux:redux}
\end{align}
In the last inequality, we have used \eqref{ineq:click:clack:1}.  After using \eqref{eq:nfnn:nfn:mess}, which gives   $\lfloor \sfrac{1}{2}\left(\Nfnn-\NcutSmall-\NcutLarge-5\right) \rfloor - \dpot \geq \Nfn$ for all $\nn<n$, we have achieved \eqref{eq:Onpnp:estimate:3}. 

In the case that $p=\infty$ and $n=1$, then we must have that $\nn=0$, and so 
\begin{align}
\left\| D^N \Dtq^M \mathcal{O}_{0,1} \right\|_{L^\infty\left(\supp\psi_{i,q}\right)} & \lesssim \Gamma_q^\badshaq\Gamma_{q+1}^{9} \lambda_{q,0}  \left(\frac{\lambda_{q,1}}{\lambda_{q+1}r_{q+1,0}}\right)^2 \left(\lambda_{q+1}\lambda_q \Gamma_{q+1}^2\right)^{-\sfrac 12} \notag\\
&\qquad \qquad \times \lambda_{q,1}^{N} \MM{M,\Nindt,\tau_q^{-1}\Gamma_{q+1}^{i-\cstarzero+4},\tilde\tau_q^{-1}\Gamma_{q+1}^{-1}} \notag \\
& \lessg \Gamma_{q+1}^{9} \Gamma_q^{\badshaq} \lambda_{q,1}^N \MM{M,\NindSmall,\tau_q^{-1}\Gamma_{q+1}^{i-\cstarzero+4}, \tilde\tau_q^{-1}\Gamma_{q+1}^{-1}} \, . \label{eq:H:estimate:1:redux:uniform}
\end{align}
To achieve the last line, we have appealed to \eqref{eq:rqn:perp:definition} and the inequality $\frac{\lambda_{q,1}^2}{\lambda_{q+1}^{\sfrac 32}\lambda_q^{\sfrac 12}\Gamma_{q+1}}<1$, which is immediate from a large choice of $\nmax$.  In the case that $p=\infty$ and $n\geq 2$, we have that
\begin{align}
\left\| D^N \Dtq^M \mathcal{O}_{n,\nn} \right\|_{L^\infty\left(\supp\psi_{i,q}\right)} & \lesssim \Gamma_q^{\badshaq}\Gamma_{q+1}^{14\Upsilon(\nn)+9}\lambda\qnn \left(\frac{\lambda_{q,n}}{\lambda_{q+1}r_{q+1,\nn}}\right)^2 \lambda_{q,n-1}^{-1} \notag\\
&\qquad \qquad \times \lambda_{q,n}^{N} \MM{M,\Nindt,\tau_q^{-1}\Gamma_{q+1}^{i-\cstarnn+4},\tilde\tau_q^{-1}\Gamma_{q+1}^{-1}} \notag \\
&\lesssim \Gamma_q^{\badshaq} \Gamma_{q+1}^{14\Upsilon(\nn)+13}  \frac{\lambda\qn^2}{\lambda_{q+1}\lambda_{q,n-1}} \lambda\qn^N \MM{M,\Nindt,\tau_q^{-1}\Gamma_{q+1}^{i-\cstarnn+4},\tilde\tau_q^{-1}\Gamma_{q+1}^{-1}} \notag\\ 
& \lessg \Gamma_q^{\badshaq} \Gamma_{q+1}^{14\Upsilon(n)} \lambda_{q,n}^N \MM{M,\NindSmall,\tau_q^{-1}\Gamma_{q+1}^{i-\cstarnn+4}, \tilde\tau_q^{-1}\Gamma_{q+1}^{-1}} \, . 
\end{align}
To achieve the second inequality, we have used \eqref{eq:rqn:perp:definition}. To achieve the third inequality, we have used \eqref{eq:theta:click:clack} and \eqref{eq:upsa:ineq}. The estimates above are again valid in the range $N,M\leq \lfloor \sfrac{1}{2}\left(\Nfnn-\NcutSmall-\NcutLarge-5\right) \rfloor - \dpot$, which from \eqref{eq:nfnn:nfn:mess} completes the proof of \eqref{eq:Onpnp:estimate:3:new}.

Following again the parameter choices in \cite[Remark~A.19]{BMNV21}, we set $N_\circ=M_\circ=3\NindLarge$, and $\Nsharp = \Nfnn-\NcutSmall-\NcutLarge-5$.  From \eqref{eq:nfnn:mess:2}, we have that the condition $N_\circ \leq \sfrac{\Nsharp}{4}$ is satisfied.  The inequalities \eqref{eq:inverse:div:v:global} and \eqref{eq:inverse:div:v:global:parameters} follow from the discussion in \cite[Remark~A.19]{BMNV21}.  The inequality in \eqref{eq:riots:4} follows from the choices $\lambda=\lambda_{q+1}r_{q+1,\nn}\Gamma_{q+1}^{-1}$, $\zeta\geq \lambda_{q+1}r_{q+1,\nn}$, \eqref{eq:Lambda:q:t:1}, and \eqref{eq:CF:new}.  We then achieve the concluded estimate in \eqref{eq:inverse:div:error:stress:bound}, which after summing as before gives \eqref{eq:Onpnp:estimate:1} in the remaining cases $0\leq \nn < \nmax$, $r(\nn)<n\leq \nmax$.
\end{proof}

\subsection{Type 2 oscillation errors}
\label{ss:stress:oscillation:2}

In order to show that the Type 2 errors identified in \eqref{nn:overlap:definition} and \eqref{e:split:nn:1} vanish, we will apply Proposition~\ref{prop:disjoint:support:simple:alternate} on the support of a specific cutoff function
$$  \eta =  \eta_{i,j,k,q,n,\xi,\vecl}=\psi_{i,q}\chi_{i,k,q}\omega_{i,j,q,n}\zeta_{q,i,k,n,\xi,\vecl} \,   $$
in order to place pipes parallel to $\xi$ on $\supp \eta$. We first collect several preliminary estimates in the first subsubsection, mainly with the goal of verifying assumption  \eqref{item:pipe:placement:three} from Proposition~\ref{prop:disjoint:support:simple:alternate}, before applying Proposition~\ref{prop:disjoint:support:simple:alternate} in the second.

\subsubsection{Preliminary estimates}

\begin{lemma}[\bf Keeping Track of Overlap]\label{l:overlap}
For every tuple $(i,j,k,n)$, define the index set $\mathcal{I}$ as
\begin{equation}
    \mathcal{I}
    =
    \mathcal{I}(i,j,k,n)
    = 
    \bigl\{
    (\istar,\jstar,q,\nstar)
    \colon 
    \nstar \leq n, \;
    \psi_{i,q}\omega_{i,j,q,n}\chi_{i,k,q} \psi_{\istar,q}\omega_{\istar,\jstar,q,\nstar}\chi_{\istar,\kstar,q}
    \not\equiv 0
    \bigr\}
    \,.
    \notag
\end{equation}
Then, the cardinality of $\mathcal{I}$ is bounded above by $\const_\eta \Gamma_{q+1}$, where $\const_\eta$ depends only on $\nmax$, $\jmax$, and dimensional constants. In particular, $C_\eta$ is independent of $q$.
\end{lemma}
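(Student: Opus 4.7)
The plan is to bound the cardinality of $\mathcal{I}(i,j,k,n)$ by estimating, for each of the four free indices $\istar$, $\jstar$, $\kstar$, $\nstar$, the number of values consistent with a nontrivial product of cutoffs. Three of these counts will be bounded by a constant depending only on $\nmax, \jmax$, and the last one will contribute the single factor of $\Gamma_{q+1}$.

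First, the constraint $\nstar \leq n \leq \nmax$ immediately bounds the number of admissible $\nstar$ by $\nmax+1$. Similarly, since $\omega_{\istar,\jstar,q,\nstar}$ is nontrivial only for $0 \leq \jstar \leq \jmax(q,\nstar)$, and $\jmax$ is bounded uniformly in $q$ and $n$ by Lemma~\ref{lem:maximal:j}, the number of admissible $\jstar$ is bounded by $\jmax+1$. For the index $\istar$, we use the partition-of-unity property \eqref{eq:inductive:partition} of the velocity cutoffs at level $q$: since $\psi_{i,q}\psi_{\istar,q} \equiv 0$ whenever $|i-\istar|\geq 2$, at most three values of $\istar$ (namely $\istar \in \{i-1,i,i+1\}$) can contribute.

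The only index for which a factor of $\Gamma_{q+1}$ enters is $\kstar$, since the time cutoffs $\chi_{i,k,q}$ and $\chi_{\istar,\kstar,q}$ live on intervals of different lengths when $\istar \neq i$. Recalling \eqref{eq:chi:support}, $\chi_{i,k,q}$ is supported on a time interval of length $2\tau_q \Gamma_{q+1}^{-i+\cstar-2}$, and $\chi_{\istar,\kstar,q}$ on an interval of length $2\tau_q \Gamma_{q+1}^{-\istar+\cstar-2}$. Since $\istar \in \{i-1,i,i+1\}$, the ratio of these lengths is $\Gamma_{q+1}^{i-\istar} \in \{\Gamma_{q+1}^{-1}, 1, \Gamma_{q+1}\}$. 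Consequently, the number of integers $\kstar$ for which the short interval $\supp \chi_{i,k,q}$ can intersect $\supp \chi_{\istar,\kstar,q}$ is bounded by $3$ when $\istar \in \{i-1,i\}$ (from \eqref{e:chi:overlap} in the case $\istar=i$, and because the $\kstar$-interval is longer when $\istar = i-1$), and by at most $\Gamma_{q+1}+2$ when $\istar = i+1$, since the $\kstar$-intervals are $\Gamma_{q+1}$ times shorter than $\supp\chi_{i,k,q}$.

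Combining these four bounds gives
\begin{equation*}
|\mathcal{I}(i,j,k,n)| \leq (\nmax+1)(\jmax+1) \cdot 3 \cdot (\Gamma_{q+1}+2) \leq \const_\eta \Gamma_{q+1},
\end{equation*}
where $\const_\eta$ depends only on $\nmax$, $\jmax$, and absolute constants, and is in particular independent of $q$. The main observation driving the proof is that the $\Gamma_{q+1}$ factor arises solely from the mismatch in temporal scales between consecutive velocity support indices $i$ and $\istar = i+1$; the other indices contribute only $q$-independent multiplicities thanks to Lemma~\ref{lem:maximal:j} and the partition of unity structure of the velocity cutoffs. No serious obstacle is expected, as the argument is a direct counting based on the supports of the cutoff functions already constructed.
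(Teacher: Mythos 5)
Your proof is correct and follows exactly the same approach as the paper's (which is only a sketch referring to \cite[Lemma 8.6]{BMNV21}): $\istar$ ranges over $\{i-1,i,i+1\}$ by the partition-of-unity overlap property, $\jstar$ and $\nstar$ contribute $q$-independent multiplicities bounded by $\jmax$ and $\nmax$, and the single factor of $\Gamma_{q+1}$ comes from the timescale mismatch between $\chi_{i,k,q}$ and $\chi_{i+1,\kstar,q}$. You have in fact filled in more detail than the paper provides.
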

\begin{proof}[Proof of Lemma~\ref{l:overlap}]
The proof proceeds similarly to the proof of \cite[Lemma~8.8]{BMNV21}. In fact it is somewhat simpler, since the parameter $p$ (see \cite[Definition~2.4]{BMNV21}) is no longer part of the scheme, and we are not considering the checkerboard cutoffs $\zeta_{q,i,k,n,\xi,\vecl}$ yet, but will only incorporate them later. We thus give only an idea of the proof. Once $i$ is fixed, we first note that $\psi_{i,q}$ may only overlap with $\psi_{i+1,q}$ and $\psi_{i-1,q}$ from \eqref{eq:inductive:partition} at level $q$.  The factor of $\Gamma_{q+1}$ in the upper bound for the cardinality of $\mathcal{I}$ comes from the fact that the timescale of the $\chi_{i+1,\kstar,q}$'s on the support of $\psi_{i+1,q}$ is faster by a factor of $\Gamma_{q+1}$ than the timescale of the $\chi_{i,k,q}$'s on the support of $\psi_{i,q}$.  Considering then values of $j$ and $n$ introduces a dependence on $\jmax$ and $\nmax$ which is nevertheless independent of $q$.
\end{proof}

\begin{lemma}\label{lem:overlap:1}
Let $(x,t),(y,t)\in\supp\psi_{i,q}$ be such that  $\psi_{i,q}^2(x,t) \geq \sfrac{1}{4}$ and $\psi_{i,q}^2(y,t) \leq \sfrac{1}{8}$.  Then there exists a geometric constant $\const_\ast > 1$ such that
\begin{equation}\label{eq:xy:distance}
|x-y| \geq \const_* \left(\Gamma_q\lambda_q\right)^{-1}.
\end{equation} 
\end{lemma}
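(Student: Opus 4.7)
The plan is to deduce the separation $|x-y|\gtrsim (\Gamma_q\lambda_q)^{-1}$ from a uniform pointwise Lipschitz bound on $\psi_{i,q}$ combined with the mean value theorem applied to the segment from $x$ to $y$ at the fixed time $t$. The key ingredient is the already established pointwise derivative bound \eqref{eq:sharp:Dt:psi:i:q:old} at level $q'=q$, specialized to $|\aaa|=1$ and $|\bbb|=0$, which reads
\begin{equation*}
|D\psi_{i,q}(z,t)| \les \Gamma_q \lambda_q \, \psi_{i,q}(z,t)^{1-1/\Nfin} \, .
\end{equation*}
Since $\psi_{i,q}$ takes values in $[0,1]$ (a fact which follows inductively from the partition of unity property \eqref{eq:inductive:partition} together with the construction in Definition~\ref{def:psi:i:q:def}), the right-hand side above is bounded by a constant multiple of $\Gamma_q\lambda_q$, uniformly in $z$.

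First, I would apply the fundamental theorem of calculus along the straight segment $\{x + s(y-x) : s \in [0,1]\}$ at time $t$, obtaining
\begin{equation*}
|\psi_{i,q}(x,t)-\psi_{i,q}(y,t)| \leq |x-y| \sup_{s\in[0,1]} |D\psi_{i,q}(x+s(y-x),t)| \leq C_0 \, \Gamma_q\lambda_q \, |x-y| \, ,
\end{equation*}
where $C_0$ is a fixed geometric constant (independent of $q$, $i$, and $a$) absorbing the implicit constant in \eqref{eq:sharp:Dt:psi:i:q:old} at $|\aaa|+|\bbb|=1$. Second, I would lower-bound the left-hand side using the hypotheses: $\psi_{i,q}(x,t)\geq \sfrac 12$ and $\psi_{i,q}(y,t)\leq \sfrac{1}{2\sqrt 2}$, so that
\begin{equation*}
\psi_{i,q}(x,t) - \psi_{i,q}(y,t) \geq \frac{1}{2}\Bigl(1 - \frac{1}{\sqrt 2}\Bigr) > 0 \, .
\end{equation*}
Combining the two displays yields the desired lower bound $|x-y|\geq \const_* (\Gamma_q\lambda_q)^{-1}$ with $\const_* = \frac{1}{2C_0}(1-\sfrac{1}{\sqrt 2})$, which is a geometric constant independent of $q$. (The normalization $\const_*>1$ claimed in the statement can then be arranged by enlarging $a=\lambda_0$ so that $\Gamma_{q+1}$ absorbs $C_0$, or it will appear only as a harmless bookkeeping convention in the subsequent pipe-dodging application.)

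The only mild subtlety is that the derivative bound \eqref{eq:sharp:Dt:psi:i:q:old} carries the factor $\psi_{i,q}^{1-1/\Nfin}$, which a priori could be small; however, since we only seek an upper bound on $|D\psi_{i,q}|$ (not a matching lower bound), the trivial estimate $\psi_{i,q}^{1-1/\Nfin} \leq 1$ suffices. A more refined version of the argument, using the Lipschitz bound for $\psi_{i,q}^{1/\Nfin}$ rather than for $\psi_{i,q}$ itself, would give the same conclusion with a slightly different constant; either approach is acceptable. Thus I expect no substantive obstacle, and the main care is simply tracking that the geometric constant depends only on $\Nfin$ and dimension, so that it is compatible with the parameter hierarchy fixed in Section~\ref{sec:parameters:DEF}.
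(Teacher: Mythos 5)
Your argument is correct and is, to the best of my reading, the same mean-value-theorem argument used in the cited source: the paper does not actually reproduce a proof here but instead refers to \cite[Lemma 8.7]{BMNV21}, and that proof runs exactly along the lines you describe --- extract the Lipschitz bound $|D\psi_{i,q}|\les\Gamma_q\lambda_q\,\psi_{i,q}^{1-1/\Nfin}\leq C_0\Gamma_q\lambda_q$ from \eqref{eq:sharp:Dt:psi:i:q:old} at level $q'=q$ (available via Proposition~\ref{prop:no:proofs}), apply the fundamental theorem of calculus along the segment, and use the gap $\psi_{i,q}(x,t)-\psi_{i,q}(y,t)\geq\tfrac12(1-\sfrac{1}{\sqrt2})$ coming from the hypotheses. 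Your observation that the trivial bound $\psi_{i,q}^{1-1/\Nfin}\leq 1$ suffices and that the Lipschitz estimate extends across the entire segment (including points where $\psi_{i,q}$ may vanish, since the cutoffs vanish to high order and are globally Lipschitz) is the right way to make the argument clean.

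One small inaccuracy in your closing paragraph: the resulting constant $\const_*=\tfrac{1}{2C_0}(1-\sfrac{1}{\sqrt 2})$ is a fixed geometric constant independent of $a$, so ``enlarging $a$ so that $\Gamma_{q+1}$ absorbs $C_0$'' cannot upgrade it to $\const_*>1$ --- the right-hand side of \eqref{eq:xy:distance} carries $\Gamma_q\lambda_q$, not $\Gamma_{q+1}\lambda_q$, so there is no spare power of $\Gamma_{q+1}$ to absorb $C_0$ into. Your second, fallback remark is the correct one: the strict inequality $\const_*>1$ plays no role in the only downstream application (Lemma~\ref{lem:overlap:2}), where the required chain $\tilde\lambda_q^{-1}<\const_*(\Gamma_q\lambda_q)^{-1}$ reduces to $\const_*\Gamma_q<\Gamma_{q+1}^5$ and holds for any fixed geometric $\const_*>0$ once $a$ is large. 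So the ``$>1$'' in the statement is a harmless bookkeeping convention, and your proof establishes the lemma in the form actually used.
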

For the proof of Lemma~\ref{lem:overlap:1}, we refer to \cite[Lemma 8.9]{BMNV21}.

\begin{lemma}\label{lem:overlap:2}
Consider cutoff functions
\begin{align*}
\eta&:=\eta_{i,j,k,q,n,\xi,\vecl} = \psi_{i,q}\chi_{i,k,q}\omega_{i,j,q,n}\zeta_{q,i,k,n,\xi,\vecl},\\
\eta^*&:= \eta_{\istar,\jstar,\kstar,q,\nstar,\xistar,\vecl^*} = \psi_{\istar,q}\chi_{\istar,\kstar,q}\omega_{\istar,\jstar,q,\nstar} \zeta_{q,\istar,\kstar,\nstar,\xistar,\vecl^*},
\end{align*}
where $(\istar,\jstar,\kstar,\nstar) \in \mathcal{I}(i,j,k,n)$, as defined in Lemma~\ref{l:overlap}. Let $t^*\in\supp\chi_{\istar,\kstar,q}$ be given. Assume furthermore that $\eta \eta^* \not \equiv 0$, which implies that $\zeta_{q,i,k,n,\xi,\vecl}\,\zeta_{q,\istar,\kstar,\nstar,\xistar,\veclstar}\not\equiv 0$. Then there exists a convex set $\Omega:=\Omega(\eta,\eta^*,t^*)\subset\T^3$ with diameter $\lambda_{q,n}^{-1}\Gamma_{q+1}$ such that
\begin{equation}\notag
\bigl(\supp \zeta_{q,i,k,n,\xi,\vecl} \cap \{t=t^*\}\bigr) \subset \Omega \subset \supp \psi_{i\pm,q} \, .
\end{equation}
\end{lemma}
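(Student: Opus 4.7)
The plan is to locate a ``base point'' $y^* \in \supp \zeta_{q,i,k,n,\xi,\vecl}\cap\{t=t^*\}$ which also lies in $\supp\psi_{i,q}(\cdot,t^*)$, and then define $\Omega$ as a Euclidean ball of diameter $\const \lambda_{q,n}^{-1} \Gamma_{q+1}$ centered at $y^*$. Granted this, the containment $\supp\zeta_{q,i,k,n,\xi,\vecl}\cap\{t=t^*\} \subset \Omega$ will follow from the checkerboard support estimate \eqref{eq:checkerboard:support} in Lemma~\ref{lem:checkerboard:estimates}, while the containment $\Omega \subset \supp\psi_{i\pm,q}$ will follow from the distance estimate in Lemma~\ref{lem:overlap:1} together with the elementary parameter inequality $\lambda_{q,n}^{-1}\Gamma_{q+1} \ll (\Gamma_q \lambda_q)^{-1}$, which is a consequence of the lower bound $\lambda_{q,n} \geq \lambda_q \Gamma_{q+1}^{6}$ implicit in \eqref{eq:def:lambda:rq}.

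To locate $y^*$, I would first exploit the assumption $\eta \eta^* \not\equiv 0$ to produce a space-time point $(y_0,t_0)$ at which every factor of $\eta$ and $\eta^*$ is positive. In particular $t_0 \in \supp\chi_{i,k,q} \cap \supp\chi_{\istar,\kstar,q}$, and since $(\istar,\jstar,\kstar,\nstar) \in \mathcal{I}$ forces $|i-\istar| \leq 1$ (via the partition of unity property \eqref{eq:inductive:partition} inherited at level $q$), the time supports \eqref{eq:chi:support} give $|t^*-t_0| \lesssim \tau_q \Gamma_{q+1}^{-i+\cstar-2}$, placing $t^*$ inside $\supp\tilde\chi_{i,k,q}$ via \eqref{eq:tilde:chi:contains}. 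I would then transport $y_0$ to time $t^*$ along the flow $\Phi_{i,k,q}$, setting $y^* := X_{(i,k)}(y_0;t_0,t^*)$. The vanishing $D_{t,q}\zeta_{q,i,k,n,\xi,\vecl} = 0$ from item~\eqref{item:check:1} of Lemma~\ref{lem:checkerboard:estimates} ensures $y^* \in \supp\zeta_{q,i,k,n,\xi,\vecl}(\cdot,t^*)$, and Lemma~\ref{lem:dornfelder} (Lagrangian paths don't jump many supports), applied with the above time-increment bound, ensures $\psi_{i,q}^2(y^*,t^*) \geq \tfrac{1}{2}\psi_{i,q}^2(y_0,t_0) > 0$, and indeed $\geq \tfrac{1}{32}$ after a preliminary application of the partition-of-unity pigeonhole to $(y_0,t_0)$.

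With $y^*$ in hand, I would set $\Omega := B(y^*, C \lambda_{q,n}^{-1}\Gamma_{q+1})$ for a geometric constant $C$ absorbing the implicit constant in \eqref{eq:checkerboard:support}. The inclusion $\supp\zeta_{q,i,k,n,\xi,\vecl}\cap\{t=t^*\} \subset \Omega$ is then immediate from item~\eqref{item:check:4} of Lemma~\ref{lem:checkerboard:estimates}, which gives $\textnormal{diam}(\supp\zeta_{q,i,k,n,\xi,\vecl}(\cdot,t^*)) \lesssim \lambda_{q,n}^{-1}$ on $\supp\psi_{i,q}\tilde\chi_{i,k,q}$ (the hypothesis established above). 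For the inclusion $\Omega \subset \supp\psi_{i\pm,q}$, any $z \in \Omega$ satisfies $|z-y^*| \leq C \lambda_{q,n}^{-1}\Gamma_{q+1}$. Since $\psi_{i,q}^2(y^*,t^*) \geq \tfrac{1}{32} \geq \tfrac{1}{8}$ would contradict the hypothesis of Lemma~\ref{lem:overlap:1} if $\psi_{i,q}^2(z,t^*) \geq \tfrac{1}{4}$ were to fail at all points $z$ near $y^*$; the contrapositive formulation gives that $\psi_{i\pm,q}^2(z,t^*) > 0$ for all $z$ within distance $\const_* (\Gamma_q \lambda_q)^{-1}$ of $y^*$, and the parameter inequality $C\lambda_{q,n}^{-1}\Gamma_{q+1} \leq \const_* (\Gamma_q \lambda_q)^{-1}$ (valid by \eqref{eq:def:lambda:rq} and a sufficiently large choice of $a$) closes the argument.

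The main technical obstacle is the cross-scale verification in the last step: translating Lemma~\ref{lem:overlap:1}, which is phrased with the explicit thresholds $\tfrac 14$ and $\tfrac 18$ dictated by the shape of the $\psi_{m,q+1}$ bumps from Lemma~\ref{lem:cutoff:construction:first:statement}, into a clean statement about neighborhoods of $\supp\psi_{i,q}$ lying inside $\supp\psi_{i\pm,q}$. This requires tracking through the recursive Definition~\ref{def:psi:m:im:q:def} of $\psi_{m,i_m,q}$ and the product structure \eqref{eq:psi:i:q:recursive} to ensure that the ``level jump'' induced by a small spatial displacement is at most one in the index $i$. Everything else is bookkeeping against the already-established estimates \eqref{eq:diameter:inequality}, \eqref{eq:checkerboard:support}, and the flow regularity of Corollary~\ref{cor:deformation}.
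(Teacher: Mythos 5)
Your proposal matches the paper's proof in all essential respects: pick a point $(y_0,t_0)\in\supp(\eta\eta^*)$, flow it to time $t^*$ along $\Phi_{i,k,q}$ to obtain $y^*$, use $D_{t,q}\zeta_{q,i,k,n,\xi,\vecl}=0$ and Lemma~\ref{lem:dornfelder} to keep $y^*$ in the checkerboard support with a quantitative lower bound on the velocity cutoff, then take $\Omega$ to be a ball about $y^*$ and use Lemma~\ref{lem:overlap:1} together with $\lambda_{q,n}^{-1}\Gamma_{q+1}\leq\tilde\lambda_q^{-1}<\const_*(\Gamma_q\lambda_q)^{-1}$ and \eqref{eq:checkerboard:support} for the two inclusions. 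One small slip: the pigeonhole at $(y_0,t_0)$ produces some $i'\in\{i-1,i,i+1\}$ with $\psi_{i',q}^2(y_0,t_0)\geq\sfrac{1}{2}$, not a lower bound on $\psi_{i,q}^2$ itself (which may be arbitrarily small on $\supp\eta\eta^*$); Lemma~\ref{lem:dornfelder} should then be applied to this $\psi_{i',q}$, giving $\psi_{i',q}^2(y^*,t^*)\geq\sfrac14$, which is the threshold triggering Lemma~\ref{lem:overlap:1} — your $\sfrac{1}{32}\geq\sfrac18$ is reversed and in any case $\sfrac14$ is the bound required. The conclusion $\Omega\subset\supp\psi_{i\pm,q}$ is unaffected since $i'$ is adjacent to $i$.
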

\begin{proof}[Proof of Lemma~\ref{lem:overlap:2}]
Let $(x,t_0)\in\supp\left(\eta\eta^*\right)$.  Then there exists $i'\in\{i-1,i,i+1\}$ such that $\psi_{i',q}^2(x,t_0)\geq\frac{1}{2}$.  Consider the flow $X(x,t)$ originating from $(x,t_0)$.  Then for any $t$ such that $|t-t_0|\leq\tau_q\Gamma_{q+1}^{-i+5+\cstar}$, we can apply Lemma~\ref{lem:dornfelder} to deduce that $\psi_{i',q}^2(t,X(x,t))\geq\frac{1}{4}$.  By the definition of $\chi_{\istar,\kstar,q}$, the fact that $\istar\in\{i-1,i,i+1\}$, the existence of $(x,t_0)\in\supp(\chi_{i,k,q}\chi_{\istar,\kstar,q})$, and the fact that $t^*\in\supp\chi_{\istar,\kstar,q}$, we in particular deduce that $\psi_{i',q}^2(t^*,X(x,t^*))\geq\frac{1}{4}$.  Now, let $y$ be such that 
$$|X(x,t^*)-y|\leq \lambda_{q,n}^{-1}\Gamma_{q+1} \leq \tilde\lambda_q^{-1} <  \const_* (\Gamma_q \lambda_q)^{-1}$$
for $\const_*$ given in \eqref{eq:xy:distance}, where we have used the definition of $\lambda_{q,n}$ in  \eqref{eq:lambda:q:n:def}. Then from Lemma~\ref{lem:overlap:1}, it cannot be the case that $\psi_{i',q}^2(y,t^*)\leq\frac{1}{8}$, and so 
\begin{align}
y\in\supp\psi_{i',q}\cap\{t=t^*\}\subset \supp\psi_{i\pm,q}\cap\{t=t^*\} \, .
\label{eq:overlap:2:2a}
\end{align}
 Since $y$ is arbitrary, we conclude that the ball of radius $\Gamma_{q+1}\lambda_{q,n}^{-1}$ is contained in $\supp\psi_{i\pm,q}\cap\{t=t^*\}$.  We let $\Omega(\eta,\eta^*,t^*)$ to be precisely this ball.  Since $\Dtq\zeta_{q,i,k,n,\xi,\vecl}=0$ and $(x,t_0)\in\supp\zeta_{q,i,k,n,\xi,\vecl}$, we have that $X(x,t^*)\in\supp\zeta_{q,i,k,n,\xi,\vecl} \cap \{ t = t^*\}$.  Then, recalling that the support of $\zeta_{q,i,k,n,\xi,\vecl}$ must obey the diameter bound in \eqref{eq:checkerboard:support} on the support of $\tilde\chi_{i,k,q}$, which contains the support of $\chi_{\istar,\kstar,q}$ by \eqref{eq:tilde:chi:contains},  we conclude that
\begin{equation}\label{eq:overlap:2:2}
\supp \zeta_{q,i,k,n,\xi,\vecl} \cap\{t=t^*\} \subset \Omega \, .
\end{equation}
Combining \eqref{eq:overlap:2:2a} and \eqref{eq:overlap:2:2} concludes the proof of the lemma.
\end{proof}

\begin{lemma}\label{lem:overlap:3}
As in Lemma~\ref{lem:overlap:2}, consider cutoff functions $\eta$ and $\eta^*$ satisfying the conditions from Lemma~\ref{l:overlap} and the assumption $\eta \eta^* \not \equiv 0$. Let $t^*\in\supp\chi_{\istar,\kstar,q}$ be such that $\Phi^*:=\Phiikstar$ is the identity at time $t^*$. Using Lemma~\ref{lem:overlap:2}, define $\Omega:=\Omega(\eta,\eta^*,t^*)$.  Define $\Omega(t):= \Omega(\eta,\eta^*,t^*,t) := X^*(\Omega,t)$, where $X^*$ is the inverse of $\Phi^*$. Then the following conclusions hold.
\begin{enumerate}[(1)]
\item For $t\in\supp\chi_{i,k,q}$,
\begin{equation}\notag
\supp \eta(\cdot,t) \subset \Omega(t) \subset \supp \psi_{i\pm,q} \, .
\end{equation}
\item Let $\WW^* \circ \Phi^* :=\WW_{\xi*,q+1,n^*}^{\istar,\jstar,\kstar,\nstar,\vecl^*} \circ \Phiikstar$ be the intermittent pipe flow supported on $\eta^*$. Then $\WW^*\circ \Phi^*$ satisfies the conclusion of Lemma~\ref{lem:axis:control} on the set $\Omega(t)$ for $t\in\supp \chi_{i,k,q}$. 
\item\label{item:dodgie:three} For $\mathcal{I} = \mathcal{I}(i,j,k,n)$ defined as in Lemma~\ref{l:overlap}, we denote
\begin{equation}\label{eq:counting:pipelets}
\mathsf{P} := \bigcup_{\mathcal{I}} \left( \supp\left(\psi_{\istar,q}\omega_{\istar,\jstar,q,\nstar} \right) \bigcap \left( \bigcup_{\veclstar,\xistar} \supp\left(\zeta_{q,\istar,\kstar,\nstar,\xistar,\veclstar} \WW_{\xistar,q+1,\nstar}^{\istar,\jstar,\kstar,\nstar,\vecl} \circ \Phi_{(\istar,\kstar)} \right) \right) \right) \, ,
\end{equation}
which is precisely the union of the supports of all pipes living on cutoff functions indexed by tuples belonging to $\mathcal{I}$, which are however \emph{not} restricted to the support of their corresponding time cutoffs $\chi_{\istar,\kstar,q}$. Then there exists $\const_P$ such that for any convex set $\Omega'\subset\T^3$ with $\textnormal{diam}(\Omega')\leq (\lambda_{q+1}r_{q+1,n})^{-1}$ and any $t\in\supp\chi_{i,k,q}$, the set $\mathsf{P} \cap \left(\{t\} \times \Omega'\right)$ consists of at most $\const_P \Gamma_{q+1}$ segments of deformed pipes of length $(\lambda_{q+1}r_{q+1,n})^{-1}$.
\end{enumerate}
\end{lemma}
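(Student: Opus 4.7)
\textbf{Proof proposal for Lemma~\ref{lem:overlap:3}.} The plan is to handle the three items sequentially, leveraging: the Lagrangian-constancy of $\zeta_{q,i,k,n,\xi,\vecl}$ under $\Dtq$, the stability of velocity cutoff supports along flow trajectories (Lemma~\ref{lem:dornfelder}), and finally a geometric pigeonhole count based on the index bound of Lemma~\ref{l:overlap}.

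For item (1), the key observation is that $\Dtq \zeta_{q,i,k,n,\xi,\vecl} = 0$ by property~\eqref{item:check:1} of Lemma~\ref{lem:checkerboard:estimates}, while $X^*$ and $X$ are both flows of $\vlq$ and hence coincide as parameterized Lagrangian trajectories (up to a shift of initial time). This gives $\supp\zeta(\cdot,t) = X^{*}(\supp\zeta(\cdot,t^*),t)$, so the first containment follows from Lemma~\ref{lem:overlap:2}, which places $\supp\zeta(\cdot,t^*)$ inside $\Omega$. For the second containment $\Omega(t)\subset\supp\psi_{i\pm,q}$, I would take any $y\in\Omega$, identify the velocity cutoff index $i'\in\{i-1,i,i+1\}$ with $\psi_{i',q}^2(y,t^*)\geq \sfrac14$, and then apply Lemma~\ref{lem:dornfelder} to the forward trajectory of $y$ under $\vlq$. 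The needed time span is controlled by the cutoff $\chi_{i,k,q}$, whose support has width $\lesssim \tau_q\Gamma_{q+1}^{-i+\cstar-2}$ by \eqref{eq:chi:support}, which is safely within the window $\tau_q\Gamma_{q+1}^{-i+5+\cstar}$ permitted by Lemma~\ref{lem:dornfelder}.

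For item (2), I will invoke Lemma~\ref{lem:axis:control} directly with $\Omega(t)$ as the transport neighborhood and with transport velocity $\vlq$. Estimate \eqref{eq:nasty:D:vq:old} applied at level $q'=q$, on $\supp\psi_{i\pm,q}\supset\Omega(t)$, gives a Lipschitz bound $\norm{\nabla \vlq}_{L^\infty(\Omega(t))}\lesssim \Gamma_{q+1}^{i+2}\delta_q^{\sfrac 12}\tilde\lambda_q$, which together with $\tilde\lambda_q\leq \lambda_q$ and the parameter bound in \eqref{eq:imax:upper:lower} produces a Lipschitz constant of the form $\delta_q^{\sfrac 12}\lambda_q\Gamma_{q+1}^{C}$ for some $C$ controlled independently of $q$. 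The time interval $\supp\chi_{i,k,q}\cup\supp\chi_{\istar,\kstar,q}$ has length $\lesssim \tau_q\Gamma_{q+1}^{-i+\cstar-2}$, which matches the $\tau\leq(\delta_q^{\sfrac 12}\lambda_q\Gamma_{q+1}^{C+2})^{-1}$ requirement of Lemma~\ref{lem:axis:control} upon increasing $C$ if necessary. This yields all of \eqref{eq:diameter:inequality}--\eqref{e:axis:periodicity:1} for $\WW^*\circ\Phi^*$ on $\Omega(t)$.

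Item (3) is the main obstacle and requires the most care. I would factor the count as $|\mathcal{I}|$ times the number of contributing $(\xistar,\veclstar)$ pairs per tuple. Lemma~\ref{l:overlap} supplies $|\mathcal{I}|\leq \const_\eta\Gamma_{q+1}$, and $\xistar$ ranges over the finite set $\Xi$, so it suffices to show that for each $(\istar,\jstar,\kstar,\nstar,\xistar)$ the number of $\veclstar$ whose associated deformed pipe intersects $\Omega'$ is bounded by a geometric constant. Since $\nstar\leq n$ implies $r_{q+1,\nstar}\leq r_{q+1,n}$ and hence $(\lambda_{q+1}r_{q+1,\nstar})^{-1}\geq (\lambda_{q+1}r_{q+1,n})^{-1}\geq \textnormal{diam}(\Omega')$, the checkerboard prisms from \eqref{eq:prism:zero} (whose cross-sections perpendicular to $\xistar$ have side $\Gamma_{q+1}(\lambda_{q+1}r_{q+1,\nstar})^{-1}$) are much larger than $\Omega'$; by item (2) applied to $\Phi_{(\istar,\kstar)}$, deformed prisms remain nearly their original size (up to a $(1+\Gamma_{q+1}^{-1})$ factor, cf.~\eqref{eq:diameter:inequality}), so only $O(1)$ prisms intersect $\Omega'$. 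Within each contributing prism, only one pipe-shift $k_0$ is installed, and the corresponding pipe's periodicity $(\lambda_{q+1}r_{q+1,\nstar})^{-1}$ exceeds $\textnormal{diam}(\Omega')$, so at most $O(1)$ pipe segments of length $(\lambda_{q+1}r_{q+1,n})^{-1}$ enter $\Omega'$ from that shift. Multiplying the three counts gives $\const_P\Gamma_{q+1}$ as claimed. The technical subtlety I anticipate is the careful verification that the deformation estimates from Corollary~\ref{cor:deformation} do not inflate either the prism cross-section or the pipe spacing by more than a constant factor when transported by $\Phi_{(\istar,\kstar)}$ over the full time interval under consideration, which in turn relies on choosing the constants in \eqref{eq:chi:cut:def} and \eqref{eq:tilde:chi:contains} consistently across the tuples in $\mathcal{I}$.
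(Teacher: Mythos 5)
Your proposal is correct and follows essentially the same route as the paper: items (1) and (2) are deferred in the paper to \cite[Lemma~8.9]{BMNV21}, while item (3) is argued via exactly the product count you describe (fix a tuple in $\mathcal{I}$, use Lemma~\ref{lem:axis:control} and the checkerboard geometry to get an $O(1)$ count of contributing $(\xistar,\veclstar)$ and pipe segments inside $\Omega'$, then multiply by $|\mathcal{I}|\lesssim\const_\eta\Gamma_{q+1}$ from Lemma~\ref{l:overlap}). One small slip in your item (2): $\tilde\lambda_q=\lambda_q\Gamma_{q+1}^5$ is larger than $\lambda_q$, not smaller, but this is harmless since Lemma~\ref{lem:axis:control} tolerates an arbitrary $q$-independent power $\Gamma_{q+1}^C$ in the Lipschitz constant.
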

\begin{remark}
The third item simply asserts that at stage $n$, there exists a geometric constant $\const_P$ such that in any $(\sfrac{\T}{\lambda_{q+1}r_{q+1,n}})^3$-periodic cell of diameter approximately $(\lambda_{q+1}r_{q+1,n})^{-1}$, there exist at most $\const_P\Gamma_{q+1}$ segments of deformed pipes of length $(\lambda_{q+1}r_{q+1,n})^{-1}$.  This will later allow us to apply Proposition~\ref{prop:disjoint:support:simple:alternate}. The factor of $\Gamma_{q+1}$ comes from the fact that overlapping time cutoffs $\chi_{i,k,q}$ and $\chi_{i+1,k',q}$ have timescales which differ by a factor of $\Gamma_{q+1}$, and that we have \emph{not} restricted $\WW_{\xistar,q+1,\nstar}^{\istar,\jstar,\kstar,\nstar,\veclstar}$ to the support of its corresponding time cutoff $\chi_{\istar,\kstar,q}$. Notice also that since choosing a shift moves a segment of pipe inside a $(\sfrac{\T}{\lambda_{q+1}r_{q+1,\nn}})^3$-periodic cell but does not increase the \emph{number} of such segments, the conclusion in \eqref{item:dodgie:three} is independent of the choice of placement.  We may thus appeal to it in the next subsection in order to choose a placement.
\end{remark}
\begin{proof}[Proof of Lemma~\ref{lem:overlap:3}]
The statement and proof are quite similar to the proof of \cite[Lemma~8.11]{BMNV21}, and we refer there for the proof of the first two claims.  The only difference is contained in the third claim above, since we have rephrased the way in which we count the number of deformed segments of pipe comprising $\WW^*\circ\Phi^*$ which may overlap with $\supp \eta$. We remind the reader that a single ``segment of deformed pipe" consists of the support of $\WW^*\circ\Phi^*$ restricted to a single (deformed) $(\sfrac{\T^3}{\lambda_{q+1}r_{q+1,n}})^{-1}$-periodic cell.  Then to prove the third claim, we first fix a tuple $(\istar,\jstar,\kstar,\nstar)\in\mathcal{I}$ and note that in any convex set $\Omega'$ of diameter at most $(\lambda_{q+1}r_{q+1,n})^{-1}$, the conclusions of Lemma~\ref{lem:axis:control} and the construction of the checkerboard cutoff functions implies that there exist at most finitely many $\zeta_{q,\istar,\kstar,\nstar,\xistar,\veclstar}$ such that 
$$\psi_{\istar,q}\omega_{\istar,\jstar,q,\nstar}\chi_{\istar,\kstar,q}\zeta_{q,\istar,\kstar,\nstar,\xistar,\veclstar} \not\equiv 0 \, .$$
From the construction of $\WW^*\circ\Phi^*$ in Proposition~\ref{prop:pipeconstruction} and the fact that $\WW^*\circ\Phi^*$ satisfies the conclusions of Lemma~\ref{lem:axis:control} on $\supp\chi_{i,k,q}$, we then have that taking the union over just $\veclstar$ and $\xistar$ in \eqref{eq:counting:pipelets} allows for the desired conclusion with a $q$-independent constant.  Then applying Lemma~\eqref{lem:overlap:1} and taking the union over the $C_\eta\Gamma_{q+1}$ many tuples in $\mathcal{I}$ then provides the conclusion with a new constant $\const_P$ multiplied by $\Gamma_{q+1}$.
\end{proof}

\subsubsection{Applying Proposition~\ref{prop:disjoint:support:simple:alternate}}

\begin{lemma}\label{lem:osc:2}
The Type 2 oscillation errors identified in \eqref{nn:overlap:definition} and \eqref{e:split:nn:1} vanish.
\end{lemma}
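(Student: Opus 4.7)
The goal is to exhibit placements (shift parameters $k_0$ in the construction of $\WW_{\xi,q+1,\nn}^{(i,j,k,\nn,\vec{l})}$) so that both the cross-level products $w_{q+1,n'}\otimes_{\rm s} w_{q+1,\nn}$ for $0\le n'\le \nn-1$ and the distinct-index self-interactions $\mathcal O_{\nn,2}$ vanish identically. Since in both terms each summand is a tensor product supported in the intersection of the supports of two pipe flows $\WW$ and $\WW^*$ (deformed by the appropriate Lagrangian maps and multiplied by the corresponding checkerboard cutoffs), it suffices to show that we can choose the shifts so that the spatial supports of the two pipe families are disjoint on the support of the product of cutoff functions. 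The plan is to reduce each case to an application of Proposition~\ref{prop:disjoint:support:simple:alternate} on a suitably chosen convex prism $\Omega$, using Lemmas~\ref{l:overlap}--\ref{lem:overlap:3} to verify the geometric hypotheses.

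The construction proceeds by induction on the tuples $(i,j,k,\nn,\vec{l})$, ordered so that the pipes at lower levels $n'<\nn$ are placed before those at level $\nn$, and within level $\nn$ the already-placed pipes are viewed as ``bent pipes to dodge.'' For each new tuple $(i,j,k,\nn,\xi,\vec{l})$, I would fix a time $t^*\in\supp\chi_{i,k,q}$ at which $\Phi_{(i,k)}$ is the identity, and apply Lemma~\ref{lem:overlap:2} to produce a convex prism $\Omega$ whose long axis is parallel to $\xi$, of length $\sim \lambda_{q,\nn}^{-1}$ (this is the anisotropy furnished by $\zeta_{q,i,k,\nn,\xi,\vec{l}}$, cf.~\eqref{eq:checkerboard:support}), and whose cross-section perpendicular to $\xi$ has length $\sim\Gamma_{q+1}(\lambda_{q+1}r_{q+1,\nn})^{-1}$. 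Thus the prism $\Omega$ matches the shape demanded by Proposition~\ref{prop:disjoint:support:simple:alternate} with $r_1=\lambda_{q,\nn}\lambda_{q+1}^{-1}$ (so that $(\lambda_{q+1}r_1)^{-1}=\lambda_{q,\nn}^{-1}$) and $r_2=r_{q+1,\nn}$. Lemma~\ref{lem:overlap:3}, item~\eqref{item:dodgie:three}, guarantees that within any $\sfrac{\T^3}{\lambda_{q+1}r_{q+1,\nn}}$-periodic cell inside $\Omega$, the total number of previously placed deformed pipe segments is at most $\const_P\Gamma_{q+1}$, which is assumption~\eqref{item:pipe:placement:three} of the proposition. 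It then remains only to verify the parameter inequality \eqref{eq:r1:r2:condition:alt}: with the above identifications this reads
\begin{equation*}
C_*\const_\Omega^2\const_P\Gamma_{q+1}^3\, r_{q+1,\nn}^2 \;\le\; \frac{\lambda_{q,\nn}}{\lambda_{q+1}}\,,
\end{equation*}
which, using $r_{q+1,\nn}\approx \lambda_{q,\nn}^{\sfrac12}\lambda_{q+1}^{-\sfrac12}\Gamma_{q+1}^{-2}$ from \eqref{eq:def:lambda:r:q+1:n}, reduces to $C_*\const_\Omega^2\const_P\Gamma_{q+1}^{-1}\le 1$, which holds for $\lambda_0$ large. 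This is precisely the quantitative content of the \emph{one-half relative intermittency rule}: the anisotropy of $\Omega$ (short in the $\xi$-direction by a factor $\lambda_{q,\nn}/(\lambda_{q+1}r_{q+1,\nn})\approx \Gamma_{q+1}^2 r_{q+1,\nn}$) exactly compensates for the sparsity factor $r_{q+1,\nn}^2$ in the pipe placement counting. Applying Proposition~\ref{prop:disjoint:support:simple:alternate} then yields the existence of a shift $k_0$ such that the corresponding $\WW^{k_0}_{\xi,\lambda_{q+1},r_{q+1,\nn}}$ avoids all the old pipe supports inside $\Omega$, and hence, via Lemma~\ref{lem:overlap:2}, on all of $\supp\eta_{i,j,k,q,\nn,\xi,\vec{l}}\cap\{t=t^*\}$. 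Since $\Dtq\zeta=0$ and the pipes are transported by the flow $\Phi_{(i,k)}$, disjointness at a single time propagates to all times in $\supp\chi_{i,k,q}$.

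A few sub-cases must be handled separately. When $\xi\neq\xi^*$, so that the new pipes and old pipes have distinct directions, one exploits the freedom in the shift of the new pipes together with the transversality of directions in $\Xi$; geometrically, the projection of bent pipes with axis $\xi^*$ onto the face of $\Omega$ perpendicular to $\xi$ still occupies at most $\sim r_{q+1,\nn}^{-1}$ grid squares of the shift grid $\mathbb G_{\lambda_{q+1},r_{q+1,\nn}}$ (by the covering step in the proof of Proposition~\ref{prop:disjoint:support:simple:alternate}), so the same pigeonhole argument applies. For the cross-level term $w_{q+1,n'}\otimes w_{q+1,\nn}$ with $n'<\nn$, the intermittency of the old pipes is $r_{q+1,n'}\le r_{q+1,\nn}$, which is more favorable for the counting, but one must take care that the old pipes may be periodized at the finer scale $\sfrac{\T^3}{\lambda_{q+1}r_{q+1,n'}}$; here one enlarges the ``bent pipe'' periodization to $\sfrac{\T^3}{\lambda_{q+1}r_{q+1,\nn}}$ (a coarser lattice into which the old lattice embeds) and the counting $\const_P\Gamma_{q+1}$ still applies because of Lemma~\ref{lem:overlap:3}\eqref{item:dodgie:three}.

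The main obstacle I anticipate is bookkeeping rather than any single sharp estimate: one must keep simultaneous track of (i) the ordered enumeration of tuples over which the inductive placement is performed, (ii) the verification that Lemma~\ref{lem:overlap:3} supplies a $q$-independent counting constant across all mixed levels $\nstar\le \nn$ and all checkerboard refinements $\vec{l}$, and (iii) the check that the anisotropy of $\Omega$ (coming from the $\xi$-adapted checkerboard cutoff) is correctly aligned with the direction of the new pipes, so that the long axis of $\Omega$ is indeed parallel to $\xi$ as required by Proposition~\ref{prop:disjoint:support:simple:alternate}. The parameter inequality itself, as shown above, reduces to a $\Gamma_{q+1}$-power that is absorbed by taking $a$ large; the sharpness of the reduction is precisely the statement that our scheme operates at the threshold of pipe dodging, with no room to spare beyond logarithmic losses.
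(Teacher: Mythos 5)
Your proposal is correct and follows essentially the same strategy as the paper: order the tuples at level $\nn$ and place pipes inductively, take $\Omega=\supp\zeta_z\cap\{t=t_z\}$ with $r_1\approx\lambda\qnn\lambda_{q+1}^{-1}$ and $r_2=r_{q+1,\nn}$ in Proposition~\ref{prop:disjoint:support:simple:alternate}, invoke Lemma~\ref{lem:overlap:3} item~\eqref{item:dodgie:three} for the $\const_P\Gamma_{q+1}$ count, check \eqref{eq:r1:r2:condition:alt} via the one-half rule, and propagate disjointness via the fact that pipes and checkerboard cutoffs are transported by $\Phi_{(i,k)}$. The sub-cases you flag ($\xi\neq\xi^*$, $n'<\nn$) do not require separate treatment, since the set $\mathsf P$ in \eqref{eq:counting:pipelets} and the counting of Lemma~\ref{lem:overlap:3} item~\eqref{item:dodgie:three} are already formulated uniformly over all directions $\xistar$ and all levels $\nstar\leq\nn$, and the covering step in Proposition~\ref{prop:disjoint:support:simple:alternate} is direction-agnostic.
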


\begin{proof}[Proof of Lemma~\ref{lem:osc:2}]
To show that the errors defined in \eqref{nn:overlap:definition} and \eqref{e:split:nn:1} vanish, it suffices to show the following: for any pairs of cutoff functions $\eta=\eta_{i,j,k,q,\nn,\xi,\vecl}$ and $\eta^*=\eta_{\istar,\jstar,\kstar,q,\nstar,\xistar,\vecl^*}$ where $(\istar,\jstar,\nstar,\kstar) \in \mathcal{I}(i,j,k,n)$, we have that
\begin{align}\label{eq:osc:2:toshow}
\eta_{i,j,k,q,\nn,\xi,\vecl} \;   \eta_{\istar,\jstar,\kstar,q,\nstar,\xistar,\vecl^*} \; \Bigl(\WW_{\xi,q+1,\nn}^{i,j,k,\nn,\vecl}\circ\Phiik
\otimes \WW_{\xistar,q+1,\nstar}^{\istar,\jstar,\kstar,\nstar,\vecl^*}\circ\Phiikstar
\Bigr)
\equiv 0 \, .
\end{align}
The proof of this claim will proceed by fixing $\nn$, using the preliminary estimates, and applying Proposition~\ref{prop:disjoint:support:simple:alternate}.  

Now, consider all cutoff functions $\eta_{i,j,k,q,\nn,\xi,\vecl}$ utilized at stage $\nn$. We may choose an ordering of the tuples $(i,j,k,\xi,\vecl)$ at level $\nn$, which  automatically provides orderings for the cutoff functions $\eta_{i,j,k,q,\nn,\xi,\vecl}$ and associated pipe flows $\WW_{\xi,q+1,\nn}^{i,j,k,\nn,\vecl}\circ\Phiik$.  To lighten the notation, we will abbreviate the newly ordered cutoff functions as $\eta_z$ and the associated intermittent pipe flows as $(\WW\circ\Phi)_z$, where $z\in\mathbb{N}$ corresponds to the ordering.  We will apply Proposition~\ref{prop:disjoint:support:simple:alternate} inductively on $z\in\mathbb{N}$, according to the chosen ordering, so that \eqref{eq:osc:2:toshow} holds. 

Fix $\eta_z$, and fix the associated index set $\mathcal{I}(z) = \mathcal{I}(i,j,k,\nn)$. Since we are proving \eqref{eq:osc:2:toshow} iteratively, we only need to consider the elements $z' \in \mathcal{I}(z)$ such that $\nstar < \nn$, and $\hat{z} \in \mathcal{I}(z)$ such that $\nstar = \nn$ and $\hat z < z$, according to the aforementioned ordering.

We will apply Proposition~\ref{prop:disjoint:support:simple:alternate} with the following choices.  First, we recall that at the time $t_z$ at which $\Phi_z$ is the identity, the cutoff function $\eta_z$ contains a checkerboard cutoff function $\zeta_{z}$ which from \eqref{eq:prism:zero} is adapted to a rectangular prism of dimensions $2\pi\lambda\qnn^{-1}$ in the direction of $\xi_z$, and $\const_\Gamma\Gamma_{q+1}(\lambda_{q+1}r_{q+1,\nn})^{-1}$ in the directions perpendicular to $\xi_z$.  Thus we can bound the dimensions of the support of the anistropic checkerboard cutoff by $4\pi\lambda\qnn^{-1}$ and $2\const_\Gamma\Gamma_{q+1}(\lambda_{q+1}r_{q+1,\nn})^{-1}$, and we thus set 
\begin{equation}\notag
\Omega=\supp \zeta_z \cap \{t=t_z\}\, , \qquad  r_1= \frac{\lambda\qnn}{4\pi\lambda_{q+1}} \, , \qquad r_2 =  r_{q+1,\nn} \, , \qquad  \const_\Omega = 2\const_\Gamma .
\end{equation}
Recalling item~\ref{item:dodgie:three} from Lemma~\ref{lem:overlap:3}, we  choose the support of $(\WW\circ\Phi)_z|_{t=t_z}$ to have empty intersection with
\begin{equation}
    \mathsf{P} \cap \Omega \, , \qquad \mathsf{P}\textnormal{ as defined in }\eqref{eq:counting:pipelets} \, ,
\end{equation}
and so by definition $\mathsf{P}$ satisfies item~\ref{item:pipe:placement:three} from Proposition~\ref{prop:disjoint:support:simple:alternate}. Thus it remains to check \eqref{eq:r1:r2:condition:alt}.  From the definition of $r_{q+1,\nn}$ in \eqref{eq:rqn:perp:definition}, we have that
\begin{align}
    C_* \const_\Omega^2 \const_P \Gamma_{q+1}^3 r_{2}^2 = C_* \const_\Omega^2 \const_P \Gamma_{q+1}^3 r_{q+1,\nn}^2 \lesssim C_* \const_\Omega^2 \const_P \Gamma_{q+1}^3 \frac{\lambda\qnn}{\lambda_{q+1}} \Gamma_{q+1}^{-4} < r_1 
\end{align}
if $a$ is chosen sufficiently large so that $\Gamma_{q+1}^{-1}$ can absorb the constants $C_*$, $\const_\Omega^2$, $\const_P$ and the implicit constant, all of which are bounded independently of $q$. Therefore \eqref{eq:r1:r2:condition:alt} is satisfied, and we may apply Proposition~\ref{prop:disjoint:support:simple:alternate} to choose a placement for $\WW_z$ which has empty intersection with $\mathsf{P}$ at time $t=t_z$. This shows that at time $t=t_z$, $\WW_z$ has empty intersection with \emph{all} previously existing pipes which may be non-zero at \emph{any} time $t\in\supp\chi_{i,k,q}$ but have been flowed to time $t=t_z$. Finally, since $\Dtq(\WW\circ\Phi)_z=\Dtq(\WW\circ\Phi)_{z'}=\Dtq(\WW\circ \Phi)_{\hat{z}}$, and $\mathsf{P}$ has been constructed to contain \emph{all} pipes which are non-zero at \emph{any} time $t\in\supp\chi_{i,k,q}$, \eqref{eq:osc:2:toshow} is satisfied for all $t\in \supp\chi_z$, concluding the proof.
\end{proof}

\subsection{Divergence corrector errors}\label{ss:stress:divergence:correctors}
In this subsection we define and estimate the stress $\mathcal{O}_{\nn, \rm corr}$ written in \eqref{eq:idiing:RRqnn} and arising from the divergence correctors identified in \eqref{e:split:nn:2}, which satisfy
\begin{equation*} 
\div \bigl( w_{q+1,\nn}^{(p)} \otimes w_{q+1,\nn}^{(c)} + w_{q+1,\nn}^{(c)} \otimes w_{q+1,\nn}^{(p)} + w_{q+1,\nn}^{(c)} \otimes w_{q+1,\nn}^{(c)} \bigr) = \div \left( \mathcal{O}_{\nn, \rm corr} \right) \, .
\end{equation*}

\begin{lemma}\label{l:divergence:corrector:error}
For all $0\leq \nn \leq \nmax$, the divergence corrector errors $\mathcal{O}_{\nn,\rm corr}$ satisfy the bounds
\begin{subequations}
\begin{align}
&\left\| \psi_{i,q} D^k \Dtq^m \mathcal{O}_{\nn,\rm corr} \right\|_{L^1} \lesssim \Gamma_{q+1}^{\shaq-1} \delta_{q+2}\lambda_{q+1}^k  \MM{m,\Nindt,\Gamma_{q+1}^{i-\cstarnn + 4} \tau_q^{-1}, \Gamma_{q+1}^{-1}\tilde\tau_q^{-1} }
\label{eq:div:corrector:L1}
\\
&\left\|  D^k \Dtq^m \mathcal{O}_{\nn,\rm corr} \right\|_{L^\infty(\supp \psi_{i,q})} \lesssim 
\Gamma_{q+1}^{\badshaq-1} \lambda_{q+1}^k \MM{m,\Nindt,\Gamma_{q+1}^{i-\cstarnn + 4} \tau_q^{-1}, \Gamma_{q+1}^{-1}\tilde\tau_q^{-1} }
    \label{eq:div:corrector:Linfty}
\end{align}
\end{subequations}
for all $k,m\leq 3\NindLarge$.
\end{lemma}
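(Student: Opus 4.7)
The plan is to exploit the fact that both error terms involve at least one corrector factor $w_{(\xi)}^{(c)}$, which according to Corollary~\ref{cor:corrections:Lp} is smaller than the principal part $w_{(\xi)}$ in $L^1$ by the factor $\lambda_{q+1}r_{q+1,\nn}/\lambda_{q+1}$ (with the same loss on the $L^\infty$ side). The cubic term $\mathcal{O}_{\nn,1,3}$ is already a symmetric stress, so I would estimate it directly by a bilinear Hölder argument on the pointwise bounds \eqref{eq:w:oxi:c:est}--\eqref{eq:w:oxi:c:unif}; summation over the parameters $(i,j,k,\xi,\vecl)$ uses the partition-of-unity properties \eqref{eq:inductive:partition} and \eqref{eq:checkerboard:partition}, the disjoint-support structure of the temporal cutoffs, and the Lebesgue bound \eqref{item:lebesgue:1} with $r_1=\infty$, $r_2=2$ to absorb the support volume on the $L^1$ side. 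The double gain $(\lambda_{q+1}r_{q+1,\nn}/\lambda_{q+1})^{2}=(\lambda\qnn/\lambda_{q+1})\Gamma_{q+1}^{-4}$, paired with the magnitude $\delta_{q+1,\nn}$ of $a_{(\xi)}^{2}$ and the parameter inequalities of Section~\ref{sec:parameters:DEF}, should suffice to produce the target size $\delta_{q+2}\Gamma_{q+1}^{\shaq-1}$ in $L^1$ and $\Gamma_{q+1}^{\badshaq-1}$ on the support of $\psi_{i,q}$; material-derivative bounds are propagated using the Leibniz rule on top of \eqref{eq:w:oxi:c:est}--\eqref{eq:w:oxi:c:unif}.

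For the inverse-divergence term $(\divH+\divR)\div \mathcal{O}_{\nn,1,2}$, I would first expand the tensor so that one factor reads $a_{(\xi)}\nabla\Phi_{(i,k)}^{-1}\WW_{\xi,q+1,\nn}(\Phi_{(i,k)})$ and the symmetric partner reads $\nabla a_{(\xi)}\times(\nabla\Phi_{(i,k)}^{T}\UU_{\xi,q+1,\nn}(\Phi_{(i,k)}))$, then apply $\div$ and feed the resulting vector field into Proposition~\ref{prop:intermittent:inverse:div}. The high-frequency density is a product $\WW_{\xi,q+1,\nn}^{m}\UU_{\xi,q+1,\nn}^{l}$, whose mean and frequency support structure via \eqref{e:Pqnp:identity} is entirely analogous to that used in Lemma~\ref{lem:oscillation:general:estimate}. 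The execution will mirror that lemma essentially verbatim, with the choices $\mu=\lambda_{q+1}r_{q+1,\nn}$, $\Lambda=\lambda_{q+1}$, $\zeta=\lambda_{q,\nmax}$ and $\nu=\tau_q^{-1}\Gamma_{q+1}^{i-\cstarnn+4}$. The amplitude $G$ fed into \eqref{eq:inverse:div:DN:G} will be a rearrangement of $\nabla a_{(\xi)}$, $a_{(\xi)}$, and contractions of Lagrangian deformations and their derivatives, for which \eqref{e:a_master_est_p}--\eqref{e:a_master_est_p_uniform} and \eqref{eq:Lagrangian:Jacobian:5}--\eqref{eq:Lagrangian:Jacobian:6} supply sharp bounds. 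The single additional factor of $\lambda_{q+1}^{-1}$ coming from $\UU$ (relative to $\WW$) is precisely what pushes the output below the threshold $\delta_{q+2}\Gamma_{q+1}^{\shaq-1}$.

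The main obstacle is the careful bookkeeping of the anisotropic derivative costs on $\nabla a_{(\xi)}$ in combination with the cross-product structure. Because $\UU_{\xi,q+1,\nn}$ lies in the plane perpendicular to $\xi$, the cross product $\nabla a_{(\xi)}\times\UU$ couples the $\xi$-component of $\nabla a_{(\xi)}$ with the perpendicular components of $\UU$, and vice versa. The $\xi$-directional derivative $(\xi\cdot\nabla)a_{(\xi)}$ costs only $\lambda\qnn$ thanks to \eqref{eq:checkerboard:derivatives} and \eqref{e:a_master_est_p} with $N'=1$, while perpendicular derivatives cost $\lambda_{q+1}r_{q+1,\nn}\Gamma_{q+1}^{-1}$; decomposing $\nabla a_{(\xi)}=\xi(\xi\cdot\nabla)a_{(\xi)}+(\xi'\otimes\xi'+\xi''\otimes\xi'')\nabla a_{(\xi)}$ and pairing with the matching components of $\UU$, each contribution to $\nabla a_{(\xi)}\times\UU$ is either of size $\lambda\qnn\lambda_{q+1}^{-1}r_{q+1,\nn}^{-1}|a_{(\xi)}|$ or of size $\Gamma_{q+1}^{-1}r_{q+1,\nn}^{-1}|a_{(\xi)}|$, both of which are bounded by $\Gamma_{q+1}^{-1}r_{q+1,\nn}^{-1}|a_{(\xi)}|$ using \eqref{eq:rqn:perp:definition}. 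Squaring this estimate (or multiplying against $a_{(\xi)}\WW$ for the $\mathcal{O}_{\nn,1,2}$ case) and combining with Proposition~\ref{prop:intermittent:inverse:div} in the $\mathcal{O}_{\nn,1,2}$ case then yields the target bounds, once the extra factor of $\Gamma_{q+1}^{-1}$ and the $\shaq$/$\badshaq$ thresholds are reconciled using the inequalities in Section~\ref{sec:parameters:DEF} (analogues of \eqref{eq:crazy:const:G:ineq}, \eqref{eq:hopeless:mess:new}, \eqref{eq:clickity:clackity}).
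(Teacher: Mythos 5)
Your treatment of $\mathcal{O}_{\nn,1,3}$ is correct and matches the paper: this is just a direct bilinear H\"older estimate on $w^{(c)}\otimes w^{(c)}$ using \eqref{eq:w:oxi:c:est}--\eqref{eq:w:oxi:c:unif} plus Lemma~\ref{lemma:cumulative:cutoff:Lp} and the parameter inequalities \eqref{eq:div:cor:ineq:1}--\eqref{eq:div:cor:ineq:2}. Your third paragraph also correctly identifies the anisotropy of $\nabla a_{(\xi)}$ as the delicate point: the perpendicular derivatives cost $\lambda_{q+1}r_{q+1,\nn}\Gamma_{q+1}^{-1}$ while the $\xi$-directional one costs only $\lambda_{q,\nn}\Gamma_{q+1}$, and this good/bad split of $\nabla a_{(\xi)}$ (cf.~\eqref{eq:nets:suck:2}) is indeed what the paper does.

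However, for $(\divH+\divR)\div\mathcal{O}_{\nn,1,2}$ there is a genuine gap. You propose to apply $\div$ to the tensor and then feed the result into Proposition~\ref{prop:intermittent:inverse:div} with high-frequency density $\WW^m_{\xi,q+1,\nn}\UU^l_{\xi,q+1,\nn}$ and a slow amplitude $G$ built from $a_{(\xi)}$, $\nabla a_{(\xi)}$, and Lagrangian deformations. That implicitly assumes the outer divergence only falls on the slow objects. But $\partial_m$ can hit $\varrho_{(\xi)}\circ\Phiik$ or $\UU_{\xi,q+1,\nn}\circ\Phiik$, producing a factor $\lambda_{q+1}$ that the inverse divergence (which only regains $\zeta^{-1}\approx (\lambda_{q+1}r_{q+1,\nn})^{-1}$, not $\lambda_{q+1}^{-1}$) cannot compensate. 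The extra $\lambda_{q+1}^{-1}$ from $\UU$ versus $\WW$, which you invoke, is already spent against the bad derivative of $a_{(\xi)}$; there is no slack left for an additional $\lambda_{q+1}$ loss. The paper closes this in two disjoint ways after writing out the Levi-Civita form \eqref{eq:nets:suck:1} and splitting $\nabla a_{(\xi)}$ into good and bad (\eqref{eq:nets:suck:2}). The good part is left in divergence form (no inverse divergence at all) and estimated directly as a stress $\mathcal{O}_{\nn,1,2}^{\rm good}$, because the full gradient never needs to be distributed. For the bad part one splits $\mathbf{V}_1+\mathbf{V}_2$ depending on whether the $\partial_m$ index is contracted with $\xi^\ell A_\ell^m$: in $\mathbf{V}_1$, the Piola identity forces $\partial_m$ to act as $\xi^\ell A_\ell^m\partial_m$, which annihilates $\varrho_{(\xi)}$ and $\UU_{\xi,q+1,\nn}$ by \eqref{eq:derivative:along:pipe}; in $\mathbf{V}_2$, after distributing $\partial_m$, the single term where it lands on the fast density \emph{vanishes identically}, because it contracts three vectors from the two-element set $\{\xi',\xi''\}$ with $\epsilon_{mpr}$ and the pulled-back metric, so two must coincide and the skew-symmetry kills the product. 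Neither of these cancellations appears in your proposal, and without them the approach produces terms a full factor $r_{q+1,\nn}^{-1}$ too large. (A secondary point: your choice $\zeta=\lambda_{q,\nmax}$ is not available here since $\varrho_{(\xi)}\UU^s_{\xi,q+1,\nn}$ is periodic at scale $(\lambda_{q+1}r_{q+1,\nn})^{-1}$ with no extra Littlewood-Paley projection; the paper uses $\zeta=\mu=\lambda_{q+1}r_{q+1,\nn}$.)
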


\begin{proof}[Proof of Lemma~\ref{l:divergence:corrector:error}]
We first present the estimates for the stress $\mathcal{O}_{\nn,1,3} = w_{q+1,\nn}^{(c)} \otimes w_{q+1,\nn}^{(c)}$, which is also given explicitly by the last line in \eqref{e:split:nn:2} and may be absorbed directly into $\mathcal{O}_{\nn,\rm corr}$ and estimated. By the Leibniz rule, the estimate \eqref{eq:w:oxi:c:est} with $(r,r_1,r_2) = (2,\infty,1)$, and the fact that $\supp \psi_{i,q} \cap \supp \eta_{(i',j',k')}\neq \emptyset$ if and only if $|i'-i|\leq 1$, it follows that 
\begin{align*}
\norm{\psi_{i,q} D^k \Dtq^m \mathcal{O}_{\nn,1,3}}_{L^1}
\les 
r_{q+1,\nn}^2 \delta_{q+1,\nn} \Gamma_{q+1}^{6} 
\lambda_{q+1}^k \MM{m, \NindSmall, \tau_{q}^{-1}\Gamma_{q+1}^{i-\cstarnn + 4}, \tilde\tau_{q}^{-1}\Gamma_{q+1}^{-1}}
\,.
\end{align*}
The bound \eqref{eq:div:corrector:L1} for $\mathcal{O}_{\nn,1,3}$ now follows from the   parameter inequality \eqref{eq:div:cor:ineq:1}.
Similarly, from \eqref{eq:w:oxi:c:unif} it follows that 
\begin{align*}
\norm{D^k \Dtq^m \mathcal{O}_{\nn,1,3}}_{L^\infty(\supp \psi_{i,q})} & \lesssim  \Gamma_{q}^{\badshaq} \Gamma_{q+1}^{14 \Upsilon(\nn) + 7} \lambda_{q+1}^{k}\MM{m,\Nindt, \tau_{q}^{-1}\Gamma_{q+1}^{i-\cstarnn + 4},\tilde\tau_q^{-1}\Gamma_{q+1}^{-1}}
\,.
\end{align*}
The bound \eqref{eq:div:corrector:Linfty} for $\mathcal{O}_{\nn,1,3}$ then follows from the   inequality \eqref{eq:div:cor:ineq:2}.

It thus remains to analyze $\div(w_{q+1,\nn}^{(p)} \otimes w_{q+1,\nn}^{(c)} + w_{q+1,\nn}^{(c)} \otimes w_{q+1,\nn}^{(p)})$. Using the second line of \eqref{e:split:nn:2}, we have
\begin{align}
&\div \bigl( w_{q+1,\nn}^{(p)} \otimes w_{q+1,\nn}^{(c)} + w_{q+1,\nn}^{(c)} \otimes w_{q+1,\nn}^{(p)} \bigr)^{\bullet}
\notag \\
&= \sum_{\xi,i,j,k, \vec{l}}
\partial_m \bigl( a_{(\xi)} \varrho_{(\xi)}\circ \Phi_{(i,k)} \xi^\ell \bigl( A_{\ell}^m  \epsilon_{\bullet p r} + A_{\ell}^\bullet  \epsilon_{m p r} \bigr) \partial_p a_{(\xi)}    \partial_r \Phi_{(i,k)}^s \mathbb{U}_{\xi,q+1,\nn}^s \circ \Phi_{(i,k)} 
\bigr)
\label{eq:nets:suck:1}
\end{align} 
where $\epsilon_{i_1i_2i_3}$ is the Levi-Civita alternating tensor, we implicitly contract the repeated indices $\ell,m,p,r,s$, and the $\bullet$ refers to the indices of the vectors on either side of the above display. The subtle point is that if the derivative in $\partial_p a_{(\xi)}$ is not in a good direction, cf. Lemma~\ref{lem:checkerboard:estimates}, one seemingly obtains the wrong bound. As such we use that $\{\xi,\xi',\xi''\}$ is an orthonormal basis associated with the direction vector $\xi$ with $\xi \times \xi' = \xi''$, and so $\xi^n \xi^\ell + (\xi')^n (\xi')^\ell + (\xi'')^n (\xi'')^\ell = \delta^{n\ell}$, and decompose $\partial_p a_{(\xi)}$ into a sum of vector fields $a_{p,(\xi)}^{\rm good}$ and $a_{p,(\xi)}^{\rm bad}$ defined by
\begin{align}
    \partial_p a_{(\xi)} 
    &=
    \underbrace{\partial_p \Phiik^n \xi^n  \xi^{\ell} A_\ell^j \partial_j a_{(\xi)}}_{=: a_{p,(\xi)}^{\rm good}}
      + 
    \underbrace{\partial_p \Phiik^n (\xi^\prime)^n (\xi^\prime)^{\ell} A_\ell^j \partial_j a_{(\xi)}
    +
     \partial_p \Phiik^n (\xi^{\prime \prime})^n (\xi^{\prime \prime})^\ell A_\ell^j \partial_j a_{(\xi)}}_{=: a_{p,(\xi)}^{\rm bad} }
     \,,
     \label{eq:nets:suck:2}
\end{align}
where we have also set $A = A_{(i,k)} =  (\nabla \Phiik)^{-1}$. Using this decomposition, we note that from Lemma~\ref{lem:a_master_est_p}, the derivative of $a_{(\xi)}$ in the ``good'' term costs a factor of $\lambda_{q,\nn}\Gamma_{q+1}$, whereas the derivatives landing on $a_{(\xi)}$ in the ``bad'' terms cost a factor of $\lambda_{q+1} r_{q+1,\nn}\Gamma_{q+1}^{-1} \gg \lambda_{q,\nn}\Gamma_{q+1}$.

In view of \eqref{e:a_master_est_p} and \eqref{e:a_master_est_p_uniform}, we leave the part of \eqref{eq:nets:suck:1} which contains $a_{p,(\xi)}^{\rm good}$ in divergence form and simply move the resulting symmetric stress 
\begin{align}
 (\mathcal{O}_{\nn,1,2}^{\rm good})^{m \bullet}
 := \sum_{\xi,i,j,k, \vec{l}}
  a_{(\xi)} \varrho_{(\xi)}\circ \Phi_{(i,k)} \xi^\ell \bigl( A_{\ell}^m  \epsilon_{\bullet p r} + A_{\ell}^\bullet  \epsilon_{m p r} \bigr) a_{p,(\xi)}^{\rm good}    \partial_r \Phi_{(i,k)}^s \mathbb{U}_{\xi,q+1,\nn}^s \circ \Phi_{(i,k)} 
  \,,
\end{align}
into $\mathcal{O}_{\nn, \rm corr}$ (and thus $\RR_{q+1}^{\nn}$), up to removing a trace term which is thrown into the pressure. This good part of $\mathcal{O}_{\nn,1,2}$ obeys the same $L^1$ and $L^\infty$ bounds as $\mathcal{O}_{\nn,1,3}$ above.   To see this, we apply the $L^1$ de-correlation estimate from Lemma~\ref{l:slow_fast}, for $p=1$, $f= a_{(\xi)} \xi^\ell (A_{\ell}^m  \epsilon_{\bullet p r} + A_{\ell}^\bullet  \epsilon_{m p r} ) a_{p,(\xi)}^{\rm good}    \partial_r \Phi_{(i,k)}^s$, $\Phi = \Phiik$, $v=\vlq$, and $\varphi = \varrho_{(\xi)} \mathbb{U}_{\xi,q+1,\nn}^s$. In light of Proposition~\ref{prop:pipeconstruction}, Corollary~\ref{cor:deformation}, estimate  \eqref{e:a_master_est_p}, and definition \eqref{eq:nets:suck:2}, we have that the assumptions of Lemma~\ref{l:slow_fast} hold with the parameter choices $\mathcal{C}_f = |\supp \eta_{i,j,k,q,\nn,\xi,\vecl}| \delta_{q+1,\nn} \Gamma_{q+1}^{j+7} \lambda_{q,\nn}$, $\lambda = \Gamma_{q+1}^{-1} r_{q+1,\nn} \lambda_{q+1} $, $\nu= \Gamma_{q+1}^{i - \cstarnn+4} \tau_q^{-1}$, $\tilde \nu = \tilde \tau_q^{-1} \Gamma_{q+1}^{-1}$, $N_t = \Nindt$, $\mu= \lambda_{q+1} r_{q+1,\nn} = \Gamma_{q+1} \lambda$, $\mathcal{C}_{\varphi} = \lambda_{q+1}^{-1}$, $\zeta = \tilde \zeta = \lambda_{q+1}$, $N_x = 0$, and $N_\circ = \Nfnn-\NcutSmall-\NcutLarge-5$. By \eqref{eq:lambdaqn:identity:3} we have that $N_\circ \geq 2 \Ndec +4$, and by \eqref{eq:lambdaqn:identity:2} we have that $\lambda_{q+1}^4 \leq (\Gamma_{q+1} (2\pi \sqrt{3})^{-1})^{\Ndec}$, and so condition \eqref{eq:slow_fast_3} is verified. Thus, from \eqref{eq:slow_fast_5} and summing on $\vecl$ using \eqref{item:lebesgue:1}, we deduce the $L^1$ estimate
\begin{align*}
\bigl\| \psi_{i,q} D^k \Dtq^m \mathcal{O}_{\nn,1,2}^{\rm good}\bigr\|_{L^1}
\les \delta_{q+1,\nn} \Gamma_{q+1}^{7}
  \lambda_{q,\nn} \lambda_{q+1}^{-1}
\lambda_{q+1}^k \MM{m, \NindSmall, \tau_{q}^{-1}\Gamma_{q+1}^{i-\cstarnn + 4}, \tilde\tau_{q}^{-1}\Gamma_{q+1}^{-1}}
\,.
\end{align*}
The bound \eqref{eq:div:corrector:L1} for $\mathcal{O}_{\nn,1,2}^{\rm good}$ now follows from the parameter inequality \eqref{eq:div:cor:ineq:3}, and the fact that $\Nfnn-\NcutSmall-\NcutLarge-5 \geq \max\{ 2 \Nindt +4 + 3\Nindv, 6 \Nindv\}$, which is a consequence of \eqref{eq:lambdaqn:identity:3} and \eqref{eq:nfnn:mess}. Similarly, from Proposition~\ref{prop:pipeconstruction}, Corollary~\ref{cor:deformation}, estimate  \eqref{e:a_master_est_p_uniform}, and definition \eqref{eq:nets:suck:2}, we have the $L^\infty$ estimate
\begin{align*}
\bigl\|D^k \Dtq^m \mathcal{O}_{\nn,1,2}^{\rm good}\bigr\|_{L^\infty(\supp \psi_{i,q})}
\les r_{q+1,\nn}^{-2}
\Gamma_q^{\badshaq} \Gamma_{q+1}^{14 \Upsilon(\nn)+8}   \lambda_{q,\nn} \lambda_{q+1}^{-1}
\lambda_{q+1}^k \MM{m, \NindSmall, \tau_{q}^{-1}\Gamma_{q+1}^{i-\cstarnn + 4}, \tilde\tau_{q}^{-1}\Gamma_{q+1}^{-1}}
\,.
\end{align*}
The bound \eqref{eq:div:corrector:Linfty} for $\mathcal{O}_{\nn,1,2}^{\rm good}$ then follows from the parameter  inequality
\eqref{eq:div:cor:ineq:4}.

Returning to \eqref{eq:nets:suck:1}, it remains to  consider the bad part, coming from the second term in \eqref{eq:nets:suck:2}, namely
\begin{align}
&\sum_{\xi,i,j,k, \vec{l}}
\partial_m \left( a_{(\xi)} \varrho_{(\xi)}\circ \Phi_{(i,k)} \xi^\ell \bigl( A_{\ell}^m  \epsilon_{\bullet p r} + A_{\ell}^\bullet  \epsilon_{m p r} \bigr) a_{p,(\xi)}^{\rm bad}   \partial_r \Phi_{(i,k)}^s \mathbb{U}_{\xi,q+1,\nn}^s \circ \Phi_{(i,k)} 
\right) 
= \mathbf{V}_1^\bullet + \mathbf{V}_2^\bullet
\label{eq:nets:suck:3}
\end{align}
where $\mathbf{V}_1$ corresponds to the term containing $A_{\ell}^m  \epsilon_{\bullet p r}$, and $\mathbf{V}_2$ corresponds to the term containing $A_{\ell}^\bullet  \epsilon_{m p r}$. When we distribute the $\partial_m$ derivative in \eqref{eq:nets:suck:3}, we need to be careful that the derivative does not land on the fast (at frequency $\lambda_{q+1}$) object $\varrho_{(\xi)} \UU_{\xi,q+1,\nn}^s$.

Let us first handle $\mathbf{V}_1$. For this purpose, note that 
\begin{align*}
   \xi^\ell A_{\ell}^m \partial_m \left((\varrho_{(\xi)} \UU_{\xi,q+1,\nn}^s) \circ \Phiik \right) 
   &=  \xi^\ell A_{\ell}^m \partial_m \Phiik^r  \left(\partial_r (\varrho_{(\xi)} \UU_{\xi,q+1,\nn}^s)\right) \circ \Phiik  \\
   &=     \left(\xi^\ell  \partial_\ell (\varrho_{(\xi)} \UU_{\xi,q+1,\nn}^s)\right) \circ \Phiik
   \\
   &= 0
\end{align*}
because $\xi \cdot \nabla$ annihilates both $\varrho_{(\xi)}$ and $\UU_{\xi,q+1,\nn}$, from \eqref{eq:derivative:along:pipe}. 
Thus, by \eqref{eq:nets:suck:3}, the term $\mathbf{V}_1^\bullet$ becomes
\begin{align}
\mathbf{V}_1^\bullet
&= \sum_{\xi,i,j,k, \vec{l}}
\partial_m \bigl( a_{(\xi)}  \xi^\ell A_{\ell}^m  \epsilon_{\bullet p r} a_{p,(\xi)}^{\rm bad}    \partial_r \Phi_{(i,k)}^s \bigr) \left(\varrho_{(\xi)} \mathbb{U}_{\xi,q+1,\nn}^s\right)\circ \Phi_{(i,k)} 
\label{eq:nets:suck:4}
\end{align}
Notice that by the Piola identity, we have $\partial_m  ( a_{(\xi)}  \xi^\ell A_{\ell}^m  \epsilon_{\bullet p r}   a_{p,(\xi)}^{\rm bad}    \partial_r \Phi_{(i,k)}^s  ) = \xi^\ell A_{\ell}^m \partial_m  ( a_{(\xi)}    \epsilon_{\bullet p r}  a_{p,(\xi)}^{\rm bad}    \partial_r \Phi_{(i,k)}^s  )$, and so the slow objects contain a derivative that costs the good factor of $\lambda_{q,\nn}\Gamma_{q+1}$, and a derivative that costs the bad factor of $\lambda_{q+1} r_{q+1,\nn}$.
We then apply the inverse divergence operator $\divH + \divR$ from Proposition~\ref{prop:intermittent:inverse:div}, with the following choices: $p=1$, $G=\xi^\ell A_{\ell}^m \partial_m  ( a_{(\xi)}    \epsilon_{\bullet p r} a_{p,(\xi)}^{\rm bad}    \partial_r \Phi_{(i,k)}^s)$, $\varrho = \varrho_{(\xi)} \mathbb{U}_{\xi,q+1,\nn}^s$, $\Phi = \Phi_{(i,k)}$, $v= \vlq$, and $N_* = M_* = \lfloor \frac 12 (\Nfnn-\NcutSmall-\NcutLarge-5) \rfloor$. By \eqref{eq:nasty:D:vq:old}, Corollary~\ref{cor:deformation}, and estimate  \eqref{e:a_master_est_p}, assumption~\eqref{eq:inverse:div:DN:G}  holds for $\mathcal{C}_G = |\supp(\eta_{i,j,k,q,\nn,\xi,\vecl})| \delta_{q+1,\nn} \Gamma_{q+1}^{j+7} (\Gamma_{q+1}^{-1} \lambda_{q+1} r_{q+1,\nn}) \lambda_{q,\nn}$, $\lambda = \lambda_{q+1} r_{q+1,\nn} \Gamma_{q+1}^{-1}$, $N_t = \Nindt$, $\nu = \tau_{q}^{-1}\Gamma_{q+1}^{i-\cstarnn + 4}$, and $\tilde \nu = \tilde\tau_{q}^{-1}\Gamma_{q+1}^{-1}$, while assumptions~\eqref{eq:DDpsi}--\eqref{eq:DDv} hold with $\lambda' = \tilde \lambda_q$. From Proposition~\ref{prop:pipeconstruction} and standard Littlewood-Paley analysis, upon letting $\zeta = \mu = \lambda_{q+1} r_{q+1,\nn}$, $\vartheta =  (\zeta^{-2} \Delta)^{-\dpot} (\varrho_{(\xi)} \mathbb{U}_{\xi,q+1,\nn}^s)$ (we note that $\varrho_{(\xi)} \mathbb{U}_{\xi,q+1,\nn}^s$ has mean zero from a direct computation using the definition of the intermittent pipe flows from Proposition~\ref{prop:pipeconstruction}), $\Lambda = \lambda_{q+1}$, $\mathcal{C}_* = \lambda_{q+1}^{-1}$, and $\alpha$ as in ~\eqref{eq:alpha:equation:1}, we have that condition~\eqref{eq:DN:Mikado:density} is satisfied.
With these chosen parameters, the condition~\eqref{eq:inverse:div:parameters:0} trivially holds, while condition \eqref{eq:inverse:div:parameters:1} is equivalent to $\lambda_{q+1}^4 \leq (\Gamma_{q+1} (2\pi \sqrt{3})^{-1})^{\Ndec}$, which in this case holds due to \eqref{eq:lambdaqn:identity:2}. Conditions \eqref{eq:inverse:div:v:global}--\eqref{eq:inverse:div:v:global:parameters} are verified for $N_\circ = M_\circ = 3 \Nindv$ and $\mathcal{C}_v = \Gamma_{q+1}^{\imax+1} \delta_{q}^{\sfrac 12} \lambda_q^2 \leq \Gamma_{q+1}^{\badshaq} \Theta_{q}^{\sfrac 12} \lambda_q^2 $, in view of \eqref{vlq}, \eqref{eq:imax:old}, and \eqref{eq:bob:Dq':old}, and \eqref{eq:tilde:lambda:q:def}--\eqref{eq:tilde:tau:q:def}. Lastly, the inequality \eqref{eq:riots:4} holds because $\dpot$ is taken to be sufficiently large to ensure \eqref{eq:CF:new:2}. From \eqref{eq:inverse:div:stress:1}, \eqref{eq:inverse:div:error:stress:bound}, and a sum on $\vecl$ as before, we deduce the $L^1$ bound
\begin{align}
&\norm{\psi_{i,q} D^k \Dtq^m (\divH + \divR)  \mathbf{V}_1}_{L^1} 
\notag \\
&\quad \les 
\delta_{q+1,\nn} \Gamma_{q+1}^{8}
\frac{(\Gamma_{q+1}^{-1} \lambda_{q+1} r_{q+1,\nn}) \lambda_{q,\nn}}{(\lambda_{q+1} r_{q+1,\nn}) \lambda_{q+1}}
\lambda_{q+1}^k \MM{m, \NindSmall, \tau_{q}^{-1}\Gamma_{q+1}^{i-\cstarnn + 4}, \tilde\tau_{q}^{-1}\Gamma_{q+1}^{-1}}
\,.
\label{eq:nets:suck:4:L1}
\end{align}
Since $\delta_{q+1,\nn} \Gamma_{q+1}^{7} \lambda_{q,\nn} \lambda_{q+1}^{-1} \leq \Gamma_{q+1}^{\shaq-1} \delta_{q+2}$ -- see~\eqref{eq:div:cor:ineq:3}, and $\lfloor \frac 12 (\Nfnn-\NcutSmall-\NcutLarge-5) \rfloor - \dpot \geq 3 \Nindv$ -- see~\eqref{eq:nfnn:mess}, the above bound is consistent with \eqref{eq:div:corrector:L1}.

The $L^\infty$ estimate is obtained similarly. We again apply Proposition~\ref{prop:intermittent:inverse:div} with the only parameters that change being: $p=\infty$,  $\mathcal{C}_G = \Gamma_q^\badshaq \Gamma_{q+1}^{14 \Upsilon(\nn)+8} \lambda_{q,\nn} (\Gamma_{q+1}^{-1} \lambda_{q+1}r_{q+1,\nn}$) --   see~\eqref{eq:inverse:div:v:global:parameters}, and  $\mathcal{C}_* = r_{q+1,\nn}^{-2} \lambda_{q+1}^{-1}$ -- see~\eqref{e:pipe:estimates:1} and \eqref{e:pipe:estimates:2}.
From \eqref{eq:inverse:div:stress:1} and \eqref{eq:inverse:div:error:stress:bound} we obtain
\begin{align}
&\norm{D^k \Dtq^m (\divH + \divR)  \mathbf{V}_1}_{L^\infty(\supp \psi_{i,q})}
\notag\\
&\quad \les 
\Gamma_q^{\badshaq} \Gamma_{q+1}^{14 \Upsilon(\nn) + 9}
\frac{(\Gamma_{q+1}^{-1} \lambda_{q+1} r_{q+1,\nn})  \lambda_{q,\nn} r_{q+1,\nn}^{-2}}{(\lambda_{q+1} r_{q+1,\nn}) \lambda_{q+1}}
\lambda_{q+1}^k \MM{m, \NindSmall, \tau_{q}^{-1}\Gamma_{q+1}^{i-\cstarnn + 4}, \tilde\tau_{q}^{-1}\Gamma_{q+1}^{-1}}
\,,
\label{eq:nets:suck:4:Linfty}
\end{align}
Since $\Gamma_q^{\badshaq} \Gamma_{q+1}^{14 \Upsilon(\nn) + 8} \lambda_{q,\nn} r_{q+1,\nn}^{-2} \lambda_{q+1}^{-1} \leq \Gamma_{q+1}^{\badshaq-1}$, see~\eqref{eq:div:cor:ineq:4}, the above bound is consistent with \eqref{eq:div:corrector:Linfty}.

It remains to consider the term $\mathbf{V}_2$ in \eqref{eq:nets:suck:3}. We distribute the $\partial_m$ derivative on either the  slow or the fast objects and decompose
\begin{align}
\mathbf{V}_2^\bullet 
&= \sum_{\xi,i,j,k, \vec{l}}
\partial_m \left( a_{(\xi)} \varrho_{(\xi)}\circ \Phi_{(i,k)} \xi^\ell   A_{\ell}^\bullet  \epsilon_{m p r}  a_{p,(\xi)}^{\rm bad}   \partial_r \Phi_{(i,k)}^s \mathbb{U}_{\xi,q+1,\nn}^s \circ \Phi_{(i,k)} 
\right)  \notag\\
&=
\sum_{\xi,i,j,k, \vec{l}}
\Bigl( 
\partial_m \bigl(   \xi^\ell   A_{\ell}^\bullet  \epsilon_{m p r}      \partial_r \Phi_{(i,k)}^s  
\bigr) a_{(\xi)} a_{p,(\xi)}^{\rm bad} 
+
 a_{m,(\xi)}^{\rm good}  \xi^\ell   A_{\ell}^\bullet  \epsilon_{m p r}  a_{p,(\xi)}^{\rm bad}   \partial_r \Phi_{(i,k)}^s \notag\\
&\qquad \qquad \qquad \qquad \qquad \qquad \qquad \qquad \qquad \ \  
-
a_{(\xi)} \xi^\ell   A_{\ell}^\bullet  \epsilon_{m p r}   \partial_m ( a_{p,(\xi)}^{\rm good})   \partial_r \Phi_{(i,k)}^s
\Bigr)
\left(\varrho_{(\xi)}  \mathbb{U}_{\xi,q+1,\nn}^s  \right)\circ \Phi_{(i,k)}
\notag\\
& +
\sum_{\xi,i,j,k, \vec{l}} a_{(\xi)} \xi^\ell A_{\ell}^\bullet  \epsilon_{m p r} a_{p,(\xi)}^{\rm bad}   \partial_r \Phi_{(i,k)}^s
\partial_m \left(  ( \varrho_{(\xi)}     \mathbb{U}_{\xi,q+1,\nn}^s) \circ \Phi_{(i,k)} 
\right)
\,.
\label{eq:nets:suck:5}
\end{align}
In the second equality above we have used the identities $\epsilon_{m p r} \partial_m  (a_{p,(\xi)}^{\rm bad}) = - \epsilon_{mpr} \partial_m (a_{p,(\xi)}^{\rm good})$, and that $\epsilon_{m p r} a_{m,(\xi)}^{\rm bad} a_{p,(\xi)}^{\rm bad} = 0$.
We first consider the terms in which the $\partial_m$ has not landed on functions related to pipe densities.  Similarly to the definition of $\mathbf{V}_1$ in \eqref{eq:nets:suck:4}, the slow functions in each term contain a derivative that costs the good factor of $\lambda_{q,\nn}\Gamma_{q+1}$, and a derivative that costs the bad factor of $\lambda_{q+1} r_{q+1,\nn}$. As such, when applying $\divH+\divR$ to the second to last line of \eqref{eq:nets:suck:5}, the resulting stress obeys exactly the same estimates as \eqref{eq:nets:suck:4:L1} and \eqref{eq:nets:suck:4:Linfty}.

Finally, we are left to consider the term on the last line of \eqref{eq:nets:suck:5}, in which the $\partial_m$ derivative lands on the fast objects, at frequency $\lambda_{q+1}$. The key observation is that this term is in fact equal to $0$! To see this cancellation, we recall the identification of $a_{p,(\xi)}^{\rm bad}$ in \eqref{eq:nets:suck:2}, and we recall from \eqref{eq:UU:explicit} that $\UU_{\xi,q+1,\nn}  
= - \xi' \varphi_{\xi,\lambda_{q+1},r_{q+1,\nn}}^{\prime\prime} 
+  \xi'' \varphi_{\xi,\lambda_{q+1},r_{q+1,\nn}}^{\prime}$.
With these identities, we have
\begin{align*}
&a_{(\xi)} \xi^\ell A_{\ell}^\bullet  \epsilon_{m p r}   a_{p,(\xi)}^{\rm bad}   \partial_r \Phi_{(i,k)}^s
\partial_m \left(  ( \varrho_{(\xi)}     \mathbb{U}_{\xi,q+1,\nn}^s) \circ \Phi_{(i,k)} 
\right)
\notag\\
&\qquad = a_{(\xi)} \xi^\ell A_{\ell}^\bullet  \epsilon_{m p r}  a_{p,(\xi)}^{\rm bad}   \partial_r \Phi_{(i,k)}^s
\partial_m \Phi^n   \partial_n( \varrho_{(\xi)}     \mathbb{U}_{\xi,q+1,\nn}^s) \circ \Phi_{(i,k)} 
\,.
\end{align*}
Note that from \eqref{eq:nets:suck:2}, that $a_{p,(\xi)}^{\rm bad}$ contains either a factor of $\partial_p \Phiik^k \xi'_k$ or a factor of $\partial_p \Phiik^k \xi^{\prime \prime}_k$. From \eqref{eq:derivative:along:pipe}, we also have that 
\begin{align*}
\partial_r \Phi_{(i,k)}^s
\partial_m \Phi^n \partial_n (\varrho_{(\xi)} \UU^s_{\xi,q+1,\nn}) 
&=- \partial_r \Phi_{(i,k)}^s \xi'_s
\partial_m \Phi^n \xi'_n \left((\xi'\cdot\nabla) \left(\varrho_{(\xi)} \varphi_{\xi,\lambda_{q+1},r_{q+1,\nn}}^{\prime\prime} 
\right)\right) \circ \Phi_{(i,k)} \notag\\
&\qquad +
\partial_r \Phi_{(i,k)}^s \xi^{\prime\prime}_s
\partial_m \Phi^n \xi'_n \left((\xi'\cdot\nabla) \left(\varrho_{(\xi)} \varphi_{\xi,\lambda_{q+1},r_{q+1,\nn}}^{\prime} 
\right)\right) \circ \Phi_{(i,k)} \notag\\
&\qquad - 
\partial_r \Phi_{(i,k)}^s \xi^\prime_s
\partial_m \Phi^n \xi_n^{\prime \prime} \left( (\xi^{\prime \prime }\cdot\nabla)\left(\varrho_{(\xi)}   \varphi_{\xi,\lambda_{q+1},r_{q+1,\nn}}^{\prime\prime} 
 \right)\right) \circ \Phi_{(i,k)} \notag\\
&\qquad + 
\partial_r \Phi_{(i,k)}^s \xi^{\prime \prime}_s
\partial_m \Phi^n \xi_n^{\prime \prime} \left( (\xi^{\prime }\cdot\nabla)\left(\varrho_{(\xi)}   \varphi_{\xi,\lambda_{q+1},r_{q+1,\nn}}^{\prime\prime} 
 \right)\right) \circ \Phi_{(i,k)} \, .
\end{align*}
Thus, the expression $\epsilon_{m p r} a_{p,(\xi)}^{\rm bad}   \partial_r \Phi_{(i,k)}^s
\partial_m \Phi^n   \partial_n( \varrho_{(\xi)}     \mathbb{U}_{\xi,q+1,\nn}^s) \circ \Phi_{(i,k)}$ equals the sum of eight terms, each of which is of the type 
\begin{align*}
    \epsilon_{m p r} \partial_p \Phiik^k \xi^{(1)}_k   \partial_r \Phi_{(i,k)}^s \xi^{(2)}_s \partial_m \Phi^n \xi^{(3)}_n
    \times 
    (\mbox{product of fast pipe densities or fast cutoffs})\circ \Phiik
\end{align*}
where  $( \xi^{(1)}, \xi^{(2)}, \xi^{(3)} ) \in \{ \xi^\prime, \xi^{\prime \prime} \}^3$. Since in each of these eight terms, at least two of the vectors in the tuple $( \xi^{(1)}, \xi^{(2)}, \xi^{(3)} )$ are equal to each other, either to $\xi'$ or $\xi''$, by the skew symmetry of the Levi-Civita symbol, we must have
\begin{align*}
    \epsilon_{m p r} \partial_p \Phiik^k \xi^{(1)}_k   \partial_r \Phi_{(i,k)}^s \xi^{(2)}_s \partial_m \Phi^n \xi^{(3)}_n = 0
    \,.
\end{align*}
This proves that the last term on the right side of \eqref{eq:nets:suck:5} is indeed equal to $0$, concluding the proof.
\end{proof}

\subsection{Transport errors}\label{ss:stress:transport}

\begin{lemma}\label{l:transport:error}
For all $0\leq \nn \leq \nmax$, the transport error satisfies the following estimates for $N,M\leq 3\NindLarge$:
\begin{subequations}
\begin{align}
\left\| \psi_{i,q} D^N \Dtq^M \left( \left( \divH + \divR \right)\left(  \Dtq w_{q+1,\nn} \right) \right) \right\|_{L^1} &\lesssim \delta_{q+2}\Gamma_{q+1}^{\shaq-1} \lambda_{q+1}^N \MM{M,\Nindt, \tau_q^{-1}\Gamma_{q+1}^{i-\cstarnn + 5},\Gamma_{q+1}^{-1}\tilde\tau_q^{-1}} \label{eq:trans:L1:est} \\
    \left\| D^k \Dtq^m \left( \left( \divH + \divR \right) (\Dtq w_{q+1,\nn}) \right) \right\|_{L^\infty(\supp \psi_{i,q})}  &\lesssim \Gamma_{q+1}^{\badshaq-1} \lambda_{q+1}^N \MM{M,\Nindt, \tau_q^{-1}\Gamma_{q+1}^{i-\cstarnn + 5},\Gamma_{q+1}^{-1}\tilde\tau_q^{-1}} \, . \label{eq:trans:Loo:est}
\end{align}
\end{subequations}
\end{lemma}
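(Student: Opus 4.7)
The plan is to compute $\Dtq w_{q+1,\nn}$ directly using the curl representation of $w_{q+1,\nn}$ from \eqref{wqplusoneone}, apply the inverse divergence operator from Proposition~\ref{prop:intermittent:inverse:div}, and then sum over the cutoff parameters exactly as in the proof of Lemma~\ref{lem:oscillation:general:estimate}. The crucial structural observation that makes this work is that $\Dtq \Phi_{(i,k)} = 0$ by Definition~\ref{def:transport:maps}, so that $\Dtq$ annihilates the fast-oscillating compositions $\UU_{\xi,q+1,\nn} \circ \Phi_{(i,k)}$ and $\WW_{\xi,q+1,\nn} \circ \Phi_{(i,k)}$. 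Consequently, after commuting $\Dtq$ past $\curl$ (which produces commutator terms involving $\nabla \vlq$), every material derivative effectively lands on one of the slow coefficients $a_{(\xi)}$, $\nabla \Phi_{(i,k)}^T$, or $(\nabla \Phi_{(i,k)})^{-1}$, each of which obeys the sharp, localized derivative bounds in Lemma~\ref{lem:a_master_est_p} and Corollary~\ref{cor:deformation}.

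First, I would split $w_{q+1,\nn} = w_{q+1,\nn}^{(p)} + w_{q+1,\nn}^{(c)}$ as in \eqref{wqplusoneonep}--\eqref{wqplusoneonec} and compute $\Dtq$ termwise. For the principal part, using $\Dtq(\WW_{\xi,q+1,\nn} \circ \Phi_{(i,k)}) = 0$, one has
\begin{align*}
\Dtq w_{(\xi)} = (\Dtq a_{(\xi)}) (\nabla \Phi_{(i,k)})^{-1} \WW_{\xi,q+1,\nn} \circ \Phi_{(i,k)} + a_{(\xi)} \Dtq\bigl((\nabla \Phi_{(i,k)})^{-1}\bigr) \WW_{\xi,q+1,\nn} \circ \Phi_{(i,k)}
\end{align*}
and analogously for $w_{(\xi)}^{(c)}$. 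Each resulting term is of the form $G \cdot (\mathrm{fast\ density}) \circ \Phi_{(i,k)}$, where $G$ is a slow coefficient whose $L^\infty$ (resp. $L^1$) norm on $\supp \eta_{(i,j,k)}$ can be estimated by combining \eqref{e:a_master_est_p}--\eqref{e:a_master_est_p_uniform} with the bounds \eqref{eq:Lagrangian:Jacobian:3}--\eqref{eq:Lagrangian:Jacobian:6} from Corollary~\ref{cor:deformation}. The $\Dtq$ derivative costs an extra factor of $\tau_q^{-1} \Gamma_{q+1}^{i-\cstarnn+3}$ in the first term and $\tau_q^{-1} \Gamma_{q+1}^{i-\cstar}$ in the second (in addition to a $\tilde\lambda_q$ from differentiating the Jacobian), which explains the modest increment from $\Gamma_{q+1}^{i-\cstarnn+4}$ to $\Gamma_{q+1}^{i-\cstarnn+5}$ in the final bound.

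Next, I would apply Proposition~\ref{prop:intermittent:inverse:div} with the parameter choices made in the oscillation-error argument: $v = \vlq$, $\Phi = \Phi_{(i,k)}$, $\mu = \lambda_{q+1} r_{q+1,\nn}$, $\Lambda = \lambda_{q+1}$, $\zeta = \lambda_{q+1} r_{q+1,\nn}$, and $\vartheta$ chosen so that $\varrho$ is a suitable iterated Laplacian of a primitive of the pipe density. Assumptions \eqref{eq:inverse:div:DN:G}--\eqref{eq:DDv} are verified exactly as in the proof of Lemma~\ref{lem:oscillation:general:estimate}, with $\const_{G,1}$ and $\const_{G,\infty}$ replaced by the corresponding quantities multiplied by $\tau_q^{-1} \Gamma_{q+1}^{i-\cstarnn+3}$. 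The concluding bound \eqref{eq:inverse:div:stress:1} then yields, after summing in $(i,j,k,\xi,\vecl)$ using \eqref{eq:inductive:partition}, \eqref{item:lebesgue:1} with $(r_1,r_2)=(\infty,2)$, and \eqref{eq:eta:cut:partition:unity}, exactly the estimates \eqref{eq:trans:L1:est}--\eqref{eq:trans:Loo:est} upon invoking the same parameter inequalities \eqref{eq:hopeless:mess:new} and \eqref{eq:clickity:clackity} that closed the estimate for $\mathcal{O}_{\nmax+1,\nn}$.

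The main technical obstacle will be the term in which $\Dtq$ lands on $(\nabla \Phi_{(i,k)})^{-1}$: here $\Dtq (\nabla \Phi_{(i,k)})^{-1}$ produces $(\nabla \Phi_{(i,k)})^{-1} \nabla \vlq (\nabla \Phi_{(i,k)})^{-1}$, and one must confirm via \eqref{eq:nasty:D:vq:old} at level $q$ that $\nabla \vlq$ on $\supp \psi_{i,q}$ is bounded by $\Gamma_{q+1}^{i+1} \delta_q^{\sfrac 12} \tilde\lambda_q \leq \Gamma_{q+1}^{i-\cstar} \tau_q^{-1} \tilde\lambda_q$, which is precisely the material derivative cost already built into the bounds \eqref{eq:Lagrangian:Jacobian:6}. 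Verifying that the resulting $\tilde\lambda_q$ is absorbed by the $\lambda_{q+1}^{-1}$ gained from the inverse divergence—so that the transport error obeys the same Nash-type budget as the other errors—is the one-half intermittency rule at work and requires the parameter inequalities already used in the other error estimates; no new inequalities beyond those listed in Section~\ref{sec:parameters:DEF} should be needed.
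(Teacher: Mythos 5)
Your overall strategy matches the paper's: use $\Dtq \Phi_{(i,k)} = 0$ to move the material derivative onto the slow coefficients $a_{(\xi)} \nabla\Phi_{(i,k)}^{-1}$, estimate the resulting slow amplitude via Lemma~\ref{lem:a_master_est_p} and Corollary~\ref{cor:deformation}, and then feed the product (slow amplitude) $\times$ (pipe density) $\circ \Phi_{(i,k)}$ into Proposition~\ref{prop:intermittent:inverse:div}. However, two concrete points in your parameter bookkeeping are wrong and would prevent the estimate from closing as written.

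First, your choice $\zeta = \lambda_{q+1} r_{q+1,\nn}$ is not admissible here. In the oscillation error, the fast profile is the frequency-truncated squared density $\LPqn\bigl((\varrho_{\xi,\lambda_{q+1},r_{q+1,\nn}})^2\bigr)$, and there one is free to pick $\zeta$ and then \emph{define} $\vartheta = \zeta^{2\dpot}\Delta^{-\dpot}\LPqn(\varrho^2)$, with Lemma~\ref{lem:tricky:tricky} giving the needed $L^\infty$ control on $\vartheta$. For the transport error the fast profile is the raw pipe density $\varrho_{\xi,\lambda_{q+1},r_{q+1,\nn}}$ itself, which by the construction in Proposition~\ref{prop:pipeconstruction} satisfies $\varrho = \lambda_{q+1}^{-2\dpot}\Delta^\dpot \vartheta_{\xi,\lambda_{q+1},r_{q+1,\nn}}$. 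This forces $\zeta = \lambda_{q+1}$; if you insist on $\zeta = \lambda_{q+1}r_{q+1,\nn}$ you must replace $\vartheta$ by $r_{q+1,\nn}^{-2\dpot}\vartheta$, and then assumption \eqref{eq:DN:Mikado:density} can only be satisfied with $\const_* \gtrsim r_{q+1,\nn}^{-2\dpot}$, which destroys the estimate since $\dpot$ is huge. With the correct $\zeta = \Lambda = \lambda_{q+1}$ and $\const_{*,p} = r_{q+1,\nn}^{\sfrac 2p -1}$, the output of \eqref{eq:inverse:div:stress:1} carries the $\lambda_{q+1}^{-1}$ gain you want.

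Second, the parameter inequalities you invoke to close the estimate are the wrong ones. Inequalities \eqref{eq:hopeless:mess:new} and \eqref{eq:clickity:clackity} are tuned to the oscillation errors, whose $\const_G$ contains the spatial-derivative factor $\lambda_{q,\nn}$ from the good directional derivative of $a_{(\xi)}^2$. The transport error's $\const_G$ instead contains the material-derivative cost $\tau_q^{-1}\Gamma_{q+1}^{i-\cstarnn+3}$ (plus, in the $L^\infty$ case, an extra $\Gamma_{q+1}^{\imax}$ from taking the supremum over $i$, cf.~\eqref{eq:imax:bound}). The inequalities that actually close the $L^1$ and $L^\infty$ transport bounds are \eqref{eq:drq:identity} and \eqref{eq:transport:Loo:ineq} respectively; these encode the one-half intermittency rule in the form $\tau_q^{-1}\, r_{q+1,\nn}\, \lambda_{q+1}^{-1} \lesssim \delta_{q+2}$, which is genuinely different from the oscillation-error bookkeeping. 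Also, for the final summation the paper uses \eqref{item:lebesgue:1} with $r_1 = r_2 = 2$, not $(\infty,2)$, so that the $\Gamma_{q+1}^{-i}$ from $\|\psi_{i,q}\|_{L^1}^{\sfrac 12}$ can cancel the $\Gamma_{q+1}^{i}$ in $\const_{G,1}$.

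These are bookkeeping errors within an otherwise correct scheme, but they are exactly the places where the one-half rule is tested, so getting them right is the content of the lemma.
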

\begin{proof}[Proof of Lemma~\ref{l:transport:error}]
Recall from the first line of \eqref{eq:idiing:RRqnn} that the transport error is given by $\divH+\divR$ applied to $\Dtq w_{q+1,\nn}$, which we further expand as
\begin{align}
    \Dtq w_{q+1,\nn} &=\Dtq \biggl(\sum_{\; i,j,k,\vecl,\xi} \curl \left( a_{\xi,i,j,k,q,\nn,\vecl} \nabla\Phi_{(i,k)}^{T}\UU_{\xi,q+1,\nn} \circ \Phi_{(i,k)} \right) \biggr) \nonumber\\
    &= \sum_{i,j,k,\vecl,\xi}\Dtq \left(a_{(\xi)} \nabla\Phi_{(i,k)}^{-1}\right) \WW_{\xi,q+1,\nn} \circ \Phi_{(i,k)}  + \sum_{i,j,k,\vecl,\xi} \left( \Dtq \nabla a_{(\xi)} \right) \times \left( \nabla\Phiik \UU_{\xi,q+1,\nn}\circ \Phiik \right) \notag\\
    &\qquad + \sum_{i,j,k,\vecl,\xi} \nabla a_{(\xi)} \times \left( \left( \Dtq \nabla\Phiik \right)\UU_{\xi,q+1,\nn}\circ \Phiik \right) \label{eq:transport:estimate:1}
\end{align}
Since the second two terms contain the corrector defined in \eqref{wqplusoneonec}, and the bounds for the corrector in \eqref{eq:w:oxi:c:est} are stronger than that of the principal part of the perturbation, we shall completely estimate only the first term and simply indicate the set-up for the second and third. Before applying Proposition~\ref{prop:intermittent:inverse:div}, recall that the inverse divergence of \eqref{eq:transport:estimate:1} needs to be estimated on the support of a cutoff $\psi_{i,q}$ in order to verify \eqref{eq:trans:L1:est} and \eqref{eq:trans:Loo:est}. Recall that for all $\nn$, $\Dtq w_{q+1,\nn}$ has zero mean.  Thus, although each individual term in the final equality in \eqref{eq:transport:estimate:1} may not have zero mean, we can safely apply $\divH$ and $\divR$ to each term and estimate the outputs while ignoring the last term in \eqref{eq:inverse:div:error:stress}.

We will apply Proposition~\ref{prop:intermittent:inverse:div} to the first term with the following choices.  Let $p\in\{1,\infty\}$.  We set $v=\vlq$, and $D_t=\Dtq=\partial_t+\vlq\cdot\nabla$ as usual.  We set $N_*=M_*=\lfloor \sfrac{1}{2}\left(\Nfnn-\NcutSmall-\NcutLarge-5\right) \rfloor$, with $\Ndec$ and $\dpot$ satisfying \eqref{eq:lambdaqn:identity:3}.  We define
$$  G = \Dtq (a_{(\xi)}\nabla\Phi_{(i,k)}^{-1}) \xi \, , $$
with $\lambda=\Gamma_{q+1}^{-1}\lambda_{q+1}r_{q+1,\nn}$, $\nu=\tau_q^{-1}\Gamma_{q+1}^{i-\cstarnn+5}$, $M_t=\Nindt$, $\tilde\nu=\tilde\tau_{q}^{-1}\Gamma_{q+1}^{-1}$. In order to obtain the value of the amplitude constant $\const_G$, which now depends on $p$, when $p=1$ we use  \eqref{e:a_master_est_p} with $r=1$ and \eqref{eq:Lagrangian:Jacobian:6}, while when $p=\infty$ we use \eqref{e:a_master_est_p_uniform} and \eqref{eq:Lagrangian:Jacobian:6}, obtaining
\begin{subequations}
\label{eq:Cg:transport}
\begin{align}
\const_{G,1} &= |\supp(\eta_{i,j,k,q,\nn,\xi,\vec{l}})| \delta_{q+1,\nn}^{\sfrac{1}{2}} \Gamma_{q+1}^{j+2} \tau_q^{-1} \Gamma_{q+1}^{i-\cstarnn+3} \,,  
\\
\const_{G,\infty} 
&= 
\Gamma_{q}^{\frac{\badshaq}{2}} 
\Gamma_{q+1}^{7 \Upsilon(\nn) + \frac 72}
\tau_q^{-1} \Gamma_{q+1}^{\imax -\cstarnn+4}
\notag\\
&
\leq 
\Gamma_{q}^{\frac{\badshaq}{2}} 
\Gamma_{q+1}^{\badshaq + 7 \Upsilon(\nn) + 9 -\cstarnn }
\tau_q^{-1} \Theta_q^{\sfrac 12} \delta_q^{-\sfrac 12} \leq \Gamma_{q}^{\frac{\badshaq}{2}} 
\Gamma_{q+1}^{\badshaq + 7 \Upsilon(\nmax) + 20 + \cstarzero -\cstarnn } \Theta_q^{\sfrac 12}   \lambda_q \, .  
\end{align}
\end{subequations}
In the above expressions we have used \eqref{eq:cstarn:inequality} to control $\cstarn$, \eqref{eq:imax:bound} to control $\Gamma_{q+1}^{\imax}$, and the definition of $\tau_q$ from \eqref{def:tau:q:actual}. We have that 
\begin{align}
\| D^N \Dtq^M G \|_{L^p} &\lessg \const_{G,p} \left(\lambda_{q+1}r_{q+1,\nn}\Gamma_{q+1}^{-1}\right)^N \MM{M,\Nindt-1,\tau_q^{-1}\Gamma_{q+1}^{i-\cstarnn + 4},\tilde\tau_q^{-1}\Gamma_{q+1}^{-1}} \notag\\
&\lessg \const_{G,p} \left(\lambda_{q+1}r_{q+1,\nn}\Gamma_{q+1}^{-1}\right)^N \MM{M,\Nindt,\tau_q^{-1}\Gamma_{q+1}^{i-\cstarnn + 5},\tilde\tau_q^{-1}\Gamma_{q+1}^{-1}}
,\label{eq:david:transport:0}
\end{align}
for all $N,M \leq \lfloor \sfrac{1}{2}\left(\Nfnn-\NcutSmall-\NcutLarge-5\right) \rfloor $ after using \eqref{eq:Nind:cond:2},
and so \eqref{eq:inverse:div:DN:G} is satisfied.  We set $\Phi=\Phi_{i,k}$ and $\lambda'=\tilde\lambda_q$.  Appealing as usual to Corollary~\ref{cor:deformation} and \eqref{eq:nasty:D:vq:old} with $q'=q$, which is valid from Proposition~\ref{prop:no:proofs}, we have that \eqref{eq:DDpsi} and \eqref{eq:DDv} are satisfied.

Referring to \eqref{item:pipe:1} from Proposition~\ref{prop:pipeconstruction}, we set $\varrho=\varrho_{\xi,\lambda_{q+1},r_{q+1,\nn}}$ and $\vartheta=\vartheta_{\xi,\lambda_{q+1},r_{q+1,\nn}}$.  Setting $\zeta=\lambda_{q+1}$, we have that \eqref{item:inverse:i} is satisfied.  Setting $\mu=\lambda_{q+1}r_{q+1,\nn}$ and referring to \eqref{item:pipe:2} from Proposition~\ref{prop:pipeconstruction}, we have that \eqref{item:inverse:ii} is satisfied.  Setting $\Lambda=\zeta=\lambda_{q+1}$, $C_{*,p}=r_{q+1,\nn}^{\frac{2}{p}-1}$, $\alpha$ as in \eqref{eq:alpha:equation:1}, and referring to \eqref{e:pipe:estimates:1} and \eqref{e:pipe:estimates:2} from Proposition~\ref{prop:pipeconstruction}, we have that \eqref{eq:DN:Mikado:density} is satisfied. \eqref{eq:inverse:div:parameters:0} is immediate from the definitions. Referring to \eqref{eq:lambdaqn:identity:2}, we have that \eqref{eq:inverse:div:parameters:1} is satisfied.

After summing on $(i,j,k,\nn,\xi,\vec{l})$, using \eqref{eq:inductive:partition} at level $q$, and \eqref{item:lebesgue:1} with $r_1=r_2=2$, we conclude from \eqref{eq:inverse:div:stress:1} that for $p=1$ and $N,M \leq\lfloor \sfrac{1}{2}\left(\Nfnn-\NcutSmall-\NcutLarge-5\right) \rfloor -\dpot$,
\begin{align}
&\left\| D^N \Dtq^M \left( \divH \left( \Dtq w_{q+1,\nn} \right) \right) \right\|_{L^1\left(\supp \psi_{i,q}\right)} \notag \\ 
&\quad  \lessg \delta_{q+1,\nn}^{\sfrac{1}{2}} \Gamma_{q+1}^{ \CLebesgue + 9 - \cstarnn} \tau_q^{-1} r_{q+1,\nn} \lambda_{q+1}^{-1} \lambda_{q+1}^N \MM{M,\Nindt,\tau_q^{-1}\Gamma_{q+1}^{i-\cstarnn + 5},\tilde\tau_q^{-1}\Gamma_{q+1}^{-1}} \notag\\
&\quad \lesssim \Gamma_{q+1}^{\shaq-1} \delta_{q+2} \lambda_{q+1}^N  \MM{M,\Nindt,\tau_q^{-1}\Gamma_{q+1}^{i-\cstarnn + 5},\tilde\tau_q^{-1}\Gamma_{q+1}^{-1}}  \label{eq:david:transport:2}
\end{align}
after also using \eqref{eq:drq:identity} and \eqref{eq:cstarn:inequality}.  From \eqref{eq:nfnn:mess}, these bounds are valid for all $N,M\leq 3\NindLarge$.  Similarly, for $p=\infty$, we have 
\begin{align}
&\left\| D^N \Dtq^M \left( \divH \left( \Dtq w_{q+1,\nn} \right) \right) \right\|_{L^\infty\left(\supp \psi_{i,q}\right)} \notag\\
&\quad \lessg 
\Gamma_{q}^{\frac{\badshaq}{2}} 
\Gamma_{q+1}^{\badshaq + 7 \Upsilon(\nmax) + 21 + \cstarzero -\cstarnn } \Theta_q^{\sfrac 12}   \lambda_q  r_{q+1,\nn}^{-1} \lambda_{q+1}^{-1}  \lambda_{q+1}^N \MM{M,\Nindt,\tau_q^{-1}\Gamma_{q+1}^{i-\cstarnn + 5},\tilde\tau_q^{-1}\Gamma_{q+1}^{-1}} \notag\\
&\quad \lesssim \Gamma_{q+1}^{\badshaq-1} \lambda_{q+1}^N  \MM{M,\Nindt,\tau_q^{-1}\Gamma_{q+1}^{i-\cstarnn + 5},\tilde\tau_q^{-1}\Gamma_{q+1}^{-1}}  \label{eq:david:transport:3}
\end{align}
after also using \eqref{def:cstarn:formula} and \eqref{eq:transport:Loo:ineq}.

To conclude the proof, we must still estimate the nonlocal ($\divR$) portion of the inverse divergence, and the error terms coming from the divergence correctors.  These error terms, however, obey stronger estimates than the bounds in \eqref{eq:david:transport:2} and \eqref{eq:david:transport:3}, and so we refer to the proof of \cite[Lemma 8.4]{BMNV21} for further details.
\end{proof}

\subsection{Nash errors}\label{ss:stress:Nash}

\begin{lemma}\label{l:Nash:error}
For all $0\leq \nn \leq \nmax$, the Nash errors satisfy the following estimates for $N,M\leq 3\NindLarge$:
\begin{subequations}
\begin{align}
\left\| \psi_{i,q} D^N \Dtq^M \left( \left( \divH + \divR \right)\left( w_{q+1,\nn}\cdot\nabla\vlq \right) \right) \right\|_{L^1} &\lesssim \delta_{q+2}\Gamma_{q+1}^{\shaq-1} \lambda_{q+1}^N \MM{M,\Nindt, \tau_q^{-1}\Gamma_{q+1}^{i-\cstarnn + 4},\Gamma_{q+1}^{-1}\tilde\tau_q^{-1}} \label{eq:Nash:L1:est} \\
\left\| D^k \Dtq^m \left( \left( \divH + \divR \right) (w_{q+1,\nn}\cdot\nabla\vlq) \right) \right\|_{L^\infty(\supp \psi_{i,q})}  &\lesssim \Gamma_{q+1}^{\badshaq-1} \lambda_{q+1}^N \MM{M,\Nindt, \tau_q^{-1}\Gamma_{q+1}^{i-\cstarnn + 4},\Gamma_{q+1}^{-1}\tilde\tau_q^{-1}} \, . \label{eq:Nash:Loo:est}
\end{align}
\end{subequations}
\end{lemma}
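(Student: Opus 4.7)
The proof will closely parallel that of Lemma~\ref{l:transport:error} in Subsection~\ref{ss:stress:transport}, since the Nash error has the same basic structure: a product of a slow amplitude $a_{(\xi)}$, a factor from the deformation $\nabla\Phi_{(i,k)}^{-1}$, and the high-frequency pipe flow $\WW_{\xi,q+1,\nn}\circ\Phi_{(i,k)}$, multiplied additionally by the low-frequency quantity $\nabla\vlq$.  As in Lemma~\ref{l:transport:error}, the principal part $w_{q+1,\nn}^{(p)}\cdot\nabla\vlq$ will be the main driver of the estimates, since the corrector contribution $w_{q+1,\nn}^{(c)}\cdot\nabla\vlq$ gains the small factor $\lambda_{q+1}r_{q+1,\nn}/\lambda_{q+1} = r_{q+1,\nn}$ relative to the principal part, cf.~\eqref{eq:w:oxi:c:est}--\eqref{eq:w:oxi:c:unif}, and the nonlocal contribution from $\divR$ will be suitably subleading.

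First I would write $w_{q+1,\nn}^{(p)}\cdot\nabla\vlq$ explicitly as a sum indexed by $(\xi,i,j,k,\vec l)$, grouping the slow factors into a scalar/vector $G$ so that the expression has the divergence-form structure amenable to Proposition~\ref{prop:intermittent:inverse:div}.  Specifically, with $A_{(i,k)}=(\nabla\Phi_{(i,k)})^{-1}$, I will set
\begin{align*}
G^{\bullet} = a_{(\xi)}\, (A_{(i,k)})^{\bullet}_{m}\,\xi^{m}\,(\partial_s \vlq)\,\partial_r\Phi_{(i,k)}^{s}\cdot\,\text{(index bookkeeping)},
\end{align*}
multiplying a fast factor of the form $(\varrho_{\xi,\lambda_{q+1},r_{q+1,\nn}}\mathbb{U}_{\xi,q+1,\nn}^{s})\circ\Phi_{(i,k)}$ or $\WW_{\xi,q+1,\nn}\circ\Phi_{(i,k)}$, depending on the variant of the inverse divergence being invoked.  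To estimate $G$ in both $L^1$ and $L^\infty$ I would combine Lemma~\ref{lem:a_master_est_p} for $a_{(\xi)}$, Corollary~\ref{cor:deformation} for $\nabla\Phi_{(i,k)}^{-1}$ and $\partial_r\Phi_{(i,k)}^{s}$, and the sharp pointwise bounds for $\vlq$ at level $q'=q$ (available from Proposition~\ref{prop:no:proofs}) given in \eqref{eq:nasty:D:vq:old}, which yields $\|D\vlq\|_{L^\infty(\supp\psi_{i,q})}\les \Gamma_{q+1}^{i+1}\delta_q^{\sfrac12}\tilde\lambda_q$.  This leads to the parameter choices
\begin{align*}
\const_{G,1} &\les |\supp(\eta_{i,j,k,q,\nn,\xi,\vec{l}})|\,\delta_{q+1,\nn}^{\sfrac12}\,\Gamma_{q+1}^{j+i+4}\,\delta_q^{\sfrac12}\tilde\lambda_q,\\
\const_{G,\infty} &\les \Gamma_{q}^{\sfrac{\badshaq}{2}}\Gamma_{q+1}^{7\Upsilon(\nn)+\frac{7}{2}}\,\Gamma_{q+1}^{\imax+1}\delta_q^{\sfrac12}\tilde\lambda_q,
\end{align*}
with $\lambda=\Gamma_{q+1}^{-1}\lambda_{q+1}r_{q+1,\nn}$, $\nu=\tau_q^{-1}\Gamma_{q+1}^{i-\cstarnn+4}$, $\tilde\nu=\tilde\tau_q^{-1}\Gamma_{q+1}^{-1}$, and $\lambda'=\tilde\lambda_q$.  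Validating \eqref{eq:inverse:div:DN:G}--\eqref{eq:DDv}, \eqref{eq:inverse:div:parameters:0}--\eqref{eq:inverse:div:parameters:1}, and \eqref{eq:inverse:div:v:global}--\eqref{eq:riots:4} is then identical to the corresponding verification in the transport-error argument, upon choosing the Mikado density $\varrho$, potential $\vartheta$, and scales $\zeta=\Lambda=\lambda_{q+1}$, $\mu=\lambda_{q+1}r_{q+1,\nn}$, $\const_{*,p}=r_{q+1,\nn}^{\sfrac{2}{p}-1}$, $\alpha$ as in \eqref{eq:alpha:equation:1}, and $N_*=M_*=\lfloor\tfrac12(\Nfnn-\NcutSmall-\NcutLarge-5)\rfloor$.

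Having verified the hypotheses, I would apply \eqref{eq:inverse:div:stress:1} and sum over $(i,j,k,\xi,\vec l)$, using the partition of unity in \eqref{eq:inductive:partition} at level $q$ and the support-volume bound \eqref{item:lebesgue:1} with $(r_1,r_2)=(2,2)$.  For the $L^1$ estimate the resulting upper bound is roughly
\begin{align*}
\lambda_{q+1}^{-1}\cdot r_{q+1,\nn}\cdot \delta_{q+1,\nn}^{\sfrac12}\,\delta_q^{\sfrac12}\,\tilde\lambda_q\,\Gamma_{q+1}^{\CLebesgue+9-\cstarnn}\,\lambda_{q+1}^{N}\MM{M,\ldots},
\end{align*}
which must be compared against $\delta_{q+2}\Gamma_{q+1}^{\shaq-1}\lambda_{q+1}^{N}\MM{M,\ldots}$.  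The desired bound \eqref{eq:Nash:L1:est} follows from the \emph{one-half intermittency rule} encoded in \eqref{eq:drq:identity}, which equates the above product (up to $\Gamma$-losses absorbed using \eqref{eq:cstarn:inequality}) with $\delta_{q+2}\Gamma_{q+1}^{\shaq-1}$.  For the $L^\infty$ estimate the analogous comparison yields $r_{q+1,\nn}^{-1}\delta_q^{\sfrac12}\tilde\lambda_q\lambda_{q+1}^{-1}$ times $\Gamma$-factors, to be matched with $\Gamma_{q+1}^{\badshaq-1}$; this is precisely the content of \eqref{eq:transport:Loo:ineq} combined with \eqref{eq:imax:bound}, which are the $L^\infty$ counterparts of the same ``one-half rule.''  The nonlocal correction from $\divR$ and the divergence-corrector contribution $w_{q+1,\nn}^{(c)}\cdot\nabla\vlq$ are handled as in the final paragraph of the proof of Lemma~\ref{l:transport:error}, since they enjoy strictly better estimates.

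The main obstacle is algebraic rather than conceptual: verifying that the parameter choices conspire to give exactly the target sizes on both sides, with \emph{no} slack in either $L^1$ or $L^\infty$.  Because the Nash error saturates the one-half rule on both scales simultaneously, any miscount in the $\Gamma_{q+1}$ losses arising from $\cstarnn$, $\imax$, $\Upsilon(\nn)$, and $\CLebesgue$ would invalidate the scheme; I therefore expect the bulk of the bookkeeping to concentrate in translating $(\Gamma_{q+1}^{-1}\lambda_{q+1}r_{q+1,\nn})^{N}$ and $\tau_q^{-1}\Gamma_{q+1}^{i-\cstarnn+4}$ losses through the inverse divergence and in verifying the precise form of \eqref{eq:drq:identity} and \eqref{eq:transport:Loo:ineq}, which is where the extra margin $\Gamma_{q+1}^{-1}$ on the right sides of \eqref{eq:Nash:L1:est}--\eqref{eq:Nash:Loo:est} must be recovered.
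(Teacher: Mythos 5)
Your proposal is correct and takes essentially the same route as the paper: treat the Nash error as a close cousin of the transport error, estimate the slow factor $G = a_{(\xi)}\nabla\Phi_{(i,k)}^{-1}\xi\cdot\nabla\vlq$ using Lemma~\ref{lem:a_master_est_p}, Corollary~\ref{cor:deformation}, and the propagated bound \eqref{eq:nasty:D:vq:old}, and then invoke Proposition~\ref{prop:intermittent:inverse:div} with the same pipe-density and parameter choices as in Lemma~\ref{l:transport:error}, closing the $L^1$ and $L^\infty$ estimates via \eqref{eq:drq:identity} and \eqref{eq:transport:Loo:ineq} respectively. The paper streamlines the bookkeeping by converting $\Gamma_{q+1}^{i+1}\delta_q^{1/2}\tilde\lambda_q$ into $\tau_q^{-1}\Gamma_{q+1}^{i-\cstarnn+3}$ via \eqref{eq:Lambda:q:x:1:NEW} so that $\const_{G,1},\const_{G,\infty}$ coincide literally with \eqref{eq:Cg:transport}, making the rest of the argument a verbatim repeat of the transport case; your version keeps these factors in their raw form, which is equivalent.
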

\begin{proof}[Proof of Lemma~\ref{l:Nash:error}]
Recall from the first line of \eqref{eq:idiing:RRqnn} that the Nash error is given by $\divH+\divR$ applied to $w_{q+1,\nn}\cdot\nabla\vlq$, which we further expand as
\begin{align}
    w_{q+1,\nn}\cdot\nabla\vlq &= \sum_{i,j,k,\vecl,\xi} \notag \curl\left( a_{\xi,i,j,k,q,\nn,\vecl} \nabla\Phiik^T \UU_{\xi,q+1,\nn} \circ \Phiik \right) \cdot \nabla\vlq  \\
    &= \Biggl(\sum_{i,j,k,\vecl,\xi} \nabla a_{(\xi)} \times \left( \nabla \Phiik^T \UU_{\xi,q+1,\nn} \circ \Phiik \right)  +   \sum_{i,j,k,\vecl,\xi} a_{(\xi)} \nabla\Phi_{(i,k)}^{-1}\WW_{\xi,q+1,\nn} \circ \Phi_{(i,k)} \Biggr) \cdot \nabla\vlq 
    \,.
    \label{eq:Nash:estimate:1}
\end{align}
Due to the fact that the first term arises from the addition of the corrector defined in \eqref{wqplusoneonec}, and the fact that the bounds for the corrector in \eqref{eq:w:oxi:c:est} are stronger than that of the principal part of the perturbation, we shall only consider the second term. Note that the Nash error can be written as $\div ( w_{q+1,\nn}\otimes \vlq)$ and so has zero mean.  Thus, although each individual term in the final equality in \eqref{eq:Nash:estimate:1} may not have zero mean, we can safely apply $\divH$ and $\divR$ to each term and estimate the outputs while ignoring the last term in \eqref{eq:inverse:div:error:stress}.

We will apply Proposition~\ref{prop:intermittent:inverse:div} to the second term with the following choices.  We set $v=\vlq$, and $D_t=\Dtq=\partial_t + \vlq\cdot\nabla$ as usual.  We set $N_*=M_*= \lfloor \sfrac{1}{2}\left(\Nfnn-\NcutLarge-\NcutSmall-4\right)\rfloor $, with $\Ndec$ and $\dpot$ satisfying \eqref{eq:lambdaqn:identity:3}.  We define
\begin{equation*}
G = a_{(\xi)} \nabla\Phi_{(i',k)}^{-1} \xi \cdot \nabla \vlq
\end{equation*}
and set $\const_{G,1}$, $\const_{G,\infty}$ to be equal to the quantities in \eqref{eq:Cg:transport}, $\lambda=\Gamma_{q+1}^{-1}\lambda_{q+1}r_{q+1,\nn}$, $\nu=\tau_q^{-1}\Gamma_{q+1}^{i-\cstarnn+4}$, $M_t=\Nindt$, and $\tilde\nu=\tilde\tau_q^{-1}\Gamma_{q+1}^{-1}$. Note that these choices match exactly the choices from the estimates on the transport error. From \eqref{e:a_master_est_p} with $r=1$ and $r_1=r_2=2$, \eqref{eq:Lagrangian:Jacobian:6}, and \eqref{eq:nasty:D:vq:old} at level $q$, we have that for $N,M\leq \lfloor \sfrac{1}{2}\left(\Nfnn-\NcutLarge-\NcutSmall-4\right)\rfloor $
\begin{align}
\left\| D^N \Dtq^M G \right\|_{L^1} &\lessg \const_{G,p} \left(\Gamma_{q+1}^{-1}\lambda_{q+1}r_{q+1,\nn}\right)^N \MM{M,\Nindt,\tau_q^{-1}\Gamma_{q+1}^{i+1},\tilde\tau_q^{-1}\Gamma_{q+1}^{-1}}, \label{eq:david:nash:1}
\end{align}
and so \eqref{eq:inverse:div:DN:G} is satisfied.  Note that we have used \eqref{eq:Lambda:q:x:1:NEW} when converting the $\delta_q^{\sfrac{1}{2}} \tilde\lambda_q$ coming from \eqref{eq:nasty:D:vq:old} at level $q$ to a $\tau_q^{-1}$.  Setting $\Phi=\Phi_{(i,k)}$ and $\lambda'=\tilde\lambda_q$, we have that \eqref{eq:DDpsi} and \eqref{eq:DDv} are satisfied as usual.  The choices of $\varrho$, $\vartheta$, $\zeta$, $\mu$, $\Lambda$, and $\const_*$ are identical to those of the transport error (both terms contain $\WW_{\xi,q+1,\nn}\circ\Phiik$), and so we have that \eqref{item:inverse:i}-\eqref{item:inverse:ii}, \eqref{eq:DN:Mikado:density}, \eqref{eq:inverse:div:parameters:0}, and \eqref{eq:inverse:div:parameters:1} are satisfied as well.  Since the bound \eqref{eq:david:nash:1} is identical to that of \eqref{eq:david:transport:0}, we obtain an estimate identical to \eqref{eq:david:transport:2} in the case $p=1$.  The case $p=\infty$ and the estimates for the $\divR$ portion follows analogously to that for the first term from the transport error.  We omit further details.
\end{proof}

\section{Parameters}
\label{sec:parameters}

The purpose of the first subsection is to define the $q$-independent parameters \emph{in order}, beginning with the regularity index $\beta$, and ending with the number $a_*$, which will be used to absorb every implicit constant throughout the paper.  Then in Section~\ref{ss:q:dependent:parameters}, we define the parameters which depend on $q$, as well as the parameters which depend in addition on $n$.  Section~\ref{ss:many:inequalities} contains, in no particular order, consequences of the definitions made in the previous two sections which are necessary to close the estimates in the proof.

\subsection{Definitions and hierarchy of the parameters}
\label{sec:parameters:DEF}
The parameters in our construction are chosen as:
\begin{enumerate}[(i)]
\item \label{item:beta:DEF} Choose an $L^2$ regularity index $\beta \in [\sfrac 13 ,\sfrac 12)$; in light of~\cite{BDLSV17,Isett2018}, there is no reason to take $\beta < \sfrac 13$.
\item Choose $b \in (1, \sfrac 32)$ sufficiently small such that 
\begin{align}
2\beta b &< 1
\label{eq:b:DEF} \,.
\end{align}

\item \label{item:nmax:pmax:DEF}  With $\beta$ and $b$ chosen, we may now designate a number of parameters:
\begin{enumerate}  
\item The parameter $\nmax$, which denotes the total number of higher order stresses $\RR\qn$, is defined as the smallest integer such that
\begin{subequations}
\label{eq:nmax:cond:all}
\begin{align}
\frac{2}{\nmax+1} &< \frac{(b-1)^2}{2b} \label{eq:nmax:cond:3} \\
2\beta b + \frac{3+\lceil \log_2 \nmax \rceil}{2(\nmax+1)} &< \frac{1}{2} + \frac{\nmax}{2(\nmax+1)}
\label{eq:nmax:cond:1}
\, .
\end{align}
\end{subequations}
Notice that the second inequality is possible since $2\beta b < 1$.

\item The parameter $\CLebesgue$ appearing in \eqref{eq:psi:i:q:support:old} to quantify $\left\| \psi_{i,q} \right\|_{L^1}$ is defined as
\begin{align}
\CLebesgue = \frac{b+4}{b-1}
\,.
\label{eq:CLebesgue:DEF}
\end{align}
\item The exponent $\mathsf{C_R}$ is a small parameter used to estimate the Reynolds stress, cf.~\eqref{eq:Rq:inductive:assumption}, and then absorb geometric constants in the construction. It is defined as 
\begin{align}
 \mathsf{C_R} = 10 b + 1 
 \,.
 \label{eq:shaq:DEF}
\end{align}
\end{enumerate}
\item The parameter $\cstar$, which is first introduced in \eqref{eq:sharp:Dt:psi:i:q:mixed:old} and utilized in Sections~\ref{sec:statements} and \ref{s:stress:estimates} to control small losses in the sharp material derivative estimates, is defined in terms of $\nmax$ as 
\begin{align}
 \cstar = 4 \nmax + 5\,.
 \label{eq:cstar:DEF}
\end{align}
\item The parameter $\eps_\Gamma > 0 $, which is used in \eqref{def:Gamma:q:actual} to quantify the \emph{finest} frequency scale between $\lambda_q$ and $\lambda_{q+1}$ utilized throughout the scheme, is defined as any real number such that
\begin{subequations}
\label{eq:eps:Gamma:DEF}
\begin{align}
\varepsilon_\Gamma 300 (\nmax+1)(\lceil \log_2 \nmax\rceil) &< b-1 \label{eq:eps:still:doing:it} \\
7\varepsilon_\Gamma (2 + \lceil \log_2 \nmax \rceil + 22 + 4\nmax ) &< \frac{b-1}{2b} - \frac{3}{2(b-1)(\nmax+1)} \label{eq:eps:gamma:blechhhh} \\
\varepsilon_\Gamma \left( 5 + (2 + \lceil \log_2 \nmax \rceil )(9+\CLebesgue) \right) &< \frac{1}{2} - \frac{2+ \lceil \log_2 \nmax \rceil}{\nmax} \label{eq:eg:new:new} \\
\varepsilon_\Gamma \left( \mathsf{C_R}\left(\frac{b-1}{b}\right) + 15 + 9(3 + \lceil \log_2(\nmax)\rceil) \right) &< \frac{1}{2}\left( 1 + \frac{\nmax}{\nmax+1} \right) - 2\beta b - \frac{3+\lceil \log_2 \nmax \rceil}{2(\nmax+1)} \,  \label{eq:vareps:gamma:new:new} \\
\eps_\Gamma \left(\frac 12\CLebesgue+\cstarzero+ 10 + \frac 12 \mathsf{C_R}\right)  &< 1- 2\beta b
\label{eq:eps:gamma:1:new}
\\
\eps_\Gamma \left( 7 + \mathsf{C_R} + \nmax (8 + \CLebesgue) \right)  
&< \frac{1-2\beta}{10} \label{eq:eps:gamma:1} \\
2b\varepsilon_\Gamma(\cstarzero+7) < 1- \beta \label{eq:eps:gamma:4}
\,. 
\end{align}
\end{subequations}
We note that the right-hand side of \eqref{eq:eps:gamma:blechhhh} is positive from \eqref{eq:nmax:cond:3} and the right-hand sides of \eqref{eq:eg:new:new} and \eqref{eq:vareps:gamma:new:new} are positive from \eqref{eq:nmax:cond:1}.
\item The parameter $\badshaq$ is defined as
\begin{equation}\label{eq:badshaq:def}
    \badshaq = \frac{1}{\varepsilon_\Gamma(b-1)(\nmax+1)} \, .
\end{equation}
\item The parameter $\alpha > 0$ from the $L^1$ loss of the inverse divergence operator  is now defined as 
\begin{align}
\alpha = \frac{\eps_\Gamma (b-1)}{2b}  
\,.
\label{eq:alpha:DEF} 
\end{align}
\item The parameters $\NcutSmall$ and $\NcutLarge$ are used in Section~\ref{sec:cutoff} in order to define the velocity and stress cutoff functions; see~\eqref{eq:h:j:q:def}, \eqref{eq:psi:i:q:recursive}, and \eqref{eq:g:i:q:n:def}. These large integers are chosen solely in terms of $b$ and $\eps_\Gamma$ as 
\begin{align}
\frac 12 \NcutLarge = \NcutSmall = \left\lceil \frac{3b}{ \eps_\Gamma (b-1)} + \frac{15 b}{2} \right\rceil
\,.
\label{eq:Ncut:DEF}
\end{align}
\item The parameter $\Nindt$, which is the number of sharp material derivatives propagated on stresses and velocities in Sections~\ref{section:inductive:assumptions} through \ref{s:stress:estimates}, is chosen as the smallest integer for which we have
\begin{align}
\Nindt  =  \left \lceil\frac{4}{\eps_\Gamma (b-1)} \right \rceil  \NcutSmall
\,.
\label{eq:Nind:t:DEF}
\end{align}
\item The parameter $\Nindv$, whose primary role is to quantify the number of sharp spatial derivatives propagated on the velocity increments and stresses, cf.~\eqref{eq:inductive:assumption:derivative} and~\eqref{eq:Rq:inductive:assumption}, is chosen as the smallest integer for which we have the bound
\begin{align}
4 b \Nindt  + 8 +  b (\mathsf{C_R}+3) \eps_\Gamma (b-1) + 2\beta (b^3-1) 
&< \eps_\Gamma (b-1)  \Nindv 
\,.
\label{eq:Nind:v:DEF}
\end{align}
\item The value of the decoupling parameter $\Ndec$, which is used in the $L^p$ decorrellation conditions \eqref{eq:slow_fast_3} and \eqref{eq:inverse:div:parameters:1}, is chosen as the smallest integer for which
\begin{align}\label{eq:Ndec:DEF}
\Ndec
> \frac{8b}{(b-1)\varepsilon_\Gamma} 
\,.
\end{align}
\item The parameter $\dpot$, which is used in the inverse divergence operator of Proposition~\ref{prop:intermittent:inverse:div} to count the order of a parametrix expansion, is chosen as the smallest integer for which we have
\begin{align}
(\dpot - 1) \varepsilon_\Gamma (b-1) 
&> b(6+ 13\NindLarge) + 2\beta b^2 + (2 + \lceil \log_2 \nmax \rceil ) \left( \frac{b-1}{2(\nmax+1)} + \varepsilon_\Gamma (b-1) (9+\CLebesgue ) \right) 
\label{eq:dpot:DEF}
\,.
\end{align}
\item\label{item:Nfin:DEF} The value of $\Nfin$, which is introduced in Section~\ref{section:inductive:assumptions} and used to quantify the highest order derivative estimates utilized throughout the scheme is chosen as the smallest integer such that 
\begin{align}
\label{eq:Nfin:DEF}
\frac{3}{2} \Nfin >  (2 \NcutSmall + \NcutLarge + 14 \Nindv + 2\dpot + 2\Ndec + 12) 2^{\nmax+1}
\,.
\end{align} 

\item\label{item:astar:DEF} Having chosen all the previous parameters in items \eqref{item:beta:DEF}--\eqref{item:Nfin:DEF}, there exists a {\em sufficiently large} parameter $a_* \geq 1 $, which depends on all the parameters listed above (which recursively means that $a_* = a_*(\beta ,b)$), and which allows us to choose $a$ an {\em arbitrary number} in the interval $[a_*,\infty)$. While we do not give a formula for $a_*$ explicitly, it is chosen so that $a_*^{(b-1)\eps_\Gamma}$ is at least twice larger than {\em all the implicit constants in the $\les$ symbols throughout the paper}; note that these constants only depend on the parameters in items \eqref{item:beta:DEF}--\eqref{item:Nfin:DEF} --- never on $q$ --- which justifies the existence of $a_*$.
\end{enumerate}

\subsection{Definitions of the \texorpdfstring{$q$}{q}-dependent parameters}\label{ss:q:dependent:parameters}

\subsubsection{Parameters which depend only on \texorpdfstring{$q$}{q}}
For $q\geq 0$, we define the fundamental frequency parameter as
\begin{align}
\lambda_q &= 2^{\big{\lceil} (b^q) \log_2 a \big{\rceil}} \,. \label{def:lambda:q:actual} 
\end{align}
Definition~\eqref{def:lambda:q:actual} gives that $\lambda_q$ is an integer power of $2$, and that we have the bounds
\begin{align}
a^{(b^q)} \leq \lambda_q \leq 2 a^{(b^q)}
\qquad\mbox{and}\qquad
\frac 13 \lambda_q^b \leq \lambda_{q+1} \leq 2
\lambda_q^b
\label{eq:lambda:q:to:q+1}
\end{align}
for all $q\geq 0$. Throughout the paper, if there exists a universal constant $C>0$ such that $C^{-1} A \leq B \leq C A$, we say that $A \approx B$. In particular, the above reads $\lambda_q \approx a^{(b^q)}$ and $\lambda_{q+1} \approx \lambda_q^b$. It will be convenient to denote the quotient of two consecutive frequency parameters by
\begin{align}
\Theta_{q+1} =  \lambda_{q+1}\lambda_q^{-1} \approx \lambda_q^{b-1}\,.
\label{eq:Theta:q+1:actual}
\end{align}
The fundamental amplitude parameter is defined in terms of $\lambda_q$ by
\begin{align}
\delta_q &=  \lambda_1^{(b+1)\beta}  \lambda_q^{-2\beta} \label{def:delta:q:actual}
\,.
\end{align}
We now introduce a parameter which is defined in terms of the parameter $\eps_\Gamma$ from \eqref{eq:eps:Gamma:DEF} and used repeatedly to mean ``a tiny power of the frequency parameter'':
\begin{align}
\Gamma_{q+1} &= \Theta_{q+1}^{\varepsilon_\Gamma} 
\,.
\label{def:Gamma:q:actual}
\end{align}
In order to cap off our derivative losses, we need to mollify in space and time using the operators described in Section~\ref{sec:mollification:stuff}. This is done in terms of the following space and time parameters:
\begin{align}
\tilde\lambda_q &= \lambda_q\Gamma_{q+1}^5   \label{eq:tilde:lambda:q:def} \\
\tilde\tau_q^{-1} &= \tau_q^{-1} \tilde\lambda_q^3 \tilde\lambda_{q+1} \label{eq:tilde:tau:q:def}
\,.
\end{align}
While $\tilde\tau_q$ is used for mollification and thus for rough material derivative bounds, the fundamental temporal parameter used in the paper for sharp material derivative bounds is
\begin{align}
\tau_q &= \bigl( \delta_q^{\sfrac 12}\tilde\lambda_q \Gamma_{q+1}^{\cstar+6} \bigr)^{-1} \label{def:tau:q:actual}
\,. 
\end{align}
Note that besides depending on the parameters introduced in \eqref{item:beta:DEF}--\eqref{item:astar:DEF}, the parameters introduced above only depend on $q$, but are independent of $n$. We note that the definitions of the parameters listed so far in this subsection have not been changed from the definitions used in \cite{BMNV21}.

\subsubsection{Parameters which depend on $q$ and $n$}
The rest of the parameters depend on both $q$ and $n$.  We start by defining the frequency parameter $\lambda_{q,n}$ and the intermittency parameter $r_{q+1,n}$ by
\begin{align}
\lambda\qn
&=  \begin{cases}
 2^{\lceil (1 + 6 (b-1) \eps_\Gamma) \log_2 \lambda_q \rceil}, & n=0 \\
 2^{\lceil (\frac 12 -\frac{n}{2(\nmax+1)}) \log_2 \lambda_q +  (\frac 12 + \frac{n}{2(\nmax+1)}) \log_2 \lambda_{q+1}\rceil}, & 1\leq n \leq \nmax 
\end{cases} \, ,
\label{eq:lambda:q:n:def} \\
    r_{q+1,n} &= \lambda_{q+1}^{-1}
    2^{\lceil \frac 12 \log_2\lambda\qn + \frac 12 \log_2 \lambda_{q+1} - 2 \log_2 \Gamma_{q+1} \rceil}
    \label{eq:rqn:perp:definition} 
\end{align}
for $0\leq n \leq \nmax$.
In particular, \eqref{eq:lambda:q:n:def} shows that $\lambda_{q,n}$ is a power of $2$, with
$\lambda_{q,0} \approx \lambda_q \Gamma_{q+1}^6$ and $\lambda_{q,n} \approx \lambda_q^{\frac 12 - \frac{n}{2(\nmax+1)}} \lambda_{q+1}^{\frac 12 + \frac{n}{2(\nmax+1)}}$ for $1\leq n \leq \nmax$. Similarly, \eqref{eq:rqn:perp:definition} shows that $\lambda_{q+1} r_{q+1,n}$ is an integer power of $2$, and we have $\lambda_{q+1}r_{q+1,\nn}\approx \lambda_{q+1}^{\sfrac 12} \lambda\qnn^{\sfrac 12} \Gamma_{q+1}^{-2}$. A consequence of these approximations are the inequalities
\begin{equation}\label{ineq:rq:useful}
    r_{q+1,\nn}^{-2} \leq 2 \frac{\lambda_{q+1}}{\lambda_q} = 2 \Theta_{q+1} \, , \qquad r_{q+1,\nn}^2 \leq 2 \Gamma_{q+1}^{-4} \frac{\lambda\qnn}{\lambda_{q+1}}  \, .
\end{equation}

We recall from \eqref{eq:new:delta:def} that the stresses $\RR_{q,n}$ for $0\leq n \leq \nmax$ will be measured in terms of 
\begin{align}
\delta_{q+1,n} = \begin{cases}
\delta_{q+1} \Gamma_{q}^\shaq , & n=0 \\
\displaystyle\delta_{q+1,0} \frac{\tilde\lambda_{q}}{\lambda_{q}^{\sfrac 12} \lambda_{q+1}^{\sfrac 12}} \Gamma_{q+1}^{9}, & n=1 \\
\displaystyle \delta_{q+1,0} \frac{\tilde \lambda_{q}}{\lambda_{q,n-1}} \Gamma_{q+1}^{8} \left(\Theta_{q+1}^{\frac{1}{2(\nmax+1)}} \Gamma_{q+1}^{9} \right)^{\Upsilon(n)}, & 2\leq n \leq \nmax \, .
\end{cases} \label{eq:delta:appendix:def}
\end{align}
The function $\Upsilon(n)$ is defined in \eqref{eq:upsawhat} to quantify the number of steps required to produce $\RR\qn$. As each step accumulates negligible losses, which correspond to the quantity in parentheses above, one may adhere to the heuristic that $\delta_{q+1,n}$ is roughly speaking equal to $\frac{\delta_{q+1}\lambda_q}{\lambda\qn}$.  We remark that each of the parameters defined so far in this subsubsection has a new definition compared to that of \cite{BMNV21}.

Conversely, the following three parameters remain unchanged when compared to \cite{BMNV21}. For $1\leq n \leq \nmax$, we define $\cstarn$ in terms of $\cstar$ by 
\begin{align}
\label{def:cstarn:formula}
\cstarn &= \cstar - 4n  \,.
\end{align}
For $n=0$, we set
\begin{equation}\label{eq:Nfn0:def}
\NN{\textnormal{fin},0} = \frac{3}{2}\Nfin,
\end{equation}
while for $1\leq n \leq \nmax$, we define $\Nfn$ inductively on $n$ by using \eqref{eq:Nfn0:def} and the formula
\begin{equation}\label{def:Nfn:formula}
\Nfn =  \left\lfloor \frac 12 \left( \NN{\textnormal{fin},\textnormal{n}-1}  - \NcutSmall - \NcutLarge - 6 \right) - \dpot \right\rfloor  \, .
\end{equation}

\subsection{Inequalities and consequences of the parameter definitions}\label{ss:many:inequalities}

Due to \eqref{def:lambda:q:actual} we have that $\Gamma_{q+1} \geq (\sfrac 12)^{b \eps_\Gamma} \lambda_q^{(b-1)\eps_\Gamma} \geq  (\sfrac 12)^{b \eps_\Gamma} \lambda_0^{(b-1)\eps_\Gamma} \geq (\sfrac 12) a_*^{(b-1)\eps_\Gamma}$. As was already mentioned in item~\eqref{item:astar:DEF}, we have chosen $a_*$ to be sufficiently large so that $a_*^{(b-1)\eps_\Gamma}$ is at least twice larger than all the implicit constants appearing in all $\les$ symbols throughout the paper. Therefore, for any $q\geq 0$, we may use a single power of $\Gamma_{q+1}$ to absorb any implicit constant in the paper: an inequality of the type $A \les B$ may be rewritten as $A \leq \Gamma_{q+1} B$.  

From the definition \eqref{def:tau:q:actual} of $\tau_q$ and \eqref{def:cstarn:formula}, which gives that $\cstarn$ is decreasing with respect to $n$, we have that for all $0\leq n \leq \nmax$,
\begin{align}
\Gamma_{q+1}^{\cstarn+6}  \delta_q^{\sfrac 12} {\tilde \lambda_q} &\leq \tau_q^{-1} \, . \label{eq:Lambda:q:x:1:NEW}
\end{align}
Using the definitions \eqref{def:delta:q:actual}, \eqref{def:Gamma:q:actual}, \eqref{eq:tilde:lambda:q:def}, and \eqref{def:tau:q:actual}, and writing out everything in terms of $\lambda_{q-1}$, we have 
\begin{align}
\tau_{q-1}^{ -1 }\Gamma_{q+1}^{3 + \cstar} &\leq \tau_q^{-1} \label{eq:Tau:q-1:q}  \, .
\end{align}
From the definition of $\tilde\tau_q$, it is immediate that
\begin{align}
\tau_{q}^{-1} {\tilde \lambda_{q}^4}  &\leq {\tilde \tau_q^{-1}} \leq  \tau_{q}^{-1} {\tilde \lambda_{q}^3} {\tilde \lambda_{q+1}}  \, .
\label{eq:Lambda:q:t:1}
\end{align}
From the definitions \eqref{eq:cstar:DEF} of $\cstar$ and \eqref{def:cstarn:formula} of $\cstarn$, we have that for all $0\leq n \leq \nmax$,
\begin{equation}\label{eq:cstarn:inequality}
-\cstarn + 4 \leq -1.
\end{equation}
Next, we a list a few consequences of the fact that $\Nindv \gg \Nindt$, as specified in \eqref{eq:Nind:v:DEF}. First, we note from \eqref{eq:Lambda:q:t:1} that 
\begin{align}
 \tilde \tau_{q-1}^{-1} \tau_{q-1} \leq \tilde \lambda_{q-1}^3 \tilde \lambda_q \leq \lambda_q^4  
 \label{eq:trickery:trickery}
\end{align}
where in the second inequality we have used that $\eps_\Gamma \leq \frac{3}{20 b}$. 

The fact that $\Nindt$ is taken to be much larger than $\NcutSmall$, as expressed in \eqref{eq:Nind:t:DEF}, implies when combined with \eqref{eq:trickery:trickery} the following bound, which is also used in Section~\ref{sec:cutoff}:
\begin{align}
 \left(\tau_q  \tilde \tau_q^{-1}\right)^{\Ncut}
 \leq \Gamma_{q+1}^{ \Nindt }
\label{eq:Nind:cond:2}
\end{align}
for all $q\geq 1$.  
The parameter $\alpha$ in \eqref{eq:alpha:DEF} is chosen as such in order to ensure that
\begin{equation}\label{eq:alpha:equation:1}
\lambda_{q+1}^\alpha \approx \Gamma_{q+1}.
\end{equation}
for all $q\geq 0$. 
We note that the previous seven inequalities only involve parameters which have not changed when compared to \cite{BMNV21}.

Next, we list a number of parameter inequalities which are not the same as those in \cite{BMNV21}.  Our choice of $\Ndec$ in \eqref{eq:Ndec:DEF} and the assumption that $a$ is chosen sufficiently large so that $\Gamma_{q+1}^{\sfrac 12}>2\pi\sqrt{3}$ yields
\begin{align}
   \lambda_{q+1}^4 \leq \left( \frac{\Gamma_{q+1}}{2\pi\sqrt{3}} \right)^{\Ndec} \qquad \impliedby \qquad  \frac{8b}{(b-1)\varepsilon_\Gamma} < \Ndec \, . \label{eq:lambdaqn:identity:2}
\end{align}

We need a number of new inequalities to manage the Type 1 oscillation errors. The first of these is 
\begin{align}
\delta_{q+1,\nn}\lambda\qnn \Gamma_{q+1}^{8} \leq
\begin{cases} \Gamma_{q}^{\shaq} \delta_{q+1} \tilde\lambda_q \Gamma_{q+1}^{9} \left( \Theta_{q+1}^{\frac{1}{2(\nmax+1)}} \Gamma_{q+1}^{9} \right)^{\nn)} &\mbox{if} \quad \nn=0,1 \\
\Gamma_{q}^{\shaq} \delta_{q+1} \tilde\lambda_q \Gamma_{q+1}^{9} \left( \Theta_{q+1}^{\frac{1}{2(\nmax+1)}} \Gamma_{q+1}^{9} \right)^{\Upsilon(\nn)+1} &\mbox{if} \quad 2\leq \nn \leq \nmax
\end{cases} \, .
\label{eq:crazy:const:G:ineq}
\end{align}
If $\nn=0$, then the inequality follows from \eqref{eq:delta:appendix:def}, \eqref{eq:tilde:lambda:q:def}, \eqref{eq:lambda:q:n:def}, and \eqref{eq:upsawhat}.  If $\nn=1$, the inequality follows from the aforementioned inequalities and the equality
\begin{equation}\label{eq:theta:click:clack}
\frac{\lambda_{q,1}}{\lambda_q^{\sfrac 12}\lambda_{q+1}^{\sfrac 12}}=\frac{\lambda_{q,\nn}}{\lambda_{q,\nn-1}}=\frac{\lambda_{q+1}}{\lambda_{q,\nmax}}=\Theta_{q+1}^{\frac{1}{2(\nmax+1)}} \, ,
\end{equation}
which holds for $2\leq \nn \leq \nmax$. Finally, if $2\leq \nn \leq \nmax$, we use the aforementioned inequalities in conjunction with \eqref{eq:theta:click:clack}. Next, we claim that for all $0\leq \nn \leq \nmax$,
\begin{equation}
\label{eq:hopeless:mess:new}
\Gamma_q^{\shaq} \delta_{q+1} \tilde\lambda_q \Gamma_{q+1}^{9} \left( \Theta_{q+1}^{\frac{1}{2(\nmax+1)}} \Gamma_{q+1}^{9} \right)^{\Upsilon(\nn)+1} \lambda_{q,\nmax}^{-1}
\leq \Gamma_{q+1}^{\shaq-1}  \delta_{q+2}
\,.
\end{equation}
The above inequality is a consequence of \eqref{eq:upsa:bound} and
\begin{align}
    &2\beta b(b-1) + (b-1) \varepsilon_\Gamma \left( \shaq\left(\frac{1}{b}-1\right) + 15 + 9(3 + \lceil \log_2(\nmax)\rceil) \right) + \left( 3 + \lceil \log_2 \nmax \rceil \right) \frac{b-1}{2(\nmax+1)} \notag\\
    &\qquad \qquad < \frac{b-1}{2} + \frac{\nmax}{2(\nmax+1)}(b-1) \, , \label{eq:stinky:ineq}
\end{align}
which in turn follows from \eqref{eq:nmax:cond:1} and \eqref{eq:vareps:gamma:new:new}.
Finally, we claim that for $n$ such that $n>r(\nn)$, as defined in \eqref{eq:rofn}, and $n>2$,
\begin{align}\label{ineq:click:clack:1}
    \delta_{q+1,\nn} \lambda_{q,\nn} \Gamma_{q+1}^{9} \lambda_{q,n-1}^{-1} \leq \delta_{q+1,n} \, .
\end{align}
If $\nn=0,1$, the inequality follows from the definitions of $\lambda_{q,0}$ and $\lambda_{q,1}$ in \eqref{eq:lambda:q:n:def}, the definition of the $\delta_{q+1,n}$'s in \eqref{eq:delta:appendix:def}, and \eqref{eq:upsawhat}, which guarantees that $\Upsilon(n)\geq 1$ for $n\geq 2$. In the case $2\leq \nn \leq \nmax$, the inequality follows from the aforementioned inequalities combined with \eqref{eq:theta:click:clack} and the fact that for $n>r(\nn)$, \eqref{eq:upsa:ineq} gives that $\Upsilon(n)\geq \Upsilon(\nn)+1$.

The amplitudes of the higher order corrections $w\qplusnp$ must meet the inductive assumptions stated in \eqref{eq:inductive:assumption:derivative:q}.  Towards this end, we claim that for all $0\leq \nn \leq \nmax$,
\begin{equation}\label{eq:delta:q:nn:pp:ineq}
\delta_{q+1,\nn}^{\sfrac{1}{2}} \Gamma_{q+1}^{5} \leq \delta_{q+1}^{\sfrac{1}{2}} \, .
\end{equation}
Indeed, the case $\nn=0$ follows from the definition of $\mathsf{C_R}$ in \eqref{eq:shaq:DEF}, while the case $\nn\geq 1$ is a consequence of the definition \eqref{eq:delta:appendix:def} and the inequality
\begin{equation}\label{eq:vareps:nmax:stuff}
    \varepsilon_\Gamma(b-1) \left( 5 + (2 + \lceil \log_2 \nmax \rceil )(9+\CLebesgue) \right) + (b-1) \frac{2+ \lceil \log_2 \nmax \rceil}{\nmax} < \frac{b-1}{2} \, ,
\end{equation}
which in turn is a consequence of \eqref{eq:nmax:cond:1} and \eqref{eq:eg:new:new}.

We will also need that
\begin{equation}\label{eq:clickity:clackity}
    \Gamma_q^{\badshaq}\Gamma_{q+1}^{14\Upsilon(\nn)+13}\lambda\qnn 
r_{q+1,\nn}^{-2}  
 \lambda_{q,\nmax}^{-1} \leq \Gamma_{q+1}^{\badshaq-2} \, .
\end{equation}
The above inequality is a consequence of \eqref{eq:def:lambda:rq}, \eqref{eq:theta:click:clack}, and
\begin{equation}\label{eq:Cu:ugly}
    \eps_\Gamma \left( \frac{\badshaq}{b} + 14\left(2 + \lceil \log_2 \nmax \rceil \right) + 20 - \badshaq \right) + \frac{1}{2(\nmax+1)} < 0 \, ,
\end{equation}
which holds due to the choice of $\badshaq$ in \eqref{eq:badshaq:def} and \eqref{eq:eps:still:doing:it}. The inequality \eqref{eq:clickity:clackity} then immediately implies that
\begin{align}
\Gamma_q^{\badshaq}  \Gamma_{q+1}^{14 \Upsilon(\nn)+13} \lambda_{q,\nn} r_{q+1,\nn}^{-2} \lambda_{q+1}^{-1}
\leq \Gamma_{q+1}^{\badshaq-2}
\,.
\label{eq:div:cor:ineq:4}
\end{align}
We claim now that $\badshaq$ satisfies
\begin{equation}\label{eq:ineq:badshaq}
\Gamma_q^{\frac{\badshaq}{2}}\Gamma_{q+1}^{7\Upsilon(\nn)+\sfrac 72} r_{q+1,\nn}^{-1} \leq \Gamma_{q+1}^{\badshaq-2} \Theta_{q+1}^{\sfrac 12} \, .
\end{equation}
We may verify this by using \eqref{ineq:rq:useful}, the definition of $\badshaq$ in \eqref{eq:badshaq:def}, and the inequalities
\begin{equation}\label{eq:Cu:ughh}
    \frac{\badshaq}{2b} + 7(2+ \lceil \log_2 \nmax \rceil ) + 4 \leq \badshaq - 2 \, , \quad \impliedby \quad 1- \frac{1}{2b} > \varepsilon_\Gamma(b-1)(\nmax+1)(7\lceil \log_2 \nmax \rceil + 20 ) \, ,
\end{equation}
the second of which follows from \eqref{eq:eps:still:doing:it}.

Next, we claim that due to our choice of $\dpot$, we have
\begin{equation}
\label{eq:CF:new}
\tilde\lambda_q \lambda_{q+1} \left( \Theta_{q+1}^{\frac{1}{2(\nmax+1)}} \Gamma_{q+1}^{9+\CLebesgue} \right)^{\Upsilon(\nn)} \lambda_{q+1}  \left( \frac{\Gamma_{q+1}^{-1}\lambda_{q+1}r_{q+1,\nn}}{\lambda_{q+1}r_{q+1,\nn}} \right)^{\dpot -1} \left( \lambda_{q+1}^{4} \right)^{3\NindLarge} \leq \frac{\delta_{q+2}}{\lambda_{q+1}^{10}} \, ,
\end{equation}
and
\begin{equation}
\label{eq:CF:new:2}
(\delta_{q+1,\nn} \Gamma_{q+1}^{\CLebesgue+5} r_{q+1,\nn} \lambda_{q,\nn} \lambda_{q+1})
\left(\frac{\Gamma_{q+1}^{-1}\lambda_{q+1}r_{q+1,\nn}}{\lambda_{q+1}r_{q+1,\nn}} \right)^{(\dpot -1)}
\left( \lambda_{q+1}^4 \right)^{3 \Nindv}
\leq \frac{\delta_{q+2}}{\lambda_{q+1}^{10}}
\,.    
\end{equation}
The bound \eqref{eq:CF:new} follows from \eqref{eq:dpot:DEF} and \eqref{eq:trickery:trickery}, while \eqref{eq:CF:new:2} follows from \eqref{eq:CF:new} and the parameter inequality $\delta_{q+1,\nn} \Gamma_{q+1}^{\CLebesgue+5} r_{q+1,\nn} \lambda_{q,\nn} \lambda_{q+1} \leq \tilde \lambda_q \lambda_{q+1}^2$ . 

For estimating the stresses emerging from the divergence correctors, we shall need the bound
\begin{align}
r_{q+1,\nn}^2 \delta_{q+1,\nn} \Gamma_{q+1}^{13} \leq \Gamma_{q+1}^{\shaq-1} \delta_{q+2}
\,,
\label{eq:div:cor:ineq:1}
\end{align}
which follows from \eqref{ineq:rq:useful}, \eqref{eq:crazy:const:G:ineq}, and \eqref{eq:hopeless:mess:new} and implies that
\begin{align}
\delta_{q+1,\nn} \Gamma_{q+1}^9 \lambda_{q,\nn} \lambda_{q+1}^{-1} = r_{q+1,\nn}^2 \Gamma_{q+1}^{13} \delta_{q+1,\nn} \leq \Gamma_{q+1}^{\shaq-1} \delta_{q+2}
\,.
\label{eq:div:cor:ineq:3}
\end{align} 
We furthermore need that
\begin{align}
\Gamma_{q}^{\badshaq} \Gamma_{q+1}^{14 \Upsilon(\nn) + 7}
\leq \Gamma_{q}^{\badshaq} \Gamma_{q+1}^{14 \Upsilon(\nmax) + 7}
\leq \Gamma_{q+1}^{\badshaq-1}
\,, 
\label{eq:div:cor:ineq:2}
\end{align} which in turn follows from $\badshaq \geq \frac{b}{b-1}(8 + 14 \Upsilon(\nmax))$, which is a consequence of \eqref{eq:eps:still:doing:it} and \eqref{eq:upsa:bound}. 

In order to estimate the transport and Nash errors in $L^1$ in Sections~\ref{ss:stress:transport} and \ref{ss:stress:Nash}, we claim that
\begin{equation}\label{eq:drq:identity}
\Gamma_{q+1}^{{\CLebesgue}+4} \delta_{q+1,\nn}^{\sfrac{1}{2}}\tau_q^{-1} r_{q+1,\nn} \lambda_{q+1}^{-1} \leq \Gamma_{q+1}^{\shaq-1} \delta_{q+2}
\,.
\end{equation}
In order to verify \eqref{eq:drq:identity}, we note that by \eqref{def:delta:q:actual}, \eqref{def:Gamma:q:actual}, \eqref{def:tau:q:actual}, \eqref{eq:rqn:perp:definition},   \eqref{eq:delta:appendix:def},  the definition of $\CLebesgue$ in \eqref{eq:CLebesgue:DEF}, and the previously established parameter inequalities \eqref{eq:crazy:const:G:ineq} and \eqref{eq:hopeless:mess:new}, the left side of \eqref{eq:drq:identity} is bounded from above by
\begin{align*}
&\Gamma_{q+1}^{\CLebesgue+\cstarzero+ 13} (\delta_{q+1,\nn} \lambda_{q,\nn})^{\sfrac 12} (\delta_q \lambda_q)^{\sfrac 12}
\lambda_q^{\sfrac 12} \lambda_{q+1}^{-\sfrac 32}
\notag\\
&\qquad \leq
\Gamma_{q+1}^{\CLebesgue+\cstarzero+ 13} (\Gamma_{q+1}^{-\CLebesgue-9 \shaq } \lambda_{q+1} \delta_{q+2})^{\sfrac 12} (\delta_{q+2} \lambda_{q+1} \lambda_{q+1}^{2\beta b - 1} \lambda_q^{1-2\beta})^{\sfrac 12}
\lambda_q^{\sfrac 12} \lambda_{q+1}^{-\sfrac 32}
\notag\\
&\qquad \leq
\Gamma_{q+1}^{\frac 12\CLebesgue+\cstarzero+ 10 + \frac 12 \mathsf{C_R}}  
\lambda_q^{(\beta b + \beta - 1)(b-1)}   (\Gamma_{q+1}^{\shaq-1} \delta_{q+2})
\,.
\end{align*}
Thus, \eqref{eq:drq:identity} holds since $\eps_\Gamma (\frac 12\CLebesgue+\cstarzero+ 10 + \frac 12 \mathsf{C_R}) + 2 \beta b < 1$, in view of \eqref{eq:b:DEF} and \eqref{eq:eps:gamma:1:new}. To estimate the transport and Nash errors in $L^\infty$, we finally need that
\begin{equation}\label{eq:transport:Loo:ineq}
     \Gamma_{q}^{\frac{\badshaq}{2}} 
\Gamma_{q+1}^{\badshaq + 7 \Upsilon(\nmax) + 21 + 4\nn } \Theta_q^{\sfrac 12}   \lambda_q  r_{q+1,\nn}^{-1} \lambda_{q+1}^{-1} \leq \Gamma_{q+1}^{\badshaq-1} \, ,
\end{equation}
which follows from the definition of $\badshaq$ in \eqref{eq:badshaq:def}, \eqref{ineq:rq:useful}, and \eqref{eq:eps:gamma:blechhhh}.

In Remark~\ref{rem:ellthree:regularity}, have have used that 
\begin{equation}
    \lim_{(\beta,b) \to (\sfrac 12-, 1+)}
    \frac{2\beta b}{(b-1)(\badshaq\varepsilon_\Gamma+\sfrac 12)} + 2 \rightarrow \infty \label{eq:solving:for:p:back}
\end{equation}
which is a consequence of the choice of $\badshaq$ in \eqref{eq:badshaq:def}.

We conclude this section by verifying a few inequalities concerning the parameter $\Nfn$, which counts the number of available space-plus-material derivative for the residual stress $\RR_{q,n}$. This verification is the same as in~\cite[Section~9.3]{BMNV21}. For all $0 \leq n \leq \nmax$ we require that  
\begin{subequations}
\label{eq:Luigi:is:Mario:s:brother}
\begin{align}
\Nindt, 2\Ndec + 4 
&\leq \lfloor \sfrac{1}{2}\left(\Nfn-\NcutSmall-\NcutLarge-5\right) \rfloor - \dpot \,,
\label{eq:lambdaqn:identity:3} \\
14 \NindLarge
&\leq \NN{\textnormal{fin},\textnormal{n}} - \NcutSmall-\NcutLarge - 2\Ndec - 9 \,,
\label{eq:Nfinn:inequality} \\
6\NindLarge  
&\leq \lfloor \sfrac{1}{2}\left(\Nfn-\NcutSmall-\NcutLarge-6\right) \rfloor - \dpot  \,,
\label{eq:nfnn:mess} \\
6\NindLarge 
&\leq \lfloor \sfrac{1}{4}\left(\Nfn-\NcutSmall-\NcutLarge-7\right) \rfloor\,.
\label{eq:nfnn:mess:2}
\end{align}
\end{subequations}
for all $0\leq n\leq\nmax$.  Additionally for  $0\leq \nn<n \leq \nmax$, we require that 
\begin{align}
\lfloor \sfrac{1}{2}\left(\Nfnn-\NcutSmall-\NcutLarge-6\right) \rfloor - \dpot \geq \Nfn
\label{eq:nfnn:nfn:mess}
\end{align}
holds. The inequality \eqref{eq:nfnn:nfn:mess} is a direct consequence of the recursive formula~\eqref{def:Nfn:formula}  and of the fact that the sequence $\Nfn$ is monotone decreasing with respect to $n$. Using \eqref{eq:Nfn0:def} and \eqref{def:Nfn:formula} one may show that
$$
\Nfn \geq 2^{-n} \NN{\textnormal{fin},0} - (2 \dpot + \NcutSmall + \NcutLarge + 8)
\,.
$$
Noting that the bounds \eqref{eq:Luigi:is:Mario:s:brother} are most restrictive for $n = \nmax$, they now readily follow from  \eqref{eq:Nfin:DEF}.

\appendix
\section{Auxiliary lemmas}

\newtheorem{innercustomlemma}{Lemma}
\newenvironment{customlemma}[1]
  {\renewcommand\theinnercustomlemma{#1}\innercustomlemma}
  {\endinnercustomlemma}
  
\subsection{\texorpdfstring{$L^p$}{Lp} decorrelation}  
\label{sec:appendix}

In order to estimate the perturbation in $L^p$ spaces as well as terms appearing in the Reynolds stress we will need a combination of~\cite[Lemma A.7]{BMNV21} and~\cite[Remark A.9]{BMNV21}, which we recall next.
\begin{lemma}[\bf $L^p$ decorrelation with flows]
\label{l:slow_fast}
Let $p\in\{1,2\}$, and fix integers $N_\circ \geq \Ndec\geq 1$. 
Suppose $f \colon \R^3 \times \R \to \R$ and let $\Phi \colon \R^3\times \R \to \R^3$ be a vector field advected by an incompressible velocity field $v$, i.e. $D_{t}\Phi = (\partial_t + v\cdot \nabla) \Phi=0$. Denote by $\Phi^{-1}$ the inverse of the flow $\Phi$,  which is the identity at a time slice which intersects the support of $f$. Assume that for some $\lambda , \nu, \tilde \nu \geq 1$ and $\const_f>0$  the functions $f$ satisfies 
\begin{align}
\norm{D^N D_{t}^M f}_{L^{p}}&\les \const_f \lambda^N\MM{M,N_{t},\nu,\tilde\nu} \notag
\end{align}
for all $N + M \leq N_\circ$, and that $\Phi$, and $\Phi^{-1}$ are bounded as
\begin{align}
\norm{D^{N+1} \Phi}_{L^{\infty}(\supp f)} +  \norm{D^{N+1} \Phi^{-1}}_{L^{\infty}(\supp f)} 
&\les \lambda^{N}\notag
\end{align}
for all $N\leq N_\circ$.
Lastly, suppose that $\varphi$ is $(\T/ \mu)^3$-periodic, and that there exist parameters  $\tilde\zeta \geq \zeta \geq \mu$  and $\const_\varphi>0$ such that
\begin{align}
\norm{D^N \varphi}_{L^p} \les \const_\varphi \MM{N, N_x, \zeta, \tilde\zeta} \, \label{eq:slow_fast_4}
\end{align}
for all $0\leq N \leq N_\circ$.  If the parameters $$\lambda\leq \mu \leq \zeta \leq \tilde\zeta$$ satisfy
\begin{align}
 \tilde\zeta^4 \bigl(2 \pi \sqrt{3} \lambda \mu^{-1} \bigr)^{\Ndec}
 \leq 1 
 \label{eq:slow_fast_3}
\,,
\end{align}
and we have
\begin{equation}\notag
2 \Ndec + 4 \leq N_\circ \,,
\end{equation}
then the bound
\begin{align}
\norm{D^N  D_{t }^M \left( f\; \varphi\circ \Phi
\right)}_{L^p}
&\les \const_f \const_\varphi \MM{N, N_x, \zeta, \tilde\zeta}  \MM{M,M_{t},\nu,\tilde\nu}
\label{eq:slow_fast_5} 
\end{align}
holds for $N+ M \leq N_\circ$ and $M \leq N_\circ - 2 \Ndec - 4$.  
\end{lemma}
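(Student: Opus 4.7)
The plan is to reduce the estimate to three ingredients: (i) the observation that time-independence of $\varphi$ combined with $D_t \Phi = 0$ forces all material derivatives to land on $f$, (ii) a Faà di Bruno chain-rule bound for purely spatial derivatives of $\varphi \circ \Phi$, and (iii) a slow/fast $L^p$ decorrelation inequality that exploits $\lambda \ll \mu$ through the parameter condition \eqref{eq:slow_fast_3}.

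First I would observe that, since $\varphi$ is time-independent and $D_t \Phi = 0$, the chain rule gives $D_t(\varphi \circ \Phi) = (\nabla \varphi)(\Phi)\cdot D_t \Phi = 0$. By Leibniz, all time derivatives are forced onto the slow factor:
\begin{equation*}
D_t^M (f \, \varphi\circ \Phi) = (D_t^M f)\, \varphi\circ \Phi.
\end{equation*}
Applying $D^N$ and using Leibniz again,
\begin{equation*}
D^N D_t^M (f\, \varphi\circ \Phi) = \sum_{k=0}^{N} \binom{N}{k}\, D^k (D_t^M f)\, D^{N-k}(\varphi\circ \Phi).
\end{equation*}
The mixed-derivative factor $D^k(D_t^M f)$ is controlled by the hypothesis on $f$, yielding $\const_f \lambda^k \MM{M,N_t,\nu,\tilde\nu}$ in $L^p$, up to standard commutator terms $[D^k, D_t^M]f$ that factor through $Dv$ and are absorbed by the same $\lambda$-envelope (cf.\ the mixed-derivative framework of \cite[Appendix~A]{BMNV21}).

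Next I would handle $D^{N-k}(\varphi\circ\Phi)$ via Faà di Bruno, writing it as a sum of terms $(\nabla^j \varphi)\circ \Phi$ multiplied by polynomials in $D^{i_1}\Phi,\dots,D^{i_r}\Phi$ of total order $N-k$. Using the hypothesis $\|D^{i+1}\Phi\|_{L^\infty(\supp f)} \lesssim \lambda^i$ together with $\|\nabla^j\varphi\|_{L^p} \lesssim \const_\varphi \MM{j,N_x,\zeta,\tilde\zeta}$, and the ordering $\lambda \le \mu \le \zeta$, each such term is dominated by $\const_\varphi \MM{N-k,N_x,\zeta,\tilde\zeta}$ times a $(\T/\mu)^3$-periodic envelope composed with $\Phi$. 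Combined with the $\lambda^k$ gain on the slow factor and $\lambda\le\zeta$, the derivative count assembles into $\MM{N,N_x,\zeta,\tilde\zeta}$ as required.

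The hardest step, which I would isolate as a separate lemma, is the decorrelation of $L^p$ norms: for a slow function $F$ (concentrated in frequencies $\lesssim \lambda$) and a $(\T/\mu)^3$-periodic function $G$ precomposed with a near-identity flow,
\begin{equation*}
\bigl\| F\cdot (G\circ \Phi)\bigr\|_{L^p} \lesssim \|F\|_{L^p}\, \bigl(\tfrac{1}{|\T^3|}\textstyle\int_{\T^3}|G|^p\bigr)^{1/p},
\end{equation*}
up to an error controlled by $(\lambda\mu^{-1})^{\Ndec}$. I would prove this by Taylor-expanding $F$ (or, equivalently, applying a parametrix) on each $(\T/\mu)^3$ cell: on such a cell $F$ is almost constant, with Sobolev-type remainder of size $(\lambda\mu^{-1})^{\Ndec}$, so one replaces $F$ by a cellwise constant and the periodicity of $G$ gives cellwise decorrelation exactly. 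The crude $L^\infty$ cost of the remainder is $\tilde\zeta^4$, which is exactly the factor that hypothesis \eqref{eq:slow_fast_3} is designed to absorb; this in turn forces the restriction $M \le N_\circ - 2\Ndec - 4$ in the conclusion, accounting for the $\Ndec$-th order expansion plus a handful of Sobolev derivatives. The main technical obstacle is keeping the bookkeeping of space vs.\ material derivatives, sharp vs.\ tilde frequencies, and commutator errors all aligned with the $\mathcal{M}$-envelopes on both sides — but since every individual piece is a standard consequence of \cite[Lemma~A.7]{BMNV21} and \cite[Remark~A.9]{BMNV21}, the present statement follows by reassembly with no new ideas.
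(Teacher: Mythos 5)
Your proposal is essentially correct and matches the route taken in \cite[Lemma~A.7 and Remark~A.9]{BMNV21}, which this paper cites without reproducing: $D_t(\varphi\circ\Phi)=0$ puts all material derivatives on $f$, Leibniz and Fa\`a di Bruno split the spatial derivatives into slow and fast factors, and a Taylor-type cellwise decorrelation at scale $\mu^{-1}$ --- with error absorbed by \eqref{eq:slow_fast_3} and the spare-derivative budget $M\le N_\circ - 2\Ndec-4$ --- closes the $L^p$ bound. One small correction: since the hypothesis controls the mixed derivatives $D^k D_t^M f$ directly for $k+M\le N_\circ$ and your Leibniz expansion produces precisely these objects, no commutators $[D^k,D_t^M]f$ actually arise, so that caveat in your second paragraph can be dropped.
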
 

\subsection{Inversion of the divergence}

\label{sec:inverse:divergence}
Given a vector field $G^i $, a zero mean periodic function $\varrho$ and an incompressible flow $\Phi$,  our goal in this section is to write $G^{i}(x) \varrho(\Phi(x))$  as the divergence of a symmetric tensor. For this purpose, we use~\cite[Proposition A.18]{BMNV21}.

\begin{proposition}[\bf Intermittency-friendly inverse divergence]
\label{prop:intermittent:inverse:div}
Fix an incompressible vector field $v$ and denote its material derivative by $D_t = \partial_t + v\cdot\nabla$.  Fix integers $N_* \geq M_* \geq   1$. Also fix $\Ndec, \dpot \geq 1$ such that  $N_* - \dpot \geq 2\Ndec + 4$, and  let $p \in \{1,\infty\}$.

Let $G$ be a vector field and assume there exists a constant $\const_{G} > 0$ and parameters $\lambda, \nu\geq 1$ such that 
\begin{align}
\norm{D^N D_{t}^M G}_{L^p}\lesssim \const_{G} \lambda^N\MM{M,M_{t},\nu,\tilde\nu}
\label{eq:inverse:div:DN:G}
\end{align}
for all $N \leq N_*$ and $M \leq M_*$.

Let $\Phi$ be a volume preserving transformation of $\T^3$, such that 
\[
D_t \Phi = 0 \,
\qquad \mbox{and} \qquad
\norm{\nabla \Phi - \Id}_{L^\infty(\supp G)} \leq \sfrac 12 \,.
\] 
Denote by $\Phi^{-1}$ the inverse of the flow $\Phi$,  which is the identity at a time slice which intersects the support of $G$.
Assume that  the velocity field $v$ and the flow functions $\Phi$ and $\Phi^{-1}$ satisfy the following bounds 
\begin{align}
\norm{D^{N+1}   \Phi}_{L^{\infty}(\supp G)} + \norm{D^{N+1}   \Phi^{-1}}_{L^{\infty}(\supp G)} 
&\les \lambda'^{N}
\label{eq:DDpsi}\\
\norm{D^ND_t^M D v}_{L^{\infty}(\supp G)}
&\les \nu \lambda'^{N}\MM{M,M_{t},\nu,\tilde\nu}
\label{eq:DDv}
\,,
\end{align}
for all $N \leq N_*$, $M\leq M_*$, and some $\lambda'>0$. 

Lastly, let $\varrho,\vartheta \colon \T^3 \to \R$ be two zero mean functions with  the following properties:
\begin{enumerate}[(i)]
\item \label{item:inverse:i} there exists $\dpot \geq 1$ and a parameter $\zeta\geq 1$ such that $\varrho (x) = \zeta^{-2\dpot } \Delta^\dpot \vartheta(x)$
\item \label{item:inverse:ii} there exists a parameter $\mu\geq 1$ such that $\varrho$ and $\vartheta$ are $(\sfrac{\T}{\mu})^3$-periodic
\item \label{item:inverse:iii} there exists  parameters $\Lambda\geq \zeta$,  $\const_{*} \geq 1$, and $\alpha \in (0,1]$, such that 
\begin{align}
\norm{D^N \vartheta}_{L^p} \les \const_{*} \Lambda^\alpha \MM{N,2\dpot,\zeta,\Lambda} 
\label{eq:DN:Mikado:density}
\end{align} 
for all $0\leq N \leq \Nfin$.
\end{enumerate}

If the above parameters satisfy
\begin{align}
 \lambda' \leq \lambda \ll \mu \leq \zeta \leq \Lambda  \,,
 \label{eq:inverse:div:parameters:0}
\end{align}
where by $\ll$ in \eqref{eq:inverse:div:parameters:0} we mean that 
\begin{align}
 \Lambda^4 \bigl(2\pi \sqrt{3} \lambda \mu^{-1}\bigr)^{\Ndec} \leq 1
 \,,
 \label{eq:inverse:div:parameters:1}
\end{align}
then, we have that 
\begin{align}
G \; \varrho\circ \Phi  
= \div\left( \divH \left( G \varrho \circ \Phi \right) \right) + \nabla P + E. \label{eq:inverse:div}
\end{align}
where the traceless symmetric stress $\divH( G \varrho \circ \Phi)$ and the scalar pressure $P$ are supported in $\supp G$, and for any fixed $\alpha\in (0,1)$ they satisfy 
\begin{align}
\norm{D^N D_{t}^M \divH \left( G \varrho \circ \Phi \right) }_{L^p} + \norm{D^N D_{t}^M P}_{L^p}
 &\les  \const_{G} \const_{*}   \zeta^{-1} \Lambda^{\alpha} \MM{N,1,\zeta,\Lambda} \MM{M,M_{t},\nu,\tilde\nu} 
\label{eq:inverse:div:stress:1}
\end{align}
for all $N \leq N_* - \dpot $ and $M\leq M_*$. The implicit constants  depend on $N,M,\alpha$ but not $G$, $\varrho$, or $\Phi$. Lastly, for $N \leq N_* - \dpot $ and $M\leq M_*$ the error term $E$  in \eqref{eq:inverse:div} satisfies
\begin{align}
\norm{D^N D_{t}^M E}_{L^p}  
\les \const_{G} \const_{*}   \lambda^\dpot  \zeta^{-\dpot } \Lambda^{\alpha+N} \MM{M,M_{t},\nu,\tilde\nu} 
\,.
\label{eq:inverse:div:error:1}
\end{align}
We emphasize that the range of $M$ in \eqref{eq:inverse:div:stress:1} and \eqref{eq:inverse:div:error:1} is exactly the same as the one in \eqref{eq:inverse:div:DN:G}, while the range of permissible values for $N$ shrank from $N_*$ to $N_* - \dpot $.

Lastly, let $N_\circ, M_\circ$ be integers such that $1 \leq M_\circ \leq N_\circ \leq M_*/2$.
Assume that in addition to the bound \eqref{eq:DDv} we have the following global lossy estimates
\begin{align}
\norm{D^N \partial_t^M v}_{L^\infty(\T^3)}\les  \const_v \tilde \lambda_q^N \tilde \tau_q^{-M}  
\label{eq:inverse:div:v:global}
\end{align}
for all  $M \leq M_\circ$ and $N+M \leq N_\circ + M_\circ$, where 
\begin{align}
\const_v \tilde \lambda_q \les \tilde \tau_q^{-1}, \qquad \mbox{and} \qquad  \lambda' \leq \tilde \lambda_q \leq \Lambda \leq \lambda_{q+1}  \,.
\label{eq:inverse:div:v:global:parameters}
\end{align}
If $\dpot $ is chosen {\em large enough} so that  
\begin{align}
\const_G \const_* \Lambda \left(\lambda \zeta^{-1}\right)^{\dpot -1}   \left(1 +\tau_{q} \max\{\tilde \tau_q^{-1}, \tilde \nu, \const_v \Lambda \}\right)^{M_\circ}
\leq  \delta_{q+2} \lambda_{q+1}^{-10}
\,,
\label{eq:riots:4}
\end{align}
then we may write  
\begin{align}
E 
= \div \left(\divR(G \varrho \circ \Phi)\right) + \fint_{\T^3} G \varrho \circ \Phi dx\,,
\label{eq:inverse:div:error:stress}
\end{align}
where $\divR(G \varrho \circ \Phi)$ is a traceless symmetric stress which satisfies
\begin{align}
\norm{D^N D_{t}^M \divR \left( G \varrho \circ \Phi \right)  }_{L^p}  
\leq  \delta_{q+2}  \lambda_{q+1}^{N-10} \tau_{q}^{-M}
\label{eq:inverse:div:error:stress:bound}
\end{align}
for  $N \leq N_\circ$ and $M\leq M_\circ$.
\end{proposition}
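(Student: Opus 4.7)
The plan is to invert the divergence by a parametrix expansion that exploits the frequency gap between the slow vector field $G$ (living at effective frequency $\lambda$) and the fast periodic density $\varrho\circ\Phi$ (with minimal periodicity scale $\mu^{-1}$). The representation $\varrho=\zeta^{-2\dpot}\Delta^\dpot\vartheta$ provides exactly $2\dpot$ ``free'' derivatives that we will integrate by parts onto $G$ and onto the inverse Jacobian $A=(\nabla\Phi)^{-1}$, gaining a factor of roughly $\lambda/\zeta$ per step.

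The central algebraic tool is the Piola identity: for volume-preserving $\Phi$, $\partial_j(A_i^j\, (f\circ\Phi))=(\partial_i f)\circ\Phi$. Starting from $G^\bullet \varrho\circ\Phi=\zeta^{-2\dpot}G^\bullet (\Delta^\dpot\vartheta)\circ\Phi$, I would use Piola repeatedly to rewrite each Laplacian in divergence form, producing after the first iteration a representation of the form
\[
  G^\bullet \varrho\circ\Phi = \partial_j\left(\zeta^{-2}G^\bullet A_i^j A_i^m \, (\partial_m\Delta^{\dpot-1}\vartheta)\circ\Phi\right) - \zeta^{-2}\partial_j(G^\bullet A_i^j A_i^m)\,(\partial_m\Delta^{\dpot-1}\vartheta)\circ\Phi.
\]
Iterating $\dpot$ times yields the principal symmetric stress $\divH(G\varrho\circ\Phi)$ (obtained after symmetrizing the outer index pair and subtracting the trace into $P$) plus an error $E$ in which all $2\dpot$ derivatives have been converted into derivatives on the slow quantities, yielding a size gain of $(\lambda/\zeta)^{\dpot}$. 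Each term in the principal stress has the schematic form (slow factor)$\times$(fast factor composed with $\Phi$) where the fast factor is a bounded-order derivative of $\vartheta$; the bound \eqref{eq:inverse:div:stress:1} then follows by combining \eqref{eq:inverse:div:DN:G}, \eqref{eq:DDpsi}, and \eqref{eq:DN:Mikado:density} via Lemma~\ref{l:slow_fast}, where the decorrelation condition \eqref{eq:inverse:div:parameters:1} is precisely what allows the fast-slow product to be estimated at the improved scale $\zeta^{-1}\Lambda^\alpha$.

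For the remainder $E$, the gain $(\lambda/\zeta)^\dpot$ makes its naive size already so small that one can afford to apply a standard (nonlocal) symmetric inverse divergence operator $\divR$ built from compositions of Riesz transforms in the spirit of Constantin--E--Titi: the resulting stress lives on all of $\T^3$, but the parameter constraint \eqref{eq:riots:4} ensures that even after boosting derivative costs to the lossy global scales $\tilde\lambda_q,\tilde\tau_q^{-1},\Lambda$ and paying for $M_\circ$ time derivatives, the output still satisfies the crushing bound \eqref{eq:inverse:div:error:stress:bound}. The spatial mean $\fint G\varrho\circ\Phi\,dx$, which has no divergence representation, is isolated before applying $\divR$. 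Material derivative bounds throughout propagate smoothly because $D_t\Phi=0$ commutes with composition by $\Phi$, while $D_t$ applied to $A$ is controlled by \eqref{eq:DDv} and the iterated commutator $[D_t,\nabla]=\nabla v\cdot\nabla$.

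The main obstacle will be producing a genuinely symmetric, traceless stress $\divH$ out of the parametrix. The natural output of iterated Piola is not symmetric in its two free indices, and the trace is nonzero; symmetrizing it introduces extra terms that must themselves be absorbed either into $\nabla P$ or into the next order of the expansion, and one must verify that symmetrization does not destroy the locality of the support in $G$. A secondary but tedious difficulty is bookkeeping in the mixed space/material derivative norms: one must keep the sharp cost $\nu$ for the first $M_t$ material derivatives while accepting the lossy cost $\tilde\nu$ thereafter, which forces one to apply the two-parameter $\mathcal{M}$-function formalism already developed in the paper with care at each step of the $\dpot$-fold iteration.
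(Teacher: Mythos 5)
Your proposal correctly reconstructs the parametrix-expansion argument from~\cite[Proposition~A.17]{BMNV21}, which this paper invokes directly (verbatim for $p=1$, mutatis mutandis for $p=\infty$): iterating the potential representation $\varrho=\zeta^{-2\dpot}\Delta^{\dpot}\vartheta$ through the Piola identity $\partial_j A_i^j=0$ to generate the local stress $\divH$ plus a remainder $E$ gaining $(\lambda/\zeta)^{\dpot}$, with $\divH$ estimated by slow-fast decorrelation (Lemma~\ref{l:slow_fast} for $p=1$, trivial H\"older for $p=\infty$, which is why \eqref{eq:inverse:div:parameters:1} is superfluous in that case) and $E$ absorbed by a standard nonlocal $\divR$ under \eqref{eq:riots:4}. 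There is a minor index slip in your one-step display, where $(\partial_m\Delta^{\dpot-1}\vartheta)\circ\Phi$ should read $\partial_m(\Delta^{\dpot-1}\vartheta\circ\Phi)$ (the two differ by a factor of $\nabla\Phi$), but this does not affect the structure, and the difficulties you flag --- symmetrization into $P$ while preserving locality, and the two-tier $\nu/\tilde\nu$ material-derivative bookkeeping --- are precisely the technical burdens of the cited proof.
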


The estimates claimed in Proposition~\ref{prop:intermittent:inverse:div} for  $p=1$ are taken as is from \cite[Proposition A.18]{BMNV21}. The definition/construction of the operators $\divH$ and $\divR$ is independent of $p$.  Then the estimates claimed in Proposition~\ref{prop:intermittent:inverse:div} in the case $p=\infty$ follow from the proof of \cite[Proposition A.18]{BMNV21} after replacing each instance of an $L^p$ bound for $p\neq \infty$ in the proof with an $L^\infty$ bound.


\begin{thebibliography}{10}

\bibitem{BN21}
R.~Beekie and M.~Novack.
\newblock Non-conservative solutions of the Euler-$\alpha$ equations 
\newblock {\em arXiv:2111.01027}, 2021.


\bibitem{BCV18}
T.~Buckmaster, M.~Colombo, and V.~Vicol.
\newblock Wild solutions of the {N}avier-{S}tokes equations whose singular sets  in time have {H}ausdorff dimension strictly less than $1$.
\newblock {\em J. Eur. Math. Soc.}, DOI:10.4171/JEMS/1162, 2021.

\bibitem{BDLISZ15}
T.~Buckmaster, C.~De~Lellis, P.~Isett, and L.~Sz{{\'e}}kelyhidi, Jr.
\newblock Anomalous dissipation for $1/5$-{H}\"older {E}uler flows.
\newblock {\em Ann. of Math.}, 182(1):127--172, 2015.

\bibitem{BDLSV17}
T.~Buckmaster, C.~De~Lellis, L.~Sz{\'{e}}kelyhidi~Jr., and V.~Vicol.
\newblock Onsager{\textquotesingle}s conjecture for admissible weak solutions.
\newblock {\em Comm. Pure Appl. Math.}, 72(2):229--274, July 2018.

\bibitem{BMNV21}
T.~Buckmaster, N.~Masmoudi, M.~Novack, V.~Vicol.
\newblock Nonconservative $H^{\sfrac 12 -}$ weak solutions of the incompressible 3D Euler equations,
\newblock{\em arXiv 2101.09278v2}, 2021.

\bibitem{BV_EMS19}
T.~Buckmaster and V.~Vicol.
\newblock Convex integration and phenomenologies in turbulence.
\newblock {\em EMS Surv. Math. Sci.}, 6(1):173--263, 2019.

\bibitem{BV19}
T.~Buckmaster and V.~Vicol.
\newblock Nonuniqueness of weak solutions to the {N}avier-{S}tokes equation.
\newblock {\em Ann. of Math.}, 189(1):101--144, 2019.

\bibitem{BV20}
T.~Buckmaster and V.~Vicol.
\newblock Convex integration constructions in hydrodynamics.
\newblock {\em Bull. Amer. Math. Soc.}, 58(1):1--44, 2020.

\bibitem{BS22}
J.~Burczak and G.~Sattig.
\newblock Anomalous energy flux in critical $L^p$-based spaces.
\newblock {\em arXiv:2203.08730}, 2022.

\bibitem{BrueColombo2021}
E.~Bru\'e and M.~Colombo.
\newblock Non-uniqueness of solutions to the Euler equations with vorticity in a Lorentz space.
\newblock {\em arXiv:2108.09469}, 2021.

\bibitem{BCDL2021}
E.~Bru\'e, M.~Colombo, and C.~De Lellis.
\newblock Positive Solutions of Transport Equations and Classical Nonuniqueness of Characteristic curves.
\newblock{Arch. for Rat. Mech. and Anal}, 240, pages1055–1090 (2021).

\bibitem{ChenEtAl05}
S.~Chen, B.~Dhruva, S.~Kurien, K.~Sreenivasan, and M.~Taylor.
\newblock Anomalous scaling of low-order structure functions of turbulent
  velocity.
\newblock {\em J. Fluid Mech.}, 533:183--192, 2005.

\bibitem{CCFS08}
A.~Cheskidov, P.~Constantin, S.~Friedlander, and R.~Shvydkoy.
\newblock Energy conservation and {O}nsager's conjecture for the {E}uler
  equations.
\newblock {\em Nonlinearity}, 21(6):1233--1252, 2008.

\bibitem{CFLS16}
A.~Cheskidov, M.C.L.~Filho, H.J.N.~Lopes, and R.~Shvydkoy. 
\newblock Energy conservation in two-dimensional incompressible ideal fluids. 
\newblock {\em Comm. Math. Phys.}, 348(1):129–143, 2016.

\bibitem{cheskidovluo}
A.~Cheskidov and X.~Luo.
\newblock Anomalous dissipation, anomalous work, and energy balance for the Navier--Stokes equations. 
\newblock {\em SIAM J. Math. Anal.}, 53(4):3856--3887, 2021.

\bibitem{cheskidov2020nonuniqueness}
A.~Cheskidov and X.~Luo.
\newblock Nonuniqueness of weak solutions for the transport equation at
  critical space regularity.
\newblock {\em arXiv:2004.09538}, 2020.

\bibitem{cheskidov2020sharp}
A.~Cheskidov and X.~Luo.
\newblock Sharp nonuniqueness for the {N}avier-{S}tokes equations.
\newblock {\em arXiv:2009.06596}, 2020.


\bibitem{ConstantinETiti94}
P.~Constantin, W.~E, and E.~Titi.
\newblock Onsager's conjecture on the energy conservation for solutions of
  {E}uler's equation.
\newblock {\em Comm. Math. Phys.}, 165(1):207--209, 1994.

 
\bibitem{DaneriSzekelyhidi17}
S.~Daneri and L.~Sz{{\'e}}kelyhidi, Jr.
\newblock Non-uniqueness and h-principle for {H}\"older-continuous weak
  solutions of the {E}uler equations.
\newblock {\em Arch. Rational Mech. Anal.}, 224(2):471--514, 2017.


\bibitem{DK2020}
C.~De Lellis and H.~Kwon.
\newblock On non-uniqueness of H\"older continuous globally dissipative Euler flows.
\newblock{\em arXiv:2006.06482}, 2020.

\bibitem{DeLellisSzekelyhidi09}
C.~De~Lellis and L.~Sz{\'e}kelyhidi, Jr.
\newblock The {E}uler equations as a differential inclusion.
\newblock {\em Ann. of Math. (2)}, 170(3):1417--1436, 2009.

\bibitem{DLSZ12}
C.~De~Lellis and L.~Sz\'{e}kelyhidi, Jr.
\newblock The {$h$}-principle and the equations of fluid dynamics.
\newblock {\em Bull. Amer. Math. Soc. (N.S.)}, 49(3):347--375, 2012.

\bibitem{DeLellisSzekelyhidi13}
C.~De~Lellis and L.~Sz{{\'e}}kelyhidi, Jr.
\newblock Dissipative continuous {E}uler flows.
\newblock {\em Invent. Math.}, 193(2):377--407, 2013.

 \bibitem{DLSZ17}
 C.~De~Lellis and L.~Sz{\'e}kelyhidi~Jr.
 \newblock High dimensionality and h-principle in {PDE}.
 \newblock {\em Bull. Amer. Math. Soc.}, 54(2):247--282, 2017.


\bibitem{DH21}
L.~De Rosa and S.~Haffter.
\newblock A fractal version of the Onsager's conjecture: the $\beta$-model.
\newblock {\em arXiv:2109:03572}, 2021.

\bibitem{DrivasEyink19}
T.~Drivas and G.~Eyink.
\newblock An {O}nsager singularity theorem for {L}eray solutions of
  incompressible {N}avier--{S}tokes.
\newblock {\em Nonlinearity}, 32(11):4465, 2019.

\bibitem{DuchonRobert00}
J.~Duchon and R.~Robert.
\newblock Inertial energy dissipation for weak solutions of incompressible
  Euler and Navier-stokes equations.
\newblock {\em Nonlinearity}, 13(1):249, 2000.

\bibitem{Eyink94}
G.~Eyink.
\newblock Energy dissipation without viscosity in ideal hydrodynamics {{I}}.
  {Fourier} analysis and local energy transfer.
\newblock {\em Physica D: Nonlinear Phenomena}, 78(3--4):222--240, 1994.

\bibitem{EyinkSreeniviasan06}
G.~Eyink and K.~Sreenivasan.
\newblock Onsager and the theory of hydrodynamic turbulence.
\newblock {\em Rev. Modern Phys.}, 78(1):87--135, 2006.


\bibitem{Frisch95}
U.~Frisch.
\newblock {\em Turbulence}.
\newblock Cambridge University Press, Cambridge, 1995.
\newblock The legacy of A. N. Kolmogorov.

\bibitem{FSN06}
U.~Frisch, P.-L.~Sulem, and M.~Nelkin.
\newblock A simple dynamical model of intermittent fully developed turbulence.
\newblock {\em Journal of Fluid Mechanics}, 87(4), 1978.


\bibitem{GK2021}
V.~Giri and H.~Kwon.
\newblock On non-uniqueness of continuous entropy solutions to the isentropic compressible Euler equations.
\newblock{\em arXiv:2109:12165}, 2021.

\bibitem{Gromov16}
M.~Gromov.
\newblock Geometric,  algebraic, and analytic descendants of Nash isometric embedding theorems.
\newblock{\em Bulletin of the AMS}, 54(2), 2016.

\bibitem{ISY20}
K.P.~Iyer, K.R.~Sreenivasan, and P.K. Yeung. 
\newblock Scaling exponents saturate in three-dimensional isotropic turbulence.
\newblock {\em Physical Review Fluids} 5, no.~5 (2020): 054605.


\bibitem{Isett17}
P.~Isett.
\newblock On the endpoint regularity in {O}nsager's conjecture.
\newblock {\em arXiv preprint arXiv:1706.01549}, 2017.

\bibitem{Isett2018}
P.~Isett.
\newblock A proof of Onsager{\textquotesingle}s conjecture.
\newblock {\em Annals of Mathematics}, 188(3):871, 2018.

\bibitem{IshiharaEtAl09}
T.~Ishihara, T.~Gotoh, and Y.~Kaneda.
\newblock Study of high--{R}eynolds number isotropic turbulence by direct
  numerical simulation.
\newblock {\em Annual Review of Fluid Mechanics}, 41:165--180, 2009.


\bibitem{Luo18}
X.~Luo.
\newblock Stationary solutions and nonuniqueness of weak solutions for the
  {N}avier-{S}tokes equations in high dimensions.
\newblock {\em Arch. Ration. Mech. Anal.}, 233(2):701--747, 2019.


\bibitem{ModenaSZ17}
S.~Modena and L.~Sz\'{e}kelyhidi, Jr.
\newblock Non-uniqueness for the transport equation with {S}obolev vector
  fields.
\newblock {\em Ann. PDE}, 4(2):Paper No. 18, 38, 2018.


\bibitem{Novack2020}
M.~Novack.
\newblock Nonuniqueness of Weak Solutions to the 3 Dimensional Quasi-Geostrophic Equations
\newblock {\em SIAM Journal on Mathematical Analysis}, 52(4), 2020.

\bibitem{Onsager49}
L.~Onsager.
\newblock Statistical hydrodynamics.
\newblock {\em Nuovo Cimento (9)}, 6(Supplemento, 2(Convegno Internazionale di
  Meccanica Statistica)):279--287, 1949.

\bibitem{Scheffer93}
V.~Scheffer.
\newblock An inviscid flow with compact support in space-time.
\newblock {\em J. Geom. Anal.}, 3(4):343--401, 1993.

\bibitem{Shnirelman00}
A.~Shnirelman.
\newblock Weak solutions with decreasing energy of incompressible {E}uler
  equations.
\newblock {\em Comm. Math. Phys.}, 210(3):541--603, 2000.

\bibitem{Shvydkoy10}
R.~Shvydkoy.
\newblock Lectures on the {O}nsager conjecture.
\newblock {\em Discrete Contin. Dyn. Syst. Ser. S}, 3(3):473--496, 2010.


\end{thebibliography}
\end{document}